\newtheorem{thm}{Theorem}[section]
\newtheorem{lem}[thm]{Lemma}
\newtheorem{cor}[thm]{Corollary}
\newtheorem{prop}[thm]{Proposition}
\newtheorem*{thm*}{Theorem}
\newtheorem*{cor*}{Corollary}
\newtheorem{thmll}{Theorem}
\theoremstyle{definition}
\newtheorem{defi}[thm]{Definition}
\newtheorem{Warn}[thm]{Warning}
\theoremstyle{remark}
\newtheorem{rmk}[thm]{Remark}
\newtheorem*{Ack}{Acknowledgment}
\newcommand{\colim}{\operatorname{colim}}
\newcommand{\Hom}{\operatorname{Hom}}
\def\C{\mathbb C}
\def\R{\mathbb R}
\def\N{\mathbb N}
\title{$1$-minimal models for $C_{\infty}$-algebras and flat connections}
\author{Claudio Sibilia}
\begin{document}
\begin{abstract}
 Given a smooth manifold $M$ equipped with a properly and discontinuous smooth action of a discrete group $G$, the nerve $M_{\bullet}G$ is a simplicial manifold and its vector space of differential forms $\operatorname{Tot}_{N}\left(A_{DR}(M_{\bullet}G)\right)$ carry a $C_{\infty}$-algebra structure $m_{\bullet}$. We show that each $C_{\infty}$-algebra $1$-minimal model $g_{\bullet}\: : \: \left(W, {m'}_{\bullet} \right) \to \left( \operatorname{Tot}_{N}\left(A_{DR}(M_{\bullet}G)\right),m_{\bullet}\right) $ gives a flat connection $\nabla$ on a smooth trivial bundle $E$ on $M$ where the fiber is the Malcev Lie algebra of $\pi_{1}(M/G)$ and its monodromy representation is the Malcev completion of $\pi_{1}(M/G)$. This connection is unique in the sense that different $1$-models give isomorphic connections. In particular, the resulting connections are isomorphic to Chen's flat connection on $M/G$. If the action is holomorphic and $g_{\bullet}$ has holomorphic image (with logarithmic singularities), $(\nabla,E)$ is holomorphic and different $1$-models give (holomorphically) isomorphic connections (with logarithmic singularities).
 These results are the equivariant and holomorphic version of Chen's theory of flat connections. 
\end{abstract}
\maketitle
%%%%%%%%%%%%%%%%%%%%%%%%%%%
% abstract, keywords and Subject classification are optional.
%%%%%%%%%%%%%%%%%%%%%%%%%%%

% Most people don't use these, so they are "commented out"
% by starting the lines with a "%"
%\begin{keywords}
%   \LaTeX, typesetting
%\end{keywords}

%\begin{AMS}
%   50C60, 18C25
%\end{AMS}

%%%%%%%%%%%%%%%%%%%%%%
% % Here is the start of the Text
%%%%%%%%%%%%%%%%%%%%%%
\section*{Introduction}
Let $H$ be an abstract group and let $\Bbbk$ be a field of characteristic zero. We denote by $\Bbbk\left[ H\right] $ the group ring. It is a Hopf algebra where the coproduct is given by $\Delta\left( g\right) :=g\otimes g$. Let $J$ be the kernel of the augmentation map $\Bbbk\left[ H\right]\to \Bbbk$ that sends each element of $H$ to $1$. The powers of $J$ (with respect to the multiplication) define a filtration $J^{i}$ such that the completion $\Bbbk\left[ H\right]^{\wedge }$ is a complete Hopf algebra, i.e a complete vector space such that the structure maps are continuous (see \cite{benoitfresse}). A group $H$ is said to be \emph{Malcev complete} if it is isomorphic to the group like elements of $\Bbbk\left[ G\right]^{\wedge }$ for some group $G$. For a group $H$ the primitive elements of $\Bbbk\left[ H\right]^{\wedge }$ are called the (complete) Malcev Lie algebra of $H$. The Malcev completion of a group $H$ (over $\Bbbk$) consists of a Malcev complete group $\widehat{H}$ and a universal group homomorphism $H\to \widehat{H}$. It can be constructed by taking the group like elements of $\Bbbk\left[ H\right]^{\wedge }$ and the adjunction between Hopf algebras and Malcev complete groups (see \cite{benoitfresse}).  Let $M$ be a connected smooth manifold such that $H^{i}\left(M, \C \right) $ is finite dimensional for any $i>0$. In \cite{extensionChen} K.T. Chen  gives a formula for the Malcev completion (over $\R$ or $\C$) of $H=\pi_{1}\left(M,p \right) $ for $p\in M$. In fact, he constructs a flat connection on a trivial bundle on $M$ such that its monodromy representation is the Malcev completion of $\pi_{1}\left(M,p \right) $. The monodromy representation can be explicitly calculated via iterated integrals. We denote by $A_{DR}^{\bullet}(M)$ the differential graded algebra of complex differential forms on $M$. In order to present the main theorem of \cite{extensionChen}, we need the following preliminaries: for any pronilpotent Lie algebra $\mathfrak{g}$, each 
\[
C\in A_{DR}^{1}(M)\widehat{\otimes}\mathfrak{g}
\] 
defines a connection $d-C$ on the trivial bundle $M\times\mathfrak{g}$, where the latter is considered to be equipped with the adjoint action. For a non-negatively graded vector space $V$ of finite type, we denote by $V_{+}$ its positively graded part and by $V[1]$ the shifted graded vector space such that $\left( V[1]\right)^{i}=V^{i+1}[1]=V^{i+1}$ for any $i$. Its graded dual is denoted by $V^{*}$. Let $\widehat{T}(V)$ be the complete free algebra on $V$. It is a Hopf algebra equipped with the shuffle coproduct. The primitive elements $\widehat{\mathbb{L}}(V)\subset\widehat{T}(V)$ are the complete free Lie algebra over $V$. Let $A\subseteq A_{DR}^{\bullet}(M) $ be a differential graded subalgebra equipped with a Hodge type decomposition i.e., a vector space decomposition
\[
A=W\oplus \mathcal{M}\oplus d\mathcal{M}
\]
where $W=\oplus_{p\geq 0}{W}^{p}$ is a graded vector subspace of closed elements such that the inclusion $W\hookrightarrow A$ is a quasi-isomorphism, and $\mathcal{M}$ is a graded vector subspace containing no exact elements except $0$. We assume that $A\hookrightarrow A_{DR}^{\bullet}(M)$ is a quasi-isomorphism, i.e. that $A$ is a model. 
\begin{thmll}[\cite{extensionChen}]\label{Chenthm}
There exist
\begin{enumerate}
\item a Lie ideal $\mathcal{R}_{0}\subset \widehat{\mathbb{L}}\left( (W_{+}^{1}[1])^{*}\right) $, and
\item an element $C_{0}\in A^{1}\widehat{\otimes}\mathfrak{u}$, where $\mathfrak{u}=\widehat{\mathbb{L}}\left( (W_{+}^{1}[1])^{*}\right)/\mathcal{R}_{0}$, 
\end{enumerate}
such that $\mathfrak{u}$ is the complete Malcev Lie algebra of $\pi_{1}\left(M,p \right)$ and $d-C_{0}$ defines a flat connection on the trivial bundle $M\times\mathfrak{u}$ whose monodromy represention at $p\in M$ is a group homomorphism
\[
\Theta\: : \: \pi_{1}\left(M,p \right)\to \widehat{T}\left( (W_{+}^{1}[1])^{*}\right)/\bar{\mathcal{R}}_{0}\subset \operatorname{End}\left(\mathfrak{u} \right) 
\]
where $\bar{\mathcal{R}}_{0}\subset \widehat{T}\left( (W_{+}^{1}[1])^{*}\right)$ is the ideal generated by $\mathcal{R}_{0}$. Moreover, $\Theta\left(\pi_{1}\left(M,p \right) \right)\subset \exp(\mathfrak{u})$ and $
\Theta\: : \: \pi_{1}\left(M,p \right)\to\exp(\mathfrak{u})
$ is the Malcev completion of $\pi_{1}\left(M,p \right)$. The element $C_{0}$ depends uniquely on the choice of $A$, of the Hodge type decomposition and of basis for $W$.
\end{thmll}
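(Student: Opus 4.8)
The plan is to produce the pair $(C_0,\mathcal{R}_0)$ simultaneously by an inductive construction along the lower central series filtration $\Gamma^{\bullet}$ of $\widehat{\mathbb{L}}(V)$, $V:=(W_{+}^{1}[1])^{*}$, using the Hodge type decomposition $A=W\oplus\mathcal{M}\oplus d\mathcal{M}$ to split obstructions stage by stage. Writing $\widehat{\mathbb{L}}(V)=\varprojlim_{n}\widehat{\mathbb{L}}(V)/\Gamma^{n+1}$, one seeks the connection form $C_0\in A^{1}\widehat{\otimes}\mathfrak{u}$ as a limit of partial forms $C_0^{(n)}$ solving the flatness (Maurer--Cartan) equation $dC_0=\tfrac12[C_0,C_0]$ modulo $\Gamma^{n+1}$. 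Fix a basis $w_1,\dots,w_r$ of $W_{+}^{1}=W^{1}$ and let $X_1,\dots,X_r\in V$ be the dual basis; set $C_0^{(1)}:=\sum_i w_i\otimes X_i$. Since each $w_i$ is closed, $dC_0^{(1)}=0$, and the first obstruction to flatness is the closed form $\tfrac12[C_0^{(1)},C_0^{(1)}]\in A^{2}\widehat{\otimes}(\Gamma^{2}/\Gamma^{3})$, which records the cup products among the classes $[w_i]$.

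The inductive step is the heart of the argument. Assume $C_0^{(n)}$ has been constructed, flat modulo $\Gamma^{n+1}$; its reduced curvature $R_n:=dC_0^{(n)}-\tfrac12[C_0^{(n)},C_0^{(n)}]$ then lies in $A^{2}\widehat{\otimes}(\Gamma^{n+1}/\Gamma^{n+2})$ and, by the graded Jacobi identity (Bianchi), is closed. Since $W$ carries all cohomology and $d$ restricts to an isomorphism $\mathcal{M}\xrightarrow{\sim}d\mathcal{M}$, the Hodge splitting writes this closed form uniquely as $R_n=h+d\eta$ with $W$-component $h\in W^{2}\widehat{\otimes}(\Gamma^{n+1}/\Gamma^{n+2})$ and $\eta\in\mathcal{M}^{1}\widehat{\otimes}(\Gamma^{n+1}/\Gamma^{n+2})$ uniquely determined. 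Replacing $C_0^{(n)}$ by $C_0^{(n)}+\eta$ absorbs the exact part $d\eta$; the component $h$ cannot be removed by modifying the form and is instead killed by imposing it as a relation, enlarging the ideal by the image of $h$ in $\widehat{\mathbb{L}}(V)$. Define $\mathcal{R}_0$ to be the closed Lie ideal generated by all such obstructions over every $n$, so that $\mathfrak{u}=\widehat{\mathbb{L}}(V)/\mathcal{R}_0$ and $C_0=\lim_n C_0^{(n)}\in A^{1}\widehat{\otimes}\mathfrak{u}$. Uniqueness of $C_0$ for the given data (the choice of $A$, the decomposition, and the basis of $W$) follows because at each stage both $\eta\in\mathcal{M}$ and the relation $h$ are pinned down uniquely by the splitting.

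It then remains to identify $\mathfrak{u}$ with the complete Malcev Lie algebra of $\pi_1(M,p)$ and to compute the monodromy. The parallel transport of $d-C_0$ along a piecewise-smooth path $\gamma$ is given by Chen's iterated-integral formula $\Theta(\gamma)=1+\sum_{k\geq1}\int_{\gamma}\underbrace{C_0\cdots C_0}_{k}$, a group-like element of $\widehat{T}(V)/\bar{\mathcal{R}}_0$; group-likeness places it in $\exp(\mathfrak{u})\subset\operatorname{End}(\mathfrak{u})$, and flatness of $C_0$ guarantees homotopy invariance, so $\Theta$ descends to a homomorphism on $\pi_1(M,p)$. That $\mathfrak{u}$ is the Malcev Lie algebra and $\Theta$ realizes the Malcev completion is Chen's $\pi_1$ de Rham theorem: the zeroth cohomology of the bar construction on $A$ is the coordinate Hopf algebra of the pro-unipotent completion of $\pi_1$, and $(\widehat{\mathbb{L}}(V)/\mathcal{R}_0,C_0)$ is the dual, Lie-theoretic incarnation of that bar construction. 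Here one uses that $A\hookrightarrow A_{DR}^{\bullet}(M)$ is a quasi-isomorphism with $H^{i}(M,\C)$ finite-dimensional, so the bar construction computes $\pi_1$ correctly.

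I expect the main obstacle to be the inductive construction of the pair $(C_0,\mathcal{R}_0)$: verifying that the reduced curvature $R_n$ is closed at every stage (so the Hodge splitting applies), that the $W$-components assemble into a genuine closed Lie ideal compatible with the filtration, and that the two splitting choices yield a well-defined Maurer--Cartan element in the inverse limit. By contrast, the identification of $\mathfrak{u}$ with the Malcev Lie algebra, though conceptually the deepest point, can be imported directly from Chen's and Sullivan's minimal-model theory once the flat connection form $C_0$ is in hand.
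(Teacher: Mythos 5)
Your overall strategy---an inductive construction of the connection along the word-length (lower central series) filtration of $\widehat{\mathbb{L}}\left((W_{+}^{1}[1])^{*}\right)$, using the Hodge splitting $A=W\oplus\mathcal{M}\oplus d\mathcal{M}$ to separate each closed obstruction into a $W^{2}$-part and an exact part---is the right one: it is essentially Chen's own proof of his Theorem 1.3.1, i.e.\ the unrolled form of the homotopy transfer theorem, which is exactly how the paper recovers Theorem \ref{Chenthm} as the special case of Theorem \ref{Final1} with trivial group action. The identification of the monodromy with the Malcev completion is, as you say, imported from Chen (Theorem 2.1.1 of \cite{extensionChen}), and the paper does the same.

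There is, however, a genuine error in how you dispose of the $W^{2}$-component $h$ of the obstruction. At stage $n$ you adjoin to the ideal the homogeneous word-length-$(n+1)$ elements $\rho_{\alpha}^{(n+1)}$ read off from $h=\sum_{\alpha}u_{\alpha}\otimes\rho_{\alpha}^{(n+1)}$ (with $u_{\alpha}$ a basis of $W^{2}$), and you define $\mathcal{R}_{0}$ to be generated by all these homogeneous pieces. That produces the completed \emph{holonomy} Lie algebra $\widehat{\mathbb{L}}(V)/\langle \rho_{\alpha}^{(2)},\rho_{\alpha}^{(3)},\dots\rangle$, not the Lie algebra of the theorem. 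In Chen's construction the relation attached to $u_{\alpha}$ is the full inhomogeneous power series $\delta^{*}Y_{\alpha}=\sum_{n\geq 2}\rho_{\alpha}^{(n)}$ (quadratic term dual to the cup product, cubic term dual to the triple Massey products, and so on), and the ideal generated by such a series is in general strictly smaller than the ideal generated by its homogeneous components: modulo $\Gamma^{4}$ the former contains $\rho_{\alpha}^{(2)}+\rho_{\alpha}^{(3)}$ and the brackets $[\rho_{\beta}^{(2)},V]$, but not $\rho_{\alpha}^{(2)}$ alone. The two quotients agree precisely when $\mathcal{R}_{0}$ can be chosen homogeneous, i.e.\ when $\pi_{1}(M,p)$ is $1$-formal (compare Proposition \ref{formality}), so your $\mathfrak{u}$ is in general a proper quotient of the Malcev Lie algebra. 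The fix is not to quotient during the induction: work in the free Lie algebra on the duals of all of $W_{+}[1]$ (in particular adjoin generators $Y_{\alpha}$ dual to $W^{2}_{+}[1]$, placed in degree $-1$), record the obstruction $h$ at each stage as the new homogeneous term of a derivation $\delta^{*}$ evaluated on the $Y_{\alpha}$, solve the twisting-cochain equation $dC+\delta^{*}C=\tfrac{1}{2}[C,C]$ there, and only at the end set $\mathcal{R}_{0}$ to be the Lie ideal generated by the completed, inhomogeneous image of $\delta^{*}$ and $C_{0}$ to be the degree-zero part of $C$ in the quotient. With that correction the rest of your argument (closedness of the obstruction, uniqueness of $\eta\in\mathcal{M}^{1}$ and of the $W^{2}$-component from the splitting, and the iterated-integral description of $\Theta$) goes through.
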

The Lie ideal $\mathcal{R}_{0}$ is constructed in \cite{extensionChen} as the image of a differential $\delta^{*}$ on $\widehat{\mathbb{L}}\left( (W_{+}^{1}[1])^{*}\right)$ and the connection $C_{0}$ is constructed as a degree zero part of a degree one element $C\in A\widehat{\otimes} \widehat{\mathbb{L}}\left( (W_{+}[1])^{*}\right) $. The pair $(C,\delta^{*})$ satisfies some algebraic conditions and they are unique up to the choice of Hodge type decomposition.
Chen first proves the existence of an element $C$ and a differential $\delta^{*}$ that satisfy such algebraic conditions (see \cite{extensionChen}, Theorem 1.3.1). Secondly, he proves that they induce $C_{0}$ and $\mathcal{R}_{0}$ that satisfy Theorem \ref{Chenthm} (see \cite{extensionChen}, Theorem 2.1.1). The proof for the existence of $C$ and $\delta^{*}$ in \cite{extensionChen} is rather geometrical and involves iterated integrals. A purely algebraic proof can be obtained by homological perturbation theory (see for instance \cite{Algchentwist}) or equivalently the homotopy transfer theorem (see \cite{Huebsch}). In fact, $\delta^{*}$ corresponds to a $C_{\infty}$-structure on $W$ induced by $A$ along the inclusion $W\hookrightarrow A$ and $C$ corresponds to a $C_{\infty}$-quasi-isomorphism between $W$ and $A$. The connection obtained by Theorem \ref{Chenthm} is holomorphic if $A$ is a holomorphic model, i.e it contains only holomorphic forms. The celebrated
Knizhnik–Zamolodchikov connection can be constructed via Theorem \ref{Chenthm}, using a holomorphic model for the configuration space of points due to Arnold (see \cite{Arnold}).
In the present paper, we generalize Theorem \ref{Chenthm} as follows. Let $M$ be a smooth (complex or real) manifold equipped with a smooth properly discontinuous action of a discrete group $G$. The nerve gives a simplicial manifold $M_{\bullet}G$ called the action groupoid. In particular, $A_{DR}\left(M_{\bullet}G \right) $ is a simplicial commutative differential graded algebra. Let $\operatorname{Tot}_{N}^{\bullet}\left(A_{DR}\left(M_{\bullet}G \right)\right)$ be the normalized total complex. An element $w$ of degree $n$ in this complex can be written as $w=\sum_{p+q=n}w^{p,q}$, where $w^{p,q}$ is a set map
\[
w^{p,q}\: : \: G^{p}\to A_{DR}^{q}(M)
\]
such that $w^{p,q}(g_{1}, \dots, g_{n})=0$ if $g_{i}=e\in G$ for some $i$. The elements $w^{p,q}$ are called elements of bidegree $(p,q)$. In particular, $\operatorname{Tot}_{N}^{0,q}\left(A_{DR}\left(M_{\bullet}G \right)\right)=A_{DR}^{q}(M)$. An element $w$ is said to be holomorphic if each $w^{p,q}$ takes holomorphic values for any $(p,q)$. Since the action is properly discontinuous, we can visualize $A_{DR}^{\bullet}(M/G)\subset A_{DR}^{\bullet}(M)$ as the differential graded algebra of $G$-invariant differential forms. Getzler and Cheng (see \cite{Getz}) have shown that the normalized total complex $\operatorname{Tot}_{N}^{\bullet}\left(A_{DR}\left(M_{\bullet}G \right)\right)$ carries a natural unital $C_{\infty}$-structure $m_{\bullet}$
such that 
\[
m_{1}=d,\quad m_{2}=\wedge,\quad m_{n}=0\text{ for }n\geq 3
\]
on $A_{DR}^{\bullet}(M/G)\subset\operatorname{Tot}_{N}^{0,\bullet}\left(A_{DR}\left(M_{\bullet}G \right)\right)$.
We denote the differential $m_{1}$ by $D$. In  \cite{Dupont} Dupont has shown that $H^{\bullet}\left(\operatorname{Tot}_{N}^{\bullet}\left(A_{DR}\left(M_{\bullet}G \right)\right),D \right)  $ corresponds to the singular cohomology of the fat geometric realization of $M_{\bullet}G$. Since the action is properly discontinuous,  $H^{\bullet}\left( ||M_{\bullet} G||,\C\right)\cong  H^{\bullet}\left( M/G,\C\right)$ and in particular the inclusion 
\[
\left( A_{DR}(M/G),d, \wedge\right) \hookrightarrow\left( \operatorname{Tot}_{N}\left(A_{DR}(M_{\bullet}G)\right),m_{\bullet}\right) 
\]
is a quasi-isomorphism of $C_{\infty}$-algebras. We assume that $M/G$ is connected and that $H^{i}\left(M/G, \C \right) $ is finite dimensional for any $i$. Let $B\subseteq \operatorname{Tot}_{N}\left(A_{DR}(M_{\bullet}G)\right)$ be a $C_{\infty}$-subalgebra. We assume that $\left( B,D\right) \hookrightarrow \left( \operatorname{Tot}_{N}\left(A_{DR}(M_{\bullet}G)\right),D\right) $ induces an isomorphism on the $0$-th and on $1$-th cohomology group and an injection on the $2$-th, i.e. $B$ is a $1$-model for  $\left( \operatorname{Tot}_{N}\left(A_{DR}(M_{\bullet}G)\right),m_{\bullet}\right)$. A $1$-minimal model for $\operatorname{Tot}_{N}\left(A_{DR}(M_{\bullet}G)\right)$ consists of a $C_{\infty}$-morphism $g_{\bullet}\: : \: \left(W, {m'}_{\bullet} \right) \to \operatorname{Tot}_{N}\left(A_{DR}(M_{\bullet}G)\right)$ such that $m_{1}=0$ (i.e. $W$ is minimal) and 
$g_{\bullet}$ induces an isomorphism on the $0$-th and on $1$-th cohomology group and an injection on the $2$-th one. Explicit examples of $1$-minimal models can be constructed via the homotopy transfer theorem (see \cite{Kadesh}, \cite{kontsoibel}, \cite{Markl} and \cite{Prelie}). In this paper, we introduce the category of $1-C_{\infty}$-algebras and $1-C_{\infty}$-morphisms. They are essentially ``$1$-truncated'' $C_{\infty}$-algebras, i.e. a graded vector space $A$ equipped with maps $\tilde{m}_{n}\: :\: \oplus_{i\leq n}\left(  A^{\otimes n}\right)^{i}\to A$ that satisfy the ordinary relations for $C_{\infty}$-algebras only on $\oplus_{i\leq n-1}\left(  A^{\otimes n}\right)^{i}$. There is a forgetful functor $\mathcal{F}$ from the category of $C_{\infty}$-algebras towards the category of $1-C_{\infty}$-algebras. The $1-C_{\infty}$-algebras have a well-defined notion of cohomology groups and morphisms. In particular, the second cohomology groups is generated by the higher Massey products between elements of lower degree. We introduce the notion of $1$-minimal models of $1-C_{\infty}$-algebras such that each ordinary $1$-minimal models gives a $1$-minimal model of $1-C_{\infty}$-algebras via $\mathcal{F}$.
A $1$-minimal model $g_{\bullet}\: : \: \left(W, {m'}_{\bullet} \right) \to  \mathcal{F}\left(\operatorname{Tot}_{N}\left(A_{DR}(M_{\bullet}G)\right), m_{\bullet}\right)$ is said to be \emph{holomorphic } if the image of $g_{\bullet}$ contains only holomorphic elements. Let $\left( N,\mathcal{D}\right) $ be a smooth complex manifold with a normal crossing divisor $\mathcal{D}$ and $G$ be a group acting holomorphically on $N$ preserving $\mathcal{D}$. Assume that $M=N-\mathcal{D}$, then $g_{\bullet}$ is said to be \emph{holomorphic with logarithmic singularities} if the image of $g_{\bullet}$ contains only holomorphic elements with logarithmic singularities along $\mathcal{D}$. In the present paper, we prove the following. For a $p\in M$ we denote its class in $M/G$ by $\overline{p}$.
\begin{thmll}\label{Final1}
 Let $M$ be a smooth complex  manifold equipped with a properly discontinuous action of a discrete group $G$.
For each $1$-minimal model $g_{\bullet}\: : \: \left(W, {m'}_{\bullet} \right) \to  \mathcal{F}\left(\operatorname{Tot}_{N}\left(A_{DR}(M_{\bullet}G)\right), m_{\bullet}\right) $ there exists
\begin{enumerate}
\item a Lie ideal $\mathcal{R}_{0}\subset \widehat{\mathbb{L}}\left( (W_{+}^{1}[1])^{*}\right) \subset \widehat{T}\left( (W_{+}^{1}[1])^{*}\right) $,
\item an element $r_{*}\left(\overline{C}\right) \in  A_{DR}^{1}(M)\widehat{\otimes}\mathfrak{u}$, where $\mathfrak{u}=\widehat{\mathbb{L}}\left( (W_{+}^{1}[1])^{*}\right)/\mathcal{R}_{0}$, and 
\item a fiber preserving smooth $G$-action on $M\times \mathfrak{u}$ such that $\mathfrak{u}$ is the complete Malcev Lie algebra of $\pi_{1}\left(M/G, {p} \right)$ and $d-r_{*}\left( \overline{C}\right)$ defines a flat connection on a trivial bundle $E:=\left( M\times \mathfrak{u}\right)/G $ on $M/G$ whose monodromy representation at $\overline{p}\in M/G$ is a group homomorphism
\[
\Theta\: : \: \pi_{1}\left(M/G,\overline{p} \right)\to \widehat{T}\left( (W_{+}^{1}[1])^{*}\right)/\bar{\mathcal{R}}_{0}\subset \operatorname{End}\left(\mathfrak{u} \right) 
\]
where $\bar{\mathcal{R}}_{0}\subset \widehat{T}\left( (W_{+}^{1}[1])^{*}\right)$ is the ideal generated by $\mathcal{R}_{0}$. Moreover, for $\bar{p}\in M$ we have\\ $\Theta\left(\pi_{1}\left(M/G,p \right) \right)\subset \exp(\mathfrak{u})$ such that $
\Theta\: : \: \pi_{1}\left(M/G,\bar{p} \right)\to\exp(\mathfrak{u})
$ is the Malcev completion of $\pi_{1}\left(M/G,\bar{p} \right)$. 
\item Assume that the action of $G$ on $M$ is holomorphic and $g_{\bullet}$ is holomorphic. Then $E$ is a holomorphic bundle on $M/G$ and $d-r_{*}\left(\overline{C}\right)$ defines a holomorphic flat connection on $E$.
\item Let $\left( N,\mathcal{D}\right) $ be a smooth complex manifold with a normal crossing divisor and $G$ be a group acting holomorphically on $N$ and that preserves $\mathcal{D}$. Assume that $M=N-\mathcal{D}$ and that $g_{\bullet}$ is holomorphic with logarithmic singularities. Then $E$ is a holomorphic bundle on $N/G$ and $d-r_{*}\left(\overline{C}\right)$ defines a holomorphic flat connection with logarithmic singularities on $E$.
\end{enumerate}
\end{thmll}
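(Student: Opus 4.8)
The plan is to dualize the $1$-minimal model into a Maurer--Cartan element and then read off the flat connection, the $G$-action, and the monodromy from its bidegree components. First I would invoke the dictionary between $C_{\infty}$-morphisms and flat connections already used for Theorem \ref{Chenthm}: the $C_{\infty}$-structure ${m'}_{\bullet}$ on $W$ dualizes to a degree one square-zero derivation $\delta^{*}$ of the complete free Lie algebra $\mathfrak{g}=\widehat{\mathbb{L}}\left( (W_{+}[1])^{*}\right)$, and the $C_{\infty}$-morphism $g_{\bullet}$ dualizes to a total degree one element
\[
C\in \operatorname{Tot}_{N}\left(A_{DR}(M_{\bullet}G)\right)\widehat{\otimes}\mathfrak{g}.
\]
The $C_{\infty}$-relations for ${m'}_{\bullet}$ and the morphism equations for $g_{\bullet}$ translate exactly into the statements that $\delta^{*}$ is a differential and that $C$ solves the Maurer--Cartan equation $(D+\delta^{*})C+\tfrac{1}{2}[C,C]=0$ of the complete differential graded Lie algebra $\operatorname{Tot}_{N}\left(A_{DR}(M_{\bullet}G)\right)\widehat{\otimes}\mathfrak{g}$, where $D=m_{1}$. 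I would then set $\mathcal{R}_{0}:=\operatorname{im}(\delta^{*})\cap \widehat{\mathbb{L}}\left( (W_{+}^{1}[1])^{*}\right)$ as the Lie ideal of item (1) and $\mathfrak{u}:=\widehat{\mathbb{L}}\left( (W_{+}^{1}[1])^{*}\right)/\mathcal{R}_{0}=H^{0}(\mathfrak{g},\delta^{*})$. That $\mathfrak{u}$ is the complete Malcev Lie algebra of $\pi_{1}(M/G,\overline{p})$ is where the $1$-model hypothesis enters: since the $D$-cohomology of the total complex computes $H^{\bullet}(M/G,\C)$ with the stated behaviour in degrees $0,1,2$, the degree one part of the dual bar construction reproduces Chen's presentation, now for the quotient $M/G$.

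The heart of the argument is the bidegree decomposition of $C$ and of the Maurer--Cartan equation. Since $\operatorname{Tot}_{N}^{1}=A_{DR}^{1}(M)\oplus\left\{ G\to A_{DR}^{0}(M)\right\}$, I would isolate the principal components $\overline{C}=C^{0,1}\in A_{DR}^{1}(M)\widehat{\otimes}\mathfrak{g}$, whose Lie coefficients automatically lie in $\widehat{\mathbb{L}}\left( (W_{+}^{1}[1])^{*}\right)$, and $\Gamma=C^{1,0}\colon G\to A_{DR}^{0}(M)\widehat{\otimes}\mathfrak{g}$. Writing the total differential as $D=d\pm\delta$ with $d$ the de Rham and $\delta$ the simplicial (group) part, and applying the reduction $r\colon\mathfrak{g}\to\mathfrak{u}$ (so that $r_{*}\overline{C}\in A_{DR}^{1}(M)\widehat{\otimes}\mathfrak{u}$ is the element of item (2)), the $(0,2)$, $(2,0)$ and $(1,1)$ components of the Maurer--Cartan equation become, modulo $\operatorname{im}(\delta^{*})=\mathcal{R}_{0}$,
\[
d(r_{*}\overline{C})=\tfrac{1}{2}[r_{*}\overline{C},r_{*}\overline{C}],\qquad \delta(r_{*}\Gamma)=\tfrac{1}{2}[r_{*}\Gamma,r_{*}\Gamma],\qquad d(r_{*}\Gamma)\pm\delta(r_{*}\overline{C})=[r_{*}\overline{C},r_{*}\Gamma].
\]
The passage modulo $\mathcal{R}_{0}$ is exactly what turns the Maurer--Cartan identity into honest flatness, cocycle and equivariance identities with $\mathfrak{u}$-coefficients: the first asserts flatness of $d-r_{*}\overline{C}$ on $M\times\mathfrak{u}$; the second makes $g\mapsto\exp(r_{*}\Gamma(g))$ a fiber-preserving smooth $G$-action (item (3)); the third is the $G$-equivariance of the connection form. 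Together they show that $d-r_{*}\overline{C}$ descends to a flat connection on the quotient bundle $E=(M\times\mathfrak{u})/G$ over $M/G$.

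It remains to identify the monodromy, and this is the step I expect to be the main obstacle. Here I would combine Chen's iterated-integral transport of $r_{*}\overline{C}$ along paths in $M$ with the action $\exp(r_{*}\Gamma)$ along the group direction: in the action groupoid a $1$-simplex over $g$ is an edge from $x$ to $gx$, so a loop in $M/G$ based at $\overline{p}$ lifts to a concatenation of actual paths in $M$ and jumps by group elements, and its parallel transport is the product of the corresponding iterated integrals with the factors $\exp(r_{*}\Gamma(g))$ at the jumps. The $(1,1)$- and $(2,0)$-equations guarantee that this assignment is well defined on homotopy classes and multiplicative, yielding $\Theta\colon\pi_{1}(M/G,\overline{p})\to\widehat{T}\left( (W_{+}^{1}[1])^{*}\right)/\bar{\mathcal{R}}_{0}$ with image in $\exp(\mathfrak{u})$. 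That $\Theta$ is the Malcev completion follows, as in Chen's Theorem 2.1.1, from the universal property of $\widehat{\mathbb{L}}$ and the $1$-model hypothesis, now applied to $\pi_{1}(M/G)$ through the extension relating $\pi_{1}(M)$, $G$ and $\pi_{1}(M/G)$. The genuine difficulty, absent in Chen's non-equivariant setting, is controlling the horizontal (group) direction of the total complex and proving that it supplies precisely the fiber-preserving $G$-action compatible with descent, so that transport upstairs and the group jumps assemble into a single multiplicative representation.

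Finally, parts (4) and (5) follow from the naturality of the construction. If $g_{\bullet}$ has holomorphic image, then $\overline{C}$ and $\Gamma$ consist of holomorphic forms and functions, so $\exp(r_{*}\Gamma)$ acts holomorphically and $E$ together with $d-r_{*}\overline{C}$ is holomorphic. If moreover the forms extend to $N$ with logarithmic singularities along $\mathcal{D}$ and $G$ preserves $\mathcal{D}$, the same components extend $G$-equivariantly over $N$, so $E$ is a holomorphic bundle on $N/G$ and $d-r_{*}\overline{C}$ a flat connection with logarithmic singularities. In both cases nothing beyond the holomorphy of the chosen components of $C$ is required, since every step above only extracts and reduces these components.
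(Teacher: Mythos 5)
Your overall dictionary (dualize $g_{\bullet}$ to a Maurer--Cartan element, project to bidegree $(0,1)$ to get the connection form) matches the paper's setup, but the core of your argument---reading the descent datum directly off the bidegree components of $C$---has two genuine gaps. First, the Maurer--Cartan equation you decompose is not the quadratic equation of a differential graded Lie algebra: the degree zero convolution algebra $\operatorname{Conv}_{1-C_{\infty}}$ carries the full $L_{\infty}$-structure induced by the Getzler--Cheng products $m_{\bullet}$ on $\operatorname{Tot}_{N}(A_{DR}(M_{\bullet}G))$, and by Theorem \ref{productthm>2} the products $m_{l}(b_{1},\dots,b_{l})$ of bidegree $(1,0)$ elements are nonzero in bidegree $(2,0)$ for \emph{every} $l$ (with Bernoulli-number coefficients), while mixed inputs contribute to bidegree $(1,1)$. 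So your three displayed equations are wrong as stated: the $(2,0)$ and $(1,1)$ components of the Maurer--Cartan equation contain all the higher brackets $l_{k}$, $k\geq 2$, not just the binary one. Only after applying $r_{*}$ (killing everything of positive simplicial degree) does the structure collapse to a dg Lie algebra; this is exactly Proposition \ref{pres} and Corollary \ref{corresiduemap}, and it yields flatness of $d-r_{*}\overline{C}$ on $M\times\mathfrak{u}$ but says nothing about the group direction. Consequently your claim that $g\mapsto\exp(r_{*}\Gamma(g))$ is a fiber-preserving $G$-action and that the connection is equivariant for it is unsupported: the cocycle identity $F_{gh}(x)=F_{g}(hx)F_{h}(x)$ would have to be extracted from the full higher Maurer--Cartan equation by a BCH/integration argument that you do not supply.

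Second, you explicitly flag the identification of the monodromy with the Malcev completion as ``the main obstacle'' and then do not resolve it; the appeal to ``the extension relating $\pi_{1}(M)$, $G$ and $\pi_{1}(M/G)$'' is not an argument. The paper avoids both difficulties by a different mechanism: it takes Chen's good homological pair on $A_{DR}(M/G)\subset\operatorname{Tot}_{N}(A_{DR}(M_{\bullet}G))$ (which has trivial descent datum), uses uniqueness of $1$-minimal models up to homotopy (Proposition \ref{homotopyuniqueness}) together with the equivalence of homotopy and gauge equivalence of Maurer--Cartan elements (Proposition \ref{equivMaurer}) to produce an isomorphism $K^{*}$ and a gauge $h\in A_{DR}^{0}(M)\widehat{\otimes}\mathfrak{u}$ with $e^{h}(d-r_{*}\overline{C})e^{-h}=k^{*}\pi(C')$, and then \emph{defines} the factor of automorphy by $F_{g}(p)=e^{-h(gp)}e^{h(p)}$ (Proposition \ref{fiber}). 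The monodromy is then computed by conjugating Chen's holonomy by $e^{h(p)}$ (Proposition \ref{holonomyrep}), which transports Chen's identification with the Malcev completion to $\Theta$. Note also that the holomorphy statements (4) and (5) are not automatic in this scheme: one must correct the gauge $h$ to a holomorphic one (the filtration argument in Proposition \ref{fiber}) and extend it over $N$ by Cauchy's formula, steps your proposal omits. If you want to salvage your direct approach, you would need to prove that the higher Maurer--Cartan components genuinely integrate to a factor of automorphy and then still compare with Chen's connection to identify the monodromy; as written, the proof is incomplete.
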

Theorem \ref{Chenthm} is special case of Theorem \ref{Final1}. Let $A\subset A_{DR}(M/G)$. The Hodge type decomposition $A=W\oplus \mathcal{M}\oplus d\mathcal{M}$ can be turned via the homotopy transfer theorem into a $1$-minimal model $g_{\bullet}\: : \: \left(W, {m'}_{\bullet} \right) \to A$ which is unique with respect some algebraic conditions relative to the decomposition. This $1$-minimal model corresponds via Theorem \ref{Final1} to a proof of \ref{Chenthm}. In this case, the group action on the bundle is trivial.\\
The strategy for the proof of Theorem \ref{Final1} is the following. The $1-C_{\infty}$-structure ${m'}_{\bullet}$ corresponds to a map $\delta'\: : \: \left( T^{c}(W_{+}[1])\right)^{0} \to \left( T^{c}(W_{+}[1])\right)^{1}$ and $\mathcal{R}_{0}$ is the Lie ideal generated by the image of ${\delta'}^{*}$. We construct a particular $L_{\infty}$-algebra called degree zero convolution Lie algebra and show that it contains some special Maurer-Cartan elements $\overline{C}$, each of which can be turned into a connection form $r_{*}\overline{C} \in A_{DR}(M)\widehat{\otimes}\left(  \widehat{\mathbb{L}}\left( (W_{+}[1])^{*}\right)/\mathcal{R}_{0}\right) $ that fulfills the above conditions. In general a holomorphic $1$-minimal model with logarithmic singularities for $ \mathcal{F}\left( A_{DR}(M/G),d, \wedge \right)$ does not exist, because there are not enough closed holomorphic forms. In some cases, it is possible to find a holomorphic $1$-minimal model with logarithmic singularities for $ \mathcal{F}\left( \operatorname{Tot}_{N}\left(A_{DR}(M_{\bullet}G)\right),m_{\bullet}\right)$
This is the case for punctured Riemann surfaces of positive genus. In the present paper, we give a comparison between Theorem \ref{Chenthm} and Theorem \ref{Final1}, and we analyze the dependence on the choice of $1$-minimal model. Let $M$ be a smooth complex manifold and let $E_{1}$, $E_{2}$ be two smooth vector bundles on $M$. Let $\left(d-\alpha_{1},E_{1} \right) $, $\left(d-\alpha_{2},E_{2} \right) $ be two smooth connections. They are \emph{isomorphic} if there exists a bundle isomorphism $T\: : \: E_{1}\to E_{2}$ such that 
\[
T\left( d-\alpha_{1}\right)T^{-1}=d-\alpha_{2}.
\]

\begin{thmll}\label{Final2}
Let $d-\alpha_{1}$ and $d-\alpha_{2}$ be two flat connections on two smooth vector bundles $E_{1}$ and $E_{2}$ on $M/G$ obtained by Theorem \ref{Final1} by choosing some $1$-minimal models. 
\begin{enumerate}
\item The connections $\left( d-\alpha_{1},E_{1}\right) $ and $\left( d-\alpha_{2},E_{2}\right) $ are isomorphic.
\item Assume that $d-\alpha_{1}$ and $d-\alpha_{2}$ are constructed using holomorphic $1$-models. The above isomorphism is holomorphic.
\item Let $\left( N,\mathcal{D}\right) $ be a smooth complex manifold with a normal crossing divisor and $G$ be a group acting holomorphically on $N$ and that preserves $\mathcal{D}$. Assume that $M=N-\mathcal{D}$ and that $d-\alpha_{1}$ and $d-\alpha_{2}$ are constructed using holomorphic $1$-models with logarithmic singularities. The above isomorphism is holomorphic and it extends to an holomorphic isomorphism between bundles on $N/G$.
\end{enumerate}
In particular, the connections are (smoothly) isomorphic to the connection obtained in Theorem \ref{Chenthm} on $M/G$.
\end{thmll}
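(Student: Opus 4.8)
The plan is to reduce all three parts to a single statement: the two connection forms are \emph{gauge equivalent} by a gauge transformation that is $G$-equivariant (and, in the holomorphic cases, holomorphic with the prescribed singularities). Recall from Theorem~\ref{Final1} that a flat connection $d-\alpha$ on a trivial bundle with pronilpotent fiber $\mathfrak u$ is exactly a Maurer--Cartan element $\alpha\in A_{DR}^{1}(M)\widehat{\otimes}\mathfrak u$ of the differential graded Lie algebra $A_{DR}^{\bullet}(M)\widehat{\otimes}\mathfrak u$, the Maurer--Cartan equation being flatness. Moreover two such connections are isomorphic through $T=\exp(\operatorname{ad}_{X})$, with $X\in A_{DR}^{0}(M)\widehat{\otimes}\mathfrak u$, if and only if the two Maurer--Cartan elements lie in the same gauge orbit. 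So it suffices to produce, for any two $1$-minimal models, a degree-zero element realizing the gauge between $r_{*}(\overline C_{1})$ and $r_{*}(\overline C_{2})$.

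First I would invoke the homotopy uniqueness of $1$-minimal models. Given
\[
g^{(i)}_{\bullet}\: : \: \left(W_{i}, {m'}^{(i)}_{\bullet}\right)\longrightarrow \mathcal F\left(\operatorname{Tot}_{N}(A_{DR}(M_{\bullet}G)),m_{\bullet}\right),\qquad i=1,2,
\]
the homotopy theory of $1$-$C_{\infty}$-algebras (as in the classical $C_{\infty}$ case) provides an isomorphism $f_{\bullet}\: : \:(W_{1},{m'}^{(1)}_{\bullet})\to(W_{2},{m'}^{(2)}_{\bullet})$ of $1$-$C_{\infty}$-algebras together with a homotopy between $g^{(1)}_{\bullet}$ and $g^{(2)}_{\bullet}\circ f_{\bullet}$. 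Dualizing and passing to complete free Lie algebras, $f_{\bullet}$ induces an isomorphism $\widehat{\mathbb L}((W_{1,+}^{1}[1])^{*})\to\widehat{\mathbb L}((W_{2,+}^{1}[1])^{*})$ carrying the ideal generated by $({\delta'}^{(1)})^{*}$ onto the one generated by $({\delta'}^{(2)})^{*}$; it therefore descends to a Lie algebra isomorphism $\phi\: : \:\mathfrak u_{1}\xrightarrow{\ \sim\ }\mathfrak u_{2}$. Both sides are the Malcev Lie algebra of $\pi_{1}(M/G,\overline p)$, and $\phi$ is precisely the comparison between the two identifications.

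Next I would transport the comparison to the level of connection forms. Using $\phi$, replace $\alpha_{1}=r_{*}(\overline C_{1})$ by $\phi_{*}\alpha_{1}\in A_{DR}^{1}(M)\widehat{\otimes}\mathfrak u_{2}$, a flat connection with the same fiber as $\alpha_{2}$. The homotopy between $g^{(1)}_{\bullet}$ and $g^{(2)}_{\bullet}\circ f_{\bullet}$ lives in the degree-zero convolution Lie algebra from the construction of Theorem~\ref{Final1}, where it integrates to a gauge equivalence between the Maurer--Cartan elements $\overline C_{1}$ and $f^{*}\overline C_{2}$. Applying $r_{*}$ pushes this to a degree-zero element $X\in A_{DR}^{0}(M)\widehat{\otimes}\mathfrak u_{2}$ whose gauge action sends $\phi_{*}\alpha_{1}$ to $\alpha_{2}$. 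Setting $T=\exp(\operatorname{ad}_{X})\circ\phi$ then gives $T(d-\alpha_{1})T^{-1}=d-\alpha_{2}$. Finally, since the convolution algebra is assembled from the $G$-equivariant total complex $\operatorname{Tot}_{N}(A_{DR}(M_{\bullet}G))$, both $\phi$ and $X$ are automatically compatible with the fiberwise $G$-actions, so $T$ descends to a bundle isomorphism $E_{1}\to E_{2}$ over $M/G$; this proves (1).

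For (2) and (3) I would re-run the same argument inside the subcomplex of holomorphic forms (respectively holomorphic forms with logarithmic poles along $\mathcal D$). The crucial point is that the homotopy producing $X$ can be kept inside this subcomplex, because the homotopy operator comparing the two minimal models preserves holomorphicity and the logarithmic filtration; then $T$ is holomorphic and, in case (3), extends across $\mathcal D$ to a holomorphic isomorphism of bundles on $N/G$. The hard part will be exactly this last compatibility: keeping $X$ simultaneously $G$-equivariant \emph{and} holomorphic with controlled poles, so that $\exp(\operatorname{ad}_{X})$ both descends to the quotient and extends over the divisor. This is where one genuinely needs a holomorphic (logarithmic) $1$-model for $\mathcal F(\operatorname{Tot}_{N}(A_{DR}(M_{\bullet}G)),m_{\bullet})$ rather than merely for $A_{DR}(M/G)$. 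The concluding assertion follows by applying (1) to the distinguished $1$-minimal model attached, via Theorem~\ref{Final1}, to a Hodge-type decomposition of $A\subset A_{DR}(M/G)$ with trivial $G$-action: this is exactly Chen's connection of Theorem~\ref{Chenthm}, so every $d-\alpha_{i}$ is smoothly isomorphic to it.
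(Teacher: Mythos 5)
Your plan for part (1) is essentially sound and close in spirit to the paper's argument, with one organizational difference: the paper does not compare the two minimal models directly, but routes both through Chen's good homological pair of Remark \ref{Chenhomologicalpair} (via Theorem \ref{wehaveabundle}), producing isomorphisms $(L')^{*}\colon\mathfrak u'\to\mathfrak u_{1}$ and $L^{*}\colon\mathfrak u'\to\mathfrak u_{2}$ and gauging each connection to the common $G$-invariant connection $d-\pi C'$ on $M\times\mathfrak u'$. This detour is not cosmetic: it is what makes the descent to $M/G$ work. The factors of automorphy are $F^{i}_{g}(p)=e^{-h_{i}(gp)}e^{h_{i}(p)}$, and the composite gauge intertwines $F^{1}$ and $F^{2}$ precisely because both collapse to the trivial factor of automorphy of the invariant middle connection (Proposition \ref{fiber}). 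Your claim that $\phi$ and $X$ are ``automatically compatible with the fiberwise $G$-actions'' because the convolution algebra is built from $\operatorname{Tot}_{N}(A_{DR}(M_{\bullet}G))$ is too quick: after applying $r_{*}$ everything lives on $M$, the connection forms are not $G$-invariant, and the required identity $e^{X(gp)}\,\phi_{*}F^{1}_{g}(p)=F^{2}_{g}(p)\,e^{X(p)}$ has to be verified; it follows from the coboundary form of the $F^{i}$, which is exactly what the paper's detour supplies.

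The genuine gap is in parts (2) and (3). You propose to keep the homotopy between the two $1$-minimal models ``inside the subcomplex of holomorphic forms,'' asserting that the homotopy operator preserves holomorphicity and the logarithmic filtration. There is no such statement available: homotopies between $C_{\infty}$-morphisms are produced by Proposition \ref{existence of a homotopical inverse} inside the full smooth algebra $\Omega(1)\otimes\operatorname{Tot}_{N}(A_{DR}(M_{\bullet}G))$, and nothing forces them to take holomorphic values even when the two endpoints do. The paper's mechanism is different and a posteriori: it only uses that the two connection \emph{forms} are holomorphic (with logarithmic singularities), and then shows in the second part of Proposition \ref{fiber} that any smooth gauge $h\in A^{0}_{DR}(M)\widehat\otimes\mathfrak u$ between two holomorphic Maurer--Cartan elements can be replaced by a holomorphic one, by lifting to $\widehat{\mathbb L}(V)$, observing $\overline\partial\underline k\in\mathcal J$, and correcting degree by degree along the filtration; the extension over $\mathcal D$ in case (3) then comes from smoothness of $h$ on $N$ (Proposition \ref{gaugeuniqueness}) together with Cauchy's integral formula in several variables. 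Without this regularization step your argument does not establish holomorphy of $T$, so (2) and (3) remain unproved as written.
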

Theorem \ref{Final1} and \ref{Final2} are Theorem \ref{wehaveabundle} and Theorem \ref{wehaveabundleuniq}. In a forthcoming paper, we construct two holomorphic $1$-minimal models and we show that the KZB connection on the punctured torus and on the configuration space of points of the punctured torus presented in \cite{Damien} can be constructed explicitly using the homotopy transfer theorem and Theorem \ref{Final1} (see \cite{Sibilia1}). The methods can be used to construct holomorphic flat connection on punctured surfaces of arbitrary genera (compare with \cite{Enriquezconnection}). We mainly work with smooth complex manifolds but all the result contained here work for real manifolds as well.

\subsection*{Plan of the paper.}
In Section \ref{first} we discuss $1-C_{\infty}$-algebras and $L_{\infty}$-algebras and we construct the convolution $L_{\infty}$-algebra. This section is purely algebraic. In Section \ref{sectGetztler}, we recall a natural $C_{\infty}$-structure on the total complex of a cosimplicial commutative algebra due to Getzler and Cheng (\cite{Getz}). It gives a natural $C_{\infty}$-structure on the differential forms on a simplicial manifold and we analyze some properties of such a $C_{\infty}$-structure. In Section \ref{third}, given a simplicial manifold $M_{\bullet}$, we consider the convolution $L_{\infty}$-algebra constructed using such a $C_{\infty}$-structure. Certain Maurer-Cartan elements of this $L_{\infty}$-algebra corresponds to flat connections on a trivial bundle on $M_{0}$. In Section \ref{newadded}, we restrict our attention on action groupoids and we give a proof of Theorem \ref{Final1}, \ref{Final2}.

\begin{Ack}
This article is based on the first part of my PhD project. I wish to thank my two advisors Damien Calaque and Giovanni Felder for their guidance and constant support and Benjamin Enriquez for his interesting questions about my PhD thesis. I'm very much indebted to Daniel Robert-Nicoud for long discussion about Section \ref{sectGetztler}. I thanks the SNF for providing an essential financial support through the ProDoc module PDFMP2 137153 ``Gaudin subalgebras, Moduli Spaces and Integrable Systems" I also thank the support of the ANR SAT and of the Institut Universitaire de France.
\end{Ack}

\subsection*{Notation} Let $\Bbbk$ be a field of characteristic zero.
We work in the unital monoidal tensor category of graded vector spaces $(grVect,\otimes,\Bbbk, \tau)$ where the field $\Bbbk$ is considered as a graded vector space concentrated in degree $0$, the twisting map is given $\tau(v\otimes w):=(-1)^{|v||w|}w\otimes v$ and the tensor product is the ordinary graded tensor product. For a graded vector space $V^{\bullet}$, $V^{i}$ is called the homogeneous component of $V$, and for $v\in V^{i}$ we define its degree via $|v|:=i$. For a vector space $W:=\oplus_{i\in I} W_{i}$ we denote by ${pro}_{W_{i}}\: : \: W\to W_{i}$ the projection. For a graded vector space $V^{\bullet}$ we denote by $V[n]$ the $n$-shifted graded vector space, where $\left( V[n]\right) ^{i}=V^{n+i}$. For example $\Bbbk[n]$ is a graded vector space concentrated in degree $-n$ (its $-n$ homogeneous component is equal to $\Bbbk$, the other homogeneous component are all equal to zero). A (homogeneous) morphism of graded vector spaces $f\: : \: V^{\bullet}\to W^{\bullet}$ of degree $|f|:=r$ is a linear map such that $f(V^{i})\subseteq V^{i+r}$. Given two graded vector spaces $V$, $W$, then the set of morphisms of degree $n$ is denoted  $\Hom^{n}_{gVect}\left(V,W \right)$. More generally the set of maps between $V$ and $W$ is again a graded vector space $\Hom^{\bullet}_{gVect}\left(V,W \right)$ for which the $i$-homogeneous elements are those of degree $i$. 
The tensor product of homogeneous morphisms is defined through the \emph{Koszul convention}: given two morphisms of graded vector spaces $f\: : \: V^{\bullet}\to W^{\bullet}$ and $g\: : \: {V'}^{\bullet}\to {W'}^{\bullet}$ then its tensor product $f\otimes g\: : \: \left( V\otimes W\right)^{\bullet}\to \left( V'\otimes W'\right)^{\bullet}$ on the homogeneous elements is given by 
\[
\left( f\otimes g\right) (v\otimes w ):=(-1)^{|g||v|}(f(v)\otimes g(w )).
\]
We denote by $s\: : \: V\to V[1]$, $s^{-1}\: : \: V[1]\to V$  the shifting morphisms that send $V^{n}$ to $V[1]^{n-1}=\Bbbk \otimes V^{n}=V^{n}$ resp. $V[1]^{n}=\Bbbk \otimes V^{n+1}=V^{n+1}$ to $V^{n+1}$. Those maps can be extended to a map $s^{n}\: : \: V\to V[n]$, (the identity map shifted by $n$).
Note that $s^{n}\in \Hom^{-n}\left(V, V[n] \right)$. A graded vector space is said to be of finite type if each homogeneous component is a finite vector space. A graded vector space $V^{\bullet}$ is said to be bounded below at $k$ if there is a $k$ such that $V^{l}=0$ for $l<k$. Analogously it is said to be bounded above at $k$ if there is a $k$ such that $V^{l}=0$ for $l>k$. For a non-negatively graded vector space we define the positively graded vector space
$W^{\bullet}_{+}$ as
\[
W_{+}^{0}:=0,\quad W_{+}^{i}:=W^{i}\text{ if }i\neq0
\]
Let $(V,d_{V})$ be a differential graded vector space, then $V^{\otimes n}$ is again a differential graded vector space with differential
\[
d_{V^{\otimes n}}(v_{1}\otimes\cdots\otimes  v_{n}):=\pm\sum_{i=1}^{n}v_{1}\otimes\cdots\otimes  d_{V}v_{i}\cdots\otimes  v_{n},
\]
where the signs follow from the Koszul signs rule.
Let $(V,d_{V})$, $(W,d_{W})$ be differential graded vector spaces, then $\Hom^{\bullet}_{gVect}\left(V,W \right)$ is a differential graded vector space with differential
\[
\partial f:= d_{W}f-(-1)^{|f|}fd_{v}.
\]

\section{$A_{\infty}$, $C_{\infty}$-structures and homological pairs}\label{first}
In this section we construct a $L_{\infty}$-algebra (called convolution $L_{\infty}$-algebra) and we study their Maurer-Cartan elements. 
\subsection{$A_{\infty}$, $C_{\infty}$, $L_{\infty}$-structures}\label{Intro A infty}
We introduce $A_{\infty}$, $C_{\infty}$, $L_{\infty}$-structures. Our reference is \cite{lodayVallette}. For a graded vector space $V$, let ${T^{c}}\left(V[1]\right)$ be the (graded) tensor coalgebra on shift of  $V$. Let $NTS(V)\subset {T^{c}}\left(V[1]\right)$ be the subspace of non-trivial shuffles, i.e the vector space generated by $\mu'(a,b)$ such that $\mu'$ is the graded shuffle product and $a,b\notin \Bbbk\subset {T^{c}}\left(V[1]\right)$.
\begin{defi}\label{defAinfty}
Let $V^{\bullet}$ be a graded vector space. An \emph{$A_{\infty}$-algebra structure} on $V^{\bullet}$ is a coderivation $\delta\: : \: {T^{c}}\left(V[1]\right)\to {T^{c}}\left(V[1]\right)$ of degree $+1$ such that $\delta ^{2}=0$. A \emph{$C_{\infty}$-algebra structure} on $V^{\bullet}$ is an $A_{\infty}$-structure such that $\delta(NTS(V[1]))=0$.
\end{defi}
Each coderivation is uniquely determined by the maps of degree $1$
\[
\begin{tikzcd}
\delta_{n}\: : \:A[1]^{\otimes n}\arrow[hook]{r}&{T^{c}}\left(A[1]\right)\arrow{r}{\delta}& 
 {T^{c}}\left(A[1]\right)\arrow{r}{{pro}_{A[1]}} & A[1].\end{tikzcd}
\]
For $n>0$ we define maps $m_{n}\: : \: A^{\otimes n}\to A$ of degree $2-n$ via
\[
\begin{tikzcd}
A^{\otimes n}\arrow{r}{\left( s\right)^ {\otimes n}}& A[1]^{\otimes n}\arrow{r}{\delta_{n}}&A[1]\arrow{r}{s^{-1}}&A.
\end{tikzcd}
\]
The condition $\delta^{2}=0$ for the maps $m_{n}\: : \: A^{\otimes n}\to A$ implies $m_{1}^{2}=0$ and the relations
\begin{equation}\label{rel}
\sum_{\substack{p+q+r=n\\ k=p+1+r\\k,q>1}}(-1)^{p+qr}m_{k}\circ \left(\mathrm{Id}^{\otimes p}\otimes m_{q}\otimes \mathrm{Id}^{\otimes r} \right)=\partial m_{n}, \quad n>0.
\end{equation}
Conversely, starting from maps $m_{\bullet}:=\left\lbrace m_{n} \right\rbrace _{n\geq0}$ that satisfy the above relations, we get a sequence of maps $\delta_{n}\: : \:A[1]^{\otimes n}\to A[1]$, for $n\geq0$ defined via
\[
\begin{tikzcd}
\delta_{n}\: : \:\left(  A[1]\right) ^{\otimes n}\arrow{r}{\left( s^{-1}\right)^ {\otimes n}}& A^{\otimes n}\arrow{r}{m_{n}}& A\arrow{r}{s}& A[1].
\end{tikzcd}
\]
In particular we have $m_{1}^2=0$. These maps can be viewed as the restriction of a coderivation $\delta$, which is a differential by \eqref{rel}. We denote an $A_{\infty}$- ( $C_{\infty}$- ) algebra $\left( {T^{c}}\left(V[1]\right),\delta\right)$ by $\left(A^{\bullet},m_{\bullet}  \right) $  as well. 
\begin{defi}
	An $A_{\infty}$-algebra $\left(A,m_{\bullet}  \right) $ is said to be \emph{unital} if there exists a $m_{1}$-closed element $1$ of degree zero such that $m_{2}(1,a)=1=m_{2}(a,1)$ and $m_{k}(a_{1}\dots,1,\dots, a_{k})=0$ for $k\geq 3$.  $\left(A,m_{\bullet}  \right) $ is said to be \emph{connected} if $A^{0}=\Bbbk1$ and $A$ is a non-negatively graded vector space.
	Let $A^{\bullet}$, $B^{\bullet}$ be two $A_{\infty}$-algebras. A \emph{morphism between $A_{\infty}$-algebras} is a morphism of differential graded coalgebras
\[
F\: : \: \left({T^{c}}\left(A[1]\right),{\Delta}, \delta_{A}\right) \to \left( {T^{c}}\left(B[1]\right),{\Delta}, \delta_{B}\right).
\]
Each morphism is completely determined by the degree zero maps, i.e.
\[
\begin{tikzcd}
F_{n}\: : \:A[1]^{\otimes n}\arrow[hook]{r}& {T^{c}}\left(A[1]\right)\arrow{r}{F}& 
{T^{c}}\left(B[1]\right)\arrow{r}{{pro}_{B[1]}} & B[1],\quad n>0.
\end{tikzcd}
\]
and $F_{0}(1):=F(1)=1$. $F$ is said to be a \emph{morphism of $C_{\infty}$-algebras} if $A$ and $B$ are $C_{\infty}$-algebras and $F_{n}(NTS(V[1])\cap V[1]^{\otimes n})=0$.
\end{defi}
We denote by $f_{\bullet}:=\left\lbrace f_{n} \right\rbrace_{n\in \N}$ the family of maps of degree $1-n$ given by
\[
\begin{tikzcd}
f_{n}\: : \: A^{\otimes n}\arrow{r}{\left( s\right)^{\otimes n}}& A[1]^{\otimes n}\arrow{r r}{{pro}_{B[1]}\circ F_{n}} & & B[1]\arrow{r}{s^{-1}}& B.
\end{tikzcd}
\]
Let $m_{n}^{A}$ be the degree $2-n$ maps obtained from $\delta_{A}$, and let $m_{n}^{B}$ be the ones from $\delta_{B}$. The condition $F\circ \delta_{A}=\delta_{B}\circ F$ implies the following equations $f_{1}m_{1}^{A}=m_{1}^{B}f_{1}$ and
\begin{align}\label{tricksign}
&\nonumber \sum_{\substack{p+q+r=n\\ k=p+1+r}}(-1)^{p+qr}f_{k}(Id^{\otimes p}\otimes m_{q}^{A}\otimes Id^{\otimes r})\\
&-\sum_{k\geq2,\,i_{1}+\dots +i_{k}=n}(-1)^{s}{m}_{k}^{B}\left(f_{i_1}\otimes\dots\otimes f_{i_k} \right)= \partial f_{n},\quad n> 1,
\end{align}
where $s=\left(k-1 \right) \left(i_{1}-1 \right) +\left(k-2 \right) \left(i_{2}-1 \right) +\dots+2\left(i_{k-2}-1 \right) +\left(i_{k-1}-1 \right) $. In particular $f_{1}$ is an ordinary cochain map. A family of maps $f_{\bullet}$ satisfying condition \eqref{tricksign} induces also a morphism of $A_{\infty}$-algebras $F\: : \: \left({T^{c}}\left(A[1]\right),{\Delta}, \delta_{A}\right) \to \left( {T^{c}}\left(B[1]\right),{\Delta}, \delta_{B}\right)$.
\begin{rmk}
We will adopt the following notation. We will use small dotted letters (e.g. $f_{\bullet}$) to denote morphisms of $A_{\infty}$, $C_{\infty}$-algebras. On the other hand, we will use capital letters (e.g. $F$) to denote morphism of $A_{\infty}$, $C_{\infty}$-algebras as morphism of quasi-free coalgebras.
\end{rmk}
\begin{defi}
A morphism between $A_{\infty}$ ( $C_{\infty}$ )-algebras is a \emph{quasi-isomorphism} if the cochain map
\[
f_{1}\: : \: \left(A, m_{1}^{A}\right)\to  \left(B, m_{1}^{B}\right)
\]
is a quasi-isomorphism. A morphism between $A_{\infty}$ ( $C_{\infty}$ ) -algebras $F\: : \: {T^{c}}\left(A[1]\right)\to {T^{c}}\left(B[1]\right)$ is an isomorphism if $f_{1}$ is an isomorphism. A morphism $f_{\bullet}$ is called strict if $f_{n}=0$ for $n>1$.
\end{defi}
 We denote by $\Omega(1)$ the free commutative graded algebra generated by $1,t_{0}$, $t_{1}$ of degree zero and by $dt_{0}$, $dt_{1}$ of degree $1$ such that
\[
t_{0}+t_{1}=1,\quad dt_{0}+dt_{1}=0.
\]
We put a differential $d$ on $\Omega(1)$ by $d1=0$ and $d(t_{j}):=dt_{j}$, for $ j=0,1$ such that $\Omega(1)$ is a differential free commutative graded algebra. Equivalently, we may define $\Omega(1)$ as the commutative free graded algebra generated by $1,t$ of degree $0$ and $dt$ in degree $1$. The differential here is given by  $d(1):=0, d(t):=dt$. The two presentations are isomorphic via the map $t\mapsto t_{0}$. We denote by $i_{j}\: : \: \Omega(1)\to \Bbbk$ the dg algebra map sending $t_{j}$ to $1$ and $dt_{j}$ to $0$ for $j=0,1$.
\begin{lem}\label{tensorinfinity}
	Let $f_{\bullet}\: : \: \left( A,m_{\bullet}^{A}\right)\to \left( B,m_{\bullet}^{B}\right) $ be a morphism of $A_{\infty}$-algebras and $g\: : \: \Omega(1)\to \Omega(1)$ be a morphism of differential graded algebras. 
	\begin{enumerate}
		\item  $\left( \Omega(1)\otimes A, m_{\bullet}^{ \Omega(1)\otimes A}\right) $ is an $A_{\infty}$-algebra via
		\[
		m_{n}^{ \Omega(1)\otimes A}(p_{1}\otimes a_{1}, \dots, p_{n}\otimes a_{n}):=\pm\left(  p_{1} \cdots p_{n}\right) \otimes m_{n}^{A}( a_{1},\otimes ,a_{n}),
		\]
		where the sign $\pm$ follows by the signs rule. In particular $i_{j}\otimes Id\: : \:  \Omega(1)\otimes A\to  A $ is a well-defined strict $A_{\infty}$-morphism. 
		\item The map $(g\otimes f)_{\bullet}\: : \:  \left( \Omega(1)\otimes A,m_{\bullet}^{ \Omega(1)\otimes A}\right)\to \left( \Omega(1)\otimes B,m_{\bullet}^{ \Omega(1)\otimes B}\right) $ defined by
		\[
		(g\otimes f)_{n}(p_{1}\otimes a_{1}, \dots, p_{n}\otimes a_{n}):= \pm g\left(  p_{1} \cdots p_{n}\right) \otimes f_{n}( a_{1},\dots,a_{n}),
		\]
		where the signs $\pm$ follows by the signs rule, is a morphism of $A_{\infty}$-algebras. If $g=id$ we have $\left( i_{j}\otimes Id\right) (Id\otimes f)_{\bullet} =f_{\bullet}\left( i_{j}\otimes Id\right) $ for $j=0,1$.
	\end{enumerate}
\end{lem}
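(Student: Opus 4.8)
The plan is to exploit that $\Omega(1)$ is a \emph{commutative} differential graded algebra, so that the only structural inputs needed are the associativity and commutativity of its product together with the Leibniz rule $d(pq)=dp\,q\pm p\,dq$. I would take $m_{1}^{\Omega(1)\otimes A}$ to be the total differential of the tensor product (the de Rham differential of $\Omega(1)$ combined with $m_{1}^{A}$ through the Koszul sign), so that the displayed formula records the higher products $m_{n}^{\Omega(1)\otimes A}$ for $n\geq 2$. With this understood, the whole lemma reduces to checking the defining relations \eqref{rel} and the morphism relations \eqref{tricksign} by separating the $\Omega(1)$-factors from the $A$-factors.

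For part (1), the key observation is that in any composite $m_{k}^{\Omega(1)\otimes A}\circ(\mathrm{Id}^{\otimes p}\otimes m_{q}^{\Omega(1)\otimes A}\otimes\mathrm{Id}^{\otimes r})$ the inner product first multiplies the $\Omega(1)$-entries in positions $p+1,\dots,p+q$ into a single element of $\Omega(1)$, which the outer $m_{k}$ then multiplies with the remaining $\Omega(1)$-entries. By associativity and commutativity the resulting $\Omega(1)$-factor is always $\pm\,p_{1}\cdots p_{n}$, while the $A$-factor is precisely $m_{k}^{A}\circ(\mathrm{Id}^{\otimes p}\otimes m_{q}^{A}\otimes\mathrm{Id}^{\otimes r})$ evaluated on $a_{1}\otimes\cdots\otimes a_{n}$. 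Summing over $p+q+r=n$ thus isolates the left-hand side of the $A_{\infty}$-relation \eqref{rel} for $A$ (tensored with $p_{1}\cdots p_{n}$), which equals $p_{1}\cdots p_{n}\otimes\partial m_{n}^{A}(a_{1},\dots,a_{n})$. It then remains to match this with $\partial m_{n}^{\Omega(1)\otimes A}$: the contributions of the de Rham differential are governed by the Leibniz rule, and cancel between $m_{1}^{\Omega(1)\otimes A}\circ m_{n}^{\Omega(1)\otimes A}$ and $m_{n}^{\Omega(1)\otimes A}\circ d_{(\Omega(1)\otimes A)^{\otimes n}}$ precisely because the multiplication of $\Omega(1)$ is a chain map, leaving only the $m_{1}^{A}$-contributions. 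That $i_{j}\otimes\mathrm{Id}$ is a strict morphism is immediate, since $i_{j}$ is a dg algebra map sending $dt_{j}$ to $0$: being multiplicative it intertwines each $m_{n}^{\Omega(1)\otimes A}$ with $m_{n}^{A}$, and killing $dt_{j}$ it commutes with the differential.

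For part (2), I would verify \eqref{tricksign} for $(g\otimes f)_{\bullet}$ by the same bookkeeping: since $g$ is a morphism of differential graded algebras it is compatible with both the product and the differential of $\Omega(1)$, so in every composite appearing in \eqref{tricksign} the $\Omega(1)$-entries collapse to $\pm\,g(p_{1}\cdots p_{n})$ and the $A$-entries reproduce the corresponding composite for $f_{\bullet}$. Hence \eqref{tricksign} for $(g\otimes f)_{\bullet}$ follows term by term from \eqref{tricksign} for $f_{\bullet}$. The final identity $(i_{j}\otimes\mathrm{Id})(\mathrm{Id}\otimes f)_{\bullet}=f_{\bullet}(i_{j}\otimes\mathrm{Id})$ is then a naturality statement: applying $i_{j}\otimes\mathrm{Id}$ to $(\mathrm{Id}\otimes f)_{n}$ produces $\pm\,i_{j}(p_{1})\cdots i_{j}(p_{n})\otimes f_{n}(a_{1},\dots,a_{n})$ because $i_{j}$ is multiplicative, and this coincides with $f_{n}$ evaluated on the rescaled inputs $i_{j}(p_{\ell})\,a_{\ell}$ by multilinearity.

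The only genuine difficulty is the sign bookkeeping imposed by the Koszul convention. One must check that the signs created when the $\Omega(1)$-factors are moved past the $A$-factors --- both inside the inner product and when re-multiplying with the outer product --- reassemble into exactly the signs $(-1)^{p+qr}$ of \eqref{rel} and the sign $s$ of \eqref{tricksign}; here commutativity of $\Omega(1)$ is what permits the free reordering of the $\Omega(1)$-factors into $p_{1}\cdots p_{n}$ without extra discrepancies. I expect this to be the principal, though routine, obstacle, and would dispatch it by carrying the shifts $s,s^{-1}$ explicitly and comparing signs degree by degree.
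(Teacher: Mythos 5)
Your proposal is correct and amounts to carrying out exactly the computation that the paper dismisses as a ``straightforward calculation'': separating the $\Omega(1)$-factors from the $A$-factors, using commutativity, associativity and the Leibniz rule in $\Omega(1)$ to reduce relations \eqref{rel} and \eqref{tricksign} to the corresponding relations in $A$, and using that $i_{j}$ and $g$ are dg algebra maps for the strictness and naturality claims. Your reading of $m_{1}^{\Omega(1)\otimes A}$ as the total differential (with the displayed formula governing only $n\geq 2$) is also the intended one, so there is nothing to add.
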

\begin{proof}
	Straightforward calculation.
\end{proof}

\begin{defi}\label{homotopy}
	Let $f_{\bullet}, g_{\bullet}\: : \: A\to B$ be $A_{\infty}$-morphisms (resp. $C_{\infty}$-morphism). A \emph{homotopy} between $f_{\bullet}$ and $g_{\bullet}$ is an $A_{\infty} $ (resp. $C_{\infty}$) map $H_{\bullet}\: : \: A \to  \Omega(1)\otimes B$ such that
	\[
	\left( i_{0}\otimes Id\right)H_{\bullet}=f_{\bullet}, \quad \left( i_{1}\otimes Id\right)H_{\bullet}=g_{\bullet},
	\]
	two morphisms are homotopy equivalent if there exists a finite sequence of homotopy maps connecting them.
\end{defi}
By Lemma \ref{tensorinfinity} we have the following. Let $f_{\bullet}, g_{\bullet}\: : \: A\to B$ be $A_{\infty}$-morphisms. Let $H_{\bullet}\: : \: A \to  \Omega(1)\otimes B$ be a homotopy between $f_{\bullet}$ and $g_{\bullet}$, let $p^{1}_{\bullet}\: : \: A_{1}\to A$ and $p^{2}_{\bullet}\: : \: B\to A_{2}$ be $A_{\infty}$-maps. Then $H_{\bullet}(Id\otimes p^2)_{\bullet}$ is a homotopy between $f_{\bullet}p^{2}_{\bullet}$ and $g_{\bullet}p^{2}_{\bullet}$ and $(Id\otimes p^1)_{\bullet}H_{\bullet}$ is a homotopy between $p^{1}_{\bullet}f_{\bullet}$ and $p^{1}_{\bullet}g_{\bullet}$.\\
\begin{prop}\label{existence of a homotopical inverse}
Let $P_{\infty}$ be $A_{\infty}$ or $C_{\infty}$.  Then any $P_{\infty}$-quasi-isomorphism $f_{\bullet}\: : \: A\to B$ has a $P_{\infty}$-homotopical inverse, i.e. there exists a $P_{\infty}$-map $g_{\bullet}\: : \: B\to A$ such that $g_{\bullet}\circ f_{\bullet}\cong Id_{A}$ and $f_{\bullet}\circ g_{\bullet}\cong Id_{B}$.
\end{prop}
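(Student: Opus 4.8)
The plan is to reduce the statement to two facts: that any $P_{\infty}$-quasi-isomorphism between \emph{minimal} algebras (those with $m_{1}=0$) is automatically a $P_{\infty}$-isomorphism, and that a $P_{\infty}$-morphism whose linear part $f_{1}$ is an isomorphism of graded vector spaces admits a two-sided $P_{\infty}$-inverse. The bridge between the general case and the minimal one is the homotopy transfer theorem, and the passage back to $A$ and $B$ is handled by the homotopy-invariance of composition established right after Lemma \ref{tensorinfinity}.

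First I would invoke the homotopy transfer theorem (\cite{lodayVallette}, with the explicit formulas of \cite{Kadesh,kontsoibel,Markl,Prelie}), valid both for $P_{\infty}=A_{\infty}$ and for $P_{\infty}=C_{\infty}$ since $\Bbbk$ has characteristic zero. For each of $A$ and $B$ one splits the underlying complex into its cohomology and a contractible complement (possible over a field) and transfers the structure to minimal $P_{\infty}$-algebras $\left(H_{A},\mu^{A}\right)$, $\left(H_{B},\mu^{B}\right)$ on $H\!\left(A,m_{1}^{A}\right)$, $H\!\left(B,m_{1}^{B}\right)$. The transfer produces $P_{\infty}$-quasi-isomorphisms $p_{A}\colon A\to H_{A}$ and $i_{A}\colon H_{A}\to A$ with $p_{A}i_{A}=\mathrm{Id}_{H_{A}}$ strictly and $i_{A}p_{A}\cong \mathrm{Id}_{A}$ in the sense of Definition \ref{homotopy}, and likewise $p_{B},i_{B}$ for $B$. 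In the $C_{\infty}$ case the transferred structures, the maps $i,p$, and the homotopy $ip\cong \mathrm{Id}$ all vanish on non-trivial shuffles.

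Next, I would consider the composite $P_{\infty}$-morphism $F_{\bullet}:=p_{B}\,f_{\bullet}\,i_{A}\colon H_{A}\to H_{B}$. Its linear part $(p_{B})_{1}f_{1}(i_{A})_{1}$ is a quasi-isomorphism between complexes with zero differential, hence an isomorphism of graded vector spaces, so $F_{\bullet}$ is a $P_{\infty}$-isomorphism. I would then build its inverse $G_{\bullet}\colon H_{B}\to H_{A}$ by recursion on $n$: the relation $(G F)_{1}=\mathrm{Id}$ fixes $G_{1}=F_{1}^{-1}$, and for $n>1$ the equation \eqref{tricksign} expresses $(GF)_{n}$ as $G_{1}F_{n}+G_{n}F_{1}+(\text{terms in }G_{k},F_{k}\text{ with }k<n)$, which can be solved for $G_{n}$ precisely because $F_{1}$ is invertible; this yields $G_{\bullet}F_{\bullet}=\mathrm{Id}_{H_{A}}$ and, symmetrically, $F_{\bullet}G_{\bullet}=\mathrm{Id}_{H_{B}}$. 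In the $C_{\infty}$ setting the recursion stays within $C_{\infty}$ since each equation respects vanishing on non-trivial shuffles.

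Finally I would set $g_{\bullet}:=i_{A}\,G_{\bullet}\,p_{B}\colon B\to A$ and verify it is a homotopy inverse, using the homotopy-invariance of composition from Lemma \ref{tensorinfinity} together with $p_{A}i_{A}=\mathrm{Id}$, $p_{B}i_{B}=\mathrm{Id}$, $i_{A}p_{A}\cong \mathrm{Id}_{A}$, $i_{B}p_{B}\cong \mathrm{Id}_{B}$, and $GF=FG=\mathrm{Id}$. Indeed $g_{\bullet}f_{\bullet}=i_{A}G\,(p_{B}f_{\bullet}i_{A})\,p_{A}=i_{A}GF\,p_{A}=i_{A}p_{A}\cong \mathrm{Id}_{A}$, and $i_{B}p_{B}\,(f_{\bullet}g_{\bullet})=i_{B}\,(p_{B}f_{\bullet}i_{A})\,G\,p_{B}=i_{B}FG\,p_{B}=i_{B}p_{B}\cong \mathrm{Id}_{B}$; since also $i_{B}p_{B}\,(f_{\bullet}g_{\bullet})\cong f_{\bullet}g_{\bullet}$, transitivity of the homotopy relation gives $f_{\bullet}g_{\bullet}\cong \mathrm{Id}_{B}$. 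The step requiring the most care is the $C_{\infty}$-compatibility throughout, namely that the transfer data and the recursive inversion of $F_{\bullet}$ all preserve vanishing on non-trivial shuffles; the $A_{\infty}$ sign bookkeeping in the recursion is routine given \eqref{tricksign}.
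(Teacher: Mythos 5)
Your argument is correct, but it is a genuinely different route from the paper's. The paper disposes of this proposition in one line by citing Theorem 3.6 of \cite{BrunoHomotopy}: in Vallette's model structure the $P_{\infty}$-algebras are fibrant--cofibrant and $\Omega(1)\otimes(-)$ is a functorial cylinder, so every quasi-isomorphism is a homotopy equivalence by abstract model-category reasoning. You instead give the classical constructive proof: transfer to minimal models $H_{A}$, $H_{B}$ via the homotopy transfer theorem, observe that $F_{\bullet}=p_{B}f_{\bullet}i_{A}$ has invertible linear part and hence is an $\infty$-isomorphism, invert it by the standard recursion on \eqref{tricksign}, and conjugate back by the transfer data. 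This buys an explicit formula for the homotopy inverse (useful elsewhere in this paper, where quasi-inverses of minimal models are actually computed), at the price of checking more things by hand: that the transferred structure, the maps $i,p$, the homotopy $ip\cong\mathrm{Id}$, and the recursive inverse all vanish on non-trivial shuffles in the $C_{\infty}$ case (true in characteristic zero, but it is the step you rightly flag), and that the homotopy $i_{A}p_{A}\cong\mathrm{Id}_{A}$ furnished by the transfer theorem is a homotopy in the sense of Definition \ref{homotopy}, i.e.\ realized through the cylinder $\Omega(1)\otimes(-)$ rather than merely as an operadic degree $-1$ homotopy --- these notions agree over a field of characteristic zero, but that identification is itself a (standard) lemma you are implicitly using. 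The paper's citation avoids all of this bookkeeping but is non-constructive.
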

\begin{proof}
This is Theorem 3.6 of \cite{BrunoHomotopy}. The $P_{\infty}$ objects enjoy this property because they are fibrant-cofibrant objects in a certain model category structure where $\Omega(1)\otimes \left(-\right) $ is a functorial cylinder object.
\end{proof}

\begin{defi}
	Let $P_{\infty}$ be $A_{\infty}$ or $C_{\infty}$. We denote by $P_{\infty}-\operatorname{ALG}$ the category of bounded below $P_{\infty}$-algebras. We denote by  $ P_{\infty}-\operatorname{ALG}_{\geq0}$ and $ P_{\infty}-\operatorname{ALG}_{>0}$ the two full subcategories whose objects are non-negatively graded $P_{\infty}$-algebras and positively graded $P_{\infty}$-algebras respectively.
\end{defi}
For a bounded below $P_{\infty}$-algebra $\left(A, m_{\bullet}^{A} \right)$ we denote by $\left(\overline{A}, m_{\bullet}^{A} \right)$ its positively graded $P_{\infty}$-subalgebra where $\overline{A}:=\oplus_{i>0} A^{i}$.
Let $A=\oplus_{i\leq 2} A^{i}$ be a bounded below graded vector space. A \emph{$1$-truncated $A_{\infty}$-algebra} or \emph{$1-A_{\infty}$-algebra} consists in a sequence of coderivations 
\[
\begin{tikzcd}
\dots \arrow{r}{\delta^{A}} &\left( T^{c}\left(A[1] \right)\right)^{-2} \arrow{r}{\delta^{A}} & \left( T^{c}\left(A[1] \right)\right)^{-1} \arrow{r}{\delta^{A}} &  \left( T^{c}\left(A[1] \right)\right)^{0}\arrow{r}{\delta^{A}} & \left( T^{c}\left(A[1] \right)\right)^{1}
\end{tikzcd}
\]
such that $\left(\delta^{A} \right)^{2}=0$. Equivalently, they corresponds to a sequences of degree $n-2$ maps $m_{n}\: :\: \oplus_{i\leq n}\left(  A^{\otimes n}\right)^{i}\to A$ that satisfies \eqref{rel} on $\oplus_{i\leq n-1}\left(  A^{\otimes n}\right)^{i}$.  The structure is said to be $1-C_{\infty}$ if $\delta^{A}$ vanishes on non-trivial shuffles of degree smaller than $1$. A \emph{$1$-$A_{\infty}$-algebra morphism} consists in a graded coalgebra maps $$F\: : \: \oplus_{i\leq 1}\left( T^{c}\left(A[1] \right)\right)^{i}  \to  \oplus_{i\leq 1}\left( T^{c}\left(B[1] \right)\right)^{i} $$ such that $\left(F\otimes F \right) \Delta=\Delta F $ and $F\delta^{A}=\delta^{B} F$. Note that $F$ corresponds to a family degree $n-1$ maps $f_{n}\: :\: \oplus_{i\leq n+1}\left(  A^{\otimes n}\right)^{i}\to A\to B$ that satisfies \eqref{tricksign} on $\oplus_{i\leq n}\left(  A^{\otimes n}\right)^{i}$ for any $n>0$. We denote the $1-A_{\infty}$-morphism $F$ by ${f}_{\bullet}\: : \: \left(A, m_{\bullet}^{A} \right)\to \left(B, m_{\bullet}^{B} \right) $. If the two algebras are $1-C_{\infty}$ then $F$ is said to be $1-C_{\infty}$ if it vanishes on non-trivial shuffles of total degree smaller than $2$. 
\begin{defi}
	Let $P_{\infty}$ be $A_{\infty}$ or $C_{\infty}$. We denote by $1-P_{\infty}-\operatorname{ALG}$ the category of $1-P_{\infty}$-algebras and by $ P_{\infty}-\operatorname{ALG}_{\geq0}$ and $ P_{\infty}-\operatorname{ALG}_{>0}$ the two full subcategories whose objects are non-negatively graded $P_{\infty}$-algebras and positively graded $P_{\infty}$-algebras respectively.
\end{defi}
There is an obvious forgetful functor $\mathcal{F}\: : \:  P_{\infty}-\operatorname{ALG}\to  1-P_{\infty}-\operatorname{ALG}$ which is left adjoint to the functor $\mathcal{E}\: : \:  1-P_{\infty}-\operatorname{ALG}\to  P_{\infty}-\operatorname{ALG}$ obtained by extending the $1-P_{\infty}$ into a $P_{\infty}$-structure by $0$. We define the cohomology of a $1-P_{\infty}$ algebra $\left(A, m_{\bullet} \right)$ as the cohomology of the complex
\[
\begin{tikzcd}
\dots \arrow{r}{m_{1}} & A^{1}\arrow{r}{m_{1}} & A'\arrow{r}{0}& 0.
\end{tikzcd}
\]
where $A'\subset A^{2}$ is the subspace generated by $m_{n}(a_{1}, \dots a_{n})$ such that all the $a_{i}$ are $m_{1}$-closed. The cohomology of a $1-P_{\infty}$-algebra is again a a $1-P_{\infty}$-algebra and a $1-P_{\infty}$-morphism induces a $1-P_{\infty}$-morphism in cohomology. A $1-P_{\infty}$-morphism ${f}_{\bullet}\: : \: \left(A, m_{\bullet}^{A} \right)\to \left(B, m_{\bullet}^{B} \right) $ is an $1$-isomorphism if $f_{1}\: : \: A^{i}\to A^{i}$ is an isomorphism for $i\leq 1$ and $f_{1}\: : \: A^{2}\to A^{2}$ is an injection.  ${f}_{\bullet}$ is a $1$-quasi-isomorphism if ${f}_{\bullet}$ induces a $1$-isomorphism in cohomology. Let $f_{\bullet}, g_{\bullet}\: : \: A\to B$ be $1$-$P_{\infty}$-morphisms. A \emph{homotopy} between $f_{\bullet}$ and $g_{\bullet}$ is a $1$-$P_{\infty} $ map $H_{\bullet}\: : \: A \to  \Omega(1)\otimes B$ such that
\[
\left( i_{0}\otimes Id\right) H_{\bullet}=f_{\bullet}, \quad \left( i_{1}\otimes Id\right)_{\bullet}H_{\bullet}=g_{\bullet}.
\]
We conclude this subsection by defining $L_{\infty}$-structures. Let $S(V)$ be the symmetric algebra, then we denote by $S^{c}(V)$ the graded coalgebra given by the graded vector space $S(V)$ and the concatenation coproduct $\Delta$, i.e. $S^{c}(V)$ is the cocommutative cofree conilpotent coalgebra generated by $V$.
\begin{defi}
An \emph{$L_{\infty}$-structure} on a graded vector space $V$ is a coderivation $\delta\: : \: S^{c}(V[1])\to S^{c}(V[1])$ of degree $+1$ such that $\delta^{2}=0$.
\end{defi}
As for the $A_{\infty}$ case, the above definition is equivalent to family of maps $l_{n}\: : \: V^{\otimes n}\to V$ of degree $2-n$. These maps are skew symmetric and satisfy the relations
\[
\sum_{p+q=n+1,\,p,q>1 }\sum_{\sigma^{-1}\in Sh^{-1}(p,q)}\operatorname{sgn}(\sigma)(-1)^{(p-1)q}l_{p}\left(l_{q}\otimes Id^{\otimes \left( p-1\right) } \right)^{\sigma}=\partial l_{n}, 
\]
for $n\geq 1$.
The morphisms, quasi-isomorphisms and isomorphisms between $L_{\infty}$-algebras are defined in the same way, as for the $A_{\infty}$ case. \\
Given an associative algebra $A $, then it carries a Lie algebra structure where the bracket is obtained by anti-symmetrizing the product. The same is true between $A_{\infty}$ and $L_{\infty}$-algebras.
\begin{thm}\label{antilie}
Let $(V, m_{\bullet})$ be an $A_{\infty}$-algebra. The anti-symmetrized map $l_{n}\: : \: V^{\otimes}\to V$, given by
\[
l_{n}:=\sum_{\sigma\in S_{n}}\operatorname{sgn}(\sigma)m_{n}^{\sigma},
\]
define an $L_{\infty}$-algebra structure on $V$.
\end{thm}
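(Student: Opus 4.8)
The plan is to avoid verifying the $L_{\infty}$-relations by hand and instead to obtain the statement structurally, by transferring the $A_{\infty}$-codifferential along a symmetrization map of coalgebras. Recall that the $A_{\infty}$-structure $m_{\bullet}$ is encoded by a degree $+1$ codifferential $\delta_{A}$ on $T^{c}(V[1])$, while an $L_{\infty}$-structure on $V$ is, by definition, a degree $+1$ codifferential $\delta_{L}$ on $S^{c}(V[1])$. So it suffices to produce such a $\delta_{L}$ whose cogenerator components, after desuspension, are the maps $l_{n}$, and to check $\delta_{L}^{2}=0$.

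The key tool is the symmetrization map $\operatorname{sh}\colon S^{c}(V[1])\to T^{c}(V[1])$ sending a symmetric word $x_{1}\cdots x_{n}$ to $\sum_{\sigma\in S_{n}}\varepsilon(\sigma)\,x_{\sigma(1)}\otimes\cdots\otimes x_{\sigma(n)}$, where $\varepsilon(\sigma)$ is the Koszul sign of the permutation acting on $V[1]$. First I would check, by a direct computation on the coproduct, that $\operatorname{sh}$ is an injective morphism of graded coalgebras (matching the symmetrized deconcatenation on the source with the deconcatenation on the target); injectivity uses that we work in characteristic zero.

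Now I would define $\delta_{L}$ as the unique coderivation of $S^{c}(V[1])$ whose projection to the cogenerators $V[1]$ equals $\operatorname{pr}_{V[1]}\circ\delta_{A}\circ\operatorname{sh}$. The point is that both $\delta_{A}\circ\operatorname{sh}$ and $\operatorname{sh}\circ\delta_{L}$ are $(\operatorname{sh},\operatorname{sh})$-coderivations from $S^{c}(V[1])$ to $T^{c}(V[1])$, i.e. maps $\phi$ with $\Delta\phi=(\operatorname{sh}\otimes\phi+\phi\otimes\operatorname{sh})\Delta$; this follows at once from $\delta_{A}$ and $\delta_{L}$ being coderivations and $\operatorname{sh}$ being a coalgebra map. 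Since $T^{c}(V[1])$ is cofree, an $(\operatorname{sh},\operatorname{sh})$-coderivation into it is determined by its composition with $\operatorname{pr}_{V[1]}$; as $\operatorname{pr}_{V[1]}\circ\operatorname{sh}=\operatorname{pr}_{V[1]}$ on cogenerators, the two coderivations have the same projection by construction, whence $\delta_{A}\circ\operatorname{sh}=\operatorname{sh}\circ\delta_{L}$.

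From this intertwining the conclusion is immediate: $\operatorname{sh}\circ\delta_{L}^{2}=\delta_{A}\circ\operatorname{sh}\circ\delta_{L}=\delta_{A}^{2}\circ\operatorname{sh}=0$, and injectivity of $\operatorname{sh}$ forces $\delta_{L}^{2}=0$, giving an $L_{\infty}$-structure on $V$. The cogenerator components of $\delta_{L}$ are computed by applying $\operatorname{pr}_{V[1]}\circ\delta_{A}$ to $\operatorname{sh}(sv_{1}\cdots sv_{n})=\sum_{\sigma}\varepsilon(\sigma)\,sv_{\sigma(1)}\otimes\cdots\otimes sv_{\sigma(n)}$, which desuspends to $\sum_{\sigma\in S_{n}}\operatorname{sgn}(\sigma)m_{n}^{\sigma}=l_{n}$; skew-symmetry of the $l_{n}$ is then automatic, since coderivations of $S^{c}(V[1])$ correspond to graded-symmetric maps on $V[1]$. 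I expect the only genuine work to be the sign bookkeeping, namely the décalage computation converting the Koszul signs $\varepsilon(\sigma)$ on the shifted space $V[1]$ into the factor $\operatorname{sgn}(\sigma)$ (together with the Koszul signs on $V$) appearing in the stated formula for $l_{n}$. Everything else is formal.
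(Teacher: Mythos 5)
Your proposal is correct. The paper does not in fact supply a proof of Theorem \ref{antilie} --- it is quoted as a standard fact (its source being the Lada--Markl/Loday--Vallette literature the paper cites) --- and the argument you give, intertwining the two codifferentials along the symmetrization coalgebra map $S^{c}(V[1])\to T^{c}(V[1])$ and concluding from cofreeness together with injectivity in characteristic zero, is exactly the standard structural proof; the only genuine computation is the d\'ecalage sign check you already flag, which indeed converts the Koszul signs on $V[1]$ into the factor $\operatorname{sgn}(\sigma)$ in the stated formula for $l_{n}$.
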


\begin{defi}\label{Linftydef}
	Given a $L_{\infty}$-algebra $\mathfrak{g}$ with structure maps $l_{\bullet}$.
	\begin{enumerate}
 \item An \emph{$L_{\infty}$-ideal} $I\subset \mathfrak{g}$ is a subgraded vector space such that $l_{k}(a_{1}, \dots ,a_{k})\in I$ if one of the $a_{i}$ lies in $I$ (in particular $\left( \mathfrak{g}/I,l_{\bullet}\right) $ is an $L_{\infty}$-algebra). 
		\item An $L_{\infty}$-algebra is said to be \emph{filtered} if it is equipped with a filtration $F^{\bullet}$ of $L_{\infty}$-ideals such that for $a_{i}\in F^{n_{i}}(\mathfrak{g})$, we have $l_{k}(a_{1}, \dots ,a_{k})\in F^{n_{1}+\dots +n_{k}}(\mathfrak{g})$.  
		\item A filtered $L_{\infty}$-algebra is said to be \emph{complete} if $\mathfrak{g}\cong \lim_{i}\mathfrak{g}/F^{i}(\mathfrak{g})$ as a graded vector space.
		\item A \emph{Maurer-Cartan element} in a complete $L_{\infty}$-algebra $\mathfrak{g}$ is a $\alpha\in \mathfrak{g}^{1}$ such that
					 \begin{equation*}\label{infinitymaurercartan0}
					\partial(\alpha)+\sum_{k\geq 2}^{\infty}\frac{l_{k} \left(\alpha,\dots, \alpha \right)  }{k!}=0 .
					\end{equation*}
				We denote by $MC(\mathfrak{g})$ the set of Maurer-Cartan elements\footnote{ Notice that the above sum is well-defined in $\mathfrak{g}$ since it is complete.}.
		\item Let $\mathfrak{g} $ be a complete $L_{\infty}$-algebra. Then $\Omega(1)\widehat{\otimes}\mathfrak{g}$ is again a complete $L_{\infty}$-algebra. An \emph{homotopy} between two Maurer-Cartan elements $\alpha_{0}, \alpha_{1}\in MC(\mathfrak{g})$ is a Maurer-Cartan element $\alpha(t)\in  MC(\Omega(1)\widehat{\otimes}\mathfrak{g})$ such that $\alpha(0)=\alpha_{0}$ and $\alpha(1)=\alpha_{1}$. Two Maurer-Cartan elements are said to be \emph{homotopy equivalent} or \emph{ homotopic } if they are connected by a finite sequence of homotopies.
	\end{enumerate} 
\end{defi}

\begin{defi}
	We denote by $\left( \mathcal{L}_{\infty}-\operatorname{ALG}\right)_{p}$ be the category whose objects are $L_{\infty}$-algebras and the arrows are defined as follows: an arrow $\mathfrak{g}\to \mathfrak{g}' $ is a set map $f\: :\: MC(\mathfrak{g})\to MC(\mathfrak{g})$.
	\end{defi}

\subsection{Convolution $L_{\infty}$-algebras}
Let $C$ be a coalgebra and let $A$ be a differential graded algebra. The space of morphisms between graded vector spaces $\Hom^{\bullet}\left(C, A \right)$ is equipped with a differential graded Lie algebra structure called convolution algebra (see \cite{lodayVallette}, chapter 1).\\
Let $\left(V, m_{\bullet}^{V}\right) ,\left(A, m_{\bullet}^{A} \right) $ be $A_{\infty}$-algebras and assume they are both bounded below.
Let $\delta$ be the codifferential of $T^{c}(V[1])$ and $\delta^{A}$ be the codifferential on $T^{c}(A[1])$. Consider $A^{\bullet}$ as a graded cochain vector space equipped with $m_{1}^{A}$ as differential. We have a differential graded vector space of morphisms between graded vector spaces
\[
\Hom^{\bullet}\left(T^{c}(V[1]), A \right).
\]
If $m_{n}^{A}=0$ for $n>2$, there is a one to one correspondence between coalgebras morphism $F\: : \:T^{c}(V[1])\to T^{c}(A[1])$ and twisting cochains. For general $A_{\infty}$-structure, there is a similar property. For each $F$, we associate a graded map $\alpha\in \Hom^{1}\left(T^{c}(V[1]), A \right)$ defined as
\begin{equation}\label{12}
\begin{tikzcd}
T^{c}(V[1]) \arrow{r}{F} & T^{c}(A[1])\arrow{r}{\text{proj}_{A[1]}} & A[1]\arrow{r}{s^{-1}}& A.
\end{tikzcd}
\end{equation}
with $\alpha(1)=0$. The condition $F\circ \delta|_{V[1]^{\otimes n}}= \delta^{A}\circ F|_{V[1]^{\otimes n}}$ reads 
\[
\left( \alpha \circ \delta^{V}\right) _{V[1]^{\otimes n}}=\sum_{k\geq 1,\, i_{1}+\dots i_{k}=n}(-1)^{k+1}m_{k}^{A}\left(\alpha_{i_{1}}, \dots, \alpha_{i_{k}} \right)\circ \Delta^{k-1}
\]
where $\alpha_{i_{j}}:=\alpha|_{V[1]^{\otimes i_{j}}}$ and $\Delta^{k}$ is the iterated coproduct in the tensor coalgebra $T^{c}(V[1])$. It is an easy exercise to show that $$\tilde{m}^{A}_{k}:=(-1)^{k}m_{k}^{A}\: : \: A^{\otimes k}\to A$$ is again an $A_{\infty}$-structure on $A$. We conclude
\begin{equation}\label{newtwist}
\alpha \circ \delta=-\sum_{k\geq 1}\tilde{m}^{A}_{k}\left(\alpha , \dots, \alpha\right)\circ \Delta^{k-1}.
\end{equation}
The above equation can be interpreted as the $A_{\infty}$-version of the twisting cochain condition. 
For $n>1$, we define the maps $M_{n}\: : \: \left( \Hom^{\bullet}\left(T^{c}(V[1]), A \right)\right)^{\otimes n}\to \Hom^{\bullet}\left(T^{c}(V[1]), A \right)$ via
\[
M_{n}(f_{1}, \dots, f_{n}):=\tilde{m}^{A}_{n}\left(f_{1}, \dots, f_{n}\right)\circ \Delta^{n-1},
\]
the map $M_{1}\: : \: \Hom^{\bullet}\left(T^{c}(V[1]), A \right)\to \Hom^{\bullet}\left(T^{c}(V[1]), A \right)$ as $M_{1}(f):=\tilde{m}^{A}_{1}(f)$ and the map $\partial\: : \: \Hom^{\bullet}\left(T^{c}(V[1]), A \right)\to \Hom^{\bullet}\left(T^{c}(V[1]), A \right)$ as
\[
\partial(f):=\tilde{m}^{A}_{1}f- (-1)^{|f|}f\circ \delta.
\]
 We define $L_{V[1]^{*}}\left(A \right)$ be the set of morphisms $\Hom^{\bullet}(T^{c}(V[1]), A)$ whose kernel contains the set of non-trivial shuffles $NTS(V[1])$.

\begin{lem}\label{Algebrainfinitystruct}
Let $V$, $A$ as above. 
\begin{enumerate}
\item $\left(M_{\bullet}, \Hom^{\bullet}\left(T^{c}(V[1]), A \right) \right)$ is an $A_{\infty}$-algebra,
 \item $\left(\partial,\left\lbrace M_{n}\right\rbrace_{n\geq 2},  \Hom^{\bullet}\left(T^{c}(V[1]), A \right) \right)$ is an $A_{\infty}$-algebra,
 \item We denote by $l_{\bullet}'$ the maps on $\Hom^{\bullet}\left(T^{c}(V[1]), A \right)^{\otimes n}$ induced by the anti-symmetrization of the maps $\left(\partial,\left\lbrace M_{n}\right\rbrace_{n\geq 2}\right) $ via Theorem \ref{antilie} and by $l_{\bullet}$ the maps induced by the anti-symmetrization of the maps $\left(\partial, M_{2}, M_{3}, \dots\right) $.  Assume that $\delta\mu'(a,b)=0$ for any non-trivial shuffle $\mu'(a,b)\in T^{c}(V[1])$ and that the product $m_{2}^{A}$ is graded commutative (but not necessarily associative). Then we have two $L_{\infty}$-subalgebras 
 \[
  \left(l_{\bullet}', L_{V[1]^{*}}(A)\right) \subset \left(l_{\bullet}',  \Hom^{\bullet}\left(T^{c}(V[1]), A \right)\right) ,\quad \left(l_{\bullet}, L_{V[1]^{*}}(A)\right) \subset \left(l_{\bullet},  \Hom^{\bullet}\left(T^{c}(V[1]), A \right)\right) .
 \]
\end{enumerate}
\end{lem}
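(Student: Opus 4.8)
The plan is to read all three statements as properties of the convolution of the differential graded coalgebra $(T^{c}(V[1]),\Delta,\delta)$ with the rescaled $A_{\infty}$-algebra $(A,\tilde{m}^{A}_{\bullet})$, $\tilde{m}^{A}_{k}=(-1)^{k}m^{A}_{k}$, the rescaling being the one already isolated before \eqref{newtwist}. For (1) I would check the $A_{\infty}$-relations \eqref{rel} for $M_{\bullet}$ by hand. Expanding $M_{k}\circ(\mathrm{Id}^{\otimes p}\otimes M_{q}\otimes \mathrm{Id}^{\otimes r})$ gives $\tilde{m}^{A}_{k}\circ(\mathrm{Id}^{\otimes p}\otimes\tilde{m}^{A}_{q}\otimes\mathrm{Id}^{\otimes r})$ precomposed with $\Delta^{k-1}$ carrying an inner $\Delta^{q-1}$ in its middle slot; coassociativity of the deconcatenation coproduct merges these into a single $\Delta^{n-1}$. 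Summing over $p+q+r=n$ factors the whole expression as $(\sum\pm\,\tilde{m}^{A}_{k}(\mathrm{Id}^{\otimes p}\otimes\tilde{m}^{A}_{q}\otimes\mathrm{Id}^{\otimes r}))\circ\Delta^{n-1}$; by \eqref{rel} applied to $\tilde{m}^{A}_{\bullet}$ the bracketed sum is $\partial\tilde{m}^{A}_{n}$, and precomposing with $\Delta^{n-1}$ identifies it with the convolution differential of $M_{n}$, as \eqref{rel} for $M_{\bullet}$ demands. The only delicate point is signs: the rescaling $\tilde{m}_{k}=(-1)^{k}m_{k}$ is exactly what absorbs the Koszul signs produced when $\Delta$ is commuted past the inputs, matching \eqref{newtwist}.

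For (2) I would write the differential as $\partial=M_{1}+D$ with $D(f):=-(-1)^{|f|}f\circ\delta$ and exploit that $\delta$ is a coderivation of $\Delta$ squaring to zero. From $\delta^{2}=0$ and $(\tilde{m}^{A}_{1})^{2}=0$ one obtains $D^{2}=0$ and $M_{1}D+DM_{1}=0$, hence $\partial^{2}=0$. The coderivation identity $\Delta^{n-1}\circ\delta=\sum_{i}(\mathrm{Id}^{\otimes i}\otimes\delta\otimes\mathrm{Id}^{\otimes n-1-i})\circ\Delta^{n-1}$ shows that $D$ acts as an $A_{\infty}$-derivation of the operations $M_{n}$, namely $D\,M_{n}(f_{1},\dots,f_{n})=\sum_{i}\pm\,M_{n}(f_{1},\dots,Df_{i},\dots,f_{n})$. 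Therefore $(\partial,\{M_{n}\}_{n\geq 2})$ again satisfies \eqref{rel}: the contributions of $M_{1}\subset\partial$ are handled by (1), while the contributions of $D$ cancel against this derivation expansion.

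For (3) I would first anti-symmetrise the two $A_{\infty}$-structures of (1) and (2) via Theorem \ref{antilie}, obtaining the $L_{\infty}$-structures $l_{\bullet}$ and $l'_{\bullet}$ on $\Hom^{\bullet}(T^{c}(V[1]),A)$, and then show that $L_{V[1]^{*}}(A)$ is closed under them. Closure under the unary parts is immediate: $M_{1}=\tilde{m}^{A}_{1}\circ(-)$ is a post-composition and cannot create values on non-trivial shuffles, while $D(f)=-(-1)^{|f|}f\circ\delta$ preserves $L_{V[1]^{*}}(A)$ precisely because the hypothesis forces $\delta(NTS(V[1]))=0$, so $f\circ\delta$ still annihilates non-trivial shuffles; hence $\partial$ preserves the subspace. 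For the brackets I would use the bialgebra compatibility of $\Delta$ with the shuffle product, $\Delta^{n-1}\mu'(a,b)=\sum\pm\,\mu'(a_{(1)},b_{(1)})\otimes\cdots\otimes\mu'(a_{(n)},b_{(n)})$, so that in $l_{n}(f_{1},\dots,f_{n})(\mu'(a,b))$ only the terms in which every tensor slot is a trivial shuffle survive, all others being killed by some $f_{i}$. Graded commutativity of $m^{A}_{2}$, together with the anti-symmetrisation, then pairs these survivors so that they cancel. Dually this is the classical identification $L_{V[1]^{*}}(A)\cong\widehat{\mathbb{L}}(V[1]^{*})\,\widehat{\otimes}\,A$ via Ree's criterion, under which the convolution brackets restrict to the free Lie algebra.

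The step I expect to be the real obstacle is this last one for the higher brackets ($n\geq 3$): organising the shuffle decompositions of $\Delta^{n-1}\mu'(a,b)$ and tracking the Koszul signs so that the surviving contributions genuinely cancel. This is the place where the hypotheses must act in concert — the $C_{\infty}$-vanishing $\delta(NTS(V[1]))=0$ on the source, the compatibility of deconcatenation with shuffles, and the shuffle-symmetry of the target operations $\tilde{m}^{A}_{\bullet}$ (for $\tilde m^A_2$ this is exactly the graded commutativity of $m^{A}_{2}$) — and getting the signs to line up is the only genuinely laborious part; everything else reduces to the coassociativity and coderivation arguments already used in (1) and (2).
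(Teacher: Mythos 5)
Your treatments of parts (1) and (2) coincide with the paper's proof: the paper likewise merges the inner $\Delta^{q-1}$ into $\Delta^{p+r}$ by coassociativity so that the $A_{\infty}$-relations for $M_{\bullet}$ reduce to those for $\tilde{m}^{A}_{\bullet}$ precomposed with $\Delta^{n-1}$, and it splits $\partial$ into $\tilde{m}^{A}_{1}\circ(-)$ and $-(-1)^{|f|}f\circ\delta$, the latter acting as a derivation of the $M_{n}$ via the coderivation identity $\Delta^{n-1}\circ\delta=\sum_{i}(\mathrm{Id}^{\otimes i}\otimes\delta\otimes\mathrm{Id}^{\otimes n-1-i})\circ\Delta^{n-1}$, with the $f\circ\delta$ contributions cancelling exactly as you describe. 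These two parts are correct and essentially identical to the paper's argument.

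Part (3) is where there is a genuine gap, and you have in effect flagged it yourself. You reduce the claim to showing that, after the bialgebra identity $\Delta^{n-1}\mu'(a,b)=\sum\pm\,\mu'(a_{(1)},b_{(1)})\otimes\cdots\otimes\mu'(a_{(n)},b_{(n)})$ is used to discard every slot that is itself a non-trivial shuffle, the surviving terms (each slot a pure piece of $a$, a pure piece of $b$, or a scalar) cancel ``by graded commutativity of $m_{2}^{A}$ together with the anti-symmetrisation.'' But for $n\geq 3$ the operation applied to the survivors is $\tilde{m}^{A}_{n}$, about whose behaviour under permutation or shuffling of its arguments the lemma assumes nothing; your own parenthetical concedes you would need ``shuffle-symmetry of the target operations $\tilde{m}^{A}_{\bullet}$,'' which is not among the hypotheses (only $m_{2}^{A}$ is assumed graded commutative). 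So the cancellation mechanism you propose cannot close for the higher brackets, and you do not carry it out. The paper's own argument for $n>2$ is of a different nature: it does not antisymmetrise at all, but claims that each individual $M_{n}(f_{1},\dots,f_{n})$ already annihilates $\mu'(a,b)$ because the displayed terms of $\Delta^{n-1}\mu'(a,b)$ — those in which $a$ and $b$ each occupy a single slot — necessarily leave at least $n-2\geq 1$ scalar slots, killed by $f_{i}(1)=0$; commutativity of $m_{2}^{A}$ enters only in the separate ``direct calculation'' for $n=2$. The configurations you rightly worry about, in which $a$ or $b$ is deconcatenated across several slots so that no slot is scalar and no slot is a non-trivial shuffle, are precisely the ones that still need to be shown to vanish or cancel; your proposal leaves them unresolved, so the proof of part (3) for $n\geq 3$ is not complete.
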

For the proof see the Appendix, Section \ref{proof}. 
\begin{defi}
Let $\left(V, m_{\bullet}^{V}\right) ,\left(A, m_{\bullet}^{A} \right) $ be $A_{\infty}$-algebras.
\begin{enumerate}
	\item We call $ \left( \partial,{\left\lbrace M_{n}\right\rbrace }_{n>1}\Hom^{\bullet}\left(T^{c}(V[1]), A \right) \right) $ the \emph{convolution $A_{\infty}$-algebra} \\associated to $\left(V, m_{\bullet}^{V}\right) ,\left(A, m_{\bullet}^{A} \right) $.
	\item We call $$
	\operatorname{Conv}_{A_{\infty}}\left(\left(V, m_{\bullet}^{V}\right) ,\left(A, m_{\bullet}^{A} \right) \right):= \left( {l'}_{\bullet},\Hom^{\bullet}\left(T^{c}(V[1]), A \right)\right)$$
	 the \emph{convolution $L_{\infty}$-algebra} associated to $\left(V, m_{\bullet}^{V}\right) ,\left(A, m_{\bullet}^{A} \right) $. 
	\item Let $\left(V, m_{\bullet}^{V}\right) ,\left(A, m_{\bullet}^{A} \right) $ be $C_{\infty}$-algebras. We call
	 \[
		\operatorname{Conv}_{C_{\infty}}\left(\left(V, m_{\bullet}^{V}\right) ,\left(A, m_{\bullet}^{A} \right) \right):= \left({l'}_{\bullet}, L_{V[1]^{*}}(A)\right) \subset \left(l_{\bullet}',  \Hom^{\bullet}\left(T^{c}(V[1]), A \right)\right) 
	 \]
	 the \emph{reduced convolution $L_{\infty}$-algebra } associated to $\left(V, m_{\bullet}^{V}\right) ,\left(A, m_{\bullet}^{A} \right) $. 
\end{enumerate}	
\end{defi}
The next proposition is a consequence of the discussion due at the beginning of this subsection.
\begin{prop}\label{Cinftydictionary}
Let $\left(V, m_{\bullet}^{V}\right) ,\left(A, m_{\bullet}^{A} \right) $ be $P_{\infty}$-algebras. There exists a one to one correspondence between
\begin{enumerate}
	\item  $P_{\infty}$-morphisms $f_{\bullet}\: : \: \left(V, m_{\bullet}^{V}\right) \to\left(A, m_{\bullet}^{A} \right) $. 
\item Morphisms of differential graded quasi-free coalgebras $F\: : \:T^{c}(V[1])\to T^{c}(A[1])$ (resp. such that $F(NTS(V[1]))=0$).
\item Maurer-Cartan elements $\alpha\in \operatorname{Conv}_{P_{\infty}}\left(\left(V, m_{\bullet}^{V}\right) ,\left(A, m_{\bullet}^{A} \right) \right)$ such that $\alpha(1)=0$.
\end{enumerate}
\end{prop}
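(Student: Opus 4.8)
The plan is to prove the two bijections $(1)\Leftrightarrow(2)$ and $(2)\Leftrightarrow(3)$ separately, and then to check that the parenthetical $C_{\infty}$-conditions are respected throughout. The equivalence $(1)\Leftrightarrow(2)$ is almost tautological given the definitions: a $P_{\infty}$-morphism $f_{\bullet}$ was \emph{defined} to be a morphism $F$ of differential graded coalgebras $T^{c}(V[1])\to T^{c}(A[1])$, with $f_{n}=s^{-1}\circ\mathrm{proj}_{A[1]}\circ F_{n}\circ s^{\otimes n}$. The only thing to record is that this passage is a genuine bijection, which follows from the cofreeness of $T^{c}(A[1])$: since $T^{c}(V[1])$ is conilpotent, a coalgebra morphism out of it into the cofree conilpotent coalgebra $T^{c}(A[1])$ is determined uniquely by its corestriction $\mathrm{proj}_{A[1]}\circ F$, and conversely every family $f_{\bullet}$ assembles into such an $F$ via \eqref{tricksign}. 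In the $C_{\infty}$ case one must further observe that the pointwise condition $f_{n}(NTS(V[1])\cap V[1]^{\otimes n})=0$ is equivalent to the global condition $F(NTS(V[1]))=0$; this again uses cofreeness together with the compatibility of the coproduct with the shuffle filtration.

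For $(2)\Leftrightarrow(3)$ I would invoke the twisting-cochain computation performed at the beginning of this subsection. To $F$ one associates $\alpha:=s^{-1}\circ\mathrm{proj}_{A[1]}\circ F$ as in \eqref{12}, normalised by $\alpha(1)=0$; cofreeness makes this a bijection between counital coalgebra maps $F$ and elements $\alpha\in\Hom^{1}(T^{c}(V[1]),A)$ with $\alpha(1)=0$, and in the $C_{\infty}$ case it restricts to a bijection onto the subspace $L_{V[1]^{*}}(A)$ of maps annihilating non-trivial shuffles. The heart of the matter is the identification of the two defining equations. On one side, the requirement that $F$ commute with the codifferentials, $F\circ\delta=\delta^{A}\circ F$, was shown to be exactly \eqref{newtwist}. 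On the other side, the Maurer-Cartan equation of $\operatorname{Conv}_{P_{\infty}}$,
\[
\partial(\alpha)+\sum_{k\geq2}\frac{l'_{k}(\alpha,\dots,\alpha)}{k!}=0,
\]
unwinds to the same relation: substituting $\partial(\alpha)=\tilde{m}^{A}_{1}\alpha+\alpha\circ\delta$ (valid because $|\alpha|=1$) and $M_{k}(\alpha,\dots,\alpha)=\tilde{m}^{A}_{k}(\alpha,\dots,\alpha)\circ\Delta^{k-1}$, and collecting the unary term with the series, reproduces $\alpha\circ\delta=-\sum_{k\geq1}\tilde{m}^{A}_{k}(\alpha,\dots,\alpha)\circ\Delta^{k-1}$.

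A preliminary technical point that I would dispatch first is that $\operatorname{Conv}_{P_{\infty}}$ is a \emph{complete} filtered $L_{\infty}$-algebra, so that the Maurer-Cartan series makes sense: filtering $\Hom^{\bullet}(T^{c}(V[1]),A)$ by the length of the tensor words on which a map is supported, one checks that $\Delta^{k-1}$ forces each $M_{k}$ to raise this filtration, and that the iterated coproduct of a word of length $n$ vanishes for $k>n$, so the Maurer-Cartan sum is in fact finite on every element of $T^{c}(V[1])$.

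The step I expect to demand the most care is the sign bookkeeping linking $l'_{k}$ to $M_{k}$. Concretely one needs $\tfrac{1}{k!}\,l'_{k}(\alpha,\dots,\alpha)=M_{k}(\alpha,\dots,\alpha)$: since $\alpha$ has odd degree $1$, the Koszul sign of permuting $k$ copies of $\alpha$ equals $\operatorname{sgn}(\sigma)$, so each of the $k!$ summands in the antisymmetrization $l'_{k}=\sum_{\sigma}\operatorname{sgn}(\sigma)M_{k}^{\sigma}$ contributes the same term $M_{k}(\alpha,\dots,\alpha)$ and the factorials cancel. Once this identity (together with the convention $\tilde{m}^{A}_{k}=(-1)^{k}m^{A}_{k}$ already fixed above) is checked, the two displayed equations coincide verbatim, and composing the three bijections gives the statement; the $C_{\infty}$ assertions follow because all maps involved restrict to $L_{V[1]^{*}}(A)$ by Lemma \ref{Algebrainfinitystruct}(3).
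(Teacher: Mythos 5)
Your proposal is correct and takes essentially the same route as the paper, whose entire proof is to invoke the twisting-cochain discussion at the start of the subsection: the bijection $(1)\Leftrightarrow(2)$ is definitional via cofreeness, and $(2)\Leftrightarrow(3)$ is the identification of $F\circ\delta=\delta^{A}\circ F$ with \eqref{newtwist}, which your factorial cancellation for the odd-degree element $\alpha$ correctly matches to the Maurer--Cartan equation. The extra details you supply (completeness of the filtration so the series is finite on each tensor word, and the equivalence of the pointwise and global shuffle-vanishing conditions) are points the paper leaves implicit.
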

\begin{defi}
Let $\left(V, m_{\bullet}^{V}\right) ,\left(A, m_{\bullet}^{A} \right) $ be $P_{\infty}$-algebras. Assume that $V$ is positively graded. An element $\alpha\in\operatorname{Conv}^{1}_{P_{\infty}}\left(\left(V, m_{\bullet}^{V}\right) ,\left(A, m_{\bullet}^{A} \right) \right)$ is a \emph{$1$-Maurer-Cartan element} if the corresponding tensor coalgebra morphism $F$ restricted on
\[
\left( T^{c}\left(V[1] \right)\right)^{0}\oplus  \left( T^{c}\left(V[1] \right)\right)^{1}
\]
is a $1-P_{\infty}$-algebra morphism. We say that two $1$-Maurer Cartan elements are equal if they coincide on $\left( T^{c}\left(V[1] \right)\right)^{0}\oplus \left( T^{c}\left(V[1] \right)\right)^{1}$. We denote the sets of $1$-Maurer-Cartan elements with $MC_{1}\left( 	\operatorname{Conv}_{P_{\infty}}\left(\left(V, m_{\bullet}^{V}\right) ,\left(A, m_{\bullet}^{A} \right)\right)\right) $.
\end{defi}
\subsection{ Degree zero convolution $L_{\infty}$-algebras}
Let $(V, m_{\bullet}^{V})$ be a positively graded $1-A_{\infty}$-algebra and let $(A, m_{\bullet}^{A})$ be an $A_{\infty}$ algebra. Let $V[1]^{0}$ be the degree $0$ part of $V[1]$, we consider $\Hom^{\bullet}(T^{c}(V[1]^{0}) , A)\subset \Hom^{\bullet}(T^{c}(V[1]) , A)$ as the graded vector subspace of morphisms with support in $T^{c}(V[1]^{0})$.  We define $L_{V[1]^{*}}^{0}\left(A \right) := L_{V[1]^{*}}\left(A \right)\cap \Hom^{\bullet}(T^{c}(V[1]^{0}) , A)$ and we denote $L_{V[1]^{*}}^{0}\left(\Bbbk\right)$ by $L_{V[1]^{*}}^{0}$. The restriction of the dual $\delta^{*}\: : \: \Hom^{\bullet}(\left( T^{c}(V[1])\right)^{1}\oplus T^{c}(V[1]^{0}) , A)\to\Hom^{\bullet}( T^{c}(V[1]^{0}) , A)$ vanishes on $\Hom^{\bullet}(T^{c}(V[1]^{0}) , A)$ if $V[1]$ is a non-negatively graded vector space (equivalently, if $V$ is positively graded). 
\begin{cor}\label{Linftystructnotquot} Let $\left(M_{\bullet}, \Hom^{\bullet}\left(T^{c}(V[1]), A \right) \right)$ be as above.
	\begin{enumerate}
		\item $\left(M_{\bullet}, \Hom^{\bullet}\left(T^{c}(V[1]^{0}), A \right) \right)$ is an $A_{\infty}$-algebra,
		\item Let $f_{1},\dots,f_{n}\in  \Hom^{\bullet}\left(T^{c}(V[1]^{0}), A \right)$ and assume that there is a $g$ with $\delta^{*}g=f_{i}$ for some $i$, then $M_{n}(f_{1},\dots, f_{n})\in \operatorname{Im}\left(\delta^{*} \right) $ for $n>1$. In particular $$\left(M_{\bullet}, \Hom^{\bullet}\left(T^{c}(V[1]^{0}), A \right)/\operatorname{Im}(\delta^{*}) \right)$$ is an $A_{\infty}$-algebra.
		\item Consider $\Hom^{\bullet}\left(T^{c}(V[1]^{0}), A \right)/{\operatorname{Im}(\delta^{*})}$ equipped with the $L_{\infty}$-structure $l_{\bullet}$ induced by the maps $M_{\bullet}$ via Theorem \ref{antilie}. Assume that $\delta\mu'(a,b)=0$ for any non-trivial shuffle $\mu'(a,b)\in T^{c}(V[1]^{0})$ and that the product $m_{2}^{A}$ is graded commutative. The subgraded vector space $ L_{V[1]^{*}}^{0}\left(A \right)/{\operatorname{Im}(\delta^{*})}$ equipped with $l_{\bullet}$ is a $L_{\infty}$-subalgebra of  $$\left(l_{\bullet}, \Hom^{\bullet}\left(T^{c}(V[1]^{0}), A \right)/{\operatorname{Im}(\delta^{*})}\right).$$
	\end{enumerate}
\end{cor}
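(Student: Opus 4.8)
The plan is to deduce all three statements from Lemma~\ref{Algebrainfinitystruct} by two elementary operations: \emph{restriction} of the convolution structure to the subcoalgebra $T^{c}(V[1]^{0})\subset T^{c}(V[1])$, and \emph{passage to the quotient} by $\operatorname{Im}(\delta^{*})$. The structural observation used throughout is that $T^{c}(V[1]^{0})$ is a subcoalgebra of $T^{c}(V[1])$: the deconcatenation coproduct of a word all of whose letters lie in $V[1]^{0}$ splits it into consecutive blocks, each again a word in $V[1]^{0}$, so $\Delta^{n-1}\bigl(T^{c}(V[1]^{0})\bigr)\subseteq \bigl(T^{c}(V[1]^{0})\bigr)^{\otimes n}$.

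For part (1), recall that $M_{n}(f_{1},\dots,f_{n})=\tilde{m}^{A}_{n}(f_{1}\otimes\dots\otimes f_{n})\circ\Delta^{n-1}$ and that $M_{1}$ is post-composition with $\tilde{m}^{A}_{1}$. First I would check that $\Hom^{\bullet}(T^{c}(V[1]^{0}),A)$, viewed as the maps supported on $T^{c}(V[1]^{0})$, is closed under every $M_{n}$: for $x\in T^{c}(V[1])$ having a letter outside $V[1]^{0}$, that letter sits in a single deconcatenation factor $x^{(j)}$, forcing $f_{j}(x^{(j)})=0$, so $M_{n}(f_{1},\dots,f_{n})$ again vanishes off $T^{c}(V[1]^{0})$; closure under $M_{1}$ is immediate since post-composition does not change the source. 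Hence $\Hom^{\bullet}(T^{c}(V[1]^{0}),A)$ is an $A_{\infty}$-subalgebra of the convolution $A_{\infty}$-algebra of Lemma~\ref{Algebrainfinitystruct}(1), and in particular an $A_{\infty}$-algebra.

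Part (2) is the crux. As recalled just before the Corollary, because $V$ is positively graded $\delta^{*}$ (precomposition with the coderivation $\delta$, up to sign) \emph{annihilates} every $f$ supported on $T^{c}(V[1]^{0})$: for degree-$0$ $x$ one has $\delta x\in (T^{c}(V[1]))^{1}$, which is disjoint from the degree-$0$ support of $f$. Given $f_{i}=\delta^{*}g$, I would set $h:=\tilde{m}^{A}_{n}(f_{1}\otimes\dots\otimes f_{i-1}\otimes g\otimes f_{i+1}\otimes\dots\otimes f_{n})\circ\Delta^{n-1}$, which lies in the domain $(T^{c}(V[1]))^{1}\oplus T^{c}(V[1]^{0})$ of $\delta^{*}$, and compute $\delta^{*}h=\pm\,h\circ\delta$. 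The coderivation property of $\delta$ gives $\Delta^{n-1}\circ\delta=\sum_{j=1}^{n}(\mathrm{Id}^{\otimes(j-1)}\otimes\delta\otimes\mathrm{Id}^{\otimes(n-j)})\circ\Delta^{n-1}$; evaluated on $x\in T^{c}(V[1]^{0})$, every summand with $j\neq i$ feeds the degree-$1$ element $\delta x^{(j)}$ into the $f_{j}$ sitting in position $j$ and hence dies, while the summand $j=i$ reconstitutes $g\circ\delta=f_{i}$ in the $i$-th slot, yielding $\pm M_{n}(f_{1},\dots,f_{n})$. Thus $M_{n}(f_{1},\dots,f_{n})\in\operatorname{Im}(\delta^{*})$; combined with $M_{1}(\delta^{*}g)=\tilde{m}^{A}_{1}\circ g\circ\delta=\pm\delta^{*}(\tilde{m}^{A}_{1}\circ g)\in\operatorname{Im}(\delta^{*})$, this shows $\operatorname{Im}(\delta^{*})$ is an ideal for the operations $M_{\bullet}$ (it absorbs any slot), so the quotient inherits an $A_{\infty}$-structure. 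The hard part here is purely the sign bookkeeping in $\delta^{*}h=\pm M_{n}(\dots)$ and in matching the Koszul signs coming from Theorem~\ref{antilie}; the conceptual content is the degree-based vanishing described above.

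For part (3) I would first invoke part (2) together with Theorem~\ref{antilie}: anti-symmetrizing the $A_{\infty}$-structure $M_{\bullet}$ on the quotient $\Hom^{\bullet}(T^{c}(V[1]^{0}),A)/\operatorname{Im}(\delta^{*})$ produces an $L_{\infty}$-algebra with the stated $l_{\bullet}$. It then remains to see that $L^{0}_{V[1]^{*}}(A)/\operatorname{Im}(\delta^{*})$ is a sub-$L_{\infty}$-algebra. Under the hypotheses that $\delta$ vanishes on non-trivial shuffles and $m^{A}_{2}$ is graded commutative, Lemma~\ref{Algebrainfinitystruct}(3) already gives that $L_{V[1]^{*}}(A)$ is closed under the anti-symmetrized operations; intersecting with the $A_{\infty}$-subalgebra of part (1) shows $L^{0}_{V[1]^{*}}(A)=L_{V[1]^{*}}(A)\cap\Hom^{\bullet}(T^{c}(V[1]^{0}),A)$ is closed under $l_{\bullet}$. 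Finally $\operatorname{Im}(\delta^{*})\subseteq L^{0}_{V[1]^{*}}(A)$, since for a non-trivial shuffle $\mu'(a,b)$ one has $(\delta^{*}g)(\mu'(a,b))=\pm g(\delta\mu'(a,b))=0$ by hypothesis, and $\operatorname{Im}(\delta^{*})$ is contained in $\Hom^{\bullet}(T^{c}(V[1]^{0}),A)$ by construction. Hence the quotient $L^{0}_{V[1]^{*}}(A)/\operatorname{Im}(\delta^{*})$ is a well-defined $L_{\infty}$-subalgebra, completing the argument.
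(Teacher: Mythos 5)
Your proof is correct and follows essentially the same route as the paper's own (Appendix, Section \ref{proof}): part (1) is treated there as immediate, part (2) is established by exactly the computation you describe — combining the coderivation identity $\Delta^{n-1}\circ\delta=\sum_{j}\left(\mathrm{Id}^{\otimes(j-1)}\otimes\delta\otimes\mathrm{Id}^{\otimes(n-j)}\right)\circ\Delta^{n-1}$ with the vanishing $\delta^{*}f_{j}=0$ for maps supported on $T^{c}(V[1]^{0})$ — and part (3) is left as straightforward. You merely run the part (2) chain of equalities in the reverse direction (starting from $h$ and computing $\delta^{*}h$ rather than unwinding $M_{n}(f_{1},\dots,\delta^{*}g,\dots,f_{n})$) and supply the routine details the paper omits.
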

For the proof see the Appendix, Section \ref{proof}.
\begin{defi}
We call $$
\operatorname{Conv}_{1-A_{\infty}}\left(\left(V, m_{\bullet}^{V}\right) ,\left(A, m_{\bullet}^{A} \right) \right):= \left( {l}_{\bullet},\Hom^{\bullet}\left(T^{c}(V[1]^{0}), A \right)/{\operatorname{Im}(\delta^{*})}\right) $$ the \emph{degree zero convolution $L_{\infty}$-algebra } associated to $\left(V, m_{\bullet}^{V}\right) ,\left(A, m_{\bullet}^{A} \right) $.\\ Assume that $\left(V, m_{\bullet}^{V}\right) ,\left(A, m_{\bullet}^{A} \right) $ are $1-C_{\infty}$ and $C_{\infty}$ respectively. We call
	   $$
	 	\operatorname{Conv}_{1-C_{\infty}}\left(\left(V, m_{\bullet}^{V}\right) ,\left(A, m_{\bullet}^{A} \right) \right):= \left({l}_{\bullet}, L_{V[1]^{*}}(A)/{\operatorname{Im}(\delta^{*})}\right) \subset \left(l_{\bullet},  \Hom^{\bullet}\left(T^{c}(V[1]^{0}), A \right)/{\operatorname{Im}(\delta^{*})}\right)$$ 
	 the \emph{degree zero reduced convolution $L_{\infty}$-algebra} associated to $\left(V, m_{\bullet}^{V}\right) ,\left(A, m_{\bullet}^{A} \right) $. 
\end{defi}
\begin{prop}\label{Cinftydictionary1}
Let $\left(V, m_{\bullet}^{V}\right) $ be a positively graded $1-P_{\infty}$-algebra and a $P_{\infty}$-algebra respectively. There exists a one to one correspondence between the following.
\begin{enumerate}
	\item  $1-P_{\infty}$-morphism $f_{\bullet}\: : \: \left(V, m_{\bullet}^{V}\right) \to\mathcal{F}\left(B, m_{\bullet}^{B} \right) $. 
	\item The set of degree zero maps $$F=F^{0}\oplus F^{1}\: : \: \left( T^{c}\left(V[1] \right)\right)^{0}\oplus \left( \left( T^{c}\left(V[1] \right)\right)^{0}\right) \to \left( T^{c}\left(B[1] \right)\right)^{0}\oplus \left( \left( T^{c}\left(B[1] \right)\right)^{0}\right)$$ such that \begin{enumerate}
		\item $\left(F\otimes F \right) \Delta=\Delta F $ and $F^1\delta=\delta F^0$, and
		\item if $P_{\infty}=C_{\infty}$ $F \left(NTS\left(V[1] \right) \cap \left( \left( T^{c}\left(V[1] \right)\right)^{0}\oplus  \left( T^{c}\left(V[1] \right)\right)^{0} \right)  \right) =0$,
	\end{enumerate} 
modulo the following equivalence relation: $F,G$ are equivalent if the kernel of $F^{1}-G^{1}$ contains $ \delta\left( \left( T^{c}\left(V[1] \right)\right)^{0}\right)$.
	\item  Maurer-Cartan elements $\alpha\in \operatorname{Conv}_{1-P_{\infty}}\left(\left(V, m_{\bullet}^{V}\right) ,\left(A, m_{\bullet}^{A} \right) \right)$ such that $\alpha(1)=0$. 
\end{enumerate}
Let $\left(V', m_{\bullet}^{V'}\right)$ be a positively graded $P_{\infty}$-algebra such that $\mathcal{F}\left(V', m_{\bullet}^{V'}\right)=\left(V, m_{\bullet}^{V}\right)$. Then the above elements are in one to one correspondence with $1$-Maurer-Cartan elements\\ $\alpha\in \operatorname{Conv}_{P_{\infty}}\left(\left(V', m_{\bullet}^{V'}\right) ,\left(A, m_{\bullet}^{A} \right) \right)$ such that $\alpha(1)=0$. 
\end{prop}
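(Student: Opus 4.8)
The strategy is to transpose the twisting-cochain dictionary underlying Proposition~\ref{Cinftydictionary} to the $1$-truncated, degree-zero setting. Throughout write $A$ for the target $P_\infty$-algebra (the $B$ of items (1)--(2)), $T=T^c(V[1])$ and $S=T^c(A[1])$. Since $V$ is positively graded, $V[1]$ is concentrated in non-negative degrees with $V[1]^0=V^1$, so $(T)^0=T^c(V[1]^0)$ while $(T)^1$ is spanned by tensors with a single factor in $V[1]^1=V^2$; the codifferential restricts to $\delta\colon(T)^0\to(T)^1$ with $\delta^2=0$, and $\delta^*$ is precomposition with it. A coalgebra map on $(T)^{\le1}$ is recorded by its cogenerator corestrictions in degrees $0$ and $1$, and these are what the three descriptions keep track of.

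First I would treat the correspondence (1)--(2), which unwinds the definitions. A $1$-$P_\infty$-morphism $f_\bullet\colon(V,m_\bullet^V)\to\mathcal{F}(A,m_\bullet^A)$ is by definition a graded coalgebra map on $\oplus_{i\le1}(T)^i$ commuting with $\delta$; positivity of $V$ leaves only the components $F^0\colon(T)^0\to(S)^0$ and $F^1\colon(T)^1\to(S)^1$, subject to $F^1\delta=\delta F^0$. The morphism relations constrain the degree-one corestriction of $F$ only on $\operatorname{Im}(\delta)$---where $F^1\delta=\delta F^0$ determines it from $F^0$---and leave it free elsewhere, so two such coalgebra maps describe the same $1$-morphism exactly when they agree on $\delta((T)^0)$, which is the equivalence relation of (2). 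In the $C_\infty$ case one adds the vanishing of $F$ on the relevant non-trivial shuffles, namely condition (b).

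Next I would establish the correspondence (2)--(3) through the assignment $F\mapsto\alpha:=s^{-1}\circ\operatorname{proj}_{A[1]}\circ F$ of \eqref{12}. Writing $\alpha^0=\alpha|_{(T)^0}$ and $\alpha^1=\alpha|_{(T)^1}$, cofreeness reconstructs $F$ on $(T)^{\le1}$ from the pair $(\alpha^0,\alpha^1)$, and $F(1)=1$ is $\alpha(1)=0$. Reading \eqref{newtwist} on $(T)^0$ and using that $\delta$ sends $(T)^0$ into $(T)^1$ gives
\[
\delta^*(\alpha^1)=\alpha^1\circ\delta=-\sum_{k\ge1}\tilde{m}_k^A(\alpha^0,\dots,\alpha^0)\circ\Delta^{k-1},
\]
so $\sum_{k\ge1}\tilde{m}_k^A(\alpha^0,\dots,\alpha^0)\circ\Delta^{k-1}$ lies in $\operatorname{Im}(\delta^*)$. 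After anti-symmetrizing, and using Corollary~\ref{Linftystructnotquot} to know both that $\operatorname{Im}(\delta^*)$ is an $L_\infty$-ideal and that this combination reproduces $\partial\alpha^0+\sum_{k\ge2}l_k(\alpha^0,\dots,\alpha^0)/k!$ modulo $\operatorname{Im}(\delta^*)$, the displayed condition becomes exactly the Maurer--Cartan equation in $\operatorname{Conv}_{1-P_\infty}((V,m_\bullet^V),(A,m_\bullet^A))$. Conversely, a Maurer--Cartan $\alpha^0$ pins down $\alpha^1$ on $\operatorname{Im}(\delta)$ through the displayed identity and leaves it free elsewhere, reconstructing $F$ up to precisely the equivalence relation of (2); in the $C_\infty$ case one works inside $L_{V[1]^*}(A)/\operatorname{Im}(\delta^*)$, whose $L_\infty$-subalgebra structure is furnished by the commutativity hypotheses of Corollary~\ref{Linftystructnotquot}.

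Finally, for the comparison with a lift $V'$ with $\mathcal{F}(V')=V$, I would observe that a $1$-Maurer--Cartan element of $\operatorname{Conv}_{P_\infty}((V',m_\bullet^{V'}),(A,m_\bullet^A))$ is by definition an $\alpha$ whose coalgebra morphism restricts to a $1$-$P_\infty$-morphism on $(T^c(V'[1]))^0\oplus(T^c(V'[1]))^1$. Restriction to $(T^c(V'[1]))^0$ followed by the projection modulo $\operatorname{Im}(\delta^*)$ sends such an $\alpha$ into $\operatorname{Conv}_{1-P_\infty}$, landing in the Maurer--Cartan set of (3) by the computation above; the inverse is the canonical extension of $\alpha^0$ to $\alpha^1$ on $\operatorname{Im}(\delta)$. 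I expect the main obstacle to be precisely this bookkeeping: checking that the equivalence relation of (2), the quotient by $\operatorname{Im}(\delta^*)$ in (3), and the identification of $1$-Maurer--Cartan elements all encode the same redundancy in the degree-one datum $\alpha^1$. The cleanest formulation is that the truncated condition $F^1\delta=\delta F^0$ is solvable in $\alpha^1$ if and only if $\sum_{k\ge1}\tilde{m}_k^A(\alpha^0,\dots,\alpha^0)\circ\Delta^{k-1}\in\operatorname{Im}(\delta^*)$, which is the quotient Maurer--Cartan equation by the very definition of $\operatorname{Im}(\delta^*)$; making every identification above compatible with this observation is the real content.
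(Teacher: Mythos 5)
Your proposal is correct and follows essentially the same route as the paper's proof: identify $F^{0}$ with $\alpha\in\Hom^{1}\left(T^{c}\left(V[1]^{0}\right),A\right)$, observe that the solvability of $F^{1}\delta=\delta F^{0}$ in $F^{1}$ is exactly the condition that $\sum_{k}\tilde{m}^{A}_{k}(\alpha,\dots,\alpha)\circ\Delta^{k-1}$ lies in $\operatorname{Im}(\delta^{*})$, i.e. the Maurer--Cartan equation in the quotient, and handle the lift $V'$ by restriction and the projection $\pi$. The paper states this in three sentences; you supply the same argument with the bookkeeping (the role of the equivalence relation on $F^{1}$ and the passage from the $A_{\infty}$-type to the antisymmetrized $L_{\infty}$-type Maurer--Cartan equation) made explicit, which is a faithful elaboration rather than a different method.
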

\begin{proof}
Let $F=F^{0}\oplus F^{1}$ be as in point 1. Then $F^{0}$ is a tensor coalgebra map and it is completely determined by an $\alpha \in \operatorname{Hom}^{1}\left( \left(T^{c}\left(V[1] \right)\right)^{0},A \right)= \operatorname{Hom}^{1} \left(T^{c}\left(V^{0}[1] \right),A \right) $. The condition $\delta^{A}F^{0}=F^{1}\delta^{V}$ correspond to the fact that $\alpha$ is a Maurer-Cartan element in the degree zero convolution algebra. Let  $\alpha$ be a Maurer-Cartan element in the degree zero convolution algebra. In particular, $\alpha \in  \operatorname{Hom}^{1} \left(T^{c}\left(V^{0}[1] \right),A \right) $ corresponds to a tensor coalgebra map $F^{0}\: : \: T^{c}\left(V^{0}[1] \right)\to T^{c}\left(A^{0}[1] \right)$ and the Maurer-Cartan equation implies that there exists a $F^{1}\: : \: \left( T^{c}\left(V[1] \right)\right)^{1}\to  \left( T^{c}\left(A[1] \right)\right)^{1}$ such that $\delta^{A}F^{0}=F^{1}\delta^{V}$. This proves the equivalence between $1$ and $2$. The last step follows in a similar way.
\end{proof}
\begin{Warn}
For any Maurer-Cartan $\alpha$ in a convolution algebra (reduced, degre zero,.. ), we will assume $\alpha(1)=0$. 
\end{Warn}
\subsection{Functoriality of convolution $L_{\infty}$-algebras}
Let $\left(V, m_{\bullet}^{V}\right)$ and $\left(W, m_{\bullet}^{W}\right)$ be positively graded $1-P_{\infty}$-algebras and let $\left(A, m_{\bullet}^{A} \right) $ be a $P_{\infty}$-algebra. A $1$-$P_{\infty}$-map $f_{\bullet}\: : \: \left(W, m_{\bullet}^{W}\right)\to \left(V, m_{\bullet}^{V} \right)$ corresponds via Proposition \ref{Cinftydictionary1} to certain morphism $F=F^{0}\oplus F^{1}$. In the same way, a Maurer-Cartan element $\alpha$ in $\operatorname{Conv}_{1-P_{\infty}}\left(\left(V, m_{\bullet}^{V}\right) ,\left(A, m_{\bullet}^{A} \right) \right)$ can be written as ${F'}={F'}^{1}\oplus {F'}^{2}$. We denote with $f^{*}(\alpha)$ the Maurer-Cartan element that corresponds to the $1$-morphism $F{F'}={F}^{1}{F'}^{1}\oplus{F}^{2} {F'}^{2}$, this gives a well-defined map 
$$f^{*}\: : \: \operatorname{Conv}_{1-P_{\infty}}\left(\left(V, m_{\bullet}^{V}\right) ,\left(A, m_{\bullet}^{A} \right) \right)\to \operatorname{Conv}_{1-P_{\infty}}\left(\left(W, m_{\bullet}^{W}\right) ,\left(A, m_{\bullet}^{A} \right) \right).$$ 
Let $\left(V', m_{\bullet}^{V'}\right)$ and $\left(W', m_{\bullet}^{W'}\right)$ be $P_{\infty}$-algebras and let $g_{\bullet}\: : \: \left(V', m_{\bullet}^{V'}\right)\to\left(W', m_{\bullet}^{W'}\right) $ be a $P_{\infty}$-morphism. In the same way we have a map
$$g^{*}\: : \: \operatorname{Conv}_{P_{\infty}}\left(\left(V', m_{\bullet}^{V'}\right) ,\left(A, m_{\bullet}^{A} \right) \right)\to \operatorname{Conv}_{P_{\infty}}\left(\left(W', m_{\bullet}^{W'}\right) ,\left(A, m_{\bullet}^{A} \right) \right).$$ 
Assume that $V'$ is positively graded.
Consider $\pi:={p} \circ r$, where 
\begin{equation}\label{restrquot}
r\: : \: \Hom^{\bullet}\left(T^{c}(V'[1]), A \right)\to \Hom^{\bullet}\left(T^{c}(V'[1]^{0}), A \right)
\end{equation} 
is the restriction map and ${p}\: : \: \Hom^{\bullet}\left(T^{c}(V'[1]^{0}), A \right)\to \Hom^{\bullet}\left(T^{c}(V'[1]^{0}), A \right)/{\operatorname{Im}(\delta^{*})}$ is the quotient map. We have a well defined map 
\begin{eqnarray}\label{notcompl}\pi\: : \: \operatorname{Conv}_{P_{\infty}}\left(\left(V', m_{\bullet}^{V'}\right) ,\left(A, m_{\bullet}^{A} \right) \right)\to \operatorname{Conv}_{1-P_{\infty}}\left(\left(V', m_{\bullet}^{V'}\right) ,\left(A, m_{\bullet}^{A} \right) \right).
\end{eqnarray}

\begin{prop}\label{pushforward}
Let $g_{\bullet}, f_{\bullet}$ as above and let $\left(A, m_{\bullet}^{A} \right) $ be a $P_{\infty}$-algebra. Assume that $V$ is positively graded.
	\begin{enumerate}
\item The map
		$$f^{*}\: : \: \operatorname{Conv}_{1-P_{\infty}}\left(\left(V, m_{\bullet}^{V}\right) ,\left(A, m_{\bullet}^{A} \right) \right) \to\operatorname{Conv}_{1-P_{\infty}}\left(\left(W, m_{\bullet}^{W}\right) ,\left(A, m_{\bullet}^{A} \right) \right) $$ 
			is a strict morphism of $L_{\infty}$-algebras.
 \item The map
 		$$g^{*}\: : \: \operatorname{Conv}_{P_{\infty}}\left(\left(V', m_{\bullet}^{V'}\right) ,\left(A, m_{\bullet}^{A} \right) \right)\to \operatorname{Conv}_{P_{\infty}}\left(\left(W', m_{\bullet}^{W'}\right) ,\left(A, m_{\bullet}^{A} \right) \right) $$ 
 			is a strict morphism of $L_{\infty}$-algebras.			
			
		\item Assume that $\left(V', m_{\bullet}^{V'}\right)$, $\left(W', m_{\bullet}^{W'}\right)$ are positively graded $P_{\infty}$-algebras such that\\ $\mathcal{F}\left(V', m_{\bullet}^{V'}\right)=\left(V, m_{\bullet}^{V}\right)$ and $\mathcal{F}\left(W', m_{\bullet}^{W'}\right)=\left(W, m_{\bullet}^{W}\right)$. $f^{*}$ is a well defined map between set of $1$-Maurer-Cartan elements. 
			\item Assume that $\left(V', m_{\bullet}^{V'}\right)$, $\left(W', m_{\bullet}^{W'}\right)$ as in point 3. $\pi$ is a strict morphism of $L_{\infty}$-algebras. In particular, it sends $1$-Maurer-Cartan elements to Maurer-Cartan elements and $f^{*}\pi= \pi f^{*}$.
	\end{enumerate} 
\end{prop}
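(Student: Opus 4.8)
The plan is to notice that every one of the four maps is, at bottom, \emph{precomposition with a (dg\nobreakdash-)coalgebra morphism}, and that such precompositions automatically commute with the structure maps $M_{\bullet}$ of the convolution $A_{\infty}$-algebra, hence — after antisymmetrization via Theorem \ref{antilie} — with the brackets $l_{\bullet}$ and $l'_{\bullet}$. Recall from Lemma \ref{Algebrainfinitystruct} that for $n>1$ one has $M_{n}(\phi_{1},\dots,\phi_{n})=\tilde{m}^{A}_{n}(\phi_{1}\otimes\cdots\otimes\phi_{n})\circ\Delta^{n-1}$, while the unary parts are $M_{1}(\phi)=\tilde{m}^{A}_{1}\phi$ and $\partial(\phi)=\tilde{m}^{A}_{1}\phi-(-1)^{|\phi|}\phi\circ\delta$. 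If $\Phi\colon T^{c}(W[1])\to T^{c}(V[1])$ is any morphism of graded coalgebras, then iterating $\Delta\,\Phi=(\Phi\otimes\Phi)\Delta$ gives $\Delta^{n-1}\Phi=\Phi^{\otimes n}\Delta^{n-1}$, so that for $n>1$
\begin{equation*}
M_{n}(\phi_{1},\dots,\phi_{n})\circ\Phi
=\tilde{m}^{A}_{n}(\phi_{1}\otimes\cdots\otimes\phi_{n})\circ\Phi^{\otimes n}\circ\Delta^{n-1}
=M_{n}(\phi_{1}\circ\Phi,\dots,\phi_{n}\circ\Phi),\qquad n>1,
\end{equation*}
the Koszul signs matching because $\Phi$ has degree $0$; and if moreover $\Phi$ is a \emph{dg}-coalgebra morphism, i.e. $\delta^{V}\Phi=\Phi\,\delta^{W}$, then likewise $\partial(\phi)\circ\Phi=\partial(\phi\circ\Phi)$. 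Thus $\Phi^{*}(-):=(-)\circ\Phi$ strictly intertwines the families $M_{\bullet}$ and $\partial$, and, being applied factorwise, it strictly intertwines their antisymmetrizations $l_{\bullet}$, $l'_{\bullet}$ as well. This single computation underlies all four statements.

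For parts (1) and (2) I would then unwind the constructions. By Proposition \ref{Cinftydictionary} the $P_{\infty}$-morphism $g_{\bullet}$ corresponds to a dg-coalgebra morphism $G$ and $g^{*}$ is precomposition with $G$; by Proposition \ref{Cinftydictionary1} the $1$-$P_{\infty}$-morphism $f_{\bullet}$ corresponds to $F=F^{0}\oplus F^{1}$ and $f^{*}$ is precomposition with $F^{0}$ on the degree-zero part. The display above shows both are strict $L_{\infty}$-morphisms. In the $C_{\infty}$ and reduced cases one checks in addition that precomposition preserves the subspaces $L_{(-)[1]^{*}}(A)$ of morphisms annihilating the non-trivial shuffles: the coalgebra map attached to a $C_{\infty}$-morphism carries non-trivial shuffles of the source into non-trivial shuffles of the target (it descends to the respective shuffle quotients), so $\phi\circ\Phi$ annihilates $NTS$ whenever $\phi$ does. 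For the $1$-versions one also verifies that precomposition descends to the quotient by $\operatorname{Im}(\delta^{*})$, which is immediate once $\Phi^{*}$ is known to commute with $\delta^{*}$ (again the dg-property of $F$).

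Part (3) is then formal: under $\mathcal{F}(V')=V$ and $\mathcal{F}(W')=W$, the last bijection of Proposition \ref{Cinftydictionary1} identifies the $1$-Maurer--Cartan elements of $\operatorname{Conv}_{P_{\infty}}\!\left(V',A\right)$ with the Maurer--Cartan elements of $\operatorname{Conv}_{1-P_{\infty}}\!\left(V',A\right)=\operatorname{Conv}_{1-P_{\infty}}\!\left(V,A\right)$; since a strict $L_{\infty}$-morphism (part (1)) preserves Maurer--Cartan elements, transporting along this bijection yields the claimed well-defined map on $1$-Maurer--Cartan sets. For part (4), write $\pi=p\circ r$ with $r$ the restriction along the subcoalgebra inclusion $\iota\colon T^{c}(V'[1]^{0})\hookrightarrow T^{c}(V'[1])$ and $p$ the projection modulo $\operatorname{Im}(\delta^{*})$. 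Because $\iota$ is a coalgebra morphism, the computation above shows $r$ commutes with $M_{n}$ for $n>1$ on the nose, and these descend to the quotient by Corollary \ref{Linftystructnotquot}(2); the unary brackets differ, the source carrying $\partial$ and the target only $\tilde{m}^{A}_{1}(-)$, and the discrepancy $\phi\mapsto\phi\circ\delta^{V}\circ\iota$ is exactly an element of $\operatorname{Im}(\delta^{*})$ (it is $\delta^{*}$ applied to the degree-one restriction of $\phi$), hence killed by $p$. Therefore $\pi$ is a strict $L_{\infty}$-morphism and sends $1$-Maurer--Cartan elements to Maurer--Cartan elements; finally $f^{*}\pi=\pi f^{*}$ reduces to the commuting square $\iota_{V}\circ F^{0}=F\circ\iota_{W}$ expressing that the full coalgebra morphism restricts to $F^{0}$ on the degree-zero subcoalgebras.

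The main obstacle I expect is part (4), and precisely the bookkeeping at the $\operatorname{Im}(\delta^{*})$-quotient: one must check carefully that $r$, which is \emph{not} a chain map before passing to the quotient, becomes one afterwards, i.e. that the unary discrepancy genuinely lies in $\operatorname{Im}(\delta^{*})$ and that $\tilde{m}^{A}_{1}$ and the $M_{n}$ preserve $\operatorname{Im}(\delta^{*})$ so that everything descends. One must also invoke completeness of the filtered convolution $L_{\infty}$-algebras so that the Maurer--Cartan sums converge and strict morphisms preserve Maurer--Cartan elements. The remaining verifications — the coalgebra identity $\Delta^{n-1}\Phi=\Phi^{\otimes n}\Delta^{n-1}$, the sign matching, and shuffle-preservation in the $C_{\infty}$ case — are routine once the single intertwining computation is in place.
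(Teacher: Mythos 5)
Your proposal is correct: the paper's own proof is just the words ``Direct verification,'' and your single intertwining computation $M_{n}(\phi_{1},\dots,\phi_{n})\circ\Phi=M_{n}(\phi_{1}\circ\Phi,\dots,\phi_{n}\circ\Phi)$ for a (dg-)coalgebra morphism $\Phi$, together with the observations that precomposition preserves the shuffle-vanishing subspace and the $\operatorname{Im}(\delta^{*})$-quotient and that the unary discrepancy $r(\phi\circ\delta)=\delta^{*}(\phi|_{\deg 1})$ dies in that quotient, is exactly the verification the paper leaves to the reader. No gaps.
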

\begin{proof}
	Direct verification.
\end{proof}
 Let $\left(B, m_{\bullet}^{B} \right) $ be a $P_{\infty}$-algebra. For a $P_{\infty}$-map $h_{\bullet}\: : \: \left(A, m_{\bullet}^{A} \right)\to \left(B, m_{\bullet}^{B} \right)$ we have a well-defined map 
$$h^{*}\: : \: \operatorname{Conv}_{1-P_{\infty}}\left(\left(V, m_{\bullet}^{V}\right) ,\left(A, m_{\bullet}^{A} \right) \right)\to \operatorname{Conv}_{1-P_{\infty}}\left(\left(V, m_{\bullet}^{V}\right) ,\left(B, m_{\bullet}^{B} \right) \right)$$ 
defined as follows. Let $\alpha\in \operatorname{Conv}_{1-P_{\infty}}\left(\left(V, m_{\bullet}^{V}\right) ,\left(A, m_{\bullet}^{A} \right) \right)$ and let $H$ and $F$ be the coalgebras morphism corresponding to $h_{\bullet}$ and $\alpha$ respectively. We consider the composition $F H$. It  corresponds to a Maurer-Cartan element $h_{*}(\alpha)\in \operatorname{Conv}_{1-P_{\infty}}\left(\left(V, m_{\bullet}^{V}\right) ,\left(B, m_{\bullet}^{B} \right) \right)$ given by
\[
h_{*}(\alpha)|_{V[1]^{\otimes n}}=\sum _{l=1}^{n}\pm h_{l}\left(\sum_{i_{1}+\dots i_{l}=n} \alpha|_{V[1]^{\otimes i_{1}}}\otimes \cdots \otimes \alpha|_{V[1]^{\otimes i_{l}}} \right) 
\]
where the signs are a consequence of the Koszul convention. In particular, if $h_{\bullet}$
is strict we have $h_{*}(\alpha)=h \alpha$.
\begin{prop}\label{pullback}\label{reductiontoquotient morphism}
	Let $h_{\bullet}$ be as above.
	\begin{enumerate}
	\item  The map
\[
h_{*}\: : \:MC\left( \operatorname{Conv}_{1-P_{\infty}}\left(\left(V, m_{\bullet}^{V}\right) ,\left(A, m_{\bullet}^{A} \right) \right)\right)  \to MC\left( \operatorname{Conv}_{1-P_{\infty}}\left(\left(V, m_{\bullet}^{V}\right) ,\left(B, m_{\bullet}^{B} \right) \right) \right)  
\]
is well-defined.
	\item The map
	\[
	h_{*}\: : \:MC\left( \operatorname{Conv}_{P_{\infty}}\left(\left(V', m_{\bullet}^{V'}\right) ,\left(A, m_{\bullet}^{A} \right) \right)\right)  \to MC \left( \operatorname{Conv}_{P_{\infty}}\left(\left(V', m_{\bullet}^{V'}\right) ,\left(B, m_{\bullet}^{B} \right) \right) \right). 
	\]
	\item Let $\overline{h}_{\bullet}\: : \: \mathcal{F}\left(A, m_{\bullet}^{A} \right)\to\mathcal{F} \left(B, m_{\bullet}^{B} \right)$ be a $1-P_{\infty}$-morphism. Then $\overline{h}_{*}$ is well-defined and the above statements are true as well if we replace ${h}_{*}$ by $\overline{h}_{*}$.
\item  Assume that $\left(V', m_{\bullet}^{V'}\right)$ is a positively graded $P_{\infty}$-algebras such that $\mathcal{F}\left(V', m_{\bullet}^{V'}\right)=\left(V, m_{\bullet}^{V}\right)$. The map $h_{*}$ preserves $1$-Maurer-Cartan elements and $h_{*}\pi= \pi h_{*}$.
\end{enumerate}
\end{prop}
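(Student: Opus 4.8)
The plan is to reinterpret $h_{*}$, under the dictionaries of Proposition \ref{Cinftydictionary} and Proposition \ref{Cinftydictionary1}, as post-composition by $h_{\bullet}$ (respectively by $\mathcal{F}(h_{\bullet})$ or by $\overline{h}_{\bullet}$). Write $H\: : \: T^{c}(A[1])\to T^{c}(B[1])$ for the morphism of differential graded coalgebras attached to $h_{\bullet}$, and, for a Maurer-Cartan element $\alpha$, let $F$ be the morphism of differential graded coalgebras attached to it. By definition $h_{*}(\alpha)$ is the Maurer-Cartan element attached to the composite $H\circ F$. Since the claim is only that $h_{*}$ is a well-defined map of Maurer-Cartan sets (an arrow in $\left( \mathcal{L}_{\infty}-\operatorname{ALG}\right)_{p}$), it suffices to check that these composites again correspond to Maurer-Cartan elements and do not depend on the chosen representatives.

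First I would prove part (2). Here $\alpha\in \operatorname{Conv}_{P_{\infty}}\left(V',A\right)$ gives $F\: : \: T^{c}(V'[1])\to T^{c}(A[1])$. Because $\alpha$ is Maurer-Cartan, $F$ commutes with the codifferentials, $F\delta^{V'}=\delta^{A}F$, and because $h_{\bullet}$ is a $P_{\infty}$-morphism, $H\delta^{A}=\delta^{B}H$; composing gives $(H\circ F)\delta^{V'}=H\delta^{A}F=\delta^{B}(H\circ F)$, so $H\circ F$ is again a morphism of differential graded coalgebras. It preserves the group-like element, so the associated twisting cochain kills $1$, and in the $C_{\infty}$ case it vanishes on $NTS(V'[1])$ because $F$ does. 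By Proposition \ref{Cinftydictionary} the composite corresponds to a Maurer-Cartan element, namely $h_{*}(\alpha)$; equivalently $h_{*}(\alpha)$ is the element attached to the composite $P_{\infty}$-morphism $h_{\bullet}\circ f_{\bullet}$. This proves (2).

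For parts (1) and (3) I would run the same argument through Proposition \ref{Cinftydictionary1}. A $P_{\infty}$-morphism $h_{\bullet}$ induces a $1$-$P_{\infty}$-morphism $\mathcal{F}(h_{\bullet})$; more generally one may start from an arbitrary $1$-$P_{\infty}$-morphism $\overline{h}_{\bullet}\: : \: \mathcal{F}(A)\to\mathcal{F}(B)$, which is all the argument uses, giving (3). By Proposition \ref{Cinftydictionary1} a Maurer-Cartan element of the degree zero convolution is the same datum as a $1$-$P_{\infty}$-morphism $f_{\bullet}\: : \: V\to\mathcal{F}(A)$, and the composite $\overline{h}_{\bullet}\circ f_{\bullet}\: : \: V\to\mathcal{F}(B)$ is again a $1$-$P_{\infty}$-morphism, hence a Maurer-Cartan element of $\operatorname{Conv}_{1-P_{\infty}}\left(V,B\right)$. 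The one genuine point, and the step I expect to be the main obstacle, is well-definedness modulo $\operatorname{Im}(\delta^{*})$: since $h_{*}$ mixes the higher components of $\overline{h}_{\bullet}$ it is not linear, so additivity cannot be invoked directly. The resolution is that, by Proposition \ref{Cinftydictionary1}, passing to the quotient by $\operatorname{Im}(\delta^{*})$ is exactly the passage to the underlying equivalence class of truncated coalgebra data $F^{0}\oplus F^{1}$, i.e. to the $1$-$P_{\infty}$-morphism itself; composition of $1$-$P_{\infty}$-morphisms is manifestly defined on these classes, and the displayed formula for $h_{*}(\alpha)$ is simply the degree zero support of the truncated composite of $H$ with $F$.

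Finally, for part (4) I would invoke that $\pi$ is a strict morphism of $L_{\infty}$-algebras carrying $1$-Maurer-Cartan elements to Maurer-Cartan elements (Proposition \ref{pushforward}, part 4). Restriction to the degree zero support commutes with post-composition by $H$, because $H$ respects the support; hence restricting and quotienting $H\circ F$ agrees with post-composing $H$ after restricting and quotienting $F$, which is precisely the identity $h_{*}\pi=\pi h_{*}$. The same observation shows $h_{*}$ preserves $1$-Maurer-Cartan elements, since the truncation of $H\circ F$ to degrees $\leq 1$ is the composite of the truncations and hence again a $1$-$P_{\infty}$-morphism.
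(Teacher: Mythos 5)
Your proof is correct and follows the route the paper intends: the paper's own proof is just ``Direct verification,'' and your argument---reading $h_{*}$ as post-composition of the associated coalgebra morphisms via Propositions \ref{Cinftydictionary} and \ref{Cinftydictionary1}, and using the equivalence-class description of the quotient by $\operatorname{Im}(\delta^{*})$ to handle the non-linearity---is exactly that verification spelled out. Your identification of well-definedness modulo $\operatorname{Im}(\delta^{*})$ as the one non-trivial point, and its resolution through the bijection with $1$-$P_{\infty}$-morphisms, is the right way to fill the gap the paper leaves to the reader.
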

\begin{proof}
	Direct verification.
\end{proof}
The propositions above and corollaries can be summarized as follows.
\begin{thm}\label{functorLinfty}
 We have functors
\begin{align*}
	&\operatorname{Conv}_{P_{\infty}}\: : \: \left(  {P}_{\infty}-\operatorname{ALG}\right)^{op}\times  {P}_{\infty}-\operatorname{ALG}\to \left( \mathcal{L}_{\infty}-\operatorname{ALG}\right)_{p},\\	
	&\operatorname{Conv}_{1-P_{\infty}}\: : \: \left( 1-P_{\infty}-\operatorname{ALG}_{>0}\right)^{op}\times  {P}_{\infty}-\operatorname{ALG}\to \left( \mathcal{L}_{\infty}-\operatorname{ALG}\right)_{p}.
\end{align*}
\end{thm}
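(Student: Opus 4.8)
Since the statement only repackages the propositions proved above, the plan is to verify the functor axioms one factor at a time and then to confirm that the two factors interact correctly, treating the ordinary and the $1$-truncated case in parallel.

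On objects there is nothing new to prove: Lemma \ref{Algebrainfinitystruct} shows that $\operatorname{Conv}_{P_\infty}$ of a pair of $P_\infty$-algebras is an $L_\infty$-algebra, and Corollary \ref{Linftystructnotquot} shows that $\operatorname{Conv}_{1-P_\infty}$ of a positively graded $1-P_\infty$-algebra together with a $P_\infty$-algebra is again an $L_\infty$-algebra. As the objects of $\left(\mathcal{L}_\infty-\operatorname{ALG}\right)_p$ are exactly the $L_\infty$-algebras, both assignments land in the target category.

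On arrows I would proceed as follows. A morphism of the source product category is a pair, one entry in each factor. The first-variable entry produces, by Proposition \ref{pushforward}, a strict morphism of $L_\infty$-algebras ($f^*$, resp. $g^*$); since a strict $L_\infty$-morphism commutes with $\partial$ and with every bracket $l_n$, it sends Maurer--Cartan elements to Maurer--Cartan elements and hence is an arrow of $\left(\mathcal{L}_\infty-\operatorname{ALG}\right)_p$. The second-variable entry produces, by Proposition \ref{pullback}, the map $h_*$, which is already a set map between Maurer--Cartan sets. The image of the pair is then the composite of $h_*$ with the first-variable map. Preservation of identities is immediate, since the identity $P_\infty$-morphisms correspond to identity coalgebra maps, and preservation of composites follows from the functoriality of composition of the coalgebra morphisms $T^c(V[1]) \to T^c(A[1])$ furnished by the dictionaries of Propositions \ref{Cinftydictionary} and \ref{Cinftydictionary1}, because the first-variable map is precomposition and $h_*$ is postcomposition of such morphisms.

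The point I expect to carry the actual content is bifunctoriality, together with well-definedness in the truncated case. Bifunctoriality asks that the first- and second-variable maps commute on Maurer--Cartan sets; this reduces to the fact that precomposition and postcomposition of coalgebra morphisms commute by associativity, which is the ``direct verification'' invoked in the propositions. For $\operatorname{Conv}_{1-P_\infty}$ one must also check that both maps are well defined modulo $\operatorname{Im}(\delta^*)$ and compatible with the projection $\pi$ of \eqref{notcompl}; this is precisely what the identities $f^*\pi = \pi f^*$ and $h_*\pi = \pi h_*$ in Propositions \ref{pushforward} and \ref{pullback} record, and they also let one transport the ordinary statements to the truncated ones through Proposition \ref{Cinftydictionary1}. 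Assembling these verifications yields the two functors.
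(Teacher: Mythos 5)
Your proposal is correct and follows essentially the same route as the paper, which offers no separate proof but explicitly presents the theorem as a summary of Lemma \ref{Algebrainfinitystruct}, Corollary \ref{Linftystructnotquot}, and Propositions \ref{pushforward} and \ref{pullback}; your write-up simply makes explicit the object/arrow/bifunctoriality checks that those results encode. Nothing further is needed.
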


\subsection{Formal power series}
We want to express convolution $L_{\infty}$-algebras of the previous section in terms of formal power series, to do that we need to put some filtrations on $T(V)$.
We fix a finite type graded vector space $V$ which is bounded below at $k$. We define two filtrations.
Let $\epsilon\: : \: T(V)\to \Bbbk$ be the augmentation map, let $I:=\ker (\epsilon)$ be the augmentation ideal. The sequence given by the powers of $I$
\[
I^{0}:=T(V)\subset I\subset I^{2}\subset \dots
\]
is a filtration $I^{\bullet}$ on $T(V)$. We define the filtration $F^{\bullet}$ on $V$ as
$
F^{l}\left( V^{\bullet}\right):=\oplus_{j\geq k+l}V^{j}.
$
 Let $\left\lbrace v^{l}_{i}\right\rbrace_{i\in J(l)} $ be a basis of $V^{l}$, hence $$\bigcup_{i\geq 0 } \bigcup_{j\in J(k+i) }v^{k+i}_{j}$$ is a basis of $V$. We denote such a basis by $\left\lbrace v_{i}\right\rbrace_{i\in J(l), l\geq 0}$. An element of $T(V)$ can be written in a unique way as a non commutative polynomial
 \[
 \displaystyle\sum_{p\geq 0}^{N} \displaystyle \sum_{(i_{1},\dots ,i_{p})}\lambda_{i_{1},\dots ,i_{p}} v_{i_{1}}\cdots v_{i_{p}},
 \]
 such that only finitely many $\lambda_{i_{1},\dots ,i_{p}} $ are different from $0$. The elements $f\in I^{i}\left( T(V)\right) \subset T(V)$ can be written in unique way as 
\[
 \displaystyle\sum_{p\geq i}^{N} \displaystyle \sum_{(i_{1},\dots ,i_{p})}\lambda_{i_{1},\dots ,i_{p}} v_{i_{1}}\cdots v_{i_{p}},
\]
 such that only finitely many $\lambda_{i_{1},\dots ,i_{p}} $ are different from $0$. Let $A_{i}^{I}$ be the set of monomials $v_{i_{1}}\cdots v_{i_{p}}$
 such that $p<i$, it forms a basis for $T(V)/I^{i}$. The tensor algebra $T\left(V/F^{j}\left(V \right)\right)$ may be considered as the graded algebra of non commutative polynomial $\Bbbk \left\langle \left\lbrace v_{i}\right\rbrace_{i\in J(l), l < j}\right\rangle $, where $\left\lbrace v_{i}\right\rbrace_{i\in J(l), l< j}$ is a basis of $V^{k}\oplus \dots V^{k+j-1}$. For each $j\geq 0$, let $G^{j}\left(T(V)\right)\subset T(V)$ be the subspace generated by
\[
 \displaystyle\sum_{p\geq 0}^{N} \displaystyle \sum_{(i_{1},\dots ,i_{p})}\lambda_{i_{1},\dots ,i_{p}} v_{i_{1}}\cdots v_{i_{p}}\in \Bbbk \left\langle \left\lbrace v_{i}\right\rbrace_{i\in J(l), l\geq 0}\right\rangle,  
\]
such that for each $v_{i_{1}}\cdots v_{i_{p}}$ there exists a $v_{i_{n}}\in V^{s}$, $s\geq j+k$. In particular
\[
G^{j}\left(T(V) \right)\oplus T\left(V/F^{j}\left(V \right)\right) =T(V).
\]
A basis $A^{G}_{j}$ of $T(V)/G^{j}\left(T(V) \right)= T\left(V/F^{j}\left(V \right)\right)$ is given by the monomials
$
v_{i_{1}}\cdots v_{i_{p}}
$
such that each $v_{i_{j}}\in V^{k}\oplus \dots V^{k+j-1}$. Let $$H_{i,j}:=\left(  T(V)/I^{i}(T(V))\right)/G^{j}( T(V)),$$ we identify $H_{i,j}$ as the subspace of $T(V)$ generated by $A_{i,j}:=A^{G}_{j}\setminus A^{I}_{i}$. They form a diagram where all the maps are inclusion and two objects $H_{a,b}$, $H_{c,d}$ are connected by a map $H_{a,b}\hookrightarrow H_{c,d}$ if $a\leq b$ and $c\leq d$, in particular $\colim_{i,j} H_{i,j}=T(V)$. Let $W$ be a graded vector space equipped with the trivial filtration. We consider the tensor product of filtrations
\[
I^{i}(W\otimes T(V)):=W\otimes I^{i}(T(V)), \quad G^{j}(W\otimes T(V)):=W\otimes G^{j}(T(V)).
\]
Then $W\otimes  T(V)\cong W\otimes \Bbbk \left\langle \left\lbrace v_{i}\right\rbrace_{i\in I(l), l\geq 0}\right\rangle$. In particular each element can be written 
in unique way as 
\begin{equation}\label{exampleform}
 \displaystyle\sum_{p\geq 0}^{N} \sum_{q\geq 0}^{M}\sum_{v_{i_{1}}\cdots v_{i_{p}}\in A_{p,q}} w_{i_{1},\dots ,i_{p}}\otimes v_{i_{1}}\cdots v_{i_{p}}
\end{equation}
 for some $N$, $M$. Let $W\widehat{\otimes}_{I} \Bbbk \left\langle \left\langle \left\lbrace v_{i}\right\rbrace_{i\in J(l), l\geq 0} \right\rangle  \right\rangle$ be the completion of $W\otimes  T(V)$ with respect to $I^{\bullet}$. It is the graded vector space of formal power series of the form $\eqref{exampleform}$ with $N=\infty$. On the other hand, let $W\widehat{\otimes}_{G} \Bbbk \left\langle \left\lbrace v_{i}\right\rbrace_{i\in J(l), l\geq 0}\right\rangle$ be the completion of $W\otimes  T(V)$ with respect to $G^{\bullet}$, it is the formal graded vector space of formal power series $\eqref{exampleform}$ with $M=\infty$. We denote by $\mathcal{I}_{G}^{\bullet}$ the filtration obtained by the completion of $I^{\bullet}$ with respect to $G^{\bullet}$ on $W\widehat{\otimes}_{G} \Bbbk \left\langle \left\lbrace v_{i}\right\rbrace_{i\in J(l), l\geq 0}\right\rangle$. We denote by $\mathcal{G}_{I}^{\bullet}$ the filtration obtained by the completion of $G^{\bullet}$ with respect to $I^{\bullet}$ on $W\widehat{\otimes}_{I} \Bbbk \left\langle \left\lbrace v_{i}\right\rbrace_{i\in J(l), l\geq 0}\right\rangle$. The completion of $W\widehat{\otimes}_{G} \Bbbk \left\langle \left\lbrace v_{i}\right\rbrace_{i\in J(l), l\geq 0}\right\rangle$ with respect to $\mathcal{I}_{G}^{\bullet}$ coincides with the completion of $W\widehat{\otimes}_{I} \Bbbk \left\langle \left\lbrace v_{i}\right\rbrace_{i\in J(l), l\geq 0}\right\rangle$ with respect to $\mathcal{G}_{I}^{\bullet}$. We denote the obtained vector space by $ W\widehat{\otimes}\widehat{T}(V)$, it is the vector space of formal power series
\[
   \displaystyle\sum_{p\geq l}^{\infty} \sum_{q\geq 0}^{\infty}\sum_{v_{i_{1}}\cdots v_{i_{p}}\in A_{p,q}} w_{i_{1},\dots ,i_{p}}\otimes v_{i_{1}}\cdots v_{i_{p}}
 \]
 We denote the induced filtrations on the completion by $\mathcal{I}$ and resp. $\mathcal{G}$. There exists a canonical isomorphism of complete graded vector spaces
\[
\Psi\: : \: \Hom\left(T^{c}\left(V[1] \right) , W \right) \to \widehat{T}\left(\left( V[1]\right)^{*}  \right)\widehat{\otimes} W
\]
In particular $I^{\bullet}$ and $G^{\bullet}$ induce a filtration on the Lie algebra of primitive elements of $\mathbb{L}\left(\left( V[1]\right)^{*} \right)\subset T\left(\left( V[1]\right)^{*}  \right)$ and $\Psi$ restricts to an isomorphism 
\[
\Psi\: : \: L_{V[1]^{*}}\left(W \right)\to \widehat{\mathbb{L}}\left(\left( V[1]\right)^{*} \right)\widehat{\otimes} W.
\]
The filtration $\mathcal{I}$ on $\Hom^{\bullet}\left(T^{c}(V[1])^{\otimes n}, W \right)$ is given as follows: $\mathcal{I}^{i}$ is the graded vector subspace of morphisms in $\Hom^{\bullet}\left(T^{c}(V[1])^{\otimes n}, W \right)$ such that $f|_{I^{i}}\cong 0$. For each $n>0$, we consider $\Hom^{\bullet}\left(T^{c}(V[1]), A \right)^{\otimes n}$ equipped with the tensor product filtration $\mathcal{I}^{\otimes n}$. 
Let $(A, m_{\bullet}^{A})$ and $(V',m_{\bullet}^{V'})$ be $P_{\infty}$-algebras. If the dual $\Hom\left( \delta, A\right) \: : \: \Hom^{\bullet}\left(T^{c}(V'[1]), A \right)\to \Hom^{\bullet}\left(T^{c}(V'[1]), A \right)$ preserves the filtration $\mathcal{I}$ then $$\left( \operatorname{Conv}_{P_{\infty}}\left(\left(V', m_{\bullet}^{V'}\right) ,\left(A, m_{\bullet}^{A} \right) \right), l_{\bullet}\right) $$ is a complete filtered $L_{\infty}$-algebra with respect to $\mathcal{I}^{\bullet}$ and the maps in \eqref{notcompl} are strict morphisms between complete filtered $L_{\infty}$-algebras.
For $V'$ of finite type, we denote the complete ideal generated by $\delta^{*}\: : \:\left(  \widehat{T}\left( \left(V'[1]\right)^{*} \right)\right)^{1} \to\widehat{T}\left( \left(V'[1]^{0} \right)^{*} \right)$ by $\overline{\mathcal{R}}_{0}\subset \widehat{T}\left( \left(V'[1]^{0} \right)^{*} \right)$ and if $m_{\bullet}^{V'}$ is $C_{\infty}$, we denote by ${\mathcal{R}}_{0}\subset \widehat{\mathbb{L}}\left( \left(V'[1]^{0} \right)^{*} \right)$ the complete Lie ideal generated by $\delta^{*}$. Let $p,r$ and $\pi$ be as above. The next corollary is a standard exercise about filtrations.
\begin{cor}\label{maurercartandego0finitetype}Let $\left(A, m_{\bullet}^{A} \right) $ be a bounded below  $A_{\infty}$-algebra and let $V$ be of finite type. The isomorphism $\Psi$ induces the following isomorphism between complete graded vector spaces.
\begin{enumerate} 
	\item If $\left(V', m_{\bullet}^{V'}\right) $ is a $A_{\infty}$-algebra$$
	\operatorname{Conv}_{A_{\infty}}\left(\left(V', m_{\bullet}^{V'}\right) ,\left(A, m_{\bullet}^{A} \right) \right)\cong  A\widehat{\otimes }\left( \widehat{T}(\left( V[1]\right)^{*}) \right).$$
\item  If $\left(V', m_{\bullet}^{V'}\right) ,\left(A, m_{\bullet}^{A} \right) $ are $C_{\infty}$ we have $
		\operatorname{Conv}_{C_{\infty}}\left(\left(V', m_{\bullet}^{V'}\right) ,\left(A, m_{\bullet}^{A} \right) \right)\cong  A\widehat{\otimes }\left( \widehat{\mathbb{L}}(\left( V'[1]\right)^{*}) \right)$.	

	\item Assume that $\left( A, m_{\bullet}^{A}\right) $ is unital and $\left( V, m_{\bullet}^{V}\right) $ is $1-A_{\infty}$. Then 
	\[
	\operatorname{Conv}_{1-A_{\infty}}\left(\left(V', m_{\bullet}^{V'}\right) ,\left(A, m_{\bullet}^{A} \right) \right)\cong  A\widehat{\otimes }\left( \widehat{T}(\left( V[1]^{0}\right)^{*})/\overline{\mathcal{R}}_{0}\right) .
	\]
	\item Assume that $\left( A, m_{\bullet}^{A}\right) $ is unital and $C_{\infty}$ and $\left( V, m_{\bullet}^{V}\right) $ is $1-C_{\infty}$. Then
	\[
	\operatorname{Conv}_{1-C_{\infty}}\left(\left(V, m_{\bullet}^{V}\right) ,\left(A, m_{\bullet}^{A} \right) \right) \cong A\widehat{\otimes }\left( \widehat{\mathbb{L}}(\left( V[1]^{0}\right)^{*})/{\mathcal{R}}_{0}\right).
	\]	
 Moreover if $\delta^{*}$ preserves $\mathcal{I}^{\bullet}$, it is a morphism of complete $L_{\infty}$-algebras.
\item  Assume that $\left(V', m_{\bullet}^{V'}\right)$ is a positively graded $P_{\infty}$-algebras such that $\mathcal{F}\left(V', m_{\bullet}^{V'}\right)=\left(V, m_{\bullet}^{V}\right)$. Let $C\in \operatorname{Conv}_{P_{\infty}}\left(\left(V', m_{\bullet}^{V'}\right) ,\left(A, m_{\bullet}^{A} \right) \right)$ be a Maurer-Cartan element. Then $\pi(C)$ is a Maurer-Cartan element in the $L_{\infty}$-algebra \\ $\operatorname{Conv}_{1-P_{\infty}}\left(\left(V, m_{\bullet}^{V}\right) ,\left(A, m_{\bullet}^{A} \right) \right)$ 
\item  Assume that $\left(V', m_{\bullet}^{V'}\right)$ is a positively graded $P_{\infty}$-algebras such that $\mathcal{F}\left(V', m_{\bullet}^{V'}\right)=\left(V, m_{\bullet}^{V}\right)$. Let $C^{1},C^{2}\in \operatorname{Conv}_{P_{\infty}}\left(\left(V', m_{\bullet}^{V'}\right) ,\left(A, m_{\bullet}^{A} \right) \right)$ be two $1$-Maurer-Cartan elements where $V'$ is positively graded. Let $f^{1}_{\bullet}, f^{2}_{\bullet}\: : \: \left(V', m_{\bullet}^{V'}\right)\to\left(A, m_{\bullet}^{A} \right) $ be the two corresponding $P_{\infty}$-maps and assume that they are $1$-homotopic. Then $\pi(C^{1}),\pi(C^{2}) $ are homotopic Maurer-Cartan elements in the $L_{\infty}$-algebra $\operatorname{Conv}_{1-P_{\infty}}\left(\left(V, m_{\bullet}^{V}\right) ,\left(A, m_{\bullet}^{A} \right) \right)$.
\end{enumerate}
\end{cor}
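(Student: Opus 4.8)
The plan is to derive all six statements from the single isomorphism $\Psi$ together with the functoriality results already established, treating the four identifications (1)--(4) as transports of structure along $\Psi$ and the last two statements (5)--(6) as formal consequences of the strictness of $\pi$.

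First I would handle (1) and (2). Setting $W=A$, the isomorphism $\Psi\: : \: \Hom^\bullet(T^c(V'[1]),A)\to \widehat{T}((V'[1])^*)\widehat{\otimes}A$ already identifies the underlying graded vector space of $\operatorname{Conv}_{A_\infty}$ with $A\widehat{\otimes}\widehat{T}((V[1])^*)$, so the only thing to check is that $\Psi$ intertwines the filtrations $\mathcal{I}$ and $\mathcal{G}$ on the source with the corresponding filtrations on the target; this guarantees the completeness hypothesis is met and that the $L_\infty$-structure $l_\bullet$ transports verbatim. For (2) I would invoke the recorded fact that $\Psi$ restricts to an isomorphism $L_{V[1]^*}(A)\to \widehat{\mathbb{L}}((V'[1])^*)\widehat{\otimes}A$ onto the primitive (free complete Lie) part; since $\operatorname{Conv}_{C_\infty}$ is by definition the $L_\infty$-subalgebra carried by $L_{V[1]^*}(A)$, the same filtration check finishes the claim.

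For (3) and (4) the underlying space is a quotient, so the extra content is to show that $\Psi$ carries $\operatorname{Im}(\delta^*)$ onto $A\widehat{\otimes}\overline{\mathcal{R}}_0$ (respectively $A\widehat{\otimes}\mathcal{R}_0$). The inclusion of $\operatorname{Im}(\delta^*)$ into the ideal is immediate, since on the Hom side $\delta^*$ dualizes to the map whose image generates $\overline{\mathcal{R}}_0$. The reverse inclusion is where unitality enters: by Corollary \ref{Linftystructnotquot}(2) the subspace $\operatorname{Im}(\delta^*)$ is stable under all the convolution products $M_n$, and using the unit of $A$ these products realize concatenation by arbitrary elements of $\widehat{T}$, so $\operatorname{Im}(\delta^*)$ is already the full two-sided ideal generated by $\delta^*$, i.e. exactly $A\widehat{\otimes}\overline{\mathcal{R}}_0$; in the $C_\infty$ case the same argument inside $L_{V[1]^*}(A)$ produces the Lie ideal $\mathcal{R}_0$. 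Passing to the quotient, and checking that $\mathcal{I}$ and $\mathcal{G}$ descend to a complete filtration there, yields the stated isomorphisms; the final sentence of (4) then records that $\delta^*$ becomes a strict complete $L_\infty$-morphism as soon as it preserves $\mathcal{I}$.

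Finally, statements (5) and (6) are formal. The map $\pi=p\circ r$ of \eqref{notcompl} is a strict morphism of complete filtered $L_\infty$-algebras by Proposition \ref{pushforward}(4), and a strict $L_\infty$-morphism preserves the Maurer-Cartan equation termwise, which gives (5). For (6), the assumed $1$-homotopy between $f^1_\bullet$ and $f^2_\bullet$ corresponds, via the dictionary of Proposition \ref{Cinftydictionary1} applied over $\Omega(1)$, to a Maurer-Cartan element of $\Omega(1)\widehat{\otimes}\operatorname{Conv}_{P_\infty}$ interpolating $C^1$ and $C^2$; since $\pi$ is strict it commutes with $\Omega(1)\widehat{\otimes}(-)$, so its image is a Maurer-Cartan element of $\Omega(1)\widehat{\otimes}\operatorname{Conv}_{1-P_\infty}$ joining $\pi(C^1)$ and $\pi(C^2)$, while the identity $\pi h_*=h_*\pi$ of Proposition \ref{pullback}(4) ensures compatibility with the pushforward used to define the $C^i$. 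I expect the main obstacle to be the reverse inclusion in (3)--(4): verifying that the mere image $\operatorname{Im}(\delta^*)$ already coincides with the generated ideal $\overline{\mathcal{R}}_0$, which is precisely where the unital hypothesis and the product-stability of Corollary \ref{Linftystructnotquot}(2) are indispensable, together with the bookkeeping needed to see that the two interacting filtrations $\mathcal{I}$ and $\mathcal{G}$ descend to a complete filtration on the quotient.
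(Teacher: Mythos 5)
Your proposal is correct and supplies exactly the details the paper leaves implicit (the paper offers no argument beyond calling this ``a standard exercise about filtrations''): transport of structure along $\Psi$ with the filtration bookkeeping for (1)--(2), identification of $\operatorname{Im}(\delta^{*})$ with $A\widehat{\otimes}\overline{\mathcal{R}}_{0}$ (resp.\ $A\widehat{\otimes}\mathcal{R}_{0}$) for (3)--(4), and strictness of $\pi$ together with the $\Omega(1)$-dictionary for (5)--(6). For the step you single out as the main obstacle, there is a shortcut: since $\delta^{*}$ is a derivation of $\widehat{T}\left(\left(V[1]\right)^{*}\right)$ that kills the degree-zero generators (because $V$ is positively graded, so $T^{c}\left(V[1]\right)$ has nothing in degree $-1$), one has $u\,\delta^{*}(X)\,v=\pm\,\delta^{*}(uXv)$ for monomials $u,v$ in degree-zero generators, so the image of $\delta^{*}$ already coincides with the two-sided (resp.\ Lie) ideal it generates, without invoking the unit of $A$ or the $M_{n}$-stability of Corollary \ref{Linftystructnotquot}.
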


Let $\left(V, m_{\bullet}^{V}\right) ,\left(A, m_{\bullet}^{A} \right) $ be $P_{\infty}$-algebras, both assumed to be bounded below and that $V$ is of finite type. Then $m_{\bullet}^{V}$ corresponds to a codifferential $\delta$ on ${T}^{c}\left(V[1]\right)$.
\begin{defi}\label{defhomologicalpair}
Let $\left(V, m_{\bullet}^{V}\right) ,\left(A, m_{\bullet}^{A} \right) $ be $P_{\infty}$-algebras. Let $C$ be a Maurer-Cartan element in $\operatorname{Conv}_{P_{\infty}}\left(\left(V, m_{\bullet}^{V}\right) ,\left(A, m_{\bullet}^{A} \right) \right)$. We call $(C, \delta^{*})$ a \emph{homological pair}.
\end{defi}
 Let $\left(V', m_{\bullet}^{V'}\right)$ be a positively graded $P_{\infty}$-algebra of finite type and let  $\left(A, m_{\bullet}^{A} \right) $ be a unital $P_{\infty}$-algebra and let  $\alpha^{0}, \alpha^{1}\in \operatorname{Conv}_{P_{\infty}}\left(\left(V', m_{\bullet}^{V'}\right) ,\left(A, m_{\bullet}^{A} \right) \right)$ be Maurer-Cartan  elements. By Proposition \ref{Cinftydictionary}, there are two  $P_{\infty}$-morphisms $f^{0}_{\bullet}, f^{1}_{\bullet}\: : \: \left(V', m_{\bullet}^{V'}\right)\to\left(A, m_{\bullet}^{A} \right) $ associated to them.
\begin{prop}\label{propequivhomot} 
Let $\left(V', m_{\bullet}^{V'}\right)$ be a positively graded and of finite type $P_{\infty}$-algebra and let  $\left(A, m_{\bullet}^{A} \right) $ be a unital $P_{\infty}$-algebra.
	\begin{enumerate}
		\item Let  $\alpha^{0}, \alpha^{1}\in 	\operatorname{Conv}_{P_{\infty}}\left(\left(V', m_{\bullet}^{V'}\right) ,\left(A, m_{\bullet}^{A} \right) \right)$ be Maurer-Cartan  elements.\\ Let $f^{0}_{\bullet}, f^{1}_{\bullet}\: : \: \left(V, m_{\bullet}^{V}\right)\to\left(A, m_{\bullet}^{A} \right) $ be the two corresponding $P_{\infty}$-map. Then if they are homotopic, so are $\alpha^{0}, \alpha^{1}$. Moreover the pullback and pushforward along homotopic maps gives homotopic Maurer-Cartan elements.
	\item Let $\left(V,m_{\bullet}^{V} \right) $ be a $1-P_{\infty}$-algebra and let $\alpha^{0}, \alpha^{1}\in 	\operatorname{Conv}_{1-P_{\infty}}\left(\left(V, m_{\bullet}^{V}\right) ,\left(A, m_{\bullet}^{A} \right) \right)$ be Maurer-Cartan  elements. Let $f^{0}_{\bullet}, f^{1}_{\bullet}\: : \: \left(V, m_{\bullet}^{V}\right)\to\left(A, m_{\bullet}^{A} \right) $ be the two corresponding $P_{\infty}$-map viewed as $1$-morphism. Then if they are $1$-homotopic, it follows that $\alpha^{0}$ and $\alpha^{1}$ hare homotopic.
	 Moreover the pullback along $1$-homotopic maps and the pushforward along homotopic maps gives homotopic Maurer-Cartan elements.
	\end{enumerate}
	\end{prop}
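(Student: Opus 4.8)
The plan is to upgrade the dictionary of Proposition \ref{Cinftydictionary} (and Proposition \ref{Cinftydictionary1} in the $1$-truncated case) from an identification of $P_\infty$-morphisms with Maurer--Cartan elements to an identification of \emph{homotopies} on both sides. The point is that homotopy of $P_\infty$-morphisms (Definition \ref{homotopy}) and homotopy of Maurer--Cartan elements (Definition \ref{Linftydef}, item (5)) are both governed by tensoring with $\Omega(1)$, so it suffices to show these two occurrences of $\Omega(1)$ correspond under the convolution construction. Throughout, finiteness of type and positive grading of $V'$ guarantee, via Corollary \ref{maurercartandego0finitetype}, that the relevant convolution $L_\infty$-algebras are complete, so all Maurer--Cartan sums converge; the argument runs uniformly in $P_\infty$, restricting to the reduced convolution subalgebra $L_{V[1]^{*}}(A)$ in the $C_\infty$ case.

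The main technical step is the isomorphism of complete filtered $L_\infty$-algebras
\[
\Conv_{P_\infty}\!\left(\left(V', m_\bullet^{V'}\right),\left(\Omega(1)\otimes A, m_\bullet^{\Omega(1)\otimes A}\right)\right)\ \cong\ \Omega(1)\,\widehat{\otimes}\,\Conv_{P_\infty}\!\left(\left(V', m_\bullet^{V'}\right),\left(A, m_\bullet^{A}\right)\right),
\]
where the left-hand side uses the $P_\infty$-structure on $\Omega(1)\otimes A$ furnished by Lemma \ref{tensorinfinity}. Since $\Omega(1)$ is finite dimensional in each degree, at the level of underlying graded vector spaces both sides are the completion of $\Omega(1)\otimes \Hom^\bullet(T^c(V'[1]),A)$. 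That the brackets $l'_\bullet$ match follows from Lemma \ref{tensorinfinity}: the products $\tilde m^{\Omega(1)\otimes A}_n$ act by multiplying the $\Omega(1)$-factors and applying $\tilde m^{A}_n$ to the $A$-factors, which is exactly the $L_\infty$-structure induced on $\Omega(1)\,\widehat{\otimes}\,\Conv_{P_\infty}(V',A)$ by the commutative differential graded algebra $\Omega(1)$. Keeping track of the Koszul signs arising from the tensor product of morphisms, together with the sign $\tilde m_n=(-1)^n m_n$, is the only delicate point.

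Granting this, I would prove (1) as follows. A homotopy between $f^0_\bullet$ and $f^1_\bullet$ is, by Definition \ref{homotopy}, a $P_\infty$-map $H_\bullet\colon V'\to \Omega(1)\otimes A$; by Proposition \ref{Cinftydictionary} applied with target $\Omega(1)\otimes A$ it corresponds to a Maurer--Cartan element of $\Conv_{P_\infty}(V',\Omega(1)\otimes A)$, hence, via the displayed isomorphism, to an element $\alpha(t)\in MC\!\left(\Omega(1)\,\widehat{\otimes}\,\Conv_{P_\infty}(V',A)\right)$. This $\alpha(t)$ is precisely a homotopy of Maurer--Cartan elements in the sense of Definition \ref{Linftydef}(5). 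Its endpoints are computed by postcomposing $H_\bullet$ with the strict $P_\infty$-morphisms $i_0\otimes \mathrm{Id}$ and $i_1\otimes \mathrm{Id}$ of Lemma \ref{tensorinfinity}, which on Maurer--Cartan elements act by $i_j\,\widehat{\otimes}\,\mathrm{Id}$ (pushforward along a strict map, cf.\ Proposition \ref{pullback}). By the defining property of a homotopy these composites are $f^0_\bullet$ and $f^1_\bullet$, so $\alpha(0)=\alpha^0$ and $\alpha(1)=\alpha^1$; thus $\alpha^0$ and $\alpha^1$ are homotopic.

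For (2) I would run the same argument with Proposition \ref{Cinftydictionary1} in place of Proposition \ref{Cinftydictionary}, replacing $\Conv_{P_\infty}$ by the degree zero (reduced) convolution $L_\infty$-algebra $\Conv_{1-P_\infty}$ and $H_\bullet$ by a $1$-homotopy $V\to \Omega(1)\otimes A$. The extra point is that passage to the quotient by $\operatorname{Im}(\delta^*)$ is compatible with the path object, i.e.\ the strict projection $\pi$ of \eqref{notcompl} commutes with $\Omega(1)\,\widehat{\otimes}\,(-)$; this is immediate from Proposition \ref{pushforward}(4) and Corollary \ref{maurercartandego0finitetype}, so the $1$-homotopy descends to a homotopy of Maurer--Cartan elements in $\Conv_{1-P_\infty}(V,A)$. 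Finally, the ``moreover'' statements follow from functoriality: the pullback $f^*$ and the pushforward along strict maps are \emph{strict} morphisms of $L_\infty$-algebras by Propositions \ref{pushforward} and \ref{pullback}, hence commute with $\Omega(1)\,\widehat{\otimes}\,(-)$ and send homotopies of Maurer--Cartan elements to homotopies; for the general (non-strict) pushforward along homotopic maps one instead invokes the remark following Lemma \ref{tensorinfinity}, which shows that composing the homotopy $H_\bullet$ with a $P_\infty$-map on either side again yields a homotopy, and then transports this back through the dictionary. I expect the verification of the bracket-and-sign compatibility in the displayed isomorphism to be the main obstacle; once that identification is in place, everything else is formal.
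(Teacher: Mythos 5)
Your proposal is correct and follows essentially the same route as the paper's own proof: apply the dictionary of Proposition \ref{Cinftydictionary} (resp.\ \ref{Cinftydictionary1}) to the homotopy $H_{\bullet}$ to get a Maurer--Cartan element of $\Conv_{P_{\infty}}\left(V',\Omega(1)\otimes A\right)\cong\Omega(1)\widehat{\otimes}\,\Conv_{P_{\infty}}\left(V',A\right)$ --- the isomorphism the paper obtains from $\Psi$ --- and read off the endpoints via Lemma \ref{tensorinfinity}. The paper's argument is simply a terser version of what you wrote, so no further changes are needed.
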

		\begin{proof}
			We prove the first assertion. Let $H_{\bullet}$ be a homotopy between the two maps. We apply \eqref{Cinftydictionary} to $H_{\bullet}$. This gives a Maurer-Cartan element $$\alpha(t)\in \operatorname{Conv}_{A_{\infty}}\left(\left(V, m_{\bullet}^{V}\right) ,\left(\Omega(1)\otimes A, m_{\bullet}^{\Omega(1)\otimes A} \right)\right)\cong \Omega(1)\widehat{\otimes}\operatorname{Conv}_{A_{\infty}}\left(\left(V, m_{\bullet}^{V}\right) ,\left(A, m_{\bullet}^{A} \right) \right),$$
			where the last isomorphism is given by $\Psi$. On the left hand side we have the desired homotopy between $\alpha^{0}$ and $\alpha^{1}$ the sense of definition \eqref{Linftydef}. The other assertion follows similarly by using the map $\Psi$ as well and point 2 of Lemma \ref{tensorinfinity}.
		\end{proof}
		\subsection{ $1$-models and $1$-minimal models}\label{minimal model}
		\begin{defi}
 Let $\left(A,m^{A}_{\bullet}\right) $ be a non-negatively graded (unital) $P_{\infty}$-algebra. A $P_{\infty}$-sub algebra $\left(B,m^{B}_{\bullet}\right) $ is a $P_{\infty}$-algebra such that the inclusion is a strict morphism $i\: : \: B\hookrightarrow A$ of $P_{\infty}$-algebras. Hence $m_{\bullet}^{A}=m_{\bullet}^{B}$. Let $1\leq j\leq \infty$. A (unital) $P_{\infty}$-sub algebra $B$ is a $j$-model for $\left(A,m^{A}_{\bullet}\right) $ if 
\begin{enumerate}
		\item $i$ induces an isomorphism up to the $j$-th cohomology group and is injective on the $j+1$ cohomology group.
		\item the inclusion  $i^{l}\: : \: B^{l}\hookrightarrow A^{l}$ preserves non-exact elements for $0\leq l\leq j+1$.
		\end{enumerate}
		If $j=\infty$, we call $B$ a\emph{ model} for $\left(A,m^{A}_{\bullet}\right) $. A (unital) $P_{\infty}$-algebra $(W,m_{\bullet}^{W})$ is said to be minimal if $m_{1}^{W}=0$. A \emph{$j$-minimal model} of $\left(A,m^{A}_{\bullet}\right) $ consists in an (unital) morphism $g_{\bullet}\: : \: (W,m_{\bullet}^{W})\to \left(A,m^{A}_{\bullet}\right) $ such that $(W,m_{\bullet}^{W})$ is minimal and $g_{\bullet}$ induces an isomorphism up to the $j$-th cohomology group and is injective on the $j+1$ cohomology group. If $j=\infty$ we call $g_{\bullet}$ a \emph{minimal model}. 
\end{defi}	
	\begin{defi}
					Let $\left(A,m^{A}_{\bullet}\right) $ be a non-negatively graded (unital) $1-P_{\infty}$-algebra. A $1-P_{\infty}$-sub algebra $\left(B,m^{B}_{\bullet}\right) $ is a $1-P_{\infty}$-algebra such that the inclusion is a strict morphism $i\: : \: B\hookrightarrow A$ of $P_{\infty}$-algebras. A (unital) $P_{\infty}$-sub algebras $B$ is a $1$-model for $\left(A,m^{A}_{\bullet}\right) $ if $i$ is a quasi-isomorphism and the inclusion  $i^{l}\: : \: B^{l}\hookrightarrow A^{l}$ preserves non-exact elements for $0\leq l\leq 2$. A (unital) non-negatively graded $1-P_{\infty}$-algebra $(V,m_{\bullet}^{V})$ is said to be minimal if $m_{1}^{V}=0$ and if $V^{2}$ is the vector space generated by $m_{n}(a_{1}, \dots, a_{n})$ where $a_{i}\in V^{1}$. Let $B$ be a positively graded $1-P_{\infty}$-algebra. A \emph{$1$-minimal model with coefficients in $B$} consists in an (unital) $1$-quasi-isomorphism $g_{\bullet}\: : \: (V,m_{\bullet}^{V})\to \mathcal{F}\left(B,m^{A}_{\bullet}\right) $ such that $(V,m_{\bullet}^{V})$ is minimal. 
	\end{defi}	
Each $1$-minimal model for $P_{\infty}$-algebras $g_{\bullet}\: : \: \left(V', m_{\bullet}^{V'} \right) \to \left(A, m_{\bullet} \right) $ gives a $1$-minimal model $\mathcal{F}\left( {g'}_{\bullet}\right) $ for $\mathcal{F}\left(A, m_{\bullet} \right) $ via 
\[
\begin{tikzcd}
{g'}_{\bullet}\: :\:\left(V, m_{\bullet}^{V'} \right) \arrow[r, hook] & \left(V', m_{\bullet}^{V'} \right)\arrow{r}{{g}_{\bullet}} & \left(A, m_{\bullet} \right)
\end{tikzcd}
\]
where $\left(V, m_{\bullet}^{V'} \right)\subset  \left(V', m_{\bullet}^{V'} \right) $ is the $P_{\infty}$-subalgebra generated by $V^{0}$ and $V^{1}$.
	\begin{rmk}
The minimal models and $1$-minimal models can be constructed explicit via the homotopy transfer theorem (see \cite{Kadesh}, \cite{kontsoibel},\cite{Markl} and \cite{Prelie}).
	\end{rmk}
	\begin{defi}
A non-negatively graded $P_{\infty}$-algebra (resp. $1-P_{\infty}$-algebra) has connected cohomology if its cohomology in degree zero has dimension $1$. 
\end{defi}	
Let $W$ be a non-negatively graded vector space. Let $m_{\bullet}^{W}$ be a $P_{\infty}$-structure on $W^{\bullet}$. For $n>1$, let $m_{n}^{W_{+}}$ be the maps given by the composition
\[
\begin{tikzcd}
W_{+}^{\otimes n}\arrow[hook]{r}&  W^{\otimes n}\arrow{r}{m_{n}^{W}} & W_{+}.
\end{tikzcd}
\]
They are well-defined since $m_{n}^{W}$ are maps of degree $2-n$ and $(W_{+}^{\bullet}, m_{\bullet}^{W_{+}})$ is a $P_{\infty}$-algebra. Assume that $m_{1}^{W}=0$. Let $g_{\bullet}\: : \: (W, m^{W})\to (A, m^{A})$ a morphism of $P_{\infty}$-algebras. We denote by $g^{+}_{\bullet}$ the restriction of $g_{\bullet}$ to $\left(W^{\bullet}_{+}, m_{\bullet}^{W_{+}}\right)$. Then $g_{\bullet}^{+}\: : \: (W_{+}, m^{W_{+}})\to (A, m^{A})$ is a well-defined morphism of $P_{\infty}$-algebras. Assume that $(W, m^{W})$ is connected and that $ (A, m^{A})$ is unital. The map $g_{\bullet}\mapsto g_{\bullet}^{+}$ is a bijection. 
Let $\left(A,m_{\bullet}\right) $ be a non-negatively graded unital $P_{\infty}$-algebra with connected cohomology.  Let $g_{\bullet}^{1}\: : \:(W_{1}, m_{\bullet}^{1})\to \left(A,m_{\bullet}\right) $ be a $P_{\infty}$-algebra minimal model. By Proposition \ref{existence of a homotopical inverse} we get an inverse up to homotopy
\[
\begin{tikzcd}
	g^{1}_{\bullet}\: : \: ({W_{1}}, m_{\bullet}^{1})\arrow[r, shift right, ""]&(A,m_{\bullet})\: : \: f^{1}_{\bullet}	\arrow[l, shift right, ""]. 
				\end{tikzcd}
				\]	
 	\begin{prop}\label{homotopyuniqueness}Let $g_{\bullet}^{1}$ be as above.\begin{enumerate}
	\item Let $g_{\bullet}^{2}\: : \:(W_{2}, m_{\bullet}^{2})\to \left(A,m_{\bullet}\right) $ be $P_{\infty}$-algebra-morphism between non-negatively graded unital objects. Consider the diagram
		\[
		\begin{tikzcd}
		({W_{1}}, m_{\bullet}^{1})\arrow[rrr, shift right, " g^{1}_{\bullet}"']&&&(A,m_{\bullet})
		\arrow[lll, shift right, " f^{1}_{\bullet} "'] \\ \\ \\
		({W_{2}}, m_{\bullet}^{2})\arrow[uuu, shift right, "k_{\bullet}"]\arrow[uuurrr, shift right, "g^{2}_{\bullet}"']
		\end{tikzcd}
		\]
		where $k_{\bullet}:=f^{1}_{\bullet}g^{2}_{\bullet}$. For $j=1,2$ let $\alpha_{j}\in \operatorname{Conv}_{r}\left(\left( \left(W_{j}\right)  _{+}, m_{\bullet}^{j}\right) ,\left(A, m_{\bullet}^{A} \right) \right)$ be the Maurer-Cartan elements corresponding to $g_{\bullet}^{j}$ via Proposition \ref{Cinftydictionary} in the reduced convolution algebra. Then 
		$\alpha_{2}$ is homotopic to $k^{*}\left( \alpha_{1}\right)$. Moreover, $k_{\bullet}$ is an isomorphism if $g_{\bullet}^{2}$ is a $1$-minimal model.
		\item Let $g_{\bullet}^{2}\: : \:(W_{2}, m_{\bullet}^{2})\to \mathcal{F}\left(A,m_{\bullet}\right) $ be $1-P_{\infty}$-algebra-morphism between non-negatively graded unital objects. Consider the diagram
		\[
		\begin{tikzcd}
		\mathcal{F}({W_{1}}, m_{\bullet}^{1})\arrow[rrr, shift right, "\mathcal{F}\left(  g^{1}_{\bullet}\right) "']&&&\mathcal{F}(A,m_{\bullet})
		\arrow[lll, shift right, " \mathcal{F}\left( f^{1}_{\bullet} \right) "'] \\ \\ \\
		({W_{2}}, m_{\bullet}^{2})\arrow[uuu, shift right, "k_{\bullet}"]\arrow[uuurrr, shift right, "g^{2}_{\bullet}"']
		\end{tikzcd}
		\]
		where $k_{\bullet}:=\mathcal{F}\left( f^{1}_{\bullet} \right)g^{2}_{\bullet}$. For $j=1,2$, let $\overline{\alpha}_{j}$ be the Maurer-Cartan elements corresponding to $g_{\bullet}^{j}$ via Proposition \ref{Cinftydictionary1} in the degree zero reduced convolution algebra. Then 
				$\overline{\alpha}_{2}$ is homotopic to $k^{*}\left( \overline{\alpha}_{1}\right)$. Moreover, $k_{\bullet}$ is a $1$-isomorphism if $g_{\bullet}^{2}$ is a $1$-minimal model.
				\item Consider the situation at point 2  and assume that $g_{\bullet}^{2}$ is a $1$-minimal model. The $P_{\infty}$-map 
				\[
				\mathcal{E}(k_{\bullet})\: : \: 	\mathcal{E}({W_{2}}, m_{\bullet}^{2})\to	\mathcal{E}\mathcal{F}({W_{1}}, m_{\bullet}^{1})
				\]
				has a left-inverse ${k'}_{\bullet}$.
	\end{enumerate}
	In particular, $1$-minimal model for $1-P_{\infty}$-algebras are unique modulo $1$-isomorphisms.
\end{prop}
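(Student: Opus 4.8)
The plan is to transport the entire statement into the language of convolution $L_{\infty}$-algebras, where the assertions become comparisons of Maurer--Cartan elements, and to use the homotopical inverse $f^{1}_{\bullet}$ of the minimal model $g^{1}_{\bullet}$ provided by Proposition \ref{existence of a homotopical inverse} as the bridge between the two models. All three parts run on one mechanism: each $P_{\infty}$- (resp. $1\text{-}P_{\infty}$-) morphism is encoded by a Maurer--Cartan element via Proposition \ref{Cinftydictionary} (resp. Proposition \ref{Cinftydictionary1}); the operations $k^{*}$ and $h_{*}$ on convolution algebras correspond to pre- and post-composition of morphisms by Propositions \ref{pushforward} and \ref{pullback}; and homotopic morphisms yield homotopic Maurer--Cartan elements by Proposition \ref{propequivhomot}. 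So the proof is essentially bookkeeping of homotopies through these dictionaries.

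For part 1, I would first observe that, by construction, $k^{*}(\alpha_{1})$ is the Maurer--Cartan element attached to the composite $g^{1}_{\bullet}\circ k_{\bullet}=g^{1}_{\bullet}f^{1}_{\bullet}g^{2}_{\bullet}$ in the reduced convolution algebra. Since $f^{1}_{\bullet}$ is a homotopical inverse of $g^{1}_{\bullet}$ we have $g^{1}_{\bullet}f^{1}_{\bullet}\cong \mathrm{Id}_{A}$, and composing this homotopy on the right with $g^{2}_{\bullet}$ (using the stability of homotopies under composition recorded after Lemma \ref{tensorinfinity}) gives $g^{1}_{\bullet}f^{1}_{\bullet}g^{2}_{\bullet}\cong g^{2}_{\bullet}$. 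As $\alpha_{2}$ corresponds to $g^{2}_{\bullet}$, Proposition \ref{propequivhomot} yields $\alpha_{2}\cong k^{*}(\alpha_{1})$. For the ``moreover'' I would pass to cohomology: $f^{1}_{\bullet}$ is a quasi-isomorphism (being a homotopical inverse of one), so $k_{1}=f^{1}_{1}g^{2}_{1}$ is an isomorphism in degrees $\le 1$ and injective in degree $2$; minimality of $W_{1}$ and $W_{2}$ (i.e. $m_{1}=0$) identifies cohomology with the underlying spaces in these degrees, and the degree-one generation of a $1$-minimal model then forces $k_{1}$, hence $k_{\bullet}$, to be an isomorphism.

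Part 2 is formally identical, carried out in the $1\text{-}P_{\infty}$ world with the degree-zero reduced convolution algebra and Proposition \ref{Cinftydictionary1}. The only new input is that the forgetful functor $\mathcal{F}$ is compatible with the cylinder $\Omega(1)\otimes(-)$ and hence sends the $P_{\infty}$-homotopy $g^{1}_{\bullet}f^{1}_{\bullet}\cong \mathrm{Id}_{A}$ to a $1\text{-}P_{\infty}$-homotopy $\mathcal{F}(g^{1}_{\bullet})\mathcal{F}(f^{1}_{\bullet})\cong \mathrm{Id}_{\mathcal{F}(A)}$; composing with $g^{2}_{\bullet}$ and invoking part 2 of Proposition \ref{propequivhomot} gives $\overline{\alpha}_{2}\cong k^{*}(\overline{\alpha}_{1})$. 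The cohomological argument is the same, except that the composite is now only a $1$-quasi-isomorphism, so minimality upgrades it to a $1$-isomorphism rather than a full isomorphism.

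For part 3 and the final uniqueness statement I would build the left inverse ${k'}_{\bullet}$ by hand, and I expect this to be the main obstacle. Since $k_{\bullet}$ is a $1$-isomorphism, its linear part $k_{1}$ is bijective in degrees $\le 1$ and a split injection in degree $2$; because $\mathcal{E}$ extends structures by zero, the higher operations of $\mathcal{E}\mathcal{F}(W_{1},m^{1}_{\bullet})$ and $\mathcal{E}(W_{2},m^{2}_{\bullet})$ that could obstruct a retraction vanish outside the truncated range. I would define ${k'}_{1}$ to be $k_{1}^{-1}$ in degrees $\le 1$ and a chosen linear retraction of $k_{1}$ in degree $2$, and then construct the higher components ${k'}_{n}$ by induction so that the morphism relations \eqref{tricksign} hold and ${k'}_{\bullet}\mathcal{E}(k_{\bullet})=\mathrm{Id}$, each inductive step being solvable precisely because the obstructing structure maps have been killed by $\mathcal{E}\mathcal{F}$. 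Once ${k'}_{\bullet}$ exists, $k_{\bullet}$ is invertible as a $1$-morphism, so every $1$-minimal model of $\mathcal{F}(A,m_{\bullet})$ is $1$-isomorphic to $\mathcal{F}(W_{1},m^{1}_{\bullet})$, which gives the asserted uniqueness up to $1$-isomorphism. By contrast the homotopy transport in parts 1 and 2 is essentially automatic once the dictionaries are installed; the one delicate point there is the compatibility of $\mathcal{F}$ with $\Omega(1)\otimes(-)$ that lets a $P_{\infty}$-homotopy descend to a $1\text{-}P_{\infty}$-homotopy.
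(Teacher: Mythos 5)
Your proposal is correct and follows essentially the same route as the paper: use the homotopical inverse $f^{1}_{\bullet}$ to get $g^{1}_{\bullet}f^{1}_{\bullet}g^{2}_{\bullet}\cong g^{2}_{\bullet}$ by composing the homotopy $H_{\bullet}$ with $g^{2}_{\bullet}$, translate via the morphism/Maurer--Cartan dictionary and Proposition \ref{propequivhomot}, and deduce the (1-)isomorphism claim from $m_{1}=0$. The only cosmetic difference is in part 3, where the paper extends the linear left-inverse $k'$ to a $P_{\infty}$-map by citing Theorem 10.4.2 of \cite{lodayVallette} rather than running the inductive obstruction argument you sketch by hand.
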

		\begin{proof}
		By Proposition \ref{existence of a homotopical inverse}, there exists a homotopy $H_{\bullet}$ between $\left( g^{1}\right)_{\bullet} \left( f^{1}\right)_{\bullet} $ and $Id_{A}$. Then $$g^{1}_{\bullet} k_{\bullet}=g^{1}_{\bullet} f^{1}_{\bullet}g^{2}_{\bullet}$$
		is homotopic to $g^{2}_{\bullet}$ via $ H_{\bullet}g^{2}_{\bullet}$. We prove the second part. If $g_{\bullet}^{2}$ is a quasi-isomorphism then $k_{1}=f^{1}g^{2}\: : \: W_{2}\to W_{1}$ is a quasi-isomorphism. Since $m_{1}^{j}=0$ for $j=1,2$, it is an isomorphism. Part 2. follows analogously. We prove part 3. By construction we have $\left( \mathcal{E}(k)\right)_{1}\: : \: W^{i}\to W^{i}$ is an isomorphism for $i\neq 2$ and  is injective for $i=2$. Then, there exists a left-inverse ${k'}\: : \: W\to W$. By using Theorem 10.4.2 in \cite{lodayVallette}, there exists an explicit $P_{\infty}$-map ${k'}_{\bullet}$ which extends ${k'}$. 
		\end{proof}
Consider the situation at point 2  in Proposition \ref{homotopyuniqueness}. We assume that $W_{i}$ are of finite type for $i=1,2$ and that $(W_{2},m_{\bullet}^{2})$ is minimal. By Proposition \ref{Cinftydictionary1}, $k_{\bullet}$ corresponds to a tensor coalgebra map $K=K^{0}\oplus K^{1}$ where
		$$K\: : \: \left( T^{c}\left(\left( W_{2}\right)_{+}[1] \right)\right)^{0}\oplus \left( \left( T^{c}\left(\left( W_{2}\right)_{+}[1] \right)\right)^{1}\right) \to \left( T^{c}\left(\left( W_{1}\right)_{+}[1] \right)\right)^{0}\oplus  \left( T^{c}\left(\left( W_{1}\right)_{+}[1] \right)\right)^{1}$$ such that 
 $\left(K\otimes K \right) \Delta=\Delta K $ and $K^0\delta^{1}=\delta^{1} K^1$. Its dual on degree-zero elements gives a map
 $$
 \left(  K^{0}\right)^{*}\: : \:\widehat{T}  \left(\left(  W_{1}^{1}\right) _{+}[1]\right) ^{*}\to \widehat{T} \left(\left(  W_{2}^{1}\right) _{+}[1]\right)^{*}
 $$
which is a morphism of complete tensor algebras. These tensor algebras are in fact Hopf algebras, where the comultiplication is given by the shuffles coproduct. 
We now assume that $k_{\bullet}$ is a $1-C_{\infty}$-morphism between $1-C_{\infty}$-algebras. It follows that $K$ vanishes on non-trivial shuffles, $K^{*}$ restricts to a morphism between complete free lie algebras
  \begin{equation}
 \left(  K^{0}\right)^{*}\: : \:\widehat{\mathbb{L}}  \left(\left(  W_{1}^{1}\right) _{+}[1]\right) ^{*}\to \widehat{\mathbb{L}} \left(\left(  W_{2}^{1}\right) _{+}[1]\right)^{*},
  \end{equation}
  and $\left( \delta^{i}\right)^{*}\: : \: \left( \widehat{T}  \left(\left(  W_{1}\right) _{+}[1]\right) ^{*}\right)^{-1}\to \widehat{T} \left(\left(  W_{i}^{1}\right) _{+}[1]\right)^{*}$
  restricts to a well-defined Lie algebra map $\left( \delta^{i}\right)^{*}\: : \: \left( \widehat{\mathbb{L}}  \left(\left(  W_{1}\right) _{+}[1]\right) ^{*}\right)^{-1}\to \widehat{\mathbb{L}} \left(\left(  W_{i}^{1}\right) _{+}[1]\right)^{*}$ for $i=1,2$. We denote by ${\mathcal{R}}_{0}^{i}\subset \widehat{\mathbb{L}}  \left(\left( W_{i}^{1}\right) _{+}[1]\right) ^{*}$ the complete Lie ideal in the complete tensor algebra generated by the image of $\left( \delta^{i}\right)^{*}$ for $i=1,2$. Let $\bar{\mathcal{R}}_{0}^{i}\subset \widehat{T}  \left(\left( W_{i}^{1}\right) _{+}[1]\right) ^{*}$ be the ideals in the complete tensor algebra generated by ${\mathcal{R}}_{0}^{i}$ for $i=1,2$. Since $\delta^{i}$ vanishes on non-trivial shuffles we get that $\bar{\mathcal{R}}_{0}^{i}$ is a Hopf ideal for $i=1,2$ as well. Hence $
   \left(  K^{0}\right)^{*}\: : \:\widehat{T}  \left(\left(  W_{1}^{1}\right) _{+}[1]\right) ^{*}/\bar{\mathcal{R}}_{0}^{1}\to \widehat{T} \left(\left(  W_{2}^{1}\right) _{+}[1]\right)^{*}/\bar{\mathcal{R}}_{0}^{2}
  $ is a morphism of complete Hopf algebras and it restricts to a Lie algebra morphism on the primitive elements. By abuse of notation we denote its restriction by
  \begin{equation}\label{mapKdual}
  K^{*}:=\left(  K^{0}\right) ^{*}\: : \:\widehat{\mathbb{L}}  \left(\left(  W_{1}^{1}\right) _{+}[1]\right) ^{*}/\mathcal{R}_{0}^{1}\to \widehat{\mathbb{L}} \left(\left(  W_{2}^{1}\right) _{+}[1]\right)^{*}/\mathcal{R}_{0}^{2}.
  \end{equation}	
 In particular, the pullback along $k_{\bullet}$
  \[
  k^{*}\: :\:  \operatorname{Conv}_{1-P_{\infty}}\left(\left(V, m_{\bullet}^{V}\right) ,\left(A, m_{\bullet}^{A} \right) \right) \to\operatorname{Conv}_{1-P_{\infty}}\left(\left(W, m_{\bullet}^{W}\right) ,\left(A, m_{\bullet}^{A} \right) \right)
  \]
  corresponds under the identification of Corollary \ref{maurercartandego0finitetype} to 
  \[
  \mathrm{Id}\widehat{\otimes }K^{*}\: : \: A\widehat{\otimes }\left( \widehat{\mathbb{L}}  \left(\left(  W_{1}^{1}\right) _{+}[1]\right) ^{*}/\mathcal{R}_{0}^{1} \right)\to A\widehat{\otimes }\left( \widehat{\mathbb{L}}  \left(\left(  W_{2}^{1}\right) _{+}[1]\right) ^{*}/\mathcal{R}_{0}^{2} \right)
  \]
\begin{prop}\label{beh}
	Consider the situation at point 2 in Proposition \ref{homotopyuniqueness} for $P_{\infty}=C_{\infty}$. Assume that $g_{\bullet}^{2}$ is a $1$-minimal model for $1-C_{\infty}$-algebras and that $W_{i}$ are of finite type for $i=1,2$. Then $
 \left(  K^{0}\right)^{*}\: : \:\widehat{T}  \left(\left(  W_{1}^{1}\right) _{+}[1]\right) ^{*}/\bar{\mathcal{R}}_{0}^{1}\to \widehat{T} \left(\left(  W_{2}^{1}\right) _{+}[1]\right)^{*}/\bar{\mathcal{R}}_{0}^{2}
 $ 
is an isomorphism of complete Hopf algebras. In particular its restriction on primitive elements $K^{*}$ is an isomorphism of complete Lie algebras. Moreover $K^{*}=\sum_{i=1}^{\infty}K_{i}^{*}$ where $K_{1}^{*}$ is induced by the dual of $k_{1}=f^{1}_{1}g^{2}_{2}\: : \: W_{2}\to W_{1}$.
 \end{prop}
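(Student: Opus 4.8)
The plan is to reduce everything to a filtered-isomorphism argument for the augmentation filtration $\mathcal{I}^{\bullet}$ of the discussion around \eqref{exampleform}; the only non-formal input is Proposition \ref{homotopyuniqueness}(2): since $g_{\bullet}^{2}$ is a $1$-minimal model, $k_{\bullet}$ is a $1$-isomorphism, so its linear part $k_{1}$ is an isomorphism in degrees $\leq 1$ and injective in degree $2$. In particular $k_{1}\: :\: W_{2}^{1}\to W_{1}^{1}$ is a linear isomorphism, hence its dual $k_{1}^{*}$ is an isomorphism of the generating spaces $\left( (W_{1}^{1})_{+}[1]\right)^{*}$ and $\left( (W_{2}^{1})_{+}[1]\right)^{*}$.

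First I would record the leading-term expansion, which simultaneously yields the last assertion. Since $K=K^{0}\oplus K^{1}$ is assembled from the components $k_{n}$ of $k_{\bullet}$ by the usual sum over compositions, dualizing shows that $(K^{0})^{*}$ sends a length-one functional $\xi$ to $\sum_{n\geq 1}\xi\circ k_{n}$, the $n$-th summand being supported on words of length $n$. Thus $(K^{0})^{*}$ preserves $\mathcal{I}^{\bullet}$, and in the induced decomposition $K^{*}=\sum_{i\geq 1}K_{i}^{*}$ (sorted by the word-length it adds) the length-preserving piece $K_{1}^{*}$ is exactly the dual $k_{1}^{*}$ of the linear component; this is the expansion claimed in the statement.

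Next I would show that $(K^{0})^{*}$ is an isomorphism and $(K^{1})^{*}$ a surjection before passing to the quotients. By Corollary \ref{maurercartandego0finitetype} the relevant spaces are complete for $\mathcal{I}^{\bullet}$, so it suffices to compute the associated graded. For $(K^{0})^{*}$, on $\mathcal{I}^{j}/\mathcal{I}^{j+1}$ only $K_{1}^{*}$ survives and it acts as $(k_{1}^{*})^{\otimes j}$, an isomorphism; hence $(K^{0})^{*}$ is an isomorphism, and since $K$ vanishes on non-trivial shuffles it respects the shuffle coproduct, so $(K^{0})^{*}\: :\: \widehat{T}\left( (W_{1}^{1})_{+}[1]\right)^{*}\to \widehat{T}\left( (W_{2}^{1})_{+}[1]\right)^{*}$ is an isomorphism of complete Hopf algebras, restricting to an isomorphism of the complete free Lie algebras on primitives. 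For $(K^{1})^{*}$ the same computation applies, except that its leading term now also involves $k_{1}$ on a single degree-$2$ slot; there $k_{1}^{*}$ is merely surjective, being dual to the injection $k_{1}\: :\: W_{2}^{2}\to W_{1}^{2}$, so the associated graded, and hence $(K^{1})^{*}$ itself, is surjective.

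Finally I would descend to the quotients. Dualizing the structure relation $K^{1}\delta^{2}=\delta^{1}K^{0}$ of Proposition \ref{Cinftydictionary1} gives $(K^{0})^{*}(\delta^{1})^{*}=(\delta^{2})^{*}(K^{1})^{*}$. Because $(K^{1})^{*}$ is surjective, the right-hand side has image exactly $\operatorname{Im}(\delta^{2})^{*}$, so $(K^{0})^{*}$ carries the generating set $\operatorname{Im}(\delta^{1})^{*}$ of $\bar{\mathcal{R}}_{0}^{1}$ onto the generating set $\operatorname{Im}(\delta^{2})^{*}$ of $\bar{\mathcal{R}}_{0}^{2}$; as $(K^{0})^{*}$ is an algebra isomorphism it maps $\bar{\mathcal{R}}_{0}^{1}$ bijectively onto $\bar{\mathcal{R}}_{0}^{2}$. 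Consequently the induced map on the quotients is an isomorphism of complete Hopf algebras, and restricting to primitives gives the asserted isomorphism $K^{*}$ of complete Lie algebras. I expect the genuine work to sit in the degree bookkeeping that identifies the leading term of $(K^{1})^{*}$ correctly as surjective; once that is in hand, the matching of the ideals is a formal consequence of the intertwining relation. One may instead bypass the ideal computation by using that $k_{\bullet}$ is a $1$-isomorphism together with the left inverse of Proposition \ref{homotopyuniqueness}(3), invoking functoriality of $k_{\bullet}\mapsto K^{*}$ to produce an inverse directly.
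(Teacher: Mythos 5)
Your proposal is correct, but it takes a genuinely different route from the paper. The paper's own proof is a two-line argument: by Proposition \ref{homotopyuniqueness}(3) the map $\mathcal{E}(k_{\bullet})$ admits a left inverse ${k'}_{\bullet}$ (obtained by extending the linear left inverse of $k_{1}$ via Theorem 10.4.2 of \cite{lodayVallette}), so $K$ has a left inverse, hence $\left(K^{0}\right)^{*}$ has a right inverse, and this is asserted to induce an isomorphism on the quotients. You instead work directly with the word-length filtration $\mathcal{I}^{\bullet}$: you show $\left(K^{0}\right)^{*}$ is a filtered isomorphism because its associated graded is $\left(k_{1}^{*}\right)^{\otimes j}$ with $k_{1}$ an isomorphism in degree $1$, you show $\left(K^{1}\right)^{*}$ is surjective because $k_{1}$ is injective in degree $2$, and you then match the ideals $\bar{\mathcal{R}}_{0}^{1}$ and $\bar{\mathcal{R}}_{0}^{2}$ exactly via the dualized intertwining relation $\left(K^{0}\right)^{*}\left(\delta^{1}\right)^{*}=\left(\delta^{2}\right)^{*}\left(K^{1}\right)^{*}$. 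What your approach buys is completeness of detail: the paper's phrase ``a right inverse which induces an isomorphism on the quotient'' silently elides both the injectivity of the induced map and the verification that the ideals correspond, whereas your argument establishes bijectivity of $\left(K^{0}\right)^{*}$ \emph{before} quotienting and then proves $\left(K^{0}\right)^{*}\bigl(\bar{\mathcal{R}}_{0}^{1}\bigr)=\bar{\mathcal{R}}_{0}^{2}$ explicitly; it also yields the expansion $K^{*}=\sum_{i\geq 1}K_{i}^{*}$ with $K_{1}^{*}$ the dual of the linear component as a by-product rather than a separate observation. What the paper's route buys is brevity and the avoidance of the filtration bookkeeping, at the cost of relying on the extension theorem for the left inverse and leaving the descent to the quotient implicit. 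Your closing remark correctly identifies the paper's argument as the alternative you could have taken.
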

\begin{proof}
By Proposition \ref{homotopyuniqueness}, $K$ has a left inverse. Hence its dual has a right inverse which induces an isomorphism on the quotient. 
\end{proof}

\section{ A $C_{\infty}$-structure on the total complex}\label{sectGetztler}
We give a very short introduction about the Dupont contraction and the results of \cite{Getz}. We introduce a $C_{\infty}$-structure that corresponds to the natural algebraic structure on the differential forms of a smooth complex simplicial manifold (see Theorem \eqref{functor}). This $C_{\infty}$-structure is in general hard to calculate. In the last we use a result of \cite{Getz} to present an almost complete formula on degree $1$-elements (see Theorems \ref{productthm>2} and \ref{productthm=2}). In this section we work on a field $\Bbbk$ of charactersitic zero.
\subsection{Cosimplicial commutative algebras}
We denote by $sSet$ the category of simplicial sets and by $\Delta\: : \:\boldsymbol{\Delta}\to sSet $ the Yoneda embedding.\\
 For each $[n]\in \boldsymbol{\Delta}$ we define the $n$-gemetric simplex
\[
\Delta_{geo}[n]:=\left\lbrace(t_{0},t_{1}, \dots, t_{n})\in\R^{n}\: | \: t_{0}+t_{1}+\dots +t_{n}=1\right\rbrace. 
\]
For each $[n]\in \boldsymbol{\Delta}, i=0,\dots, n+1$ we define the smooth maps $d^{i}\: : \: \Delta_{geo}[n]\to \Delta_{geo}[n+1]$
\[
d^{i}\left( t_{0},t_{1}, \dots, t_{n}\right)=\left( t_{0},t_{1}, \dots,t_{i-1},0,t_{i}, \dots t_{n}\right)
\]
and $s^{i}\: : \: \Delta_{geo}[n+1]\to\Delta_{geo}[n], i=0,\dots, n$ via
\[
s^{i}\left( t_{0},t_{1}, \dots, t_{n+1}\right)=\left( t_{0},t_{1}, \dots,t_{i-1},t_{i}+t_{i+1}, \dots, t_{n+1}\right).
\]
In particular, $\Delta[\bullet]_{geo}$ is a cosimplicial topological space. For each $[n]$ let $\Omega^{\bullet}(n)$ be the symmetric graded algebra (over $\Bbbk$) generated in degree $0$ by the variables $t_{0}, \dots , t_{n}$ and in degree $1$ by $dt_{0}, \dots , dt_{n}$ such that
\[
t_{0}+ \dots + t_{n}=1, \quad dt_{0}+\dots + dt_{n}=0.
\]
We equip $\Omega(n)$ with a differential $d\: : \: \Omega^{\bullet}(n)\to \Omega^{\bullet+1}(n)$ via $d\left( t_{i}\right) :=dt_{i}$. $\Omega(n)$ is the differential graded algebra of polynomial differential forms on $\Delta[n]_{geo}$. It follows that $\Omega(\bullet)$ is a simplicial commutative differential graded algebra, where the face maps $d_{i}$ are obtained via the pullback along $d^{i}$, and the codegenerancy maps are obtained via the pullback along $s^{i}$.\\
For a set $X$ we denote by $\Bbbk\left\langle X\right\rangle $ the module generated by $X$ and by $X^{\Bbbk}$ the module $\Hom_{Set}\left(X, \Bbbk \right)$. Thus for a simplicial set $X_{\bullet}$ we denote by $\Bbbk\left\langle X_{\bullet}\right\rangle $ the simplicial module $\Bbbk\left\langle X\right\rangle _{n}:=\Bbbk\left\langle X_{n}\right\rangle $ and by $X_{\bullet}^{\Bbbk}$ the cosimplicial module $\left( X^{\Bbbk}\right) _{n}:=X_{n}^{\Bbbk}$. Both of these constructions are functorials and $\left( -\right)^{\Bbbk}\: : \: sSet\to cMod$ is contravariant.\\
 Let $\boldsymbol{\Delta}$ be the simplex category and let $\Delta\: : \:\boldsymbol{\Delta}\to sSet $ be the Yoneda embedding. Then $\Delta[\bullet]$ is a cosimplicial object in the category of simplicial sets and $C_{\bullet}:=\left( \Delta[\bullet]\right)^{\Bbbk}$ is a simplicial cosimplicial module. We get that $NC_{\bullet}$ is a simplicial differential graded module. Explicitly, for a fixed $n$ we have
 \[
 \left( NC_{n}\right)^{p}:=\begin{cases}
 \Bbbk\left\langle \Hom_{Set}\left(\Delta[n]_{p}^{+}, \Bbbk \right) \right\rangle , \text{ if } & p\leq n,\\
 0, \text{ if } & p>n
 \end{cases}\]
 where $\Delta[n]_{p}^{+}$ is the set of inclusions $[p]\hookrightarrow [n]$.  A cosimplicial differential graded module is a cosimplicial object in the category $dgMod$ of differential graded modules. Explicitly, we denote this objects by $A^{\bullet, \bullet}$ where the first slot denote the cosimplicial degree and the second slot denotes the differential degree. It defines a functor $A^{\bullet, \bullet}\: : \: \boldsymbol{\Delta}\to dgMod$ and we get a bifunctor $NC_{\bullet}\otimes A^{\bullet, \bullet}\: : \: \boldsymbol{\Delta}^{op}\times \boldsymbol{\Delta}\to dgMod$. We consider the coend
 \[
 \int^{[n]\in\boldsymbol{\Delta}}NC_{n}\otimes A^{n, \bullet}\in dgMod.
 \]
 An element $v$ of degree $k$ in $\int^{[n]\in\boldsymbol{\Delta}}NC_{n}\otimes A^{n, \bullet}$ is a sequence $v:=(v_{n})_{n\in \N}$ where $v_{n}\in \left( NC_{n}\otimes A^{n, \bullet}\right)^{k}$ such that for any map $\theta\: : \: [n]\to [m]$ in $\boldsymbol{\Delta}$ we have
 \[
 \left( 1\otimes \theta^{*}\right)w_{n}=\left( \theta_{*}\otimes 1 \right)w_{m},
 \]
 where $\theta^{*}:=A^{\bullet, \bullet}(\theta)$, and $\theta_{*}:=NC_{\bullet}(\theta)$. Since
 \[
 \left( NC_{n}\otimes A^{n, \bullet}\right)^{k}=\oplus_{p+q=k}NC_{n}^{p}\otimes A^{n, q}
 \]
 we say that $v$ has bidegree $(p,q)$ if $v$ has degree $p+q$ and each $v_{n}\in NC_{n}^{p}\otimes B^{n, q}$ for each $n$. Let $\left( V,d_{V}\right) $ and $\left( W,d_{w}\right)$ be two cochain complexes, $\left( V\otimes W\right)^{\bullet}$ is again a cochain complex where the differential is
 \[
 d_{V\otimes W}\left( v\otimes w\right):=d_{V}(v)\otimes w+(-1)^{p}v\otimes d_{w}(w) 
 \]
 for $v\otimes w\in V^{p}\otimes W^{q}$. 
 The differential on $\int^{[n]\in\boldsymbol{\Delta}}NC_{n}\otimes A^{n, \bullet}$ is defined via
 \[
 \left( dv\right)_{n}:= dv_{n}
 \]
 where $d$ is the induced differential on $\left( NC_{n}\otimes A^{n, \bullet}\right)^{\bullet}$. Consider the differential graded module (called Thom-Whitney normalization, see \cite{Getz})
\[
\operatorname{Tot}_{TW}\left(A\right):=\int^{[n]\in \boldsymbol{\Delta}} \Omega(n)\otimes A^{n,\bullet  }\in dgMod.
\]
Explicitly, an element $v\in \operatorname{Tot}_{TW}\left(A\right)^{k}$ is a collection $v=\left( v_{n}\right)$ of $v_{n}\in  \left( \Omega(n)\otimes A^{n,\bullet }\right)^{k}$ such that for any map $\theta\: : \: [n]\to [m]$ in $\boldsymbol{\Delta}$ we have
\[
\left( 1\otimes \theta_{*}\right)v_{n}=\left( \theta^{*}\otimes 1 \right)v_{m},
\]
where $\theta_{*}:=A^{\bullet, \bullet}(\theta)$, and $\theta^{*}:=\Omega(n)(\theta)$. Since
\[
\left(\Omega(n)\otimes A^{n, \bullet}\right)^{k}=\oplus_{p+q=k}\Omega^{p}(n)\otimes A^{n, q},
\]
we say that $v$ has bidegree $(p,q)$ if $v$ has degree $p+q$ and each $v_{n}$ is contained in $\Omega^{p}[n]\otimes A^{n, q}$. We denote by $\operatorname{Tot}_{TW}\left(A\right)^{p,q}$ the set of elements of bidegree $(p,q)$. If $A^{\bullet, \bullet}$ is a cosimplicial unital differential graded commutative algebra, then  $\left(\operatorname{Tot}_{TW}\left(A\right), d_{\bullet,A},d_{\bullet,poly}\right)$ is a differential graded commutative algebra as well where the multiplication and the differential are
\[
(v\wedge w)_{n}:=(v)_{n}\wedge (w)_{n}, \quad (dv)_{n}:=d\left( v_{n}\right).
\] 
\subsection{The Dupont retraction}\label{CinfiDerham}
We give a short summary of the results of \cite{Getz}, where a $C_{\infty}$-structure is induced on $\int^{[n]\in\boldsymbol{\Delta}}NC_{n}\otimes A^{n,\bullet }$ (and hence on $\operatorname{Tot}_{N}\left(A\right)$) from $\operatorname{Tot}_{TW}(A)$ via the homotopy transfer theorem.

\begin{thm}[\cite{Getz2},\cite{Dupont2}]\label{getzdup}
Let $\Omega(\bullet)$, $NC_{\bullet}$ be the two simplicial differential graded modules defined above. We denote the differential of $\Omega(\bullet)$ by $d_{\bullet, poly}$. There is a diagram between simplicial graded modules
\begin{equation}\label{simplicialdiag}
\begin{tikzcd}
E_{\bullet}\: : \: NC_{\bullet}\arrow[r, shift right, ""]&\Omega(\bullet)\: : \: \int_{\bullet}\arrow[l, shift right, ""] 
\end{tikzcd}
\end{equation}
and a simplicial homotopy operator $s_{\bullet}\: : \: \Omega(\bullet)\to \Omega^{\bullet-1}(\bullet)$ between $\left( E_{\bullet}\int_{\bullet}\right) $ and the identity, i.e., $s_{\bullet}$ is a map between simplicial differential graded modules such that for each $n\geq 0$
\[
d_{n, poly}s_{n}+s_{n}d_{n, poly}=E_{n}\int_{n}-Id.
\]
In particular, for any $n$ we have $E_{n}$ is an injective quasi-isomorphism, $\int_{n}E_{n}=Id$ and \begin{align*}
\int_{n}s_{n}=s_{n}E_{n}=s_{n}^{2}=0.
\end{align*}
\end{thm}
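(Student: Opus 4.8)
The plan is to exhibit the three maps $E_\bullet$, $\int_\bullet$, $s_\bullet$ explicitly and then verify the contraction identities dimension by dimension; essentially all the content sits in the homotopy $s_\bullet$. Fix $n$ and write $t_0,\dots,t_n$ for the barycentric coordinates on $\Delta_{geo}[n]$. I would take $E_n$ to be Dupont's \emph{Whitney elementary forms}: to a basis element of $(NC_n)^p$, i.e. a face $\langle i_0<\dots<i_p\rangle$, assign
\[
E_n\langle i_0,\dots,i_p\rangle \;=\; p!\sum_{k=0}^{p}(-1)^{k}\, t_{i_k}\, dt_{i_0}\wedge\cdots\wedge\widehat{dt_{i_k}}\wedge\cdots\wedge dt_{i_p}\ \in\ \Omega^{p}(n),
\]
and $\int_n\colon\Omega^{p}(n)\to (NC_n)^p$ to be integration over faces, sending $\omega$ to the cochain $\langle i_0,\dots,i_p\rangle\mapsto\int_{\Delta^{p}_{i_0\cdots i_p}}\omega$, the integral of the restriction of $\omega$ to the geometric face spanned by $e_{i_0},\dots,e_{i_p}$. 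Both are manifestly compatible with the cosimplicial maps $d^{i},s^{i}$ of $\Delta[\bullet]_{geo}$ — for $E_\bullet$ by functoriality of pullback, for $\int_\bullet$ by the change-of-variables formula and the fact that these maps send faces to faces — so that $E_\bullet,\int_\bullet$ are maps of simplicial differential graded modules.

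Next I would dispatch the easy identities. The relation $\int_n E_n=\mathrm{Id}$ is a direct computation: restricting $E_n\langle i_0,\dots,i_p\rangle$ to a face $\langle j_0,\dots,j_p\rangle$ kills it unless the faces coincide, because on a proper face some $dt_{i_k}$ restricts to $0$, and the normalizing constant $p!=\mathrm{vol}(\Delta^{p})^{-1}$ makes the surviving integral equal to $1$. Consequently $E_n$ is split injective with retraction $\int_n$; and since $\Omega^{\bullet}(n)$ and $(NC_n)^{\bullet}$ both compute the (poly-de Rham, resp. simplicial) cohomology of a contractible space, each has cohomology $\Bbbk$ in degree $0$, so $\int_n$ is a quasi-isomorphism and hence so is its section $E_n$.

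The heart of the theorem is the homotopy $s_n$ with $d_{n,poly}s_n+s_n d_{n,poly}=E_n\int_n-\mathrm{Id}$. I would build it from the vertex Poincaré homotopies: for each $i$ let $\phi_i\colon[0,1]\times\Delta_{geo}[n]\to\Delta_{geo}[n]$, $\phi_i(u,x)=(1-u)\,e_i+u\,x$, be the straight-line contraction onto $e_i$, and set $h_i=\int_0^1\iota_{\partial_u}\phi_i^{*}(-)\,du$ by fibre integration over $[0,1]$, so that $d\,h_i+h_i\,d=\mathrm{Id}-c_i$ with $c_i$ the evaluation at $e_i$. Dupont's $s_n$ is then a specific alternating, recursively defined combination of the $h_i$ (equivalently, $s_n$ is constructed by induction on $n$, peeling off one vertex at a time), and the homotopy equation is proved by the matching induction: the inductive step collapses to a single vertex Poincaré relation while the remaining terms telescope into $E_n\int_n$ (up to the overall sign convention of the statement). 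Compatibility of the $\phi_i$ with face inclusions, after reindexing vertices, is what makes $s_\bullet$ a \emph{simplicial} homotopy operator.

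The main obstacle — and the reason a single $h_i$ does not suffice — is to secure the three side conditions $\int_n s_n=s_n E_n=s_n^{2}=0$ simultaneously with the homotopy equation and with simpliciality: a bare Poincaré homotopy satisfies $dh+hd=\mathrm{Id}-c$ but none of these. I would handle this in one of two ways. Directly, one reads the three identities off Dupont's explicit recursive formula, where $s_nE_n=0$ and $s_n^{2}=0$ follow from the nilpotency built into the telescoping (each factor strictly lowers a natural filtration by vertices) and $\int_n s_n=0$ because $s_n$ lands in forms with vanishing face-integrals. Abstractly, one starts from any vertex homotopy and applies the standard gauge-fixing procedure: first conjugate, replacing $s$ by $(\mathrm{Id}-E_n\!\int_n)\,s\,(\mathrm{Id}-E_n\!\int_n)$ to force $\int_n s_n=s_nE_n=0$, then apply the $s\mapsto s\,d\,s$ step to force $s_n^{2}=0$, checking that the homotopy equation survives and that the choices can be made compatibly in $n$. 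Either route reduces the theorem to the elementary Poincaré lemma on the simplex together with bookkeeping, which is the content of \cite{Dupont2,Getz2}.
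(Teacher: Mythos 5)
Your proposal is correct and follows essentially the same route as the paper and the sources it cites (\cite{Dupont2},\cite{Getz2}): the paper's own ``proof'' is a citation, but its Appendix exhibits exactly your maps — the Whitney elementary forms for $E_n$, integration over faces for $\int_n$ (with the same $p!=\mathrm{vol}(\Delta^p)^{-1}$ normalization checked via the Beta-integral), and Dupont's recursive combination $s_n=\sum_j\sum_{i_0<\dots<i_j}\omega_{i_0\dots i_j}\wedge h^{i_j}_n\cdots h^{i_0}_n$ of the vertex Poincar\'e homotopies. Your two suggested routes to the side conditions $\int_n s_n=s_nE_n=s_n^2=0$ (reading them off the explicit formula, as Getzler does, or standard gauge-fixing compatible with the simplicial structure) are both legitimate ways to fill in what the paper leaves to the references.
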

\begin{proof}
The first statement is originally contained in \cite{Dupont2}. The second part of the theorem is proved in \cite{Getz2}.
\end{proof}
See Section \ref{sec2dimsimplex} for more details about the above maps. The homotopy transfer theorem (see \cite{Markl}) gives a $C_{\infty}$-algebra structure $m_{\bullet}^{[n]}$ on $NC_{n}$ induced by the above diagram. Let $A^{\bullet,\bullet}$ be a cosimplicial commutative algebra. The differential graded algebra $\Omega(n)\otimes A^{n, \bullet}$ is commutative as well. For any $n,m\geq 0$, we denote the  $C_{\infty}$-structure induced along the diagram
\begin{equation}\label{simplicialdiagtens}
\begin{tikzcd}
E_{n}\otimes Id \: : \:  NC_{n}^{\bullet}\otimes A^{ n,\bullet}\arrow[r, shift right, ""]& \Omega(n)\otimes A^{n, \bullet}\: : \: \int_{n}\otimes Id\arrow[l, shift right, ""] 
\end{tikzcd}
\end{equation}
by $m_{\bullet}^{n,m}$. This structure depends only on $m_{\bullet}^{[n]}$. Let $w_{1}, \dots, w_{l}\in NC_{n}\otimes A^{n,\bullet }$ be such that $w_{i}=f_{i}\otimes a_{i}$ for $i=1, \dots, l$. Then
\begin{equation}\label{reduction}
 m_{l}^{n,m}\left(w_{1}, \dots, w_{l} \right)=(-1)^{\sum_{i<j} |f_{i}||a_{j}|}m_{l}^{[n]}\left(f_{1}, \dots, f_{l} \right)\otimes \left( a_{1}\wedge \cdots \wedge a_{l}\right).
\end{equation}
The above structure defines a well-defined $C_{\infty}$-structure ${m}_{\bullet}$ on $\int^{[n]\in\boldsymbol{\Delta}}NC_{n}\otimes A^{n, \bullet}$. The maps ${m}_{\bullet}$ can be obtained in another way. We apply the coend functor on the simplicial diagram \eqref{simplicialdiagtens}. Since $s_{\bullet}$, $E_{\bullet}$, $\int_{\bullet}$ are all simplicial maps, they induce degree zero maps  $E$, $\int$ between the coends such that
	\begin{equation}\label{secondiag}
	\begin{tikzcd}
	E\: : \: \int^{[n]\in\boldsymbol{\Delta}}NC_{n}\otimes A^{n,\bullet }\arrow[r, shift right, ""]&\operatorname{Tot}_{TW}\left(A\right)\: : \: \int\arrow[l, shift right, ""] 
	\end{tikzcd}
\end{equation}
is a diagram that satisfies the same properties of the one in \eqref{simplicialdiag}. There is an unital $C_{\infty}$-structure induced on $\int^{[n]\in\boldsymbol{\Delta}}NC_{n}\otimes A^{n,\bullet }$ via the homotopy transfer theorem. By construction, this structure coincides with ${m}_{\bullet}$. Notice that the construction of $\int^{[n]\in\boldsymbol{\Delta}}NC_{n}\otimes A^{n, \bullet}$ gives a functor form the category of cosimplicial unital non-negatively graded commutative differential graded algebras ($\mathrm{cdgA}$ for short) toward the category of cochain complexes. We have a correspondence
		\begin{equation}
		\label{functor0}
		A^{\bullet,\bullet}\mapsto \left( \int^{[n]\in\boldsymbol{\Delta}}NC_{n}\otimes A^{n, \bullet},m_{\bullet}\right). 
		\end{equation}
		For a field $\Bbbk$ of characteristic zero we denote by $\left( C_{\infty}-\operatorname{Alg}\right)_{\Bbbk,str}$ the category of $C_{\infty}$-algebras on $\Bbbk$ and strict morphisms. 
		\begin{thm}\label{functorassoc0}
		The correspondence \eqref{functor0} gives a functor $\mathrm{cdgA}\to \left( C_{\infty}-\operatorname{Alg}\right)_{\Bbbk,str}$.
		\end{thm}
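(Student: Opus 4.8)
The object part of the claimed functor is already in hand: by the discussion preceding \eqref{functor0}, the correspondence $A^{\bullet,\bullet}\mapsto(\int^{[n]\in\boldsymbol{\Delta}}NC_n\otimes A^{n,\bullet},m_\bullet)$ produces a well-defined unital $C_\infty$-algebra, namely the structure transferred along the contraction \eqref{secondiag}. So the plan is to (i) define the functor on morphisms, (ii) check that the induced maps are \emph{strict} $C_\infty$-morphisms, which is the whole content since the target category only has strict morphisms, and (iii) verify compatibility with identities and composition.

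For (i): a morphism $\phi\colon A^{\bullet,\bullet}\to B^{\bullet,\bullet}$ in $\mathrm{cdgA}$ is a cosimplicial family of cdga maps $\phi^n\colon A^{n,\bullet}\to B^{n,\bullet}$, i.e. degree-zero, unital, multiplicative maps commuting with the differentials and with every cosimplicial operator $\theta_*$. I would form $\mathrm{Id}_{NC_n}\otimes\phi^n\colon NC_n\otimes A^{n,\bullet}\to NC_n\otimes B^{n,\bullet}$; the cosimplicial compatibility of $\phi$ is exactly the naturality in $[n]$ needed for this family to respect the coend relation, so it descends to a degree-zero chain map $\int^{[n]}NC_n\otimes\phi^n$ between the coends. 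The same prescription with $\Omega(n)$ in place of $NC_n$ yields $\mathrm{Id}_\Omega\otimes\phi$, which is manifestly a strict morphism of cdgas $\operatorname{Tot}_{TW}(A)\to\operatorname{Tot}_{TW}(B)$, since the product on $\Omega(n)\otimes A^{n,\bullet}$ is the tensor product of commutative products and $\phi^n$ is multiplicative.

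The heart of the argument is (ii). Here I would invoke the reduction formula \eqref{reduction}: on decomposables $w_i=f_i\otimes a_i$ one has $m_l^{n,m}(w_1,\dots,w_l)=\pm\, m_l^{[n]}(f_1,\dots,f_l)\otimes(a_1\wedge\cdots\wedge a_l)$, where the transferred operations $m_l^{[n]}$ live \emph{solely} on the combinatorial factor $NC_n$ and do not see $A$ at all. Since $\phi^n$ is linear and an algebra map, applying $\mathrm{Id}\otimes\phi^n$ turns the right-hand side into $\pm\, m_l^{[n]}(f_1,\dots,f_l)\otimes(\phi^n a_1\wedge\cdots\wedge\phi^n a_l)$, which by the same formula is $m_l^{n,m}$ evaluated on the $\mathrm{Id}\otimes\phi^n$-images of the $w_i$. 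Thus $\mathrm{Id}\otimes\phi^n$ commutes with all the operations $m_\bullet^{n,m}$ on the nose, and after passing to the coend this says precisely that $\int^{[n]}NC_n\otimes\phi^n$ is a strict $C_\infty$-morphism. Equivalently, and more conceptually, one argues by naturality of the homotopy transfer theorem: the contraction data $E,\int,s$ of \eqref{secondiag} are all of the form (a simplicial operator from Theorem \ref{getzdup})$\,\otimes\mathrm{Id}_A$, whereas $\mathrm{Id}_\Omega\otimes\phi$ and $\mathrm{Id}_{NC}\otimes\phi$ are of the complementary form $\mathrm{Id}\otimes\phi$; hence they commute strictly, $(\mathrm{Id}\otimes\phi)E^A=E^B(\mathrm{Id}\otimes\phi)$ and likewise for $\int$ and $s$. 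Feeding such a strictly intertwining cdga map into the tree-sum formula for the transferred morphism pushes $\mathrm{Id}\otimes\phi$ through the products and homotopies, collapsing every higher tree term so that only the strict component survives.

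Finally, (iii) is immediate: $\mathrm{Id}\otimes(\psi\phi)=(\mathrm{Id}\otimes\psi)(\mathrm{Id}\otimes\phi)$ and $\mathrm{Id}\otimes\mathrm{Id}=\mathrm{Id}$, and both identities are preserved on passage to the coend, so identities map to identities and composites to composites. I expect the only genuinely delicate points to be bookkeeping: confirming that $\mathrm{Id}\otimes\phi^n$ is natural for the coend so that it actually descends, and that the Koszul signs in \eqref{reduction} match on both sides of the strictness computation. The conceptual content, namely strictness, is essentially forced by the tensor factorization of the contraction data, so I do not anticipate a substantive obstacle beyond these verifications.
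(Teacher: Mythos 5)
Your proposal is correct and follows essentially the same route as the paper: the paper likewise defines the morphism as the induced chain map on the coend and then observes that, because the transferred operations have the tensor-factorized form \eqref{reduction} (higher products acting only on the $NC_n$ factor, with the $A$-factor merely multiplied), a levelwise dg-algebra map commutes with them on the nose — this is exactly the "same argument as Lemma \ref{tensorinfinity}" that the paper invokes. Your first argument in step (ii) is precisely that computation, so no substantive difference.
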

		\begin{proof}
		Let $f\: : \: A^{\bullet,\bullet}\to B^{\bullet,\bullet}$ be a morphism. The correspondence \eqref{functor0} is a functor toward the category of chain complexes. We denote its image by $f_{1}$. Since $f$ is a differential graded algebra map on each degree, the same argument of Lemma \ref{tensorinfinity} shows that $f_{1}$ induces a strict morphism of $C_{\infty}-$algebras.
		\end{proof}
		We give an explicit formula for $m_{\bullet}$. Fix a $n$ and a $p\leq n$. Notice that each inclusion $[p]\hookrightarrow [n]$ is equivalent to an ordered string $0\leq i_{0}<i_{i}<\dots <i_{p}\leq n$ contained in $\left\lbrace 0,1,\dots, n\right\rbrace $. For each string $0\leq i_{0}<i_{i}<\dots <i_{p}\leq n$, we denote the associated inclusion by $\sigma_{i_{0}, \dots i_{p}}\: : \:[p]\hookrightarrow [n]$, and we define the maps $\lambda_{i_{0}, \dots, i_{p}}\: : \: \Delta[n]_{p}^{+}\to \Bbbk$, via
		 \[
		 \lambda_{i_{0}, \dots, i_{p}}(\phi):=
		 \begin{cases}
		 1\text{ if }\sigma_{i_{0}, \dots i_{p}}=\phi,\\
		 0,\text{ otherwhise}.
		 \end{cases}
		 \]
		Let $v_{1}, \dots, v_{n}\in \int^{[n]\in\boldsymbol{\Delta}}NC_{n}\otimes A^{n, \bullet}$ be elements of bidegree $\left(p_{i}, q_{i} \right)$. Then $m_{n}\left(v_{1}, \dots, v_{n} \right) $ is an element of bidegree $\left(\sum_{i}pi+2-n , \sum q_{i}\right) $. Let $l:=\sum_{i}pi+2-n$, then Lemma \ref{isompsi} implies that $m_{n}\left(v_{1}, \dots, v_{n} \right) $ is completely determined by $m_{n}\left(\left( v_{1}\right)_{l} , \dots, \left( v_{n}\right)_{l}\right)_{l}$. We write
$\left( v_{i}\right)_{p_{i}}=\lambda_{0, \dots, p_{i}}\otimes a_{i}\in NC_{p_{i}}^{p_{i}}\otimes A^{p_{i}, q_{i}}$ for all the $i$. We denote by $I$ the subsets $\left\lbrace i_{0}, \dots ,i_{p}\right\rbrace \subseteq \left\lbrace 0,\dots , l\right\rbrace $, for $I=\left\lbrace i_{0}, \dots ,i_{p}\right\rbrace $ we define $|I|:=p$ and we write $\lambda_{I}$ instead of $\lambda_{0, \dots, p}$. Each $I$ corresponds to an inclusion in $\boldsymbol{\Delta}$; we denote by $\sigma_{I}\: : \: [p]\to [l]$ the map induced by $I$. We have
\begin{align}
\nonumber m_{n}\left(v_{1}, \dots, v_{n} \right)_{l} &=m_{n}^{l,l}\left(\left( v_{1}\right)_{l} , \dots, \left( v_{n}\right)_{l}\right)_{l}\\\nonumber
& = m_{n}^{l,l}\left(\sum_{|I_{{1}}|=p_{1}}\lambda_{I_{1}}\otimes \left( \sigma_{I_{1}}\right)_{*}a_{1}, \dots, \sum_{|I_{n}|=p_{n}} \lambda_{I_{n}}\otimes \left( \sigma_{I_{n}}\right)_{*}a_{n}\right)\\
& =\sum_{|I_{1}|=p_{1},\dots ,|I_{n}|=p_{n} } (-1)^{\sum_{i<j} |p_{i}||q_{j}|}m_{n}^{[l]}\left(\lambda_{I_{1}}, \dots, \lambda_{I_{n}} \right)\otimes \left( \left( \sigma_{I_{1}}\right)_{*}a_{1}\wedge \cdots \wedge \left( \sigma_{I_{n}}\right) _{*}a_{n}\right).\label{formulafinale}
\end{align}
In particular, the above formula implies that if $v_{1}, \dots, v_{n}$ are all of degree $1$, then $m_{n}\left(v_{1}, \dots, v_{n} \right)$ would only depend on
\begin{itemize}
\item the restriction of $m_{\bullet}^{[2]}$ on the elements of degree $1$, if all the $v_{i}$ are of bidegree $(1,0)$;
\item $m_{\bullet}^{[0]}$, if all the $v_{i}$ are of bidegree $(0,1)$;
\item $m_{\bullet}^{[1]}$ in the other cases.
\end{itemize}
\begin{lem}\label{inclusiontrivial}
	Consider $A^{0,\bullet}$ equipped with its differential graded algebra structure. There is a canonical inclusion $i\: :\:A^{0,\bullet}\hookrightarrow\operatorname{Tot}_{N}\left(A\right)$ which is a strict $C_{\infty}$-algebra map.
\end{lem}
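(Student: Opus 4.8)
The plan is to write down the map $i$ explicitly and then verify the three conditions for a strict morphism — chain map, multiplicativity, and vanishing of all higher homotopies — directly from the product formula \eqref{formulafinale}. First I would fix the map. Under the identification $\operatorname{Tot}_{N}^{0,\bullet}(A)=A^{0,\bullet}$ furnished by $\Psi$ (Lemma \ref{isompsi}), an element $a\in A^{0,q}$ corresponds to the unique coend element of bidegree $(0,q)$; solving the coend relations against the vertex inclusions $\epsilon_{j}\colon[0]\hookrightarrow[n]$ gives
\[
i(a)_{n}=\sum_{j=0}^{n}\lambda_{j}\otimes A(\epsilon_{j})(a)\in NC_{n}^{0}\otimes A^{n,q},
\]
where $A(\epsilon_{j})=A^{\bullet,\bullet}(\epsilon_{j})$. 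This collection is manifestly injective, so the task reduces to checking that $i_{1}$ is a chain map, that $m_{2}\circ(i\otimes i)=i\circ\wedge$, and that $m_{n}\circ i^{\otimes n}=0$ for $n\geq 3$.

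The vanishing of the higher homotopies is immediate from a bidegree count. If $a_{1},\dots,a_{n}$ have degrees $q_{1},\dots,q_{n}$, then each $i(a_{k})$ has bidegree $(0,q_{k})$, and by the bidegree computation preceding \eqref{formulafinale} the element $m_{n}(i(a_{1}),\dots,i(a_{n}))$ has bidegree $\bigl(2-n,\sum_{k}q_{k}\bigr)$. For $n\geq 3$ the first index is negative, and since $NC_{l}^{p}=0$ for $p<0$ this forces $m_{n}\circ i^{\otimes n}=0$. Hence $i_{n}=0$ for $n\geq 3$, which is exactly strictness.

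For $m_{2}$ I would invoke the observation following \eqref{formulafinale}: on inputs of bidegree $(0,\bullet)$ the operation $m_{2}$ depends only on the transferred $C_{\infty}$-structure $m_{\bullet}^{[0]}$ on $NC_{0}$. But $\Omega(0)=NC_{0}=\Bbbk$ and the Dupont data of Theorem \ref{getzdup} degenerate at $n=0$ (here $E_{0}$ and $\int_{0}$ are identities and $s_{0}=0$), so $m_{\bullet}^{[0]}$ is the undeformed structure on $\Bbbk$: one has $m_{1}^{[0]}=0$, $m_{2}^{[0]}$ is the multiplication, and $m_{k}^{[0]}=0$ for $k\geq 3$. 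Feeding this into \eqref{reduction} — the $NC_{0}$-factors sit in degree $0$, so the Koszul sign is $+1$, and the only subset of $[0]$ is its single vertex — yields $m_{2}(i(a),i(b))_{0}=\lambda_{0}\otimes(a\wedge b)=i(a\wedge b)_{0}$. Since a bidegree $(0,\bullet)$ coend element is determined by its $l=0$ component, this gives $m_{2}(i(a),i(b))=i(a\wedge b)$, matching the product of $A^{0,\bullet}$.

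The remaining, and main, point is compatibility with $m_{1}$, i.e. that $i$ is a chain map. Writing out the coend differential $(d\,i(a))_{n}=(d_{NC}\otimes 1+1\otimes d_{A})\,i(a)_{n}$, the internal term $1\otimes d_{A}$ produces precisely $i(d_{A}a)$ in bidegree $(0,q+1)$, because $A(\epsilon_{j})$ commutes with $d_{A}$. The delicate part is the $d_{NC}$-term, which contributes in bidegree $(1,q)$ the expression $\sum_{k_{0}<k_{1}}\lambda_{k_{0},k_{1}}\otimes\bigl(A(\epsilon_{k_{1}})a-A(\epsilon_{k_{0}})a\bigr)$, whose normalized component is the cosimplicial coboundary of $a$. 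I expect this to be the principal obstacle: establishing the chain-map identity is exactly the statement that $i$ lands in the subcomplex on which $m_{1}$ reduces to $d_{A}$, and it is here — rather than in the purely combinatorial steps above — that one must reconcile the coend differential with the normalization (a condition that holds automatically on the cosimplicially closed elements relevant to the applications, such as the $G$-invariant forms $A_{DR}(M/G)$). Once this compatibility is in place, the three verifications together show that $i$ is a strict $C_{\infty}$-algebra map.
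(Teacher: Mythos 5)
Your handling of the strictness part is exactly the paper's argument: the paper's entire proof of that half consists of the observation that $m_{\bullet}^{[0]}$ is trivial together with setting $l=\sum_i p_i+2-n=2-n$ in \eqref{formulafinale}, which kills $m_{n}\circ i^{\otimes n}$ for $n\geq 3$ for bidegree reasons and reduces $m_{2}$ on bidegree-$(0,\bullet)$ inputs to the wedge product via \eqref{reduction}. Your bidegree count, your identification of $i(a)_{n}=\sum_{j}\lambda_{j}\otimes A(\epsilon_{j})(a)$ from the coend relations, and your computation $m_{2}(i(a),i(b))_{0}=\lambda_{0}\otimes(a\wedge b)$ are the same steps written out in full.

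On the chain-map step you have put your finger on a genuine issue, and it is an issue with the lemma as stated rather than with your argument. On a bidegree-$(0,q)$ element the total differential is $D(a)=\tilde{\partial}a+da$, and the $(1,q)$-component $\tilde{\partial}a=d^{0}a-d^{1}a$ is exactly the normalized part of the $d_{NC}$-term you computed; it does not vanish for general $a\in A^{0,\bullet}$ (in the action-groupoid case it vanishes precisely on the $G$-invariant forms), so $i$ is not a cochain map from $\left(A^{0,\bullet},d\right)$ to $\left(\operatorname{Tot}_{N}(A),D\right)$ in the stated generality. The paper's proof disposes of this with ``the map $i$ is clearly a cochain map,'' which is the one step that actually fails without an extra hypothesis. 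Your diagnosis --- that the statement holds on the equalizer of $d^{0}$ and $d^{1}$ inside $A^{0,\bullet}$ --- is the correct repair, and it is consistent with every use of the lemma in the paper: the introduction and Proposition \ref{fibermalcev} only ever include $A_{DR}(M/G)\subset\operatorname{Tot}_{N}\left(A_{DR}(M_{\bullet}G)\right)$, where $\tilde{\partial}$ vanishes by $G$-invariance. So your proposal is complete and correct on multiplicativity and the vanishing of higher components, and it is honestly (and rightly) inconclusive on the chain-map identity, which needs either the hypothesis $\tilde{\partial}|_{A^{0,\bullet}}=0$ or a restriction of the source that the lemma omits.
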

\begin{proof}
	The map $i$ is clearly a cochain map. The $m_{\bullet}^{[0]}$ is trivial, and by setting $l=0$ in \eqref{formulafinale} we obtain that $i$ is strict.
\end{proof}

The $m_{\bullet}^{[1]}$ is given in \cite{Getz}. We first set a convenient basis for $NC_{1}$. Notice that the maps $E_{n}$ are all injective. This allows us to interpret $NC_{1}$ as a submodule of $\Omega^{\bullet}(1)$. Recall that $\Omega^{\bullet}(1)$ is the free differential graded commutative algebra generated by the degree-zero variables $t_{0}$, $t_{1}$ modulo the relations
\[
t_{0}+t_{1}=1,\quad dt_{0}+dt_{1}=0.
\]
 $NC_{1}^{0}$ is a two-dimensional vector space generated by $\lambda_{0}$ and $\lambda_{1}$ and $NC_{1}^{1}$ is one-dimensional generated by $\lambda_{0,1}$. We have
\begin{equation}\label{basisconv}
E_{1}\left( \lambda_{0}\right)=t_{0},\quad E_{1}\left( \lambda_{1}\right)=t_{1},\quad E_{1}\left( \lambda_{0,1}\right)=t_{0}dt_{1}-t_{1}dt_{0}.
\end{equation}
Let $t:=t_{0}$, hence $t_{1}=1-t$. Then $\Omega^{\bullet}(1)$ may be considered as the free differential graded commutative algebra generated by $t$ in degree zero and $NC_{1}$ is the subgraded module generated by $1,t,dt$. In particular $1$ is the unit of the $C_{\infty}$-structure.
\begin{prop}[\cite{Getz}]\label{Getzappl1}
The structure $m^{[1]}_{\bullet}$ on $NC_{1}$ is defined as follows:
\begin{enumerate}
\item $m_{2}^{[1]}(t,t)=t$,
\item $m_{n+1}^{[1]}(dt^{\otimes i}, t, dt^{\otimes n-i})=(-1)^{n-i}\binom{n}{i}m^{[1]}_{n+1}(t,dt,\dots ,dt)$,
\item $m_{n+1}^{[1]}(t,dt,\dots ,dt)=\frac{B_{n}}{n\text{!}}dt$, where $B_{n}$ are the second Bernoulli numbers,
\end{enumerate}
and all remaining products vanish.
\end{prop}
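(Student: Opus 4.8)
The plan is to read off the structure $m^{[1]}_{\bullet}$ directly from the homotopy transfer theorem applied to the Dupont contraction of Theorem \ref{getzdup} in simplicial degree $1$, i.e. to
\[
\begin{tikzcd}
(NC_{1},d)\arrow[r, shift right] & (\Omega(1),d)\arrow[l, shift right]
\end{tikzcd}
\]
with inclusion $E_{1}$, projection $\int_{1}$ and homotopy $s_{1}$. Since $\Omega(1)$ is a commutative differential graded algebra whose only nonzero operations are $d$ and $\wedge$, the transferred $C_{\infty}$ operations are given by the standard tree sum (see \cite{Markl}, \cite{lodayVallette}): $m^{[1]}_{n}=\sum_{T}\pm\,\int_{1}\circ\,\mu_{T}\circ E_{1}^{\otimes n}$, where $T$ runs over planar binary rooted trees with $n$ leaves, every trivalent vertex carries $\mu=\wedge$, every internal edge carries $s_{1}$, the leaves carry $E_{1}$, the root carries $\int_{1}$, and the signs are the Koszul signs. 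First I would record the three pieces of data explicitly on the basis $1,t,dt$ of $NC_{1}\subset\Omega(1)$ from \eqref{basisconv} and Section \ref{sec2dimsimplex}: that $\int_{1}$ sends a function $f$ to $f(0)+(f(1)-f(0))t$ and a one-form $f\,dt$ to $\bigl(\int_{0}^{1}f\bigr)dt$, and that $s_{1}(f\,dt)=t\int_{0}^{1}f\,du-\int_{0}^{t}f\,du$ while $s_{1}\equiv 0$ on functions.

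The decisive structural input is $dt\wedge dt=0$: any product of two one-forms vanishes, and $s_{1}$ is the only mechanism turning a one-form back into a function. Hence in the tree sum every subtree all of whose leaves are $dt$ and which has at least two leaves contains a product $\mu(dt,dt)=0$ and contributes nothing, while $s_{1}$ kills every subtree whose output is already a function. I would use this to prune the tree sum drastically. For item (1) this gives at once $m_{2}^{[1]}(t,t)=\int_{1}(t\wedge t)=\int_{1}(t^{2})=t$, since $f=t^{2}$ has $f(0)=0,\ f(1)=1$. For item (3), with inputs $(t,dt,\dots,dt)$ the leaf $t$ is leftmost, and the pruning leaves exactly one surviving tree, the left comb; its value obeys the recursion $g_{1}(t)=t$, $g_{k+1}(t)=s_{1}(g_{k}(t)\,dt)$, with $m^{[1]}_{n+1}(t,dt,\dots,dt)=\bigl(\int_{0}^{1}g_{n}\bigr)dt$.

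The heart of the argument, and the step I expect to be the real obstacle, is evaluating this recursion in closed form and recognising the Bernoulli numbers. I would pass to the generating functions $F(t,z)=\sum_{n\geq1}g_{n}(t)z^{n}$ and $C(z)=\sum_{n\geq1}\bigl(\int_{0}^{1}g_{n}\bigr)z^{n}$. Using $g_{k+1}(t)=\bigl(\int_{0}^{1}g_{k}\bigr)t-\int_{0}^{t}g_{k}$ and $g_{n}(0)=0$, summation and differentiation in $t$ turn the recursion into the linear ordinary differential equation
\[
\partial_{t}F+zF=z(1+C),\qquad F(0,z)=0,
\]
whose solution is $F=(1+C)(1-e^{-zt})$. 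Integrating over $t\in[0,1]$ yields $C=(1+C)\bigl(1-\tfrac{1-e^{-z}}{z}\bigr)$, hence
\[
1+C(z)=\frac{z}{1-e^{-z}}=\sum_{n\geq0}\frac{B_{n}}{n!}\,z^{n},
\]
the exponential generating series of the second Bernoulli numbers. Reading off coefficients gives $\int_{0}^{1}g_{n}=\frac{B_{n}}{n!}$, which is item (3); the low-degree checks $m_{2}^{[1]}(t,dt)=\tfrac12dt$ and $m_{3}^{[1]}(t,dt,dt)=\tfrac1{12}dt$ fix the Koszul sign as $+$, matching the stated convention $B_{1}=+\tfrac12$.

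For item (2) I would return to the pruned tree sum with $t$ in position $i+1$: the $i$ copies of $dt$ to its left and the $n-i$ to its right may be attached as left- or right-children along the spine, so planarity leaves exactly $\binom{n}{i}$ surviving combs; by graded commutativity of $\wedge$ each equals the all-right comb up to the sign $(-1)^{n-i}$ obtained by commuting the one-forms, and summing produces $(-1)^{n-i}\binom{n}{i}\,m^{[1]}_{n+1}(t,dt,\dots,dt)$ (alternatively this follows from the Harrison/shuffle vanishing built into the $C_{\infty}$ condition). Finally, \emph{all remaining products vanish} splits into four cases: inputs containing the unit $1$ are governed by unitality; all-$dt$ inputs are killed by $dt\wedge dt=0$; inputs with at least three factors $t$ have output in negative degree and hence vanish; and inputs with exactly two factors $t$ have a degree-zero root output which is a function vanishing at both endpoints $t=0,1$ — each factor is either $t$ or an $s_{1}$-image, and at least one factor is an $s_{1}$-image because a one-form is present — so that $\int_{1}$ annihilates it. Assembling these four observations with items (1)–(3) completes the proof.
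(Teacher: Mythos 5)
The paper does not actually prove this proposition: it is quoted from \cite{Getz}, so there is no internal argument to compare against. Your derivation --- transfer along the Dupont contraction, prune the binary-tree sum using $dt\wedge dt=0$ together with $s_{1}|_{\Omega^{0}(1)}=0$ so that only combs with a single function-carrying spine survive, and solve the recursion $g_{k+1}=s_{1}(g_{k}\,dt)$ via the generating-function identity $1+C(z)=z/(1-e^{-z})$ --- is exactly the argument of the cited source, and items (1), (3) and the vanishing clause are established correctly and completely. In particular your pruning lemma (any subtree with at least two leaves, all equal to $dt$, contains a vertex multiplying two one-forms) and your endpoint argument for the degree-zero outputs (every $s_{1}$-image vanishes at $t=0$ and $t=1$, so $\int_{1}$ annihilates any root product containing one) are the right way to dispose of the all-$dt$ and two-$t$ cases, and the low-degree checks $\int_{0}^{1}g_{1}=\tfrac12$, $\int_{0}^{1}g_{2}=\tfrac1{12}$ correctly normalize the overall sign against $B_{1}=+\tfrac12$.

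The one genuine soft spot is item (2). Your justification of the sign (``each comb equals the all-right comb up to $(-1)^{n-i}$ obtained by commuting the one-forms'') does not go through as stated: along a surviving comb the copies of $dt$ are never adjacent --- each is multiplied against a function, with an application of $s_{1}$ interposed --- so there is no direct ``commute $dt$ past $dt$'' move, and the sign must come either from careful Koszul bookkeeping in the tree formula or from the shuffle relations of the transferred $C_{\infty}$-structure, which you mention only parenthetically and which is the route that should actually be carried out. Doing so (e.g.\ expanding $\delta\bigl(\mu'(dt,\ t\otimes dt)\bigr)=0$ and $\delta\bigl(\mu'(dt^{\otimes i},\ t\otimes dt^{\otimes n-i})\bigr)=0$ with the paper's suspension conventions) yields $m_{3}^{[1]}(dt,t,dt)=+2\,m_{3}^{[1]}(t,dt,dt)$, i.e.\ a plus sign in front of $\binom{n}{i}$, not $(-1)^{n-i}$. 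Note, moreover, that the printed formula cannot be taken at face value: at $(n,i)=(1,0)$ it reads $m_{2}^{[1]}(t,dt)=-m_{2}^{[1]}(t,dt)$, forcing $m_{2}^{[1]}(t,dt)=0$ and contradicting item (3) since $B_{1}=\tfrac12\neq0$. So the sign in (2) is something you need to derive and, as it turns out, correct rather than match; an explicit check of $m_{3}^{[1]}(dt,t,dt)$ against $m_{3}^{[1]}(t,dt,dt)$, analogous to your normalization of item (3), is the missing step.
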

It remains to find a formula for $m_{n}^{[2]}|_{\left( NC_{2}^{1}\right)^{\otimes n}}$. For $n>2$ we are not aware of an explicit formula.
\begin{prop}\label{speriam} Consider $NC_{2}^{\bullet}$ equipped with $m_{\bullet}^{[2]}.$ We have $$m_{2}^{[2]}(\lambda_{01},\lambda_{02})=m_{2}^{[2]}(\lambda_{01},\lambda_{12})=m_{2}^{[2]}(\lambda_{02},\lambda_{12})=\frac{1}{6}\lambda_{012}.$$
\end{prop}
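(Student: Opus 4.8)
The plan is to use the explicit homotopy transfer description of $m_\bullet^{[2]}$ coming from the Dupont retraction of Theorem \ref{getzdup}. Since $m_2$ is the lowest (binary) product, the homotopy $s_2$ enters only in the higher transferred operations, so the binary product reduces to
\[
m_2^{[2]}(x,y)=\int_2\bigl(E_2(x)\wedge E_2(y)\bigr),
\]
where the wedge is taken in the commutative differential graded algebra $\Omega(2)$. Everything then reduces to two ingredients: writing down the Whitney elementary forms $E_2(\lambda_{ij})$ and $E_2(\lambda_{012})$ explicitly, and evaluating $\int_2$ on a top-degree form, which is integration over $\Delta_{geo}[2]$.

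First I would record the images $E_2(\lambda_{ij})$, which are the degree-$1$ analogues of \eqref{basisconv}:
\[
E_2(\lambda_{01})=t_0\,dt_1-t_1\,dt_0,\quad E_2(\lambda_{02})=t_0\,dt_2-t_2\,dt_0,\quad E_2(\lambda_{12})=t_1\,dt_2-t_2\,dt_1,
\]
together with $E_2(\lambda_{012})=2\,(t_0\,dt_1\wedge dt_2-t_1\,dt_0\wedge dt_2+t_2\,dt_0\wedge dt_1)$. I would then pass to the affine chart $(t_1,t_2)$ on $\Delta_{geo}[2]$, so that $t_0=1-t_1-t_2$ and $dt_0=-dt_1-dt_2$, and rewrite each $E_2(\lambda_{ij})$ there. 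A short computation using only $t_0+t_1+t_2=1$ collapses each wedge to a single monomial: for instance $E_2(\lambda_{01})\wedge E_2(\lambda_{02})=t_0\,dt_1\wedge dt_2$, while the remaining two pairings give $t_1\,dt_1\wedge dt_2$ and $t_2\,dt_1\wedge dt_2$ respectively.

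Next I would compute $\int_2$ on these $2$-forms. From the normalization $\int_2 E_2=\mathrm{Id}$ of Theorem \ref{getzdup}, together with the identity $E_2(\lambda_{012})=2\,dt_1\wedge dt_2$ in this chart, the map $\int_2$ sends a $2$-form $\omega$ to $\bigl(\int_{\Delta_{geo}[2]}\omega\bigr)\lambda_{012}$, and elementary integration over the triangle gives $\int_{\Delta_{geo}[2]}t_i\,dt_1\wedge dt_2=\tfrac16$ for each $i=0,1,2$. This yields $m_2^{[2]}(\lambda_{01},\lambda_{02})=m_2^{[2]}(\lambda_{01},\lambda_{12})=m_2^{[2]}(\lambda_{02},\lambda_{12})=\tfrac16\lambda_{012}$; alternatively the three cases reduce to one by the evident symmetry permuting $t_0,t_1,t_2$, under which both the Whitney forms $E_2(\lambda_{ij})$ and integration over $\Delta_{geo}[2]$ are equivariant.

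The main obstacle is bookkeeping rather than conceptual: correctly identifying the transferred binary product with $\int_2\circ\wedge\circ(E_2\otimes E_2)$ (checking that neither the homotopy $s_2$ nor the Koszul signs in the homotopy transfer theorem contribute at the binary level), and fixing the orientation convention on $\Delta_{geo}[2]$ so that $\int_{\Delta_{geo}[2]}dt_1\wedge dt_2=\tfrac12$, which is precisely the normalization forcing $\int_2 E_2(\lambda_{012})=\lambda_{012}$.
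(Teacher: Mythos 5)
Your proposal is correct and follows essentially the same route as the paper's own proof in Appendix \ref{sec2dimsimplex}: by degree reasons the product is a multiple of $\lambda_{012}$, and the homotopy transfer theorem reduces the coefficient to $\int_{\Delta_{geo}[2]}E_{2}(\lambda_{ij})\wedge E_{2}(\lambda_{kl})$, which collapses to $\int_{\Delta_{geo}[2]}t_{m}\,dt_{1}\wedge dt_{2}=\tfrac16$. Your added remarks (that $s_{2}$ does not enter the binary product, and that the three cases are related by the $S_{3}$-symmetry of the simplex) are accurate refinements of the same computation.
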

\begin{proof}
By explicit calculation. The details are given in Appendix \ref{sec2dimsimplex}.
\end{proof} 
Consider $A^{\bullet, \bullet}$ as above. We denote by $N(A)^{\bullet, \bullet}$ its bigraded bidifferential module (see Appendix \ref{cosimplicial modules} for a definition) and by $\operatorname{Tot}_{N}\left(A\right)\in dgMod$ its associated total complex. It is well known that there is a natural isomorphism $\psi\: : \: \operatorname{Tot}_{N}\left(A\right)\to\int^{[n]\in\boldsymbol{\Delta}}NC_{n}\otimes A^{n, \bullet}$ of differential graded modules (see Lemma \ref{isompsi} for a proof). With an abuse of notation we denote again by $m_{\bullet}$ the $C_{\infty}$-structure induced on $\operatorname{Tot}_{N}\left(A\right)$ via the isomorphism $\psi$. We have the following.
\begin{thm}\label{functor}
The association \begin{equation}
A^{\bullet,\bullet}\mapsto \left( \operatorname{Tot}_{N}\left(A\right),m_{\bullet}\right) 
\end{equation} is part of a functor $\mathrm{cdgA}\to \left( C_{\infty}-\operatorname{Alg}\right)_{\Bbbk,str}$.
\end{thm}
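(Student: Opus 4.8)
The plan is to deduce Theorem~\ref{functor} from Theorem~\ref{functorassoc0} by transport of structure along the natural isomorphism $\psi$, so that no genuinely new computation is required. Recall that the $C_{\infty}$-structure $m_{\bullet}$ on $\operatorname{Tot}_{N}(A)$ was \emph{defined} as the one obtained from the $C_{\infty}$-structure on $\int^{[n]\in\boldsymbol{\Delta}}NC_{n}\otimes A^{n,\bullet}$ by transporting it along $\psi\: : \: \operatorname{Tot}_{N}(A)\to \int^{[n]\in\boldsymbol{\Delta}}NC_{n}\otimes A^{n,\bullet}$. Thus, for each fixed object $A^{\bullet,\bullet}$, the map $\psi$ is tautologically a strict isomorphism of $C_{\infty}$-algebras, and the object assignment is already provided; it remains only to treat morphisms.

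Given a morphism $f\: : \: A^{\bullet,\bullet}\to B^{\bullet,\bullet}$ in $\mathrm{cdgA}$, I would consider the square
\[
\begin{tikzcd}
\operatorname{Tot}_{N}(A)\arrow[r, "\psi_{A}"]\arrow[d, "\operatorname{Tot}_{N}(f)"'] & \int^{[n]\in\boldsymbol{\Delta}}NC_{n}\otimes A^{n,\bullet}\arrow[d, "\int f"]\\
\operatorname{Tot}_{N}(B)\arrow[r, "\psi_{B}"] & \int^{[n]\in\boldsymbol{\Delta}}NC_{n}\otimes B^{n,\bullet}
\end{tikzcd}
\]
where the right vertical arrow $\int f$ is the map induced on coends and the left one is the standard induced map on normalized total complexes. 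Since $\psi$ is natural in $A^{\bullet,\bullet}$, this square commutes, whence $\operatorname{Tot}_{N}(f)=\psi_{B}^{-1}\circ (\int f)\circ \psi_{A}$. By Theorem~\ref{functorassoc0} the arrow $\int f$ is a strict $C_{\infty}$-morphism, and the horizontal arrows are strict $C_{\infty}$-isomorphisms by the previous paragraph; as a composite of strict $C_{\infty}$-morphisms, $\operatorname{Tot}_{N}(f)$ is again a strict $C_{\infty}$-morphism. Preservation of identities and of composition is then inherited verbatim from the functoriality of $A\mapsto \int^{[n]\in\boldsymbol{\Delta}}NC_{n}\otimes A^{n,\bullet}$, since strict $C_{\infty}$-morphisms are determined by their underlying cochain maps and $\operatorname{Tot}_{N}$ is already a functor to cochain complexes.

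The single point requiring genuine verification is the naturality of $\psi$, i.e. that the square above commutes on the nose rather than merely up to homotopy; this is precisely the content of Lemma~\ref{isompsi}, which exhibits $\psi$ as an isomorphism of functors $\mathrm{cdgA}\to dgMod$. Since the multiplication and the differential on $\operatorname{Tot}_{TW}(A)$, and hence the transferred maps $m_{\bullet}$, are built from the simplicial data $(E_{\bullet},\int_{\bullet},s_{\bullet})$ by applying functorial operations levelwise, this naturality is the only compatibility that could fail. The expected main obstacle is therefore entirely bookkeeping: checking that $\psi$ intertwines the two descriptions of the structure maps $m_{\bullet}$ compatibly with morphisms, after which the functoriality statement is immediate.
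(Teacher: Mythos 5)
Your proposal is correct and follows exactly the route the paper intends: the paper states Theorem~\ref{functor} without a separate proof precisely because it is the transport of Theorem~\ref{functorassoc0} along the natural isomorphism $\psi$ of Lemma~\ref{isompsi}, which is how the structure $m_{\bullet}$ on $\operatorname{Tot}_{N}(A)$ was defined in the first place. You merely make explicit the naturality check that the paper leaves implicit, which is immediate from the formula $\psi(b)_{p}=b$ in Lemma~\ref{isompsi}.
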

We give an explicit formula for $m_{\bullet}$ in $\operatorname{Tot}_{N}\left(A\right)$ for elements of degree $0,1$. We denote by $\tilde{\partial}\: : \: A^{\bullet,\bullet}\to A^{\bullet+1,\bullet}$ the differential given by the alternating sum of coface maps, in particular $\tilde{\partial}=d^0-d^1$ on $A^{0,\bullet}$.
\begin{thm}\label{productthm>2}Let $l>2$.
\begin{enumerate} 
\item Let $a_{1}, \dots, a_{l}\in\operatorname{Tot}^{1}_{N}\left(A\right)$ and let $b_{i}\in A^{1,0}$, $c_{i}\in A^{0,1}$ be such that $a_{i}=b_{i}+c_{i}$ for every $i=1, \dots, l$. Then
\[
m_{l}\left( a_{1}, \dots, a_{l}\right) = \sum_{i=1}^{l}(-1)^{l-1}\binom{l-1}{i-1}\frac{B_{l-1}}{\left(l-1 \right) !}b_{1}\cdots \widehat{b}_{i}\cdots b_{l}\tilde{\partial}c_{i}+m_{l}\left(c_{1}, \dots, c_{n} \right) .
\]
\item Let $x\in A^{0,0}$. Then
\[
m_{l}\left(a_{1}, \dots,a_{i-1}, x, a_{i+1}, \dots, a_{l}\right)=(-1)^{i}\binom{l-1}{i-1}\frac{B_{l-1}}{\left(l-1 \right) !}b_{1}\cdots b_{l}\left( \tilde{\partial}x\right),
\]
and if we replace some $a_{i}$ by an element in $A^{0,0}$, the above expression vanishes. 
\end{enumerate}

\end{thm}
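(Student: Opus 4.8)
The plan is to evaluate $m_l$ one bidegree at a time, pushing everything through the master identity \eqref{formulafinale} so that only the local simplicial products $m^{[k]}_{\bullet}$ remain. First I would decompose $a_i = b_i + c_i$ with $b_i \in A^{1,0}$ (so $p_i = 1$) and $c_i \in A^{0,1}$ (so $p_i = 0$) and expand $m_l(a_1,\dots,a_l)$ multilinearly. A summand carrying $c$'s in exactly $k$ of the slots has $\sum_i p_i = l-k$, so by \eqref{formulafinale} it is concentrated in level $(l-k)+2-l = 2-k$ and bidegree $(2-k,k)$; moreover each $b$-slot requires an index set $I_j$ that is a two-element subset of $\{0,\dots,2-k\}$, which is impossible once $2-k<1$. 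Hence for $l>2$ only $k=0$ (all $b$'s, level $2$) and $k=1$ (one $c$, level $1$) can produce nonzero contributions, while the degenerate all-$c$ term $m_l(c_1,\dots,c_l)$ sits at level $2-l<0$ and vanishes (this is also where Lemma \ref{inclusiontrivial} and the triviality of $m^{[0]}$ enter). This reduces the statement to computing the level-$1$ piece explicitly and recording the level-$2$ piece.

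Next I would compute the level-$1$ part, which is the displayed sum. Fixing the slot $i$ that carries $c_i$, formula \eqref{formulafinale} forces $I_j=\{0,1\}$ and $(\sigma_{I_j})_* b_j = b_j$ on every $b$-slot, while the $c$-slot ranges over the two singletons $\{0\},\{1\}\subset\{0,1\}$, contributing the two cofaces of $c_i$. Under the identification \eqref{basisconv} (with $t:=t_0$) one has $\lambda_{01}\leftrightarrow dt$ up to sign, $\lambda_0\leftrightarrow t$ and $\lambda_1\leftrightarrow 1-t$, so the simplicial coefficients that appear are the values $m^{[1]}_l(dt,\dots,t,\dots,dt)$ with $t$ in position $i$. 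Proposition \ref{Getzappl1} evaluates these to $(-1)^{l-i}\binom{l-1}{i-1}\frac{B_{l-1}}{(l-1)!}\,dt$; the companion value with $1-t$ in place of $t$ is its negative, since unitality kills $m^{[1]}_l(\dots,1,\dots)$ for $l\ge 3$. Thus the two cofaces enter with opposite signs and assemble into $\tilde{\partial} c_i = (d^0-d^1)c_i$, and after collecting the Koszul signs of \eqref{reduction} the coefficient in front of $b_1\cdots\widehat{b_i}\cdots b_l\,\tilde{\partial} c_i$ becomes $(-1)^{l-1}\binom{l-1}{i-1}\frac{B_{l-1}}{(l-1)!}$; summing over $i$ yields the first term, and the $(0,2)$-contribution is precisely $m_l(c_1,\dots,c_l)$, recorded for uniformity with the $l=2$ case even though it vanishes here.

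For part (2) I would rerun the argument with slot $i$ occupied by $x\in A^{0,0}$ ($p_i=0$). A surviving summand must take $b_j$ in every remaining slot, because any extra $c$- or $A^{0,0}$-input would lower the level to $0$ and again leave the $b$-slots without admissible index sets; in particular two inputs from $A^{0,0}$ force the product to vanish. The surviving term lives in level $1$, bidegree $(1,0)$, and the same evaluation of $m^{[1]}_l(dt,\dots,t,\dots,dt)$ together with the two cofaces of $x$ produces $\tilde{\partial} x = (d^0-d^1)x$ with coefficient $(-1)^i\binom{l-1}{i-1}\frac{B_{l-1}}{(l-1)!}$ in front of $b_1\cdots b_l\,\tilde{\partial} x$. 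The only change from part (1) is that the transported factor $x$ has de Rham degree $0$ rather than $1$, which removes one Koszul factor in \eqref{reduction} and is exactly what converts the overall sign from $(-1)^{l-1}$ to $(-1)^i$.

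The single component this scheme does not pin down is the level-$2$, bidegree-$(2,0)$ piece: by \eqref{formulafinale} the all-$b$ summand equals a combination of $m^{[2]}_l(\lambda_{I_1},\dots,\lambda_{I_l})$ with each $\lambda_{I_j}\in\{\lambda_{01},\lambda_{02},\lambda_{12}\}$, and $m^{[2]}_l$ restricted to degree-$1$ elements of $NC_2$ is not available for $l>2$; this is precisely the sense in which the formula is ``almost complete,'' and I would flag it rather than attempt to evaluate it. I expect the genuine difficulty of the whole argument to lie not in the combinatorial skeleton — which levels survive and which cofaces occur is forced — but in the sign bookkeeping of the second and third paragraphs: reconciling the Koszul signs of \eqref{reduction}, the orientation sign hidden in $E_1(\lambda_{01})$ from \eqref{basisconv}, and the position sign $(-1)^{l-i}$ of Proposition \ref{Getzappl1} so that everything lands exactly on $\tilde{\partial}$ with the stated coefficient.
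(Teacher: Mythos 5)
Your proposal follows the paper's proof essentially verbatim: the same multilinear expansion and bidegree count eliminating every term with two or more inputs from $A^{0,1}$ (or $A^{0,0}$), the same explicit evaluation of the single-$c$ terms on $NC_{1}$ via Proposition \ref{Getzappl1} and the identification \eqref{basisconv}, and the same decision to leave the all-$b$ term unevaluated since $m^{[2]}_{l}$ on degree-one elements is unavailable for $l>2$. Your identification of the surviving unevaluated piece as $m_{l}(b_{1},\dots,b_{l})$ in bidegree $(2,0)$, with the all-$c$ term vanishing for $l>2$, agrees with equation \eqref{step2} of the paper's proof and with Corollary \ref{corresiduemap}, and in fact indicates that the last term displayed in the theorem statement should read $m_{l}(b_{1},\dots,b_{l})$ rather than $m_{l}(c_{1},\dots,c_{n})$.
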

 \begin{proof}
For two subsets $\mathcal{B}, \mathcal{C}\subseteq \left\lbrace 1, \dots, l \right\rbrace $ such that $\mathcal{B}\cup\mathcal{C}=\left\lbrace 1, \dots, l \right\rbrace$, we denote by $m_{l}\left(b_{\mathcal{B}}, c_{\mathcal{C}}\right)$ the expression $m_{l}\left(y_{1}, \dots, y_{l} \right)$ such that $y_{i}=b_{i}$ for $i\in \mathcal{B}$ and $y_{i}=c_{i}$ for $i\in \mathcal{B}$. In particular, we have $\left| m_{l}\left(b_{\mathcal{B}},  c_{\mathcal{C}}\right)\right|= \left|\mathcal{B} \right|+2-l+\left| \mathcal{C}\right|$. It follows that $\left|\mathcal{B} \right| \geq l-2$ and hence $\left|\mathcal{C} \right| \leq 2$. We have
\begin{align*}
m_{l}\left( a_{1}, \dots, a_{l}\right) & = \sum_{\mathcal{B}, \mathcal{C}}m_{l}\left(b_{\mathcal{B}}, c_{\mathcal{C}}\right)\\& =
\sum_{\left| \mathcal{B}\right|=l-2 , \left| \mathcal{C}\right|=2}m_{l}\left(b_{\mathcal{B}}, c_{\mathcal{C}}\right)+ \sum_{\left| \mathcal{B}\right|=l-1 ,  \left| \mathcal{C}\right|=1}m_{l}\left(b_{\mathcal{B}}, c_{\mathcal{C}}\right)+ \sum_{\left| \mathcal{B}\right|=l ,  \left| \mathcal{C}\right|=0}m_{l}\left(b_{\mathcal{B}}, c_{\mathcal{C}}\right).\\&
\end{align*}
The first summand vanishes by \eqref{formulafinale}. We conclude
\begin{equation}\label{step1}
m_{l}\left( a_{1}, \dots, a_{l}\right) = \sum_{i=1}^{l}m_{l}\left(b_{1}, \dots, c_{i}, \dots, b_{l}\right)+m_{l}\left(b_{1}, \dots, b_{l}\right).
\end{equation}
We calculate explicitly $m_{l}\left(b_{1}, \dots, b_{l},c_{1}\right)\in \operatorname{Tot}^{2}_{N}\left(A\right)$. We work in $\int^{[n]\in\boldsymbol{\Delta}}NC_{n}\otimes A^{n,\bullet} $ and we use the isomorphism $\psi$ (see Appendix \ref{cosimplicial modules}). In particular, for $c\in \operatorname{Tot}^{0,1}_{N}\left(A\right)$ we have a $(0,1)$ element $\psi(b)\in \int^{[n]\in\boldsymbol{\Delta}}NC_{n}\otimes A^{\bullet,n} $. Its projection at $NC_{1}^{0}\otimes A^{1,1}\subset NC_{1}^{\bullet}\otimes A^{1,\bullet}$ is
\[
\lambda_{0}\otimes \sigma_{0}^{*}\left(c \right)+\lambda_{1}\otimes \sigma_{1}^{*}\left(c \right).
\]
Let $d^{0}, d^{1}$ be the coface maps of $A$. By definition, $\sigma_{0}$ corresponds to the coface map $d^{1}$ and $\sigma_{1}$ corresponds to the coface map $d^{0}$. Hence we can write $NC_{1}^{\bullet}\otimes A^{1, \bullet}$ as $t\otimes d^{1}c +(1-t)\otimes d^{0} c$ (see \eqref{basisconv}). Similarly, an element $b$ of bidegree $(0,1)$ can be written as $-dt\otimes b \in NC_{1}^{1}\otimes A^{1,0}$. We have
\begin{align*}
\nonumber m_{l}^{1,1}\left(\phi(b)_{1}, \dots, \phi(b)_{l-1}, \phi(c)_{l}\right)_{1} 
&\nonumber =  m_{l+1}^{1,1}\left(-dt\otimes b_{1}, \dots,-dt\otimes  b_{l-1}, t\otimes d^{1}c_{l} +(1-t)\otimes d^{0} c_{l}\right)\\
&=(-1)^lm_{l+1}^{[1]}\left(dt, \dots,dt,t\right)\otimes b_{1}\cdots b_{l-1}\left( d^{1}c_{l}-d^{0}c_{l}\right)  \\
& =(-1)^l\frac{B_{l-1}}{\left(l-1 \right) !}dt\otimes\left( b_{1}\cdots b_{l-1}\right)  \tilde{\partial}c_{l}\in NC_{1}^{1}\otimes A^{1,1}
\end{align*}
where $ b_{1}\cdots b_{l-1}$ has to be understood as a multiplication inside $A^{1,0}$. The above expression defines an element of bidegree $(1,1)$ inside $\int^{[n]\in\boldsymbol{\Delta}}NC_{n}\otimes A^{\bullet,n} $. By applying $\psi^{1}$ we get
\[
m_{l}\left(b_{1}, \dots, b_{l-1},c_{l}\right)=(-1)^{l-1}\frac{B_{l-1}}{\left(l-1 \right) !}b_{1}\cdots b_{l-1}\tilde{\partial}c_{l}.
\]
Thanks to point 2 in Proposition \ref{Getzappl1}, we have
\[
m_{l}\left(b_{1}, \dots,b_{i-1}, c_{i}, b_{i+1}, \dots, b_{l}\right)=(-1)^{l-1}\binom{l-1}{i-1}\frac{B_{l-1}}{\left(l-1 \right) !}b_{1}\cdots b_{l}\tilde{\partial}c_{i}.
\]
By \eqref{step1}, we have
\begin{equation}\label{step2}
m_{l}\left( a_{1}, \dots, a_{l}\right) = \sum_{i=1}^{l}(-1)^{l-1}\binom{l-1}{i-1}\frac{B_{l-1}}{\left(l-1 \right) !}b_{1}\cdots \widehat{b}_{i}\cdots b_{l}\tilde{\partial}c_{l}+m_{l}\left(b_{1}, \dots, b_{l}\right).
\end{equation}
Now let $x\in A^{0,0}$. Then, the above computations give
\begin{align}
\nonumber m_{l}\left(a_{1}, \dots,a_{i-1}, x, a_{i+1}, \dots, a_{l}\right)& =m_{l}\left(b_{1}, \dots,b_{i-1}, x, b_{i+1}, \dots, b_{l}\right)\\
\nonumber& =(-1)^{l-1}(-1)^{l-i-1}\binom{l-1}{i-1}\frac{B_{l-1}}{\left(l-1 \right) !}b_{1}\cdots b_{l}\left( \tilde{\partial}x\right) \\
\label{step2unit}& =(-1)^{i}\binom{l-1}{i-1}\frac{B_{l-1}}{\left(l-1 \right) !}b_{1}\cdots b_{l}\left( \tilde{\partial}x\right).
\end{align}
\end{proof}
The following corollary follows directly from the previous proof.
\begin{cor}\label{corresiduemap}
	Let $n>2$ and and let $a_{1}, \dots, a_{n}\in \operatorname{Tot}^{1}_{N}\left(A\right)$. We have
	\[
	m_{n}\left( a_{1}, \dots, a_{n}\right)=d_{1}+d_{2}+d_{3},
	\]
	where $d_{1}\in \operatorname{Tot}^{0,2}_{N}\left(A\right)$, $d_{2}\in \operatorname{Tot}^{1,1}_{N}\left(A\right)$ and $d_{3}\in\operatorname{Tot}^{0,2}_{N}\left(A\right)$. Then $d_{1}=0$ and $d_{3}=m_{n}\left( b_{1}, \dots, b_{n}\right) $.
\end{cor}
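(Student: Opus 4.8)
The plan is to extract the statement from the bidegree analysis already carried out in the proof of Theorem \ref{productthm>2}. Writing $a_i = b_i + c_i$ with $b_i \in A^{1,0}$ of bidegree $(1,0)$ and $c_i \in A^{0,1}$ of bidegree $(0,1)$, I would expand $m_n(a_1, \dots, a_n)$ by multilinearity into the $2^n$ monomials obtained by choosing, in each slot, either $b_i$ or $c_i$, and sort them by the number $k$ of slots occupied by a $c_i$. By the bidegree rule recorded in \eqref{formulafinale}, a monomial with $k$ factors of type $c$ and $n-k$ of type $b$ has inputs of total bidegree $(n-k, k)$, so its image under $m_n$ lies in bidegree $(n-k+2-n,\, k) = (2-k,\, k)$.

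This bookkeeping already produces the claimed decomposition. Monomials with $k \geq 3$ would land in negative cosimplicial degree $2-k<0$ and hence vanish, leaving only $k=0,1,2$: these feed, respectively, the bidegree $(2,0)$ part $d_3$, the bidegree $(1,1)$ part $d_2$, and the bidegree $(0,2)$ part $d_1$. Since the unique monomial with no $c$-factor is $m_n(b_1, \dots, b_n)$, the pure-cosimplicial piece is exactly $d_3 = m_n(b_1, \dots, b_n)$, which is the second assertion.

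For the first assertion $d_1 = 0$, I would look at the $k=2$ monomials, each of which sits in cosimplicial degree $l=0$. Applying \eqref{formulafinale} at $l=0$, its coefficient involves $m_n^{[0]}(\lambda_{I_1}, \dots, \lambda_{I_n})$, and each $b$-factor forces the corresponding $\lambda_{I_j}$ into $NC_0^1$. As there is no injection $[p] \hookrightarrow [0]$ for $p>0$, we have $NC_0^1 = 0$; and since $n > 2$ every $k=2$ monomial still contains at least $n-2 \geq 1$ factor of type $b$, so each such term vanishes and $d_1 = 0$. This is exactly the vanishing invoked as ``the first summand vanishes by \eqref{formulafinale}'' inside the proof of Theorem \ref{productthm>2}.

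The only subtle step, and the sole place the hypothesis $n>2$ is used, is this last vanishing: one must recognise that two de Rham factors $c$ already saturate the bidegree and collapse the cosimplicial degree to $0$, where the edge generators $\lambda_I$ attached to the surviving $b$-factors die. The remaining steps are the bidegree bookkeeping that Theorem \ref{productthm>2} has already performed.
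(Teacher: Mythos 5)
Your proposal is correct and follows exactly the route the paper intends: the corollary is extracted from the bidegree bookkeeping in the proof of Theorem \ref{productthm>2}, with the $|\mathcal{C}|\geq 3$ terms killed by negative cosimplicial degree, $d_3$ identified as the unique $|\mathcal{C}|=0$ term, and the $|\mathcal{C}|=2$ terms vanishing because they live at cosimplicial level $l=0$ where the degree-one generators $\lambda_{I}$ attached to the remaining $b$-factors lie in $NC_0^1=0$. Your elaboration of why ``the first summand vanishes by \eqref{formulafinale}'' is precisely the intended argument, and you correctly isolate $n>2$ as the hypothesis guaranteeing at least one surviving $b$-factor.
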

\begin{thm}\label{productthm=2}
Let $a_{1},a_{2}\in\operatorname{Tot}^{1}_{N}\left(A\right)$ and let $b_{i}\in \operatorname{Tot}^{1,0}_{N}\left(A\right)$, $c_{i}\in \operatorname{Tot}^{0,1}_{N}\left(A\right)$ such that $a_{i}=b_{i}+c_{i}$ for $i=1,2$. Then 
\begin{align*}
m_{2}\left( a_{1},a_{2}\right)& = m_{2}\left( c_{1},c_{2}\right)+ m_{2}\left( b_{1},c_{2}\right)+m_{2}\left( c_{1},b_{2}\right)+m_{2}\left( b_{1},b_{2}\right),
\end{align*}
where
\begin{enumerate}
\item $m_{2}(c_{1}, c_{2})=c_{2}c_{2}\in A^{0,2},$
\item $m_{2}\left(b_{1},c_{2} \right)= -\frac{1}{2}b_{1}\tilde{\partial}c_{2}+b_{1}d^{0}c_{2}$,
\item $m_{2}\left(b_{1},b_{2} \right)= \frac{1}{6}\left(- \left(d^0b_{1}\left(d^1b_{2}+d^2b_{2} \right)  \right)
+ \left(d^1b_{1}\left(d^0b_{2}-d^2b_{2} \right)  \right)
 +  \left(d^2b_{1}\left(d^0b_{2}+d^1b_{2} \right)  \right)\right).$
\end{enumerate}
Let $x,y\in \operatorname{Tot}^{0}_{N}\left(A\right)$. Then, $m_{2}(c_{1},x)=c_{1}x\in \operatorname{Tot}^{0,1}_{N}\left(A\right)$ , $m_{2}(x,y)=xy\in A^{0,1}$, and $m_{2}\left( b_{1},x\right)=-\frac{1}{2}b_{1}\tilde{\partial}x+b_{1}d^{0}x\in\operatorname{Tot}^{1,0}_{N}\left(A\right)$.
\end{thm}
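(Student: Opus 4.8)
The plan is to expand $m_2(a_1,a_2)=m_2(b_1,b_2)+m_2(b_1,c_2)+m_2(c_1,b_2)+m_2(c_1,c_2)$ by bilinearity and compute each summand separately, exactly in the spirit of the proof of Theorem \ref{productthm>2}: pass to the coend model $\int^{[n]}NC_n\otimes A^{n,\bullet}$ via the isomorphism $\psi$ and apply the reduction formula \eqref{formulafinale}. Each summand is concentrated in a single bidegree — $(2,0)$, $(1,1)$, $(1,1)$ and $(0,2)$ respectively — so the relevant simplicial level $l=\sum p_i+2-n$ equals $2$, $1$, $1$, $0$ in turn, and I only need the transferred structures $m_\bullet^{[2]}$, $m_\bullet^{[1]}$ and (trivially) $m_\bullet^{[0]}$ recorded in Propositions \ref{Getzappl1} and \ref{speriam}.

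First I would dispose of the terms with $l\le 1$. The product $m_2(c_1,c_2)$ sits at $l=0$, where $m_\bullet^{[0]}$ is trivial, so Lemma \ref{inclusiontrivial} (strictness of $A^{0,\bullet}\hookrightarrow\operatorname{Tot}_N(A)$) gives $c_1c_2\in A^{0,2}$; the same lemma yields $m_2(x,y)=xy$ and $m_2(c_1,x)=c_1x$ for degree-zero inputs. For the mixed term $m_2(b_1,c_2)$ I mirror the $(1,1)$-computation of Theorem \ref{productthm>2}: using the Dupont basis \eqref{basisconv} I write $\psi(b_1)=-dt\otimes b_1$ and $\psi(c_2)=t\otimes d^1c_2+(1-t)\otimes d^0c_2$, apply \eqref{reduction}, and evaluate $m_2^{[1]}$ on $(dt,t)$ and $(dt,1)$. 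The point where $l=2$ departs from the case $l>2$ is the unit: for $l\ge 3$ the analogous unit product $m_l^{[1]}(dt,\dots,dt,1)$ vanishes by the unitality axiom $m_k(\dots,1,\dots)=0$, whereas here $m_2^{[1]}(dt,1)=dt$ survives, so the coefficient $t_1=1-t$ of $d^0c_2$ contributes the extra summand $b_1d^0c_2$ on top of the $\tilde\partial c_2$ term coming from $m_2^{[1]}(dt,t)=B_1\,dt$. This produces $m_2(b_1,c_2)=-\tfrac12 b_1\tilde\partial c_2+b_1d^0c_2$; the term $m_2(c_1,b_2)$ follows from graded commutativity of $m_2$, and $m_2(b_1,x)$ is identical with $c_2$ replaced by $x$.

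The genuinely new computation is the $(2,0)$-term $m_2(b_1,b_2)$ at level $l=2$, governed by $m_2^{[2]}$ on $NC_2$. Proposition \ref{speriam} supplies $m_2^{[2]}(\lambda_{01},\lambda_{02})=m_2^{[2]}(\lambda_{01},\lambda_{12})=m_2^{[2]}(\lambda_{02},\lambda_{12})=\tfrac16\lambda_{012}$, and because $m_2$ of a $C_\infty$-algebra is graded commutative the three opposite orderings are obtained with the sign $(-1)^{1\cdot 1}=-1$ while the diagonal products $m_2^{[2]}(\lambda_{ij},\lambda_{ij})$ vanish. Feeding these six values into \eqref{formulafinale} with $n=2$ and $p_1=p_2=1$ (so the global Koszul sign $(-1)^{\sum_{i<j}p_iq_j}$ is trivial), together with the coface identifications $\sigma_{\{0,1\}}=d^2$, $\sigma_{\{0,2\}}=d^1$, $\sigma_{\{1,2\}}=d^0$, I collect the six contributions $\lambda_{012}\otimes(\sigma_{I_1})_*b_1\wedge(\sigma_{I_2})_*b_2$ and regroup them according to the coface applied to $b_1$. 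After applying $\psi^{-1}$ this returns exactly the asserted three-term expression with prefactor $\tfrac16$.

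I expect the only real obstacle to be sign bookkeeping: the Koszul signs in \eqref{reduction}, the signs picked up by $\psi^{-1}$ on $NC_1^1\otimes A^{1,1}$ and on $NC_2^2\otimes A^{2,0}$, and the anticommutativity sign used to extend Proposition \ref{speriam} to all orderings. None of these is conceptually difficult, but the coefficients $-\tfrac12$ and $\tfrac16$ and the three internal signs of the $(2,0)$-formula all depend on making them consistent, so I would fix all conventions once — as in the proof of Theorem \ref{productthm>2} — and carry them uniformly through the four terms.
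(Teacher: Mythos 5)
Your proposal is correct and follows essentially the same route as the paper's proof: decompose by bidegree, transport to the coend model via $\psi$, and evaluate the transferred products $m_\bullet^{[0]}$, $m_\bullet^{[1]}$, $m_\bullet^{[2]}$ using Lemma \ref{inclusiontrivial}, Proposition \ref{Getzappl1} (including the surviving unit term $m_2^{[1]}(dt,1)=dt$, which is exactly how the paper obtains the extra $b_1d^0c_2$ summand) and Proposition \ref{speriam} extended to all orderings by graded antisymmetry. The sign bookkeeping you flag is indeed the only delicate point, and your stated coefficients and signs agree with the paper's computation.
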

\begin{proof}
The second and last terms of $m_{2}(a_{1},a_{2})$ can be calculated by a computation similar to the proof above. We have
\begin{align*}
\nonumber m_{2}^{1,1}\left(\psi(b)_{1},  \psi(c)_{2}\right)_{1} 
&\nonumber =  m_{2}^{1,1}\left(-dt\otimes b_{1}, t\otimes d^{1}c_{2} +(1-t)\otimes d^{0} c_{2}\right)\\
&=-m_{2}^{[1]}\left(dt,t\right)\otimes b_{1}c_{2}+m_{2}^{[1]}\left(dt,1\right)\otimes \left(- b_{1}d^{0}c_{2}\right)  \\
& +m_{2}^{[1]}(dt,t)\otimes \left(b_{1}d^{0}c_{2} \right) \\
& =dt\otimes \left( B_{1}b_{1}\tilde{\partial}c_{2}-b_{1}d^{0}c_{2}\right),
\end{align*}
and thus $m_{2}\left(b_{1},c_{2} \right)= -B_{1}b_{1}\tilde{\partial}c_{2}+b_{1}d^{0}c_{2}$. It remains to add $m_{2}\left(b_{1},b_{2}\right)$ where $B_{1}=\frac{1}{2}$. The expression for $m_{2}\left(b_{1},x\right)$ can be computed in the same way.
By  Appendix \ref{sec2dimsimplex}, $NC_{2}$ is the graded vector space generated by $\lambda_{0}, \lambda_{1},  \lambda_{2}$ in degree $0$, $\lambda_{12}, \lambda_{02},  \lambda_{01}$ in degree $1$ and $\lambda_{012}$ in degree 2. In particular, for $b\in \operatorname{Tot}^{1,0}_{N}\left(A\right)$ we have a $(1,0)$ element $\psi(b)\in \int^{[n]\in\boldsymbol{\Delta}}NC_{n}\otimes A^{\bullet,n} $. Its projection at $ NC_{2}^{\bullet}\otimes A^{2,\bullet}$ is
\[
\lambda_{12}\otimes d^{0}\left(b \right)+\lambda_{02}\otimes d^{1}\left(b \right)+\lambda_{01}\otimes d^{2}\left(b \right).
\]
By Proposition \ref{speriam} we have
\begin{align*}
& m_{2}^{2,2}\left(\psi\left( b_{1}\right) ,\psi\left( b_{2}\right)\right)_{2}=\\
& =m_{2}^{2,2}\left(\lambda_{12}\otimes d^{0}\left(b_{1} \right)+\lambda_{02}\otimes d^{1}\left(b_{1} \right)+\lambda_{01}\otimes d^{2}\left(b_{1} \right) ,\lambda_{12}\otimes d^{0}\left(b_{2} \right)+\lambda_{02}\otimes d^{1}\left(b_{2} \right)+\lambda_{01}\otimes d^{2}\left(b_{2} \right)\right)_{2}\\
&=\frac{1}{6}\lambda_{012}\otimes \left( -\left(d^0b_{1}\left(d^1b_{2}+d^2b_{2} \right)  \right)
+ \left(d^1b_{1}\left(d^0b_{2}-d^2b_{2} \right)  \right)
 +  \left(d^2b_{1}\left(d^0b_{2}+d^1b_{2} \right)  \right)\right). \\
\end{align*}
Then 
\[
m_{2}\left(b_{1},b_{2} \right)= \frac{1}{6}\left( -\left(d^0b_{1}\left(d^1b_{2}+d^2b_{2} \right)  \right)
+ \left(d^1b_{1}\left(d^0b_{2}-d^2b_{2} \right)  \right)
 +  \left(d^2b_{1}\left(d^0b_{2}+d^1b_{2} \right)  \right)\right).
\]
\end{proof}

\section{Geometric connections}\label{third}
We put the $C_{\infty}$-structure of Section \ref{sectGetztler} on the cosimplicial module of differential forms on a simplicial manifold $M_{\bullet}$. We show that Maurer-Cartan elements induce a flat connection form on $M_{0}$ (Corollary \ref{corpronilpot}). 
\subsection{Simplicial De Rham theory}\label{sectionsimplicial De Rham Theory}
 We recall some basic notions about simplicial manifolds. Our main reference is \cite{Dupont} and \cite{Dupont2}. By complex manifold we mean a smooth complex manifold. Let $\mathrm{Diff}_{\C}$ be the category of complex manifold. We denote with $\mathrm{dgA}$ the category of complex commutative non-negatively graded differential graded algebras.
\begin{defi}
A \emph{simplicial manifold} $M_{\bullet}$ is a simplicial object in $\mathrm{Diff}_{\C}$.
\end{defi}
Each smooth complex manifold $M$ can be viewed as a constant simplicial manifold $M_{\bullet}$, where $M_{n}:=M$ and all the degenerancy and face maps are equal to the identity. Let $G$ be a Lie group, and let $M$ be a manifold equipped with a left smooth (or holomorphic) $G$-action. We define the simplicial manifold $M_{\bullet}G$ as follows: $M_{n}G=M\times G^{n}$ and
The face maps $d^{i}\: : \: M_{n}G\to M_{n-1}G$ for $i=0,1,\dots n$ are
\[
d^{i}(x,g_{1}, \dots , g_{n}):=\begin{cases}
& (g_{1}x, g_{2}, \dots , g_{n}),\text{ if }i=0,\\
& (x,g_{1}, \dots , g_{i}g_{i+1}, \dots ,g_{n}),\text{ if }1<i<n\\
& (x,g_{1}, \dots , g_{n-1}),\text{ if }i=n.
\end{cases}
\]
The degenerancy maps $s^{i}\: : \: M_{n}G\to M_{n+1}G $ are defined via
\[
s^{i}(x,g_{1}, \dots , g_{n}):=\left(x,g_{1}, \dots, g_{i}, e, g_{i+1}, \dots , g_{n} \right) 
\]
for $i=1, \dots ,n$.
\begin{defi}
	We call the simplicial manifold $M_{\bullet}G$ the action groupoid.
\end{defi}
In particular the geometric realization of $M_{\bullet}G$ is weakly equivalent to the Borel construction $EG\times_{G}M$ and if the action of $G$ is free, the projection $\pi\: : \: EG\times_{G}M\to M/G$ is a homotopy equivalence. Let $A_{DR}\: : \: \mathrm{Diff}_{\C}\to \mathrm{dgA}$ be the smooth complex De Rham functor, i.e., $A_{DR}\left( M\right) $ is the differential graded algebra of smooth complex valued differential forms on $M$. $A_{DR}$ is contravariant and $A_{DR}\left(M_{\bullet} \right)$ is a cosimplicial complex commutative differential graded algebra for any simplicial manifold $M_{\bullet}$. As explained in the previous section, $\operatorname{Tot}_{TW}\left( A_{DR}\left(M_{\bullet} \right)\right) $ is a differential graded commutative algebra over $\C$. We obtain a contravariant functor
\[
\operatorname{Tot}_{TW}\left( A_{DR}\left(- \right)\right)\: :\: \mathrm{Diff}^{\boldsymbol{\Delta}^{op}}_{\C}\to \mathrm{dgA}.
\]
 A smooth map between simplicial manifolds $f_{\bullet}\: : \: M_{\bullet}\to N_{\bullet}$ induces a morphism of differential graded algebras via $\left( f^{*}(w)\right)_{n}:=f^{*}_{n}(w_{n})$. If $M_{\bullet}$ is a constant simplicial manifold then $\operatorname{Tot}_{TW}\left( A_{DR}\left(M_{\bullet} \right)\right)$ is naturally isomorphic to $A_{DR}(M_{\bullet})$. Given a simplicial manifold $M_{\bullet}$ and consider the cosimplicial complex commutative differential graded algebra $A_{DR}\left(M_{\bullet} \right)$, by Theorem \ref{functor}, we have a functor
\[
\operatorname{Tot}_{N}\left( A_{DR}\left(- \right)\right)\: :\: \mathrm{Diff}_\C^{\boldsymbol{\Delta}^{op}}\to \left( C_{\infty}-\operatorname{Alg}\right)_{\C,str}.
\]
There is a De Rham theorem for simplicial manifolds. This is very useful because it allows us to determine the cohomology of $\operatorname{Tot}_{N}\left( A_{DR}\left(M_{\bullet} \right)\right)$. Let $\mathrm{Top}$ be the category of topological spaces. Let $T_{\bullet}\in \mathrm{Top}^{\boldsymbol{\Delta}^{op}}$ be a simplicial topological space.
Let $\boldsymbol{\Delta}_{+}$ be the subcategory of $\boldsymbol{\Delta}$ with the same objects but only injective maps. By restriction, we have a functor $T_{\bullet}\: : \: \boldsymbol{\Delta}_{+}^{op}\to \mathrm{Top}$. \\
\begin{defi}
We call the topological spaces
\[
\left| \left| T_{\bullet} \right| \right|:=\int^{[n]\in \boldsymbol{\Delta}_{+}}T_{n}\times \Delta_{geo}[n]\text{ and }\left| T_{\bullet} \right| :=\int^{[n]\in \boldsymbol{\Delta}}T_{n}\times \Delta_{geo}[n]
\]
the \emph{fat realization} and the \emph{geometric realization} of $T_{\bullet}$ respectively.
\end{defi} 
\begin{rmk}
The natural quotient map $||T_{\bullet}||\to |T_{\bullet}|$ is not a weak equivalence in general. However, if the simplicial topological space is ``good'' (see the appendix  of \cite{Segal}), then it is a weak-equivalence. In particular, for a simplicial manifold $M_{\bullet}$, this is true when its degeneracy $s^{i}$ maps are embeddings. We call this class of simplicial manifolds \emph{good} simplicial manifolds.
\end{rmk}

\begin{prop}[\cite{Dupont}]\label{Derhamthmsimpl}
Let $M_\bullet$ be a simplicial manifold, we have a sequence of multiplicative isomorphisms:
\[
H^{\bullet}\left(\operatorname{Tot}_{TW}\left( A_{DR}\left(M_{\bullet} \right)\right) \right)\cong H^{\bullet}\left(\operatorname{Tot}_{N}\left( A_{DR}\left(M_{\bullet} \right)\right)\right) \cong H^{\bullet}\left(||M_{\bullet}||, \C \right).
\]
\end{prop}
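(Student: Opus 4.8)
The plan is to prove the two displayed isomorphisms separately and then verify that each is compatible with products. Throughout write $A^{\bullet,\bullet}:=A_{DR}(M_\bullet)$ for the cosimplicial commutative differential graded algebra, the first index being simplicial degree and the second de Rham degree. The first isomorphism $H^\bullet(\operatorname{Tot}_{TW}(A))\cong H^\bullet(\operatorname{Tot}_N(A))$ I would read off directly from the Dupont contraction. Applying the coend to the simplicial contraction of Theorem \ref{getzdup} produces the diagram \eqref{secondiag}, a pair of maps
\[
E\: : \:\int^{[n]\in\boldsymbol{\Delta}} NC_{n}\otimes A^{n,\bullet}\rightleftarrows \operatorname{Tot}_{TW}(A)\: : \:\int
\]
together with a contracting homotopy $s$ satisfying $ds+sd=E\int-\mathrm{Id}$ and $\int E=\mathrm{Id}$. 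Thus $E$ is a quasi-isomorphism of cochain complexes, and composing the induced isomorphism with the natural isomorphism $\psi$ of Lemma \ref{isompsi} that identifies $\operatorname{Tot}_N(A)$ with $\int^{[n]}NC_n\otimes A^{n,\bullet}$ yields the first isomorphism. For multiplicativity I would invoke that the homotopy transfer theorem applied to this contraction produces a genuine $C_\infty$-quasi-isomorphism whose linear term is $E$; since a $C_\infty$-quasi-isomorphism induces a ring isomorphism on cohomology, and the product on $H^\bullet(\operatorname{Tot}_{TW}(A))$ is the honest commutative product of the cdga $\operatorname{Tot}_{TW}(A)$, it corresponds to the product on $H^\bullet(\operatorname{Tot}_N(A))$ induced by $m_2$.

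For the second isomorphism $H^\bullet(\operatorname{Tot}_N(A))\cong H^\bullet(||M_\bullet||,\C)$ the main device is integration over geometric simplices. An element of $\operatorname{Tot}_{TW}(A)$ is a compatible family $(v_n)$ with $v_n\in\Omega(n)\otimes A_{DR}^\bullet(M_n)$, that is, a Dupont simplicial form on $\coprod_n \Delta_{geo}[n]\times M_n$ compatible along the face and degeneracy identifications defining $||M_\bullet||$. Fibre integration over each $\Delta_{geo}[n]$ assigns a number to every cell of the fat realization, producing a map of cochain complexes
\[
\mathcal{I}\: : \:\operatorname{Tot}_{TW}(A)\longrightarrow C^{\bullet}(||M_\bullet||;\C)
\]
into the singular (or cellular) cochains of $||M_\bullet||$; that $\mathcal{I}$ intertwines the de Rham differential with the cochain coboundary is Stokes' theorem. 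Using the first isomorphism it then suffices to prove that $\mathcal{I}$ is a quasi-isomorphism.

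This I would do by a spectral sequence comparison. Both $\operatorname{Tot}_{TW}(A)$ and $C^\bullet(||M_\bullet||;\C)$ carry a filtration by the simplicial degree $p$; since $NC_n^p=0$ for $p>n$, each total degree of the complexes is a finite sum over $p$, so both filtrations are bounded in each total degree and the associated spectral sequences converge. Taking de Rham cohomology in the form degree first gives on the algebraic side $E_1^{p,q}=H^q_{dR}(M_p)$, while the spectral sequence of the fat realization gives $E_1^{p,q}=H^q(M_p;\C)$, with $d_1$ on both sides the alternating sum of the cosimplicial coface maps. The integration map $\mathcal{I}$ is a morphism of these filtered complexes, and on $E_1$ it restricts, in each fixed simplicial degree $p$, to the ordinary de Rham isomorphism $H^q_{dR}(M_p)\cong H^q(M_p;\C)$. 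By the comparison theorem for spectral sequences it therefore induces an isomorphism on the abutments.

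The genuine difficulty lies in the word \emph{multiplicative}. The integration map $\mathcal{I}$ is not an algebra morphism at the cochain level, since fibre integration does not send the wedge product to the cup product. I would resolve this exactly as in the classical de Rham theorem: wedge and cup products agree after passing to cohomology on each $M_p$, hence they agree on the $E_1$-pages, and because both filtrations are multiplicative the comparison isomorphism respects the induced products on the abutment. From the viewpoint of this paper the same fact is a shadow of the transferred $C_\infty$-structure $m_\bullet$ on $\operatorname{Tot}_N(A)$, whose higher operations record the failure of $\mathcal{I}$ to be strictly multiplicative. The remaining points — convergence of both spectral sequences and the insistence on the fat realization $||M_\bullet||$, for which the $E_1$-term is as stated without any goodness hypothesis on $M_\bullet$ — are routine but should be recorded carefully.
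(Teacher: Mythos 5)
The paper gives no proof of this proposition---it is quoted from Dupont---and your argument is a correct reconstruction of the standard one: the first isomorphism via the coend of the Dupont contraction (exactly the diagram \eqref{secondiag} that the paper itself constructs, with multiplicativity coming from the transferred $C_{\infty}$-structure), and the second via fibre integration and the simplicial-degree spectral sequence, which is Dupont's own proof. The only wrinkle worth recording is that on the $\operatorname{Tot}_{N}$ side the $E_{1}$-term is the \emph{normalized} complex $N\left(H^{q}_{dR}(M_{\bullet})\right)^{p}$ rather than $H^{q}_{dR}(M_{p})$ itself, so the standard quasi-isomorphism between normalized and unnormalized cosimplicial complexes must be inserted before comparing with the skeletal spectral sequence of $||M_{\bullet}||$; this is routine and does not affect the argument.
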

 We introduce the complex of smooth logarithmic differential forms. Let $M$ be a complex manifold. 

\begin{defi}
A normal crossing divisor $D\subset M$ is given by $\cup_{i} D_{i}$, where each $D_{i}$ is a non-singular divisor (a codimension $1$ object) and for each $p\in D_{i_{1}}\cap \dots \cap D_{i_{l}}$ there exist local coordinates $z=\left( z_{1}, \dots, z_{n}\right)\: : \: \C^{n}\to U\subset M$ near $p$ such that $U\cap D_{i_{1}}\cap \dots \cap D_{i_{l}}$ is given by the equation $\prod^{q}_{i=1}z_{j_{i}}=0$.
\end{defi}
 A differential form $w$ in $A_{DR}^{\bullet}(\log \left( D\right) )$ is a smooth complex valued differential form on $M-D$ whose extension on $M$ admits some singularites along $D$ of degree $1$. More precisely it can be viewed as a non-smooth complex differential form on $M$ such that
\begin{enumerate}
\item for any given holomorphic coordinates $z=\left( z_{1}, \dots, z_{n}\right)\: : \: \C^{n}\to U\subset M$ such that $U\cap D=\emptyset$, $w$ can be written as an ordinary  smooth complex valued differential forms 
\[
\sum f(z_{1}, \dots z_{n})d{z_{i_{1}}\wedge\dots \wedge  dz{i_{p}}}\wedge d{\overline{z}_{j_{1}}\wedge\dots \wedge d\overline{z}{j_{q}}}
\]
where $f$ is a smooth function over $U$.
\item For any given holomorphic coordinates  $z=\left( z_{1}, \dots, z_{n}\right)\: : \: \C^{n}\to U\subset M$  near $p\in D$, such that $U\cap D_{i_{1}}\cap \dots \cap D_{i_{l}}$ is given by the equation $\prod^{q}_{i=1}z_{j_{i}}=0$; $w$ can be written as 
 \[
\sum w_{J}\wedge \frac{dz_{j_{1}}}{z_{j_{1}}}\wedge\cdots \wedge\frac{dz_{j_{l}}}{z_{j_{l}}}
\]
for $j_{i}\in \left\lbrace 1, \dots, q \right\rbrace $ and where $w_{J}$ is a smooth complex valued differential forms on $U$. 
\end{enumerate}
The graded vector space $A_{DR}^{\bullet}(\log \left( D\right))$ equipped with the differential $d$ and wedge product $\wedge$ is a commutative differential graded algebra. Its cohomology gives  the cohomology of $M-{D}$.
\medskip
\begin{prop}[\cite{Deligne2}]\label{delignholom}
The inclusion $M-D\hookrightarrow M$ induces a map
\[
A_{DR}^{\bullet}(\log \left( D\right) )\to A_{DR}^{\bullet}(M-D )
\]
which is a quasi-isomorphism of differential graded algebras.
\end{prop}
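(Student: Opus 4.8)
The statement is local in nature, so the plan is to sheafify the inclusion, reduce to a computation on small polydiscs, and then globalize via hypercohomology. Write $j\: :\: M-D\hookrightarrow M$ for the open inclusion, let $\mathcal{A}^{\bullet}_{M}(\log D)$ denote the sheaf of smooth forms with logarithmic singularities along $D$ (whose global sections are $A_{DR}^{\bullet}(\log D)$), and let $\mathcal{A}^{\bullet}_{M-D}$ be the smooth de Rham sheaf complex on $M-D$. First I would observe that the map of the proposition is the map on global sections of a morphism of complexes of sheaves $\iota\: :\: \mathcal{A}^{\bullet}_{M}(\log D)\to j_{*}\mathcal{A}^{\bullet}_{M-D}$ on $M$, which is manifestly a map of sheaves of differential graded algebras.

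The key step is the local computation. Both complexes are complexes of $C^{\infty}_{M}$-modules, hence fine, and softness is preserved by $j_{*}$; therefore $j_{*}\mathcal{A}^{\bullet}_{M-D}$ represents $Rj_{*}\underline{\C}_{M-D}$, and to prove $\iota$ is a quasi-isomorphism it suffices to check it on stalks, i.e.\ on arbitrarily small polydiscs. Fixing $p\in D$ and coordinates in which $D=\{z_{1}\cdots z_{k}=0\}$ on a polydisc $U$, so that $U-D\cong (\Delta^{*})^{k}\times\Delta^{n-k}$, the stalk cohomology on the target is $H^{\bullet}(U-D,\C)$, which by homotopy equivalence with the $k$-torus is the exterior algebra on the classes $[\tfrac{dz_{i}}{z_{i}}]$, $1\le i\le k$. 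For the source, since the smooth log complex on $U$ factors as the (completed) tensor product of the one-variable log complexes on the punctured factors with the ordinary smooth de Rham complexes on the unpunctured factors, a Künneth-type argument reduces everything to the single-variable case on $\Delta^{*}$. There one checks directly that $H^{0}=\C$, that $H^{1}$ is spanned by $[\tfrac{dz}{z}]$, and that the higher cohomology vanishes; matching generators shows $\iota$ is a stalkwise quasi-isomorphism.

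The hardest point is precisely this one-variable lemma. Every smooth log $1$-form can be written $\omega=g(z,\bar z)\tfrac{dz}{z}+\eta$ with $g$ a smooth function and $\eta$ a smooth $1$-form. One must show that if $\omega$ is closed and has vanishing period around the origin, then it admits a primitive that is again a \emph{log} $0$-form, i.e.\ a function smooth across $D$ rather than merely on the complement. The analytic content is to integrate $\omega$ while controlling the order of the singularity at the origin, and this is exactly where the restriction to logarithmic (degree-one) poles is used; conversely $\tfrac{dz}{z}$ has no log primitive, which is why its class survives in $H^{1}$ and accounts for the single nontrivial period.

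Finally I would globalize. Because $\mathcal{A}^{\bullet}_{M}(\log D)$ is a complex of fine sheaves, its hypercohomology is computed by global sections, giving $\mathbb{H}^{\bullet}(M,\mathcal{A}^{\bullet}_{M}(\log D))\cong H^{\bullet}(A_{DR}^{\bullet}(\log D))$; likewise $\mathbb{H}^{\bullet}(M,j_{*}\mathcal{A}^{\bullet}_{M-D})\cong H^{\bullet}(M-D,\C)\cong H^{\bullet}(A_{DR}^{\bullet}(M-D))$ by the ordinary de Rham theorem. Since $\iota$ is a quasi-isomorphism of complexes of sheaves it induces an isomorphism on hypercohomology, and this isomorphism is exactly the map induced by the inclusion in the statement. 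As $\iota$ respects wedge products and $d$, the resulting isomorphism is multiplicative, which proves the claim.
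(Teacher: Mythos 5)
The paper itself offers no proof of this proposition: it is quoted as a known result of Deligne, and the only argument in the vicinity is the deduction of Proposition \ref{simplicialcrossdiv} from it via the double-complex Lemma \ref{spectralcomplexes}. So your proposal cannot be compared to an argument in the text; what I can assess is whether it would stand on its own. The global architecture you describe (sheafify, use that both complexes are fine so that global sections compute hypercohomology, identify $j_{*}\mathcal{A}^{\bullet}_{M-D}$ with $Rj_{*}\C$, reduce to stalks on polydiscs) is the standard Deligne-style argument and is sound; one small caution is that ``softness is preserved by $j_{*}$'' is not a general fact for open inclusions --- what saves you here is that $j_{*}\mathcal{A}^{q}_{M-D}$ is still a $C^{\infty}_{M}$-module, hence fine.

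The genuine gap is that the entire analytic content of the proposition --- the local acyclicity computation --- is described rather than carried out. You correctly isolate the one-variable statement as the crux, but you only prove the easy half of it (that $\tfrac{dz}{z}$ has no log primitive, so its class survives); the hard half, that a closed smooth log $1$-form on $\Delta^{*}$ with vanishing period admits a primitive smooth across the origin, and that every log $2$-form $h\,d\bar{z}\wedge\tfrac{dz}{z}+(\text{smooth})$ is exact in the log complex (a $\bar{\partial}$-type problem with a prescribed pole order), is asserted with the phrase ``one must show.'' Note also that the complex in this paper is the \emph{smooth} log complex (only smooth coefficients $w_{J}$ times holomorphic $\tfrac{dz_{j}}{z_{j}}$'s, so e.g.\ $\tfrac{d\bar{z}}{\bar{z}}$ is excluded), which is not literally the object of Deligne's Th\'eorie de Hodge II; the local lemma therefore cannot simply be quoted from the holomorphic case and genuinely needs the estimate you allude to. Finally, the K\"unneth reduction to one variable is delicate for smooth forms, since $\mathcal{A}^{\bullet}$ of a product is only a completion of the tensor product of the factors' complexes; this step needs either an explicit homotopy in several variables or a justification that the quasi-isomorphism passes to the completion. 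Until the one-variable lemma and the K\"unneth step are actually proved, the proposal is an outline of the right proof rather than a proof.
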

\begin{defi}
Let $\mathrm{Diff}_{Div}$ be the category of complex manifolds equipped with a normal crossing divisors, i.e the objects are pairs $(M,\mathcal{D})$ where $\mathcal{D}$ is a normal crossing divisor of $M$ and the maps are holomorphic maps $f\: : \: \left(M,\mathcal{D} \right)\to \left(N,{\mathcal{D}'}\right) $ such that
\[
f^{-1}(\mathcal{D}')\subset {\mathcal{D}}.
\] 
A \emph{simplicial manifold $M_{\bullet}$ with a simplicial normal crossing divisor $\mathcal{D}_{\bullet}$ }is a simplicial objects in $\mathrm{Diff}_{Div}$.
\end{defi}
So given a simplicial manifold with divisor $(M_{\bullet}, \mathcal{D}_{\bullet})$, then $\left( M-D\right)_{\bullet}$ defines by $\left( M-\mathcal{D}\right)_{n}=M_{n}-\mathcal{D}_{n}$ is again a simplicial manifold and $A_{DR}\left(\left( M-\mathcal{D}\right)_{\bullet}\right)$ and $A_{DR}\left( \log\left( \mathcal{D}_{\bullet}\right) \right)$ are cosimplicial commutative dg algebras.
\medskip
\begin{prop}\label{simplicialcrossdiv}
The inclusion $\left( M-\mathcal{D}\right)_{\bullet} \hookrightarrow M_{\bullet}$ induces a map between chain complexes 
\[
\operatorname{Tot}_{N}\left(A_{DR}(\log \left( \mathcal{D}_{\bullet}\right) ) \right)\to\operatorname{Tot}_{N}\left(A_{DR}\left(\left( M-\mathcal{D}\right)_{\bullet}\right)  \right)
\]
which is a quasi-isomorphism.
\end{prop}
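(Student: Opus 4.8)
The plan is to deduce the statement from its pointwise (levelwise) version, Proposition \ref{delignholom}, by a spectral sequence comparison. First I would record that the inclusion $\left( M-\mathcal{D}\right)_{\bullet}\hookrightarrow M_{\bullet}$ is a morphism of simplicial objects in $\mathrm{Diff}_{Div}$: every face and degeneracy map $\theta$ of $M_{\bullet}$ satisfies $\theta^{-1}(\mathcal{D})\subseteq \mathcal{D}$, so the pullback $\theta^{*}$ carries logarithmic forms to logarithmic forms. Consequently the Deligne maps $A_{DR}^{\bullet}(\log \mathcal{D}_{n})\to A_{DR}^{\bullet}(M_{n}-\mathcal{D}_{n})$ assemble, as $n$ varies, into a morphism of cosimplicial commutative differential graded algebras
\[
\phi\colon A_{DR}(\log \mathcal{D}_{\bullet})\to A_{DR}\left( \left( M-\mathcal{D}\right)_{\bullet}\right).
\]
Applying the normalization functor in the cosimplicial direction (see Appendix \ref{cosimplicial modules}) turns each side into a bigraded bidifferential module $N(-)^{\bullet,\bullet}$, with the simplicial differential $\tilde{\partial}$ in the first slot and the de Rham differential $d$ in the second, and $\phi$ into a morphism of such modules. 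Passing to total complexes gives the asserted map on $\operatorname{Tot}_{N}$, which is automatically a cochain map.

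Second, I would run the spectral sequence of the double complex $N(-)^{\bullet,\bullet}$ associated to the filtration by the cosimplicial degree $p$. Since the de Rham degree $q$ is non-negative, in each total degree $k$ only the finitely many bidegrees $(p,q)$ with $0\le p,q$ and $p+q=k$ occur; hence the filtration is finite in each total degree and the spectral sequence converges strongly to $H^{\bullet}\left( \operatorname{Tot}_{N}\right)$. On the $E_{0}$ page the differential is the internal de Rham differential $d$, so
\[
E_{1}^{p,q}=H^{q}\bigl(N^{p,\bullet},d\bigr).
\]
Because we work over a field of characteristic zero, the cosimplicial normalization $N^{p}$ is the image of an idempotent built from the codegeneracies, and the codegeneracies are cochain maps for $d$; an idempotent endomorphism commutes with the formation of cohomology, so
\[
E_{1}^{p,q}\cong N^{p}\bigl(H^{q}_{dR}(\,\cdot\,)\bigr),
\]
i.e. the normalization applied to the de Rham cohomology of $M_{p}$ relative to $\mathcal{D}_{p}$ (in the logarithmic, resp. complement, sense).

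Finally, Proposition \ref{delignholom} says precisely that for each fixed $p$ the map $H^{q}_{dR}(\log \mathcal{D}_{p})\to H^{q}_{dR}(M_{p}-\mathcal{D}_{p})$ induced by $\phi$ is an isomorphism; since normalization is exact it sends this isomorphism to an isomorphism, so $\phi$ induces an isomorphism on the $E_{1}$ pages. By the comparison theorem for strongly convergent spectral sequences, $\phi$ then induces an isomorphism on $E_{\infty}$ and hence on the abutment $H^{\bullet}\left( \operatorname{Tot}_{N}\right)$, which is the claim. The only genuinely delicate points are bookkeeping ones: checking that the cosimplicial structure really does restrict to the logarithmic subcomplex (so that $\phi$ is well defined), and verifying that $N^{p}$ commutes with the vertical cohomology; the convergence and comparison steps are formal once the filtration is seen to be finite in each total degree.
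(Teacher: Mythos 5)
Your proposal is correct and follows essentially the same route as the paper: the paper also deduces the statement from the levelwise Deligne quasi-isomorphism (Proposition \ref{delignholom}) together with the first-quadrant double-complex comparison lemma (Dupont, Lemma 1.19), which is exactly the spectral-sequence argument you spell out. The only difference is that you prove the comparison step (including the finiteness of the filtration and the commutation of normalization with vertical cohomology) rather than citing it.
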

\begin{proof}
This is a direct consequence of Proposition \eqref{delignholom} and Lemma below. For a double complex $\left( A^{\bullet,\bullet},d,d'\right), $ we define a new double complex $\left(\left(  E^{A} _{1}\right)^{\bullet,\bullet},d_{1}, {d'}_{1}\right) $, where $\left( E^{A}_{1}\right)^{p,q}:=H^{p}\left(A^{\bullet,q},d \right)$, $d_{1}=d$ and ${d'}_{1}=0$.
\begin{lem}[\cite{Dupont}, Lemma 1.19]\label{spectralcomplexes}
Let $f\: : \: A^{\bullet, \bullet}\to B^{\bullet,\bullet}$ be a morphism of double complex. Assume that $A^{p,q}=B^{p,q}=0$ if $p$ or $q$ are negative and that $f\: : \:  E^{A}_{1}\to E^{B}_{1}$ is an isomorphism between cochain complexes. Then $f$ induces an isomorphism in the cohomology of the total complex.
\end{lem}
\end{proof}

\subsection{Restriction to ordinary flat connections on $M_{0}$}\label{restr}
 Let $M_{\bullet}$ be a simplicial manifold with connected cohomology. Then $A_{DR}(M_{\bullet})$ is a cosimplicial unital commutative differential graded algebra. By Theorem \ref{functor} $\operatorname{Tot}_{N}\left(A_{DR}(M_{\bullet})\right)$ is an unital $C_{\infty}$-algebras. We denote its structure by ${m}_{\bullet}$ where ${m}_{1}=D$ such that $D$ is the differential on $\operatorname{Tot}_{N}\left(A_{DR}(M_{\bullet})\right)$ defined on elements of bidegree $(p,q)$ by 
\[
D(a)=\tilde{\partial} a+(-1)^{p}da,
\]
where $\tilde{\partial}$ is differential obtained by the alternating sum of the pullback of the cofaces maps of the simplicial manifold. We denote the unit by $1$. It corresponds to the constant function at $1$ inside $A_{DR}^{0}(M_{0})$.
\begin{defi}\label{reducedhomologicalparifed}
Let $B\subseteq \left( \operatorname{Tot}_{N}\left(A_{DR}(M_{\bullet})\right), m_{\bullet}\right)$ be a $C_{\infty}$-subalgebra. Let $W$ be a non-negatively graded vector space of finite type such that $W^{0}=\C$.
\begin{enumerate}
\item Let $\left( W, m_{\bullet}^{W}\right)$ be a unital minimal $C_{\infty}$-algebra. Let $\delta$ be the corresponding differential induced on $T^{c}\left(W_{+}[1] \right) $. Let $g_{\bullet}\: : \:\left( W, m_{\bullet}^{W}\right)\to B$ be a unital $C_{\infty}$-map and let $C$ be the corresponding Maurer-Cartan element in $C\in \operatorname{Conv}_{C_{\infty}}\left(\left(W_{+}, m_{\bullet}^{W_{+}}\right) ,\left(B, m_{\bullet} \right) \right)$
We call $(C, {\delta}^{*})$ the \emph{reduced  homological pair associated to $g_{\bullet}$} or simply\emph{ reduced homological pair with coefficients in $B$}. If $g_{\bullet}\: : \:\left( W, m_{\bullet}^{W}\right)\to  \left( B, m_{\bullet}\right)$ is a minimal model and $B$ is a model, we call $(C, {\delta}^{*})$ \emph{a good reduced homological pair associated to $g_{\bullet}$ } or simply \emph{ good reduced homological pair  with coefficients in $B$}.
\item Let $\left( W, m_{\bullet}^{W}\right)$ be a unital minimal $1-C_{\infty}$-algebra. Let $\delta$ be the corresponding differential induced on $\left( T^{c}\left(W_{+}[1] \right)\right)^{0}$. 
Let $f_{\bullet}\: : \:\left( W, m_{\bullet}^{W}\right)\to \mathcal{F}\left(B, m_{\bullet} \right)$ be a unital $1$-$C_{\infty}$-map and let $\overline{C}$ be the corresponding Maurer-Cartan element in $\operatorname{Conv}_{1-C_{\infty}}\left(\left(W_{+}, m_{\bullet}^{W_{+}}\right) ,\left(B, m_{\bullet} \right) \right)$. We call it the \emph{ degree zero geometric connection associated to $f_{\bullet}$} or simply \emph{ degree zero geometric connection with coefficients in $B$}. If $f_{\bullet}\: : \:\left( W, m_{\bullet}^{W}\right)\to   \mathcal{F}\left(B, m_{\bullet} \right)$ is a $1$-minimal model and $B$ is a $1$-model, we call $\overline{C}$ \emph{a good degree zero geometric connection with coefficients in $B$}.
A $1$-minimal model $f_{\bullet}\: : \:\left( W, m_{\bullet}^{W}\right)\to   \mathcal{F}\left(B, m_{\bullet} \right)$ is said to be holomorphic (with logarithmic singularities) if the image of $f_{n}$ contains only holomorphic elements (with logarithmic singularities). If $f_{\bullet}\: : \:\left( W, m_{\bullet}^{W}\right)\to   \mathcal{F}\left(B, m_{\bullet} \right)$ is holomorphic (with logarithmic singularities) we say that $\overline{C}$ is holomorphic (with logarithmic singularities).
\end{enumerate}
\end{defi}
\begin{rmk}\label{masseyprod}
 Let $M_{\bullet}$ be a simplicial manifold with connected and finite type cohomology. Let $f_{\bullet}\: : \:\left( W, m_{\bullet}^{W}\right)\to   \mathcal{F}\left( \operatorname{Tot}_{N}\left(A_{DR}(M_{\bullet})\right), m_{\bullet}\right)$ be a $1$-minimal model. By Proposition \ref{homotopyuniqueness}, the $1-C_{\infty}$ structure $m_{\bullet}^{W}$ is unique up to isomorphism. This products can explicit determined via the homotopy transfer theorem.
\end{rmk}
We fix a $C_{\infty}$-subalgebra $B\subset \left( \operatorname{Tot}_{N}\left(A_{DR}(M_{\bullet})\right), m_{\bullet}\right)$ with connected and finite type cohomology and let $\overline{C}$ be the degree zero geometric connection associated to a $f_{\bullet}\: : \:\left( W, {m'}_{\bullet}\right)\to   \mathcal{F}\left(B, m_{\bullet} \right)$. Let $
w_{1}, \dots, w_{i}, \dots$
 be a basis of $W_{+}$. Let $X_{1}, \dots X_{n}, \dots$ be the basis of $\left( W[1]\right)^{*}$ dual to $ s\left( w_1 \right) ,\dots s\left(  w_{n}\right) , \dots\in W_{+}[1]$. Let $\mathcal{R}_{0}\subset \widehat{\mathbb{L}}\left( \left( W_{+}^{1}[1]\right)^{*} \right)$ be the completed Lie ideal generated by ${\delta'}^{*}$. Hence $\overline{C}$ can be written as
 \begin{equation}\label{formulaconnection}
 	\overline{C}=\sum{v_{i}X_i}+\sum{v_{ij}X_{i}X_{j}}+\dots+\sum{v_{i_{1}\dots i_{r}}X_{i_{1}}\dots X_{i_{r}}}+\dots \in B^{1} \hat{\otimes} \left( \widehat{\mathbb{L}}(\left( W^{1}_{+}[1]\right)^{*})/\mathcal{R}_{0}\right) .
 \end{equation}
We show how to construct a flat connection starting from $\overline{C}$. Consider the projection $${r}\: : \:B\to B^{0,1}\subset A_{DR}\left(M_{0} \right) $$ that sends forms of bidegree $(p,q)$ to $0$ if $p\neq 0$ and preserves forms of bidegree $(0,q)$. Consider $A_{DR}\left(M_{0} \right)$ equipped with its differential graded algebra structure. Then 
 \[
 \operatorname{Conv}_{1-C_{\infty}}\left( \left( W_{+}, {m'}_{\bullet}\right) ,  A_{DR}\left(M_{0} \right) \right) 
 \]
 is a differential graded Lie algebra. In particular, it is a ordinary convolution Lie  algebra (compare with \cite{lodayVallette}).
 \begin{prop}\label{pres}
The pushforward along $r$ induces a map
\[
 r_{*}:=r\widehat{\otimes}\mathrm{Id}\: : \: B \hat{\otimes} \left( \widehat{\mathbb{L}}(\left( W^{1}_{+}[1]\right)^{*})/\mathcal{R}_{0}\right) \to  A_{DR}\left(M_{0} \right) \hat{\otimes} \left( \widehat{\mathbb{L}}(\left( W^{1}_{+}[1]\right)^{*})/\mathcal{R}_{0}\right)
 		\]
which is in $\left( L_{\infty}-ALG\right)_{p}$.
 \end{prop}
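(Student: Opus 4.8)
The plan is to show that $r_{*}:=r\widehat{\otimes}\mathrm{Id}$ carries Maurer-Cartan elements to Maurer-Cartan elements, since by the definition of $\left(\mathcal{L}_{\infty}-\operatorname{ALG}\right)_{p}$ an arrow is exactly a set map between Maurer-Cartan loci. Under the isomorphism $\Psi$ of Corollary \ref{maurercartandego0finitetype}, the source is the degree zero reduced convolution $L_{\infty}$-algebra $\operatorname{Conv}_{1-C_{\infty}}\left(\left(W_{+},{m'}_{\bullet}\right),\left(B,m_{\bullet}\right)\right)$ and the target is $\operatorname{Conv}_{1-C_{\infty}}\left(\left(W_{+},{m'}_{\bullet}\right),A_{DR}\left(M_{0}\right)\right)$. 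The latter is an \emph{ordinary} differential graded Lie algebra: since $A_{DR}\left(M_{0}\right)$ is a commutative differential graded algebra its $C_{\infty}$-products vanish above arity $2$, so all its higher convolution brackets $l_{k}$ with $k\geq 3$ are zero. First I would record that $r\widehat{\otimes}\mathrm{Id}$ is well defined: $r$ preserves the $\mathcal{I}$-filtration, hence is continuous, and since it acts only on the $B$-factor while $\mathcal{R}_{0}\subset\widehat{\mathbb{L}}\left(\left(W_{+}^{1}[1]\right)^{*}\right)$ is the same complete Lie ideal on both sides, the map descends to the quotients and lands in $A_{DR}\left(M_{0}\right)\widehat{\otimes}\left(\widehat{\mathbb{L}}/\mathcal{R}_{0}\right)$.

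The heart of the argument is the following compatibility, which I would extract from Section \ref{sectGetztler}: for degree one elements $\beta_{1},\dots,\beta_{k}\in B^{1}$ one has
\[
r\left(m_{k}\left(\beta_{1},\dots,\beta_{k}\right)\right)=m_{k}^{A_{DR}\left(M_{0}\right)}\left(r\beta_{1},\dots,r\beta_{k}\right),
\]
where on the right $m_{1}^{A_{DR}(M_{0})}=d$, $m_{2}^{A_{DR}(M_{0})}=\wedge$ and $m_{k}^{A_{DR}(M_{0})}=0$ for $k\geq 3$. For $k=1$ this is $rD=dr$, read off from $D\left(a\right)=\tilde{\partial}a+\left(-1\right)^{p}da$ by noting that $\tilde{\partial}$ raises the simplicial degree and is therefore annihilated by the projection $r$ onto bidegree $\left(0,\bullet\right)$. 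For $k=2$ it is Theorem \ref{productthm=2}: writing $\beta_{i}=b_{i}+c_{i}$ with $b_{i}\in\operatorname{Tot}^{1,0}_{N}$ and $c_{i}\in\operatorname{Tot}^{0,1}_{N}$, only the summand $m_{2}\left(c_{1},c_{2}\right)=c_{1}c_{2}$ has bidegree $\left(0,\bullet\right)$, whereas $m_{2}\left(b_{i},c_{j}\right)$ and $m_{2}\left(b_{1},b_{2}\right)$ have positive simplicial degree; hence $r\left(m_{2}\left(\beta_{1},\beta_{2}\right)\right)=c_{1}\wedge c_{2}=r\beta_{1}\wedge r\beta_{2}$. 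For $k\geq 3$ it is Corollary \ref{corresiduemap}, whose content is precisely that the bidegree $\left(0,2\right)$ part of $m_{k}\left(\beta_{1},\dots,\beta_{k}\right)$ vanishes, so $r\left(m_{k}\left(\beta_{1},\dots,\beta_{k}\right)\right)=0$.

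Finally I would push the Maurer-Cartan equation through $r_{*}$. Let $\overline{C}$ be a Maurer-Cartan element of the source, so that
\[
\partial\overline{C}+\sum_{k\geq 2}\frac{l_{k}\left(\overline{C},\dots,\overline{C}\right)}{k!}=0 .
\]
Because $\overline{C}$ has degree $1$ in the convolution algebra it takes values in $B^{1}$, so—after expanding the convolution brackets as $\tilde{m}_{k}\left(-,\dots,-\right)\circ\Delta^{k-1}$ on the $B$-factor (Corollary \ref{Linftystructnotquot}), leaving the $\widehat{\mathbb{L}}/\mathcal{R}_{0}$-factor untouched—every bracket applies $m_{k}$ only to degree one elements of $B$. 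The compatibility above then gives $r_{*}\left(l_{k}\left(\overline{C},\dots,\overline{C}\right)\right)=l_{k}'\left(r_{*}\overline{C},\dots,r_{*}\overline{C}\right)$, where $l_{\bullet}'$ is the bracket of the target; in particular the $k\geq 3$ terms die since $m_{k}^{A_{DR}(M_{0})}=0$. Applying $r_{*}$ to the equation therefore yields $\partial'\left(r_{*}\overline{C}\right)+\tfrac{1}{2}\left[r_{*}\overline{C},r_{*}\overline{C}\right]=0$, exactly the Maurer-Cartan equation in the differential graded Lie algebra $\operatorname{Conv}_{1-C_{\infty}}\left(\left(W_{+},{m'}_{\bullet}\right),A_{DR}\left(M_{0}\right)\right)$. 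Hence $r_{*}$ maps $MC$ into $MC$ and is an arrow in $\left(\mathcal{L}_{\infty}-\operatorname{ALG}\right)_{p}$.

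The main obstacle is the degree one compatibility of the second paragraph: everything rests on the fact that, although $r$ is only a linear projection and not a priori a $C_{\infty}$-morphism on all of $B$, it does intertwine the Getzler–Cheng $C_{\infty}$-products with the plain de Rham structure on $M_{0}$ once one restricts to degree one inputs, simply because all the genuinely higher (positive simplicial degree) contributions are killed by the projection. This is where the explicit computations of Theorems \ref{productthm>2} and \ref{productthm=2} and of Corollary \ref{corresiduemap} are indispensable. Equivalently, one could package this step as the assertion that $r$ descends to a strict $1$-$C_{\infty}$-morphism $\mathcal{F}\left(B\right)\to\mathcal{F}\left(A_{DR}\left(M_{0}\right)\right)$ and then invoke Proposition \ref{pullback}(3), but the degree one verification is unavoidable in either presentation.
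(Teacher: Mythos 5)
Your proposal is correct and follows essentially the same route as the paper's proof: both reduce the claim to showing that $r$ intertwines the Getzler--Cheng products on degree-one inputs (via Corollary \ref{corresiduemap} and the explicit formulas of Theorems \ref{productthm>2} and \ref{productthm=2}, which kill all contributions of positive simplicial degree), and then push the Maurer--Cartan equation through $r_{*}$ to land in the ordinary differential graded Lie algebra $\operatorname{Conv}_{1-C_{\infty}}\left(\left(W_{+},{m'}_{\bullet}\right),A_{DR}\left(M_{0}\right)\right)$. You merely spell out the degree-one compatibility in more detail than the paper, which cites Corollary \ref{corresiduemap} for the whole display at once.
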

 \begin{proof}
By Corollary \ref{corresiduemap}, we have
 		\[
 		r{m}_{n}(a_{1}, \dots, a_{n})={m}_{n}(ra_{1}, \dots, ra_{n})=\begin{cases}
 		d ra_{1}, & \text{ if }n=1,\\
 		{m}_{2}(ra_{1}, ra_{2}), & \text{ if }n=2,\\
 		0, & \text{ otherwise }.
 		\end{cases}
 		\]
for $a_{1}, \dots, a_{n}\in B^{1}$. Let $\overline{C}\in B \hat{\otimes} \left( \widehat{\mathbb{L}}(\left( W^{1}_{+}[1]\right)^{*})/\mathcal{R}_{0}\right) $ be a Maurer-Cartan element and let
 $${C}'\in\left(A_{DR}^{1}(M_{0})\right)\widehat{\otimes} \widehat{\mathbb{L}}  \left( \left( W^{1}_{+}[1]\right)^{*}\right), \quad \text{ resp. } \tilde{C}\in\left(A_{DR}^{0}(M_{1})\right)\widehat{\otimes} \widehat{\mathbb{L}}  \left( \left( W^{1}_{+}[1]\right)^{*}\right)$$ be such that $\overline{C}={C}'+\tilde{C}$. In particular, ${r_{*}}C=C'$ and 
 	\begin{align*}
 	0& =r_{*}\left( D C+\sum_{k>1}\frac{{l}_{k}\left(C,\dots,C\right)}{k \text{!}}\right) \\
 	& = \left( d r_{*}C+\sum_{k>1}\frac{{l}_{k}\left(r_{*}C,\dots,r_{*}C\right)}{k \text{!}}\right)\\
 	& = \left( d r_{*}C+\frac{{l}_{2}\left(r_{*}C,r_{*}C\right)}{2}\right),\\
 	\end{align*}
 	i.e., $r_{*}C$ is a Maurer-Cartan element in $\operatorname{Conv}_{1-C_{\infty}}\left( \left( W_{+}, {m'}_{\bullet}\right) ,  A_{DR}\left(M_{0} \right) \right).$
 \end{proof}
 \begin{rmk}\label{mapr}
 	Let $M_{\bullet}=M_{\bullet}G$ for some complex manifold $M$ and discrete group $G$. The morphism of simplicial manifolds $ M_{\bullet}\left\lbrace e\right\rbrace \to  M_{\bullet}G$ given by the inclusion gives the map
 	\begin{equation}
 	r\: : \: \operatorname{Tot}_{N}\left(A_{DR}\left(  M_{\bullet}G\right) \right)\to  A_{DR}\left(  M\right)
 	\end{equation}
 	which is a strict morphism of $C_{\infty}$-algebras.
 \end{rmk}
 A Lie algebra $\mathfrak{u}$ is said to be \emph{pronilpotent} if is the projective limit of finite dimensional Lie algebras
 \[
 \mathfrak{u}\cong \varprojlim_{i}\left( \mathfrak{u}/I^{i}\right),
 \]
where $I^{\bullet}$ is defined via $I^{0}=\mathfrak{u}$, $I^{i}:=\left[I^{i-1}, \mathfrak{u}\right]$ for $i\geq 1$.
\begin{cor}\label{corpronilpot}
Let $\mathfrak{u}':=\widehat{\mathbb{L}} \left( \left( W^{1}_{+}[1]\right)^{*}\right)/\mathcal{R}_{0}$.
	\begin{enumerate}
		\item We have $
		\operatorname{Conv}_{1-C_{\infty}}\left( \left( W_{+}, {m'}_{\bullet}\right) , \left( A_{DR}\left(M_{0} \right)\right) \right)=\left( {l_{\bullet}},A_{DR}(M_{0})\widehat{\otimes} \mathfrak{u}'\right).
$\\
		such that 
		$${l_{1}}=-d,\quad {l_{2}}=[-,-] ,\quad{l_{n}}=0,\text{ for }n>2,$$ where $[-,-]$ is the obvious Lie bracket on $A_{DR}^{\bullet}(M_{0})\widehat{\otimes} \mathfrak{u}'$. In particular $\mathfrak{u}'$ and $A_{DR}^{\bullet}(M_{0})\widehat{\otimes} \mathfrak{u}'$ are complete Lie algebras and $ \mathfrak{u}'$ is pronilpotent.
	\end{enumerate}
\end{cor}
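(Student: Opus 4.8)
The plan is to compute the operations $l_{\bullet}$ explicitly, exploiting that on $A_{DR}(M_{0})$ the $C_{\infty}$-structure is the honest commutative differential graded algebra one, so $m_{n}=0$ for all $n\geq 3$. First I would fix the underlying complete graded vector space: since $\left(W_{+},{m'}_{\bullet}\right)$ is a positively graded, finite type $1$-$C_{\infty}$-algebra and $\left(A_{DR}(M_{0}),d,\wedge\right)$ is a unital commutative dga (hence a unital $C_{\infty}$-algebra), Corollary \ref{maurercartandego0finitetype}(4) furnishes the isomorphism $\Psi$
\[
\operatorname{Conv}_{1-C_{\infty}}\left(\left(W_{+},{m'}_{\bullet}\right),A_{DR}(M_{0})\right)\cong A_{DR}(M_{0})\widehat{\otimes}\left(\widehat{\mathbb{L}}\left((W_{+}^{1}[1])^{*}\right)/\mathcal{R}_{0}\right)=A_{DR}(M_{0})\widehat{\otimes}\mathfrak{u}'
\]
of complete graded vector spaces. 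It then remains to transport $l_{\bullet}$ across $\Psi$ and to recognize the resulting dgla structure.

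Recall that the convolution operations are assembled from $\tilde{m}_{k}^{A}=(-1)^{k}m_{k}^{A}$ through $M_{n}(f_{1},\dots,f_{n})=\tilde{m}_{n}^{A}(f_{1},\dots,f_{n})\circ\Delta^{n-1}$, and that $l_{\bullet}$ is the anti-symmetrization (Theorem \ref{antilie}) of $\left(\partial,M_{2},M_{3},\dots\right)$. Since $m_{n}^{A}=0$ for $n\geq 3$, we get $M_{n}=0$ and hence $l_{n}=0$ for all $n>2$. For the differential $l_{1}=\partial$, the key point is that $\partial(f)=\tilde{m}_{1}^{A}\circ f-(-1)^{|f|}f\circ\delta'$ with $\tilde{m}_{1}^{A}=-d$; because $W_{+}$ is positively graded, $W_{+}[1]$ is non-negatively graded, so $\left(T^{c}(W_{+}[1])\right)^{-1}=0$ and the twisting term $f\circ\delta'$ vanishes identically on $\Hom\left(T^{c}(W_{+}[1]^{0}),A_{DR}(M_{0})\right)$. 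Thus $\partial=-d\circ(-)$, which under $\Psi$ is $(-d)\widehat{\otimes}\operatorname{id}_{\mathfrak{u}'}$; that it descends to the quotient by $\operatorname{Im}\left(({\delta'})^{*}\right)$ is Corollary \ref{Linftystructnotquot}. Crucially, $\delta'$ has not disappeared: it survives only as the generator of the Lie ideal $\mathcal{R}_{0}$ cutting out $\mathfrak{u}'$, and no longer contributes to the differential.

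For the bracket, $M_{2}(f_{1},f_{2})=(f_{1}\wedge f_{2})\circ\Delta$ is the ordinary convolution product, which $\Psi$ intertwines with the product $\wedge\,\widehat{\otimes}\,(\text{concatenation})$ on $A_{DR}(M_{0})\widehat{\otimes}\widehat{T}\left((W_{+}^{1}[1])^{*}\right)$; its anti-symmetrization restricts on the primitive part to the natural bracket $[a\otimes x,b\otimes y]=\pm(a\wedge b)\otimes[x,y]$ and descends to $\mathfrak{u}'$ because $\mathcal{R}_{0}$ is a Lie ideal, giving $l_{2}=[-,-]$. Finally I would record completeness and pronilpotency: the complete free Lie algebra $\widehat{\mathbb{L}}\left((W_{+}^{1}[1])^{*}\right)$ on the finite-dimensional degree-$0$ space $(W_{+}^{1}[1])^{*}$ is pronilpotent, its lower central series $I^{\bullet}$ agreeing with the bracket-length filtration; quotienting by the closed ideal $\mathcal{R}_{0}$ preserves this, so $\mathfrak{u}'\cong\varprojlim_{i}\mathfrak{u}'/I^{i}$ is a complete, pronilpotent Lie algebra, and $A_{DR}(M_{0})\widehat{\otimes}\mathfrak{u}'$ is complete by construction of $\widehat{\otimes}$.

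The main obstacle is conceptual rather than computational: one must see transparently that the positivity of $W_{+}$ evicts the $C_{\infty}$-twisting term $f\circ\delta'$ from $l_{1}$, so that the codifferential $\delta'$ reappears purely as the relation defining $\mathfrak{u}'$ while the differential collapses to the untwisted $-d$. The remaining work, namely tracking the Koszul signs while carrying $M_{2}$ through $\Psi$ to the Lie bracket, is routine but error-prone.
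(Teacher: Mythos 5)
Your proof is correct and follows essentially the same route as the paper: the identification of the underlying complete graded vector space via Corollary \ref{maurercartandego0finitetype}, the vanishing of the twisting term $f\circ\delta'$ and of $l_{n}$ for $n>2$ from the positivity of $W_{+}$ and from $m_{n}^{A}=0$ for $n\geq 3$ on a genuine dga, and completeness/pronilpotency from the bracket-length filtration. The paper's own proof only records this last point explicitly — observing that ${\delta'}^{*}I\subset I^{2}$ because ${m'}_{1}=0$, so that $\mathcal{R}_{0}$ is compatible with the lower central series and $\mathfrak{u}'$ is a projective limit of finite-dimensional nilpotent Lie algebras — and treats the computation of $l_{\bullet}$ as already established by the preceding results; your write-up supplies those details and is consistent with them.
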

\begin{proof}
 ${\delta'}^{*}$ preserves the filtration given by the power of the augmented ideal $I$ in ${\mathbb{L}} \left( \left( W^{1}_{+}[1]\right)^{*}\right)$, since it satisfies the Leibniz rule and ${\delta'}^{*}I\subset I^{2}$ by ${m'}_{1}=0$. Hence $ \mathfrak{u}'$ can be written as a projective limit of finite dimensional nilpotent Lie algebras and the lie algebras $A_{DR}^{\bullet}(M_{0})\widehat{\otimes} \mathfrak{u}'$, $ \mathfrak{u}'$ are complete.
\end{proof}
For a Lie algebra $\mathfrak{u}'$ we define the adjoint action $\operatorname{ad}\: : \: \mathfrak{u}'\to \operatorname{End}\left(\mathfrak{u}' \right)$ via $\operatorname{ad}_{v}(w):=\left[v,w \right] $.
\begin{thm}\label{flatconnection on M0}
Let $M_{\bullet}$, $B$, $\overline{C}$ and $\mathfrak{u}'$ be as above. Consider the adjoint action $\operatorname{ad}$ of $\mathfrak{u}'$ on itself. Then $d-r_{*}\overline{C}$ defines a flat connection on the trivial bundle over $M_{0}$ with fiber $\mathfrak{u}'$, where the latter is considered to be equipped with the adjoint action. The connection is holomorphic (with logarithmic singularities) if so is $\overline{C}$.
\end{thm}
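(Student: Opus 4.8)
The plan is to reduce everything to the Maurer--Cartan formalism already developed and to the classical fact that a flat connection on a trivial bundle is the same datum as a Maurer--Cartan element of the differential graded Lie algebra of Lie-algebra-valued forms. First I would record the degree bookkeeping: since $W^{1}_{+}$ is concentrated in degree $1$, the space $(W^{1}_{+}[1])^{*}$ sits in degree $0$, hence $\mathfrak{u}'=\widehat{\mathbb{L}}((W^{1}_{+}[1])^{*})/\mathcal{R}_{0}$ is concentrated in degree $0$. Therefore a degree-$1$ element of $A_{DR}(M_{0})\widehat{\otimes}\mathfrak{u}'$ is exactly a $\mathfrak{u}'$-valued $1$-form, and $C':=r_{*}\overline{C}$ lies in $A_{DR}^{1}(M_{0})\widehat{\otimes}\mathfrak{u}'$. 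Since $\overline{C}$ is by definition a Maurer--Cartan element, Proposition \ref{pres} gives that $C'=r_{*}\overline{C}$ is again Maurer--Cartan, and by Corollary \ref{corpronilpot} the target $L_{\infty}$-algebra $\operatorname{Conv}_{1-C_{\infty}}\left(\left(W_{+},m'_{\bullet}\right),A_{DR}(M_{0})\right)=\left(A_{DR}(M_{0})\widehat{\otimes}\mathfrak{u}',l_{1},l_{2}\right)$ is in fact a complete differential graded Lie algebra with $l_{1}=-d$ and $l_{2}=[-,-]$. Concretely the Maurer--Cartan equation then reads $dC'=\tfrac{1}{2}[C',C']$.

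Next I would translate this identity into flatness. Equipping the fiber $\mathfrak{u}'$ with the adjoint action turns $C'$ into the $\operatorname{End}(\mathfrak{u}')$-valued connection form $\operatorname{ad}_{C'}$, so that $\nabla=d-\operatorname{ad}_{C'}$; this is well defined because $\mathfrak{u}'$ is pronilpotent (Corollary \ref{corpronilpot}), so $\operatorname{ad}$ and all the sums involved converge. Its curvature is the $\operatorname{End}(\mathfrak{u}')$-valued $2$-form
\[
F_{\nabla}=-d(\operatorname{ad}_{C'})+\operatorname{ad}_{C'}\wedge\operatorname{ad}_{C'}.
\]
Since $\operatorname{ad}\colon\mathfrak{u}'\to\operatorname{End}(\mathfrak{u}')$ is a Lie algebra homomorphism (Jacobi identity), one has $\operatorname{ad}_{C'}\wedge\operatorname{ad}_{C'}=\operatorname{ad}_{\frac{1}{2}[C',C']}$ and $d(\operatorname{ad}_{C'})=\operatorname{ad}_{dC'}$, whence $F_{\nabla}=\operatorname{ad}_{-dC'+\frac{1}{2}[C',C']}=\operatorname{ad}_{0}=0$ by the Maurer--Cartan equation. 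Thus $\nabla$ is flat. I expect the only delicate point here to be the sign and normalization bookkeeping: one must check that the convention $l_{1}=-d$, $l_{2}=[-,-]$ of Corollary \ref{corpronilpot} matches the geometric curvature formula, and in particular that the factor $\tfrac{1}{2}$ produced by the wedge of $\operatorname{ad}$-valued forms cancels exactly against the $\tfrac{1}{2}$ in the Maurer--Cartan equation.

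Finally, for the holomorphic statements I would argue that the map $r$ is the projection onto the bidegree-$(0,q)$ part, landing in $A_{DR}(M_{0})$, and hence carries holomorphic forms to holomorphic forms, and holomorphic forms with logarithmic singularities along $\mathcal{D}$ to the same. Consequently, if $\overline{C}$ is holomorphic (with logarithmic singularities), then $C'=r_{*}\overline{C}$ is a holomorphic $\mathfrak{u}'$-valued $1$-form (with logarithmic singularities), so $\nabla=d-\operatorname{ad}_{C'}$ is a holomorphic connection (with logarithmic singularities); the curvature computation above is insensitive to this refinement and again gives $F_{\nabla}=0$.
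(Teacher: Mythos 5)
Your proposal is correct and follows essentially the same route as the paper: the paper's entire proof is the one-line observation that $r_{*}\overline{C}$ is a Maurer--Cartan element (via Proposition \ref{pres} and Corollary \ref{corpronilpot}) and hence defines a flat connection, which is exactly the reduction you carry out, just with the standard MC-element/flat-connection dictionary and the holomorphicity of the projection $r$ spelled out explicitly.
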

\begin{proof}
Since $r_{*}\overline{C}$ is a Maurer-Cartan element it defines a flat connection. 
\end{proof}
\begin{rmk}\label{Chenhomologicalpair}
 In \cite[Theorem 1.3.1]{extensionChen} Chen constructs a good homological pair $\left(C, \delta^{*} \right)$ on a complex manifold $M$ such that $d-\pi(C)$ is a flat connection on $M\times\mathfrak{u}$ and $d-\pi(C)$ satisfies Theorem \ref{Chenthm} (see  \cite[Theorem 2.1.1]{extensionChen}). By definition, such a homological pair corresponds to a minimal model $g_{\bullet}\: : \:\left( W, m_{\bullet}^{W}\right)\to  \left( A_{DR}(M), d, \wedge\right)$. In \cite{Huebsch}, it is shown that $g_{\bullet}$ is constructed via the homotopy transfer theorem. In \cite{Prelie}, it is shown that $g_{\bullet}$ has a quasi-inverse $f_{\bullet}$ (explicitly constructed) such that $f_{\bullet}g_{\bullet}=\mathrm{Id}_{W}$. 
\end{rmk}
\begin{defi}
Let $\overline{C}$ be a good degree zero geometric connection. We call $\mathfrak{u}=\widehat{\mathbb{L}} \left( \left( W^{1}_{+}[1]\right)^{*}\right)/\mathcal{R}_{0}$ the \emph{fiber Lie algebra of the simplicial manifold} $M_{\bullet}$. 
 \end{defi}	
 The next proposition shows that the fiber Lie algebra is well-defined.
\begin{prop}\label{fibermalcev} Let $M_{\bullet}$ and $N_{\bullet}$ be simplicial manifolds with finite type connected cohomology. Assume that there is a smooth map $p_{\bullet}\: : \: \:N_{\bullet}\to M_{\bullet}$ that induces a quasi-isomorphism in cohomology.  
\begin{enumerate}
\item The fiber Lie algebra of $M_{\bullet}$ is isomorphic to the fiber Lie algebra of $N_{\bullet}$. 
\item Let $G$ be a discrete group acting properly and discontinuously on a complex manifold $M$. In this case the quotient $M/G$ is again a complex manifold. Consider the action groupoid $M_{\bullet}G$. Assume that the cohomology of $M/G$ is of finite type. The fiber Lie algebra of $M_{\bullet }G$ is the Malcev Lie algebra of $\pi_{1}\left( M/G\right) $.
\end{enumerate}
In particular the fiber Lie algebra is independent (up to isomorphism) by the choice of good degree zero geometric connection. 
\end{prop}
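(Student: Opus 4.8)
The plan is to reduce both assertions to a single principle: the Lie algebra $\widehat{\mathbb{L}}\left( (W^1_+[1])^* \right)/\mathcal{R}_0$ attached to a $1$-minimal model depends only on the $C_\infty$-quasi-isomorphism type of the algebra being modelled. Once this invariance is in place, part (1) is immediate and part (2) is obtained by identifying the relevant $C_\infty$-quasi-isomorphism type with that of $A_{DR}(M/G)$ and invoking Chen's Theorem \ref{Chenthm}. First I would record the functoriality input: by Theorem \ref{functor} the smooth map $p_\bullet\: :\: N_\bullet \to M_\bullet$ induces a strict $C_\infty$-morphism $p^*\: :\: \operatorname{Tot}_{N}\left(A_{DR}(M_\bullet)\right) \to \operatorname{Tot}_{N}\left(A_{DR}(N_\bullet)\right)$, and by the simplicial De Rham theorem (Proposition \ref{Derhamthmsimpl}) the hypothesis that $p_\bullet$ is a cohomology equivalence guarantees that $p^*$ is a $C_\infty$-quasi-isomorphism, hence a $1$-quasi-isomorphism after applying $\mathcal{F}$.

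The key step is to establish the invariance lemma: if $\varphi\: :\: A \to A'$ is a $C_\infty$-quasi-isomorphism between connected finite-type $C_\infty$-algebras, then the fiber Lie algebras built from any $1$-minimal models of $\mathcal{F}(A)$ and $\mathcal{F}(A')$ are isomorphic. To prove it I would fix good degree zero geometric connections on each side; by definition these arise from $1$-minimal models $g^M_\bullet\: :\: (W_M,m'_\bullet) \to \mathcal{F}(B_M)$ and $g^N_\bullet\: :\: (W_N,m'_\bullet) \to \mathcal{F}(B_N)$ with $B_M, B_N$ strict $1$-models inside the respective total complexes, giving $\mathfrak{u}_M = \widehat{\mathbb{L}}\left((W_M^1)_+[1]\right)^*/\mathcal{R}_0^M$ and $\mathfrak{u}_N$ likewise. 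Composing $g^M_\bullet$ with the strict $1$-quasi-isomorphism $B_M \hookrightarrow \operatorname{Tot}_{N}\left(A_{DR}(M_\bullet)\right)$ and with $\mathcal{F}(p^*)$ produces a $1$-minimal model $W_M \to \mathcal{F}\left(\operatorname{Tot}_{N}\left(A_{DR}(N_\bullet)\right)\right)$; the same holds for $g^N_\bullet$. Taking a full $C_\infty$-minimal model of the target as a common reference object, Proposition \ref{homotopyuniqueness}(2) realizes each of these two $1$-minimal models as $1$-isomorphic to the degree-$\leq 1$ truncation of the reference, and Proposition \ref{beh} upgrades these $1$-isomorphisms into isomorphisms of complete Lie algebras between each $\mathfrak{u}$ and the reference Lie algebra. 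Chaining these gives $\mathfrak{u}_M \cong \mathfrak{u}_N$. Specializing $N_\bullet = M_\bullet$ with $p_\bullet = \mathrm{id}$ yields the ``in particular'' clause that any two good degree zero geometric connections on a fixed $M_\bullet$ produce isomorphic fiber Lie algebras, and the general statement proves part (1).

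For part (2), the relevant $C_\infty$-quasi-isomorphism is the inclusion $\left(A_{DR}(M/G),d,\wedge\right) \hookrightarrow \left(\operatorname{Tot}_{N}\left(A_{DR}(M_\bullet G)\right),m_\bullet\right)$, which is a $C_\infty$-quasi-isomorphism precisely because the action is properly discontinuous, so that Dupont's theorem gives $H^\bullet(||M_\bullet G||,\C) \cong H^\bullet(M/G,\C)$. Running the invariance lemma with $\varphi$ this inclusion identifies the fiber Lie algebra of $M_\bullet G$ with the Lie algebra $\widehat{\mathbb{L}}\left((W^1_+[1])^*\right)/\mathcal{R}_0$ produced by a $1$-minimal model of the commutative differential graded algebra $A_{DR}(M/G)$. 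By Remark \ref{Chenhomologicalpair} a Hodge-type decomposition of a model $A \subseteq A_{DR}(M/G)$ yields, via the homotopy transfer theorem, a full minimal model whose degree-$\leq 1$ truncation is such a $1$-minimal model, and Theorem \ref{Chenthm} identifies the associated $\widehat{\mathbb{L}}\left((W^1_+[1])^*\right)/\mathcal{R}_0$ with the complete Malcev Lie algebra of $\pi_1(M/G)$. Since $\mathcal{R}_0$ is generated by $(\delta')^*$ and hence depends only on the degree-$\leq 1$ data, the $1$-minimal model and the truncated full model give the same Lie algebra (again by the uniqueness in Proposition \ref{homotopyuniqueness}), completing part (2).

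The step I expect to be the main obstacle is the bookkeeping inside the invariance lemma: I must verify that composing a $1$-minimal model with the strict inclusion of a $1$-model and with $\mathcal{F}(p^*)$ genuinely preserves all defining properties of a $1$-minimal model (minimality of $W$, isomorphism on $H^0$ and $H^1$, injectivity on $H^2$), so that Proposition \ref{homotopyuniqueness} applies, and that the resulting $1$-isomorphism meets the finite-type and minimality hypotheses required for Proposition \ref{beh} to deliver an isomorphism of the \emph{quotient} Lie algebras by $\mathcal{R}_0$ rather than merely a map of the ambient free Lie algebras. The identification with Chen's construction in part (2) is then essentially formal, resting on the fact that $\mathfrak{u}$ is determined by the $1$-truncated homotopy type alone.
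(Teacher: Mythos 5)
Your proposal is correct and follows essentially the same route as the paper: both reduce everything to the invariance of $\widehat{\mathbb{L}}\left((W^{1}_{+}[1])^{*}\right)/\mathcal{R}_{0}$ under $C_{\infty}$-quasi-isomorphism, established by comparing any $1$-minimal model to a full minimal model of the target via Propositions \ref{homotopyuniqueness} and \ref{beh} (i.e.\ the isomorphism $K^{*}$ of \eqref{mapKdual}), with the strictness of $p^{*}$ from Theorem \ref{functor} feeding the comparison. Part (2) is likewise handled identically, by taking the inclusion $A_{DR}(M/G)\hookrightarrow \operatorname{Tot}_{N}\left(A_{DR}(M_{\bullet}G)\right)$ as the quasi-isomorphism and invoking Chen's good homological pair from Remark \ref{Chenhomologicalpair} together with Theorem \ref{Chenthm}.
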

\begin{proof}
We prove 1. We fix a good reduced homological pair $(C, {\delta}^{*})$ with coefficients in $\operatorname{Tot}_{N}\left( A_{DR}\left(M_{\bullet } \right)\right) $ and a good reduced homological pair $(C', {\delta'}^{*})$ with coefficients in $\operatorname{Tot}_{N}\left( A_{DR}\left(N_{\bullet} \right)\right) $. In particular, 
\[
p^{*}\: : \:\operatorname{Tot}_{N}\left( A_{DR}\left(N_{\bullet } \right)\right) \to \operatorname{Tot}_{N}\left( A_{DR}\left(M_{\bullet } \right)\right) 
\]
is a quasi-isomorphism and a strict $C_{\infty}$-map.
We get a diagram 
\[
\begin{tikzcd}
\operatorname{Tot}_{N}\left( A_{DR}\left(M_{\bullet } \right)\right)\arrow[r,  "p^{*}"]\arrow[d, , "{g}_{\bullet}"]
 &  \operatorname{Tot}_{N}\left( A_{DR}\left(N_{\bullet } \right)\right)\arrow[d, , "{g'}_{\bullet}"]\\(W ,m_{\bullet}^{W})\arrow[u, shift right, "f_{\bullet}"]& ( W' ,m_{\bullet}^{W'})\arrow[u, shift right, "{f'}_{\bullet}"].
\end{tikzcd}
\]
of $C_{\infty}$-quasi isomorphism. In particular the map $(g'pf)_{\bullet}$ is an isomorphism. It follows that the two fiber Lie algebras are isomorphic. Let $\overline{C}$ be a good degree zero geometric connection with coefficients in $\operatorname{Tot}_{N}\left( A_{DR}\left(M_{\bullet } \right)\right)$. Its fiber Lie algebra is isomorphic to the fiber Lie algebra of $\pi(C)$ via the isomorphism $K^{*}$ as constructed in \eqref{mapKdual}. Let $\overline{C}'$ be a good degree zero geometric connection with coefficients in $\operatorname{Tot}_{N}\left( A_{DR}\left(N_{\bullet } \right)\right)$, by the same reasoning we get that its fiber Lie algebra is isomorphic to the one of $\overline{C}$. We prove 2. We set $N_{\bullet}=M_{\bullet }G$ and $M_{\bullet}$ is the constant simplicial manifold $M/G$. There is a quasi-isomorphism
\[
i\: : \:A_{DR}\left(M/G \right) \to \operatorname{Tot}_{N}\left( A_{DR}\left(M_{\bullet } G\right)\right).
\]
We consider the good homological pair  $\left(C, \delta^{*} \right)$ on constructed by Chen as in Remark \ref{Chenhomologicalpair}. Then it defines a flat connection on $M/G$ whose fiber is the Malcev Lie algebra of $\pi_{1}\left( M/G\right) $. The statement follows from 1.
\end{proof}
Let $(N,\mathcal{D})$ be a complex manifold with a normal crossing divisor. Let $G$ be a discrete group acting smoothly on $N$ and that preserves $\mathcal{D}$. Then $(N-\mathcal{D})_{\bullet}G$ is a complex manifold equipped with a simplicial normal crossing divisor. 

\begin{cor}[\cite{Morgan}]\label{Morgancor}
Let $(N,\mathcal{D})$ and $G$ be as above. Assume that $N/G$ is a connected projective non-singular variety with a normal crossing divisor $\mathcal{D}/G$. Then the fiber Lie algebra $\mathfrak{u}$ of $(N-\mathcal{D})_{\bullet}G$ is given as follows: there exists a graded vector space $V=V^{1}\oplus V^{2}$ concentrated in degree $1$ and $2$ and a completed homogeneous ideal $\mathcal{J}\subset\widehat{\mathbb{L}\left(V\right)}$ where the degree of the generators is $2$, $3$ and $4$ such that
\[
\mathfrak{u}= \widehat{\mathbb{L}\left(V\right)}/\mathcal{J}
\]
\end{cor}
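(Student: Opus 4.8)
The plan is to reduce the assertion to a classical statement about the Malcev Lie algebra of a smooth quasi-projective variety and then to feed in Morgan's mixed Hodge theory. First I would invoke Proposition \ref{fibermalcev}. Since the fiber Lie algebra of $(N-\mathcal{D})_{\bullet}G$ is independent (up to isomorphism) of the choice of good degree zero geometric connection, and since part 2 of that proposition identifies it with the Malcev Lie algebra of $\pi_{1}$ of the quotient, it suffices to compute the Malcev Lie algebra of $\pi_{1}\left( (N-\mathcal{D})/G \right)$. Under the hypotheses we have $(N-\mathcal{D})/G = (N/G) - (\mathcal{D}/G) = X - D$, where $X := N/G$ is smooth projective and $D := \mathcal{D}/G$ is a normal crossing divisor, so $U := X - D$ is a smooth quasi-projective variety and $\mathfrak{u}$ is the complete Malcev Lie algebra of $\pi_{1}(U)$.

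The core input is then the mixed Hodge theory of \cite{Morgan}: the $1$-minimal model of $U$ carries a functorial mixed Hodge structure, so that both the space of generators and the relations inherit a weight filtration. Concretely, the generators $V = (W_{+}^{1}[1])^{*}$ are, up to shift and duality, the first cohomology $H^{1}(U)$, and for a smooth variety $X-D$ the weights on $H^{1}(U)$ are $1$ and $2$ (Deligne); I would record the corresponding weight decomposition as $V = V^{1}\oplus V^{2}$. The bracket of the completed free Lie algebra $\widehat{\mathbb{L}\left(V\right)}$ is additive in weight, and the completed ideal $\mathcal{R}_{0}$, being generated by the image of ${\delta'}^{*}$, is controlled by the dual of the quadratic-and-higher part of the $1$-$C_{\infty}$-differential, i.e.\ by $H^{2}(U)$, whose weights lie in $\{2,3,4\}$.

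The decisive step is to upgrade the weight \emph{filtration} to an honest \emph{grading} that is respected by the presentation. This follows from the strictness of morphisms of mixed Hodge structures: the differential of the minimal model is a morphism of mixed Hodge structures, hence strictly compatible with the weight filtration, so the relations split into weight-homogeneous pieces. Since the generators sit in weights $1,2$ and the defining relations are governed by $H^{2}(U)$ in weights $2,3,4$, the ideal $\mathcal{J} \cong \mathcal{R}_{0}$ is generated by homogeneous elements of (weight) degrees $2$, $3$ and $4$, which yields $\mathfrak{u} = \widehat{\mathbb{L}\left(V\right)}/\mathcal{J}$ with exactly the asserted structure.

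I expect the main obstacle to be the compatibility between the $C_{\infty}$-theoretic construction of the fiber Lie algebra used throughout this paper and the commutative differential graded minimal model on which Morgan's mixed Hodge structure is defined. This is precisely the role of Proposition \ref{fibermalcev}: it guarantees that the fiber Lie algebra is a homotopy invariant and coincides with the classical Malcev Lie algebra, so that Morgan's weight decomposition — a priori attached to the Sullivan model — transports across the comparison isomorphism to $\widehat{\mathbb{L}}\left( (W_{+}^{1}[1])^{*}\right)/\mathcal{R}_{0}$. A secondary technical point is to confirm that the properly discontinuous hypothesis required to apply Proposition \ref{fibermalcev}(2) is in force here; this holds because $N/G$ is assumed to be a (non-singular) variety, which forces the action to be properly discontinuous on the relevant locus.
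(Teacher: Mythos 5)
Your proposal is correct and follows the same route as the paper: the paper's proof is simply the citation ``This is Corollary 10.3 in \cite{Morgan}'', which implicitly relies on Proposition \ref{fibermalcev} to identify the fiber Lie algebra of $(N-\mathcal{D})_{\bullet}G$ with the Malcev Lie algebra of $\pi_{1}\left((N-\mathcal{D})/G\right)$, exactly as you do, before importing Morgan's presentation. Your additional sketch of the mixed-Hodge-theoretic mechanism behind Morgan's result (weights $1,2$ on generators, weights $2,3,4$ on relations, strictness giving homogeneity) is a faithful account of the cited input rather than a new argument.
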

\begin{proof}
This is Corollary 10.3 in \cite{Morgan}.
\end{proof}

\section{Connections and bundles}\label{newadded}
Let $M$ be a complex manifold. Let $M_{\bullet}G$ be an action groupoid where $G$ is discrete and it acts properly and discontinuosly on $M$. Assume that the chomology of $M/G$ is connected and of finite type. The results of the previous section allow the construction of the desired flat connections on $M/G$.
\subsection{Gauge equivalences}\label{gauge equiv}
The next lemma is a standard exercise about filtered vector spaces.
\begin{lem}\label{lemmapronilp}
Let $\mathfrak{u}$ be a pronilpotent graded Lie algebra concentrated in degree $0$ and let $\left( A,d,\wedge\right) $ be a non-negatively graded differential graded algebra.
\begin{enumerate}
	\item The vector space $A\widehat{\otimes }\mathfrak{u}$ is a differential graded Lie algebra where the differential is given by the tensor product and the brackets are defined via
	\[
	[a\otimes v, b\otimes w]:=\pm \left( a\wedge b\right) \otimes [v,w],
	\]
	where the signs follow from the signs rule.
	\item The Lie algebra $A^{0}\widehat{\otimes }\mathfrak{u}$ is complete with respect to the filtration induced by $I$.
	\item Let $\Omega(1)$ be the differential graded algebra of polynomials forms on the interval $[0,1]$. Consider the differential graded Lie algebra $\Omega(1)\widehat{\otimes }\left( A\widehat{\otimes }\mathfrak{u}\right) $ obtained as in point 1. Then, there is a canonical isomorphism 
	\[
	\Omega(1)\widehat{\otimes }\left( A\widehat{\otimes }\mathfrak{u}\right) \cong \left(\Omega(1)\otimes  A\right)\widehat{\otimes }\mathfrak{u}.
	\]
\end{enumerate}
\end{lem}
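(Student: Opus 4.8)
The plan is to reduce every assertion to the finite-dimensional nilpotent quotients of $\mathfrak{u}$ and then pass to the inverse limit. Since $\mathfrak{u}$ is pronilpotent, $\mathfrak{u}\cong\varprojlim_{i}\mathfrak{u}/I^{i}$ with each $\mathfrak{u}/I^{i}$ a finite-dimensional nilpotent Lie algebra, and by the very definition of the completed tensor product (with $A$ carrying the trivial filtration) we have $A\widehat{\otimes}\mathfrak{u}=\varprojlim_{i}A\otimes(\mathfrak{u}/I^{i})$. All three statements become transparent once one checks that the relevant algebraic structures exist on each stage $A\otimes(\mathfrak{u}/I^{i})$ and are compatible with the tower of projections $\mathfrak{u}/I^{i+1}\to\mathfrak{u}/I^{i}$.

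For part (1), on each stage I would equip $A\otimes(\mathfrak{u}/I^{i})$ with the classical differential graded Lie algebra structure: the differential $d(a\otimes v):=da\otimes v$ (there being no differential on $\mathfrak{u}$) and the bracket $[a\otimes v,b\otimes w]:=(-1)^{|v|\,|b|}(a\wedge b)\otimes[v,w]$, whose Koszul sign is in fact trivial here since $\mathfrak{u}$ is concentrated in degree $0$. Graded antisymmetry, the graded Jacobi identity and the Leibniz rule $d[x,y]=[dx,y]+(-1)^{|x|}[x,dy]$ follow from graded commutativity of $A$ and the Lie axioms of $\mathfrak{u}/I^{i}$ exactly as in the classical tensor-product construction. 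The key point for the passage to the limit is the filtration estimate $[I^{j},I^{k}]\subseteq I^{j+k}$ for the lower central series, which gives $[A\otimes I^{j},A\otimes I^{k}]\subseteq A\otimes I^{j+k}$; since the projections are Lie algebra maps, $\{A\otimes(\mathfrak{u}/I^{i})\}$ is an inverse tower of differential graded Lie algebras, and its limit $A\widehat{\otimes}\mathfrak{u}$ inherits a differential graded Lie algebra structure. The same filtration estimate guarantees that brackets of infinite series converge in the completion, so the bracket is well defined on $A\widehat{\otimes}\mathfrak{u}$.

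Part (2) is then immediate: the bracket preserves degree, so the degree-$0$ subspace $A^{0}\widehat{\otimes}\mathfrak{u}$ is a Lie subalgebra, and the filtration induced by $I$ has associated quotients $(A^{0}\widehat{\otimes}\mathfrak{u})/\mathcal{I}^{i}=A^{0}\otimes(\mathfrak{u}/I^{i})$; completeness $A^{0}\widehat{\otimes}\mathfrak{u}\cong\varprojlim_{i}A^{0}\otimes(\mathfrak{u}/I^{i})$ is precisely the defining property of the completed tensor product. For part (3) I would simply unwind both sides: using that $\Omega(1)$ carries the trivial filtration, both $\Omega(1)\widehat{\otimes}(A\widehat{\otimes}\mathfrak{u})$ and $(\Omega(1)\otimes A)\widehat{\otimes}\mathfrak{u}$ compute as $\varprojlim_{i}\Omega(1)\otimes A\otimes(\mathfrak{u}/I^{i})$, the two towers being identified stage by stage via associativity of the ordinary tensor product. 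The identity maps on each stage assemble into the canonical isomorphism, which is manifestly compatible with differentials and brackets.

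The only genuine content of the argument is the filtration estimate $[I^{j},I^{k}]\subseteq I^{j+k}$, which is what makes the bracket continuous and hence extendable to the completion; everything else is bookkeeping of Koszul signs---trivial here because $\mathfrak{u}$ sits in degree $0$---and the formal manipulation of inverse limits. I therefore expect no real obstacle, consistent with the statement that this is a standard exercise; the one point deserving care is to confirm that the inverse limit defining $\widehat{\otimes}$ is taken along a tower of genuine differential graded Lie algebra morphisms, which the lower-central-series filtration ensures.
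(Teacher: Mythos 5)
Your proof is correct: the reduction to the tower of finite-dimensional nilpotent quotients $\mathfrak{u}/I^{i}$, the verification of the DGLA axioms stage by stage (with trivial Koszul signs since $\mathfrak{u}$ sits in degree $0$), the estimate $[I^{j},I^{k}]\subseteq I^{j+k}$ ensuring the tower consists of Lie morphisms and the bracket extends to the completion, and the stage-wise identification in part (3) are exactly the content the paper has in mind. The paper omits the argument entirely, dismissing it as ``a standard exercise about filtered vector spaces,'' and your write-up is precisely that standard argument, so there is nothing to compare beyond noting that you have supplied the details the author chose not to.
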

Let $\mathfrak{u}$ be a pronilpotent differential graded Lie algebra. The pronilpotency guarantees that the (complete) universal enveloping algebra $\mathbb{U}\left( \mathfrak{u}^{0}\right) $ is a complete Hopf algebra and its group-like elements can be visualized as $U:=\exp\left(\mathfrak{u}^{0} \right) $. Assume that $\mathfrak{u}$ is concentrated in degree zero and that it is equipped with the trivial differential. Let $A$ be a differential graded commutative algebra an assume that it is non-negatively graded. We consider the complete differential graded Lie algebra $A\widehat{\otimes}\mathfrak{u}$. The universal enveloping algebra of $\mathfrak{u}$ is equipped with the filtration induced by $I^{\bullet}$. In particular $U$ is complete with respect to such a filtration and the exponential map can be extended to
\[
Id\widehat{\otimes }\exp\: : \: A^{0}\widehat{\otimes}\mathfrak{u}\to A^{0}\widehat{\otimes}U\subset A^{0}\widehat{\otimes}\mathbb{U}\left( \mathfrak{u}^{0}\right) 
\]
 By abuse of notation we denote the above map again by $\exp$. The complete tensor product gives to $ A^{0}\widehat{\otimes}\mathbb{U}\left( \mathfrak{u}^{0}\right) $ the structure of an associative algebra. The image of $Id\widehat{\otimes }\exp$ is again a group where the inverse of $e^{h}$ is given by $e^{-h}$ for a $u\in A^{0}\widehat{\otimes}\mathfrak{u}$. This group acts on the set of Maurer-Cartan elements $MC\left(A^{0}\widehat{\otimes}\mathfrak{u} \right) $ via 
\[
e^{h}(\alpha):=e^{\operatorname{Ad}_{h}}(\alpha)+\frac{1-e^{\operatorname{Ad}_{h}}}{\operatorname{Ad}_{h}}(dh).
\]
We call this action the \emph{gauge-action} and the above group the \emph{gauge group}. If $A=A_{DR}(M)$ for some complex manifold $M$, then $d-\alpha$ defines a flat connection on $M\times \mathfrak{u}$ and the action of the gauge group can be written as $e^{h}(\alpha)=e^{h}(d-\alpha)e^{-h}$.
\begin{defi}
Two Maurer-Cartan elements $\alpha_{0},\alpha_{1}$ are said to be \emph{gauge equivalent} in $A\widehat{\otimes}\mathfrak{h}$ if there is a $h$ in $A^{0}\widehat{\otimes}\mathfrak{u}$ such that $e^{h}(\alpha_{0})=\alpha_{1}$.
\end{defi}
\begin{prop}\label{equivMaurer}
Let $A\widehat{\otimes }\mathfrak{h}$ be as above. Two Maurer-Cartan elements are gauge equivalent if and only if they are homotopy equivalent in the sense of Definition \ref{Linftydef}.
\end{prop}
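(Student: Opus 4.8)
The plan is to show that both relations coincide with the existence of a single ``path of Maurer--Cartan elements'' and then to pass between the differential (flow) description of such a path and its integrated (group) description. First I would note that gauge equivalence is already an equivalence relation, being an action of the gauge group, while homotopy equivalence is generated by single homotopies; hence it suffices to prove that $\alpha_0$ and $\alpha_1$ are gauge equivalent if and only if they are connected by a single homotopy. Using Lemma \ref{lemmapronilp}(3) I identify $\Omega(1)\widehat{\otimes}(A\widehat{\otimes}\mathfrak{h})\cong(\Omega(1)\otimes A)\widehat{\otimes}\mathfrak{h}$ and write a general degree-one element as $\gamma=\alpha(t)+\beta(t)\,dt$, with $\alpha(t)$ of degree $1$ and $\beta(t)$ of degree $0$, both polynomial in $t$. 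Splitting the Maurer--Cartan equation for $\gamma$ into its $dt$-free and its $dt$-parts shows that $\gamma$ is a Maurer--Cartan element precisely when (i) $\alpha(t)$ is a Maurer--Cartan element of $A\widehat{\otimes}\mathfrak{h}$ for every fixed $t$, and (ii) the flow equation $\dot{\alpha}(t)=-d\beta(t)+[\beta(t),\alpha(t)]$ holds.

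For the forward implication, suppose $\alpha_1=e^{h}(\alpha_0)$. I would set $\alpha(t):=e^{th}(\alpha_0)$ and take $\beta(t):=h$ constant. Condition (i) holds because the gauge action preserves Maurer--Cartan elements, and condition (ii) is checked by differentiating the explicit gauge formula: writing $D:=\operatorname{ad}_{h}$, one has $e^{th}(\alpha_0)=e^{tD}\alpha_0+\frac{1-e^{tD}}{D}(dh)$, whose $t$-derivative is $De^{tD}\alpha_0-e^{tD}\,dh$, and a direct computation identifies this with $-dh+[h,\alpha(t)]$. Since $\alpha(0)=\alpha_0$ and $\alpha(1)=\alpha_1$, the element $\gamma=\alpha(t)+h\,dt$ is the desired homotopy in the sense of Definition \ref{Linftydef}.

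For the converse, given a homotopy $\gamma=\alpha(t)+\beta(t)\,dt$, I would integrate the flow equation (ii). Because $A^{0}\widehat{\otimes}\mathfrak{h}$ is complete (Lemma \ref{lemmapronilp}(2)), the ordinary differential equation $\dot{g}(t)=\beta(t)\,g(t)$ in the gauge group $\exp(A^{0}\widehat{\otimes}\mathfrak{h})$ with $g(0)=1$ has a unique solution, obtained by solving modulo each step of the $I$-adic filtration and passing to the limit. One then verifies that $t\mapsto g(t)\cdot\alpha_0$ satisfies the same flow equation (ii), with the same parameter $\beta(t)$ and the same initial value $\alpha_0$; by uniqueness of solutions of (ii) this forces $g(t)\cdot\alpha_0=\alpha(t)$ for all $t$. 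Evaluating at $t=1$ and setting $h:=\log g(1)$ gives $\alpha_1=e^{h}(\alpha_0)$, i.e. gauge equivalence. Finally, composing a finite chain of homotopies yields a composite gauge equivalence, so the two equivalence relations agree on all of $MC(A\widehat{\otimes}\mathfrak{h})$.

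The hard part will be the bookkeeping of signs in step (ii): matching the $dt$-component of the Maurer--Cartan equation, under the convention $l_1=-d$, $l_2=[-,-]$ of Corollary \ref{corpronilpot}, to the infinitesimal gauge generator extracted from the explicit gauge formula, together with the well-posedness of the ODE $\dot{g}=\beta g$ in the complete gauge group. Both are routine consequences of completeness, but the sign matching is where an error is easiest to make, so I would carry out the two decompositions --- of the Maurer--Cartan equation and of the $t$-derivative of $e^{th}(\alpha_0)$ --- in parallel and compare coefficients of $dt$ term by term.
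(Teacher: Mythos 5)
Your proposal is correct, and it is the standard argument. The paper itself does not carry out this argument: its entire proof consists of observing that $A\widehat{\otimes}\mathfrak{h}$ is a complete filtered Lie algebra with respect to $\mathcal{I}^{\bullet}$ and then citing Section 2.3 of Yalin's \emph{Maurer--Cartan spaces of filtered L-infinity algebras} for the equivalence of gauge and homotopy equivalence in that setting. What you have written is essentially a self-contained version of that cited result: the decomposition $\gamma=\alpha(t)+\beta(t)\,dt$, the splitting of the Maurer--Cartan equation into the pointwise condition and the flow equation, the explicit check that $t\mapsto e^{th}(\alpha_0)$ solves the flow with constant generator $h$, and the integration of $\dot{g}=\beta g$ in the complete gauge group with uniqueness of solutions proved degree-by-degree in $t$ and step-by-step along the $I$-adic filtration. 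The one input you share with the paper is the only one it makes explicit, namely completeness (Lemma \ref{lemmapronilp}); everything else you supply yourself, which makes your write-up more informative than the paper's, at the cost of having to do the sign bookkeeping against the convention $l_1=-d$, $l_2=[-,-]$ that you correctly identify as the main pitfall.
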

\begin{proof}
We have that $A\widehat{\otimes }\mathfrak{h}$ is a complete Lie algebra with respect to $I^{\bullet}$. The result follows from \cite[Section 2.3]{Yalin}.
\end{proof}

\subsection{Factors of automorphy}\label{factorofautomorphy}
We show how to construct a flat connection on the quotient. We use the same point of view of \cite{Hain}. Let $X$ be a set and $G$ a group acting on it from the left. Let $V$ be a vector space. A \emph{factor of automorphy} is a map $F\: : \:G\times X\to \operatorname{Aut}(V) $ such that $g\: : \: X\times V\to X\times V$ defined by
$
(x,v)\mapsto (gx,F_{g}(x)v)
$
gives a group action of $G$ on $X\times V$. This is equivalent to
\[
F_{gh}(x)(v)=F_{g}(hx)F_{h}(x).
\]
Let $M$ be a complex manifold equipped with a smooth and properly discontinuous action of a discrete group $G$ (from the left). Let $V$ be a vector space and let $F\: : \:G\times M\to \operatorname{Aut}(V) $ be a factor of automorphy. Then $F$ induces a $G$-action on $M\times V$. In particular, the quotient
\[
\left( M\times V\right) /G
\]
is a vector bundle on $M/G$ where the sections satisfy $s(gx)=F_{g}(x)s(x).$ We denote by $E_{F}$ the vector bundle induced by the factor of automorphy $F$.
\begin{prop}\label{welldefined}
Let $M$ be a complex manifold equipped with a smooth action of a group $G$. Let $V$ be a complete vector space and let $F\: : \:G\times M\to \operatorname{Aut}(V) $ be a factor of automorphy. Let $\alpha\in A^{1}_{DR}(M)\widehat{\otimes }\operatorname{End}(V)$ be a $1$-form with vaules in $\operatorname{End}(V)$. 
\begin{enumerate}
	\item The connection $d-\alpha$ induces well-defined connection on $E_{F}$ if
	\[
	d-g^{*}\alpha=F_{g}\left(d-\alpha \right)F^{-1}_{g}
	\]
	for any $g\in G$.
	\item For a smooth path $\gamma\: : \: [0,1]\to M$ we denote with $T(\gamma)$ the parallel transport along $\alpha$ of $d-\alpha$ cosnidered as a connection over $M\times V$. Assume that $\alpha$ is well-defined on $E_{F}$, Then
		\[
		T(g\gamma)={F}_{g}(\gamma(0))T(\gamma){F}_{g}(\gamma(1))^{-1}.
		\]	
\end{enumerate}
\end{prop}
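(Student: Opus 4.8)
The plan is to derive both parts from the single principle that a connection on the trivial bundle $M\times V$ descends to the quotient $E_{F}=\left( M\times V\right)/G$ exactly when it is invariant under the $G$-action $\Phi_{g}\: : \:(x,v)\mapsto (gx,F_{g}(x)v)$, and then to push this invariance through the parallel transport equation. Throughout I let $L_{g}\: : \: M\to M$ denote $x\mapsto gx$ and write $F_{g}$ for the $\operatorname{Aut}(V)$-valued function $x\mapsto F_{g}(x)$.

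For part (1) I would work with sections. A section of $E_{F}$ is the same as a map $s\: : \: M\to V$ with $s(gx)=F_{g}(x)s(x)$, i.e. $L_{g}^{*}s=F_{g}\,s$, and an $E_{F}$-valued $1$-form on $M/G$ pulls back to a $V$-valued $1$-form $\omega$ on $M$ with $L_{g}^{*}\omega=F_{g}\,\omega$. Thus $\nabla=d-\alpha$ descends precisely when it sends equivariant sections to equivariant $V$-valued $1$-forms. Computing $L_{g}^{*}(\nabla s)=d(F_{g}s)-(L_{g}^{*}\alpha)F_{g}s=(dF_{g})s+F_{g}\,ds-(L_{g}^{*}\alpha)F_{g}s$ and comparing with $F_{g}(\nabla s)=F_{g}\,ds-F_{g}\alpha\,s$, and using that the fibre value $s(x)$ may be chosen freely (by local triviality of $E_{F}$), this descent holds for all equivariant $s$ if and only if
\[
L_{g}^{*}\alpha=(dF_{g})F_{g}^{-1}+F_{g}\,\alpha\,F_{g}^{-1}.
\]
It then remains to recognize this as the stated identity: expanding $F_{g}(d-\alpha)F_{g}^{-1}$ on a test function via the Leibniz rule and $F_{g}\,d(F_{g}^{-1})=-(dF_{g})F_{g}^{-1}$ gives $F_{g}(d-\alpha)F_{g}^{-1}=d-\bigl[(dF_{g})F_{g}^{-1}+F_{g}\alpha F_{g}^{-1}\bigr]$, so $d-L_{g}^{*}\alpha=F_{g}(d-\alpha)F_{g}^{-1}$ is equivalent to the displayed descent condition.

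For part (2) I would assume the descent identity from (1) and transport it along $\gamma$. A lift $v(t)$ is parallel along $\gamma$ iff $\dot v(t)=\alpha_{\gamma(t)}(\dot\gamma(t))\,v(t)$, whose solution operator exists since $V$ is complete. The heart of the argument is to check that $w(t):=F_{g}(\gamma(t))\,v(t)$ is parallel along the path $g\gamma$: differentiating, $\dot w=(dF_{g})(\dot\gamma)\,v+F_{g}\,\dot v$, and substituting $\dot v=\alpha(\dot\gamma)v$ together with the descent identity gives $\dot w=\bigl[(dF_{g})F_{g}^{-1}+F_{g}\alpha F_{g}^{-1}\bigr](\dot\gamma)\,w=(L_{g}^{*}\alpha)_{\gamma(t)}(\dot\gamma(t))\,w$, which is exactly the parallel transport equation for $g\gamma$ because the velocity of $g\gamma$ is $(dL_{g})\dot\gamma$ and hence $\alpha_{g\gamma(t)}$ evaluated on it equals $(L_{g}^{*}\alpha)_{\gamma(t)}(\dot\gamma(t))$. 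Consequently parallel sections along the two paths correspond via $w(t)=F_{g}(\gamma(t))v(t)$, so that $w(0)=F_{g}(\gamma(0))v(0)$ and $w(1)=F_{g}(\gamma(1))v(1)$; substituting these into the definition of the transport operator yields $T(g\gamma)=F_{g}(\gamma(0))\,T(\gamma)\,F_{g}(\gamma(1))^{-1}$.

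The computations are short, so the main effort is bookkeeping of conventions. In part (1) the subtle point is reading $d-g^{*}\alpha=F_{g}(d-\alpha)F_{g}^{-1}$ as an identity of first-order operators on $V$-valued functions and matching the conjugation order with the pointwise descent relation. In part (2) the genuine care is orientation: the target formula forces the convention in which $T(\gamma)$ transports from $\gamma(1)$ to $\gamma(0)$, so I must keep track of which endpoint each factor $F_{g}$ is evaluated at and of the order in which the operators compose, so as to land on $F_{g}(\gamma(0))\,T(\gamma)\,F_{g}(\gamma(1))^{-1}$ rather than its mirror image.
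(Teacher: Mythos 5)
Your argument is correct. Note, however, that the paper does not actually prove this proposition: it simply cites Hain (Propositions 5.1 and 5.7 of the reference \cite{Hain}), so your write-up supplies the verification that the paper outsources. Your part (1) is the standard descent computation -- identifying sections of $E_{F}$ with $F$-equivariant maps $s\colon M\to V$, comparing $L_{g}^{*}(\nabla s)$ with $F_{g}(\nabla s)$, and matching the resulting pointwise condition $L_{g}^{*}\alpha=(dF_{g})F_{g}^{-1}+F_{g}\alpha F_{g}^{-1}$ with the operator identity $d-g^{*}\alpha=F_{g}(d-\alpha)F_{g}^{-1}$ via $F_{g}\,d(F_{g}^{-1})=-(dF_{g})F_{g}^{-1}$. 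You in fact prove an ``if and only if,'' which is harmless since the statement only claims the ``if'' direction (and the converse step, prescribing the $1$-jet of an equivariant section at a point, is legitimate because $M\to M/G$ is a covering in the paper's standing hypotheses). Your part (2) correctly reduces to checking that $t\mapsto F_{g}(\gamma(t))v(t)$ solves the transport ODE along $g\gamma$, and you rightly flag the only genuinely delicate point, namely that the target formula $T(g\gamma)=F_{g}(\gamma(0))T(\gamma)F_{g}(\gamma(1))^{-1}$ pins down the convention that $T(\gamma)$ transports from $\gamma(1)$ to $\gamma(0)$; with that convention fixed your bookkeeping of the endpoint factors is consistent. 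No gaps.
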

\begin{proof}
The two statements are in \cite{Hain}( Proposition 5.1 and Proposition 5.7 respectively).
\end{proof}
Let $M$, $G$ be as above. We fix a pronilpotent Lie algebra $\mathfrak{u}$ concentrated in degree zero and we set $U=\exp(\mathfrak{u})$. Let $\alpha\in A^{1}_{DR}(M)\widehat{\otimes }\operatorname{End}(\mathfrak{u})$ be a well-defined connection form on a $E_{F}$. We fix a $p\in M$ and we denote its class in $M/G$ by $\overline{p}$. By covering space theory, this choice induces a group homomorphism
 \[
 \rho\: : \: \pi_{1}\left(M/G,\overline{p} \right) \to G,
 \]
 which is constructed via the homotopy lifting property. For a path starting at $\overline{p}$, we denote its (unique) lift starting at $p$ with $c_{\gamma}$. In particular, $c_{\gamma}(1)=\rho(\gamma)p$. Given two loops $\gamma_{1}, \gamma_{2}$ starting at $\overline{p}$ on $M/G$, we have $c_{\gamma_{1}\cdot \gamma_{2}}=c_{\gamma_{1}}\cdot\left(\rho(\gamma_{1}) \gamma_{2}\right) $ (see \cite{Hain}, Lemma 5.8). 
 \begin{prop}\label{holonomyrep}
 	Let $U=\exp(\mathfrak{u})$ and let $p$ be as above. The parallel transport $T$ of $\alpha$ on $M\times \mathfrak{u}$ induces a group homomorphism
 	\[
 	\Theta_{0}\: : \:  \pi_{1}\left(M/G,\overline{p} \right) \to U
 	\]
 	given by $\Theta_{0}\left(\gamma \right):=T(c_{\gamma})F_{\rho(\gamma)}(p)$.
 \end{prop}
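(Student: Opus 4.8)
The plan is to establish two facts: that $\Theta_{0}$ is well-defined as a map into $U$ on homotopy classes of loops, and that it is a group homomorphism. The computation itself is short once Proposition \ref{welldefined} and the cocycle identity for $F$ are in hand, so most of the work is bookkeeping with conventions.

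First I would address well-definedness. By Theorem \ref{flatconnection on M0} the connection $d-\alpha$ is flat, so its parallel transport $T$ depends only on the homotopy class rel endpoints of a path. A loop $\gamma$ based at $\overline{p}$ lifts uniquely to a path $c_{\gamma}$ from $p$ to $\rho(\gamma)p$, and since the fiber of $M\to M/G$ over $\overline{p}$ is discrete, a basepoint-preserving homotopy of loops downstairs lifts to a homotopy of paths rel endpoints upstairs; as $\rho(\gamma)\in G$ also depends only on the class of $\gamma$, both $T(c_{\gamma})$ and $F_{\rho(\gamma)}(p)$ — hence $\Theta_{0}(\gamma)$ — depend only on the class of $\gamma$ in $\pi_{1}(M/G,\overline{p})$. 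To see that $\Theta_{0}$ lands in $U$, I would use that $\mathfrak{u}$ is pronilpotent (Corollary \ref{corpronilpot}): writing $\mathfrak{u}=\varprojlim_{i}\mathfrak{u}/I^{i}$, the connection descends to each finite-dimensional nilpotent quotient, where its transport lies in the unipotent group $\exp(\mathfrak{u}/I^{i})$; passing to the limit yields $T(c_{\gamma})\in\varprojlim_{i}\exp(\mathfrak{u}/I^{i})=U$. The relation $d-g^{*}\alpha=F_{g}(d-\alpha)F_{g}^{-1}$ exhibits each $F_{g}(x)$ as a gauge transformation (Section \ref{gauge equiv}), so $F$ is $U$-valued and $F_{\rho(\gamma)}(p)\in U$; since $U$ is a group, $\Theta_{0}(\gamma)=T(c_{\gamma})F_{\rho(\gamma)}(p)\in U$.

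Next I would verify multiplicativity. Using the lift identity $c_{\gamma_{1}\gamma_{2}}=c_{\gamma_{1}}\cdot(\rho(\gamma_{1})c_{\gamma_{2}})$ (see \cite{Hain}), multiplicativity of parallel transport under concatenation, and Proposition \ref{welldefined}(2) applied with $g=\rho(\gamma_{1})$ and the path $c_{\gamma_{2}}$ running from $p$ to $\rho(\gamma_{2})p$, one obtains
\[
T(c_{\gamma_{1}\gamma_{2}})=T(c_{\gamma_{1}})\,F_{\rho(\gamma_{1})}(p)\,T(c_{\gamma_{2}})\,F_{\rho(\gamma_{1})}(\rho(\gamma_{2})p)^{-1}.
\]
On the other hand the factor-of-automorphy relation together with $\rho$ being a homomorphism gives $F_{\rho(\gamma_{1}\gamma_{2})}(p)=F_{\rho(\gamma_{1})}(\rho(\gamma_{2})p)\,F_{\rho(\gamma_{2})}(p)$. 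Multiplying, the interior factors $F_{\rho(\gamma_{1})}(\rho(\gamma_{2})p)^{-1}$ and $F_{\rho(\gamma_{1})}(\rho(\gamma_{2})p)$ cancel, and I am left with
\[
\Theta_{0}(\gamma_{1}\gamma_{2})=T(c_{\gamma_{1}\gamma_{2}})F_{\rho(\gamma_{1}\gamma_{2})}(p)=\bigl(T(c_{\gamma_{1}})F_{\rho(\gamma_{1})}(p)\bigr)\bigl(T(c_{\gamma_{2}})F_{\rho(\gamma_{2})}(p)\bigr)=\Theta_{0}(\gamma_{1})\Theta_{0}(\gamma_{2}).
\]

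The hard part is not any single computation but getting the conventions mutually consistent: the order in which parallel transport composes under concatenation and the side on which $F$ acts must be chosen so that they match the cocycle $F_{gh}(x)=F_{g}(hx)F_{h}(x)$, which is exactly what makes the interior factors telescope. The only genuinely structural input — as opposed to bookkeeping — is the verification that $T(c_{\gamma})$ takes values in $U$ rather than merely in $\operatorname{Aut}(\mathfrak{u})$, and this is where pronilpotency and the group-like character of the holonomy are essential.
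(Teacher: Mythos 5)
Your argument is correct and is essentially the standard one: the paper itself gives no proof here, deferring entirely to Proposition 5.9 of \cite{Hain}, and your computation (lift identity $c_{\gamma_1\gamma_2}=c_{\gamma_1}\cdot(\rho(\gamma_1)c_{\gamma_2})$, Proposition \ref{welldefined}(2) applied to the translated lift, and the cocycle $F_{gh}(x)=F_g(hx)F_h(x)$ making the interior factors telescope) is exactly that proof. The well-definedness discussion via flatness and homotopy lifting, and the identification of $T(c_\gamma)$ as an element of $U$ via the nilpotent quotients $\mathfrak{u}/I^i$, are also the right ingredients.

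One small caution: your claim that the intertwining relation $d-g^*\alpha=F_g(d-\alpha)F_g^{-1}$ by itself forces $F_g(x)\in U$ is not valid — an automorphism conjugating one $\mathfrak{u}$-valued connection into another need not lie in $\exp(\mathfrak{u})$ (take $\alpha=0$ and $F_g$ any constant Lie algebra automorphism). That $F$ is $U$-valued is really an implicit hypothesis of the proposition (and holds by construction in the application, where $F_g(p)=e^{-h(gp)}e^{h(p)}$ as in Proposition \ref{fiber}); it is not a consequence of the gauge relation. This does not affect the multiplicativity computation, which is the substance of the statement.
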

\begin{proof}
See Proposition 5.9 in \cite{Hain}.
\end{proof}
We call $\Theta_{0}$ the\emph{ holonomy representation} of $\alpha$ on $M/G$. Consider the action groupoid $M_{\bullet}G$ and assume that the cohomology of $M/G$ is connected and of finite type. Let $$g_{\bullet}\: : \: \left(W, m_{\bullet}^{W} \right)\to \mathcal{F}\left( \operatorname{Tot}_{N}^{\bullet}\left(A_{DR}(M_{\bullet}G)\right) ,m_{\bullet} \right)$$
be a $1$-minimal model and let $\overline{C}$ be its associated  good degree zero geometric connection. By the results of the previous section, $\overline{C}$ induces a flat connection form $r^{*}\overline{C}\in A_{DR}^{1}(M)\widehat{\otimes}\mathfrak{u}$ such that $d-r^{*}\overline{C}$ is a flat connection on the trivial bundle on $M$ with fiber $\mathfrak{u}$. By repeating the same reasoning for another $1$-minimal model $${g'}_{\bullet}\: : \: \left(W', m_{\bullet}^{W'} \right)\to\mathcal{F}\left( \left(A_{DR}(M/G)\right) ,m_{\bullet} \right)\subset \mathcal{F}\left( \operatorname{Tot}_{N}^{\bullet}\left(A_{DR}(M_{\bullet}G)\right) ,m_{\bullet} \right)$$	
we get a good degree zero geometric connection $\overline{C}'$ and a flat connection form $r^{*}\overline{C}'\in A_{DR}^{1}(M)\widehat{\otimes}\mathfrak{u}'$ such that $d-r^{*}\overline{C}'$ is a flat connection on the trivial bundle on $M$ with fiber Lie algebra $\mathfrak{u}'$. 
Let ${g''}_{\bullet}\: : \: (W'',m_{\bullet}^{W''})\to \left( \operatorname{Tot}_{N}^{\bullet}\left(A_{DR}(M_{\bullet}G)\right) ,m_{\bullet} \right)$ be a $C_{\infty}$-algebra minimal model. Assume that there exist a quasi-isomorphism $f_{\bullet}\: : \:  \left(A_{DR}\left( M_{\bullet}G\right) ,m_{\bullet} \right)\to(W'',m_{\bullet}^{W''})$ such that $f_{\bullet}\circ g''_{\bullet}=\mathrm{Id}_{W}$. Let $\overline{C''}$ be its associated  good degree zero geometric connection. By the results of the previous section, $\overline{C''}$ induces a flat connection form $r^{*}\overline{C''}\in A_{DR}^{1}(M)\widehat{\otimes}\mathfrak{u}$ such that $d-r^{*}\overline{C''}$ is a flat connection on the trivial bundle on $M$ with fiber $\mathfrak{u''}$.
\begin{prop}\label{gaugeuniqueness}
Let $g_{\bullet}$ and ${g'}_{\bullet}$ be as above.
\begin{enumerate}
	\item There exists a morphism of Lie algebras $K^{*}\: : \: \mathfrak{u}'\to \mathfrak{u}$ 
	such that $$r_{*}k^{*}\overline{C}',r_{*}\overline{C}\in A_{DR}(M)\widehat{\otimes } \mathfrak{u}$$ are gauge equivalent. Assume that $M_{\bullet}G=(N-\mathcal{D})_{\bullet}G$, where $\left( N,\mathcal{D}\right) $ is a complex manifold with a normal crossing divisors and $G$ is a group acting holomorphically on $N$ and that preserves $\mathcal{D}$. If ${g'}_{\bullet}$ and $g_{\bullet}$ are holomorphic with logarithmic singularities, the gauge $e^{h'}$ is in $ \exp\left(A^{0}_{DR}(M)\widehat{\otimes }\mathfrak{u}\right)$.
	\item There exists a morphism of Lie algebras ${K''}^{*}\: : \: \mathfrak{u}''\to \mathfrak{u}$ 
		such that $$r_{*}{k''}^{*}\overline{C}'',r_{*}\overline{C}\in A_{DR}(M)\widehat{\otimes } \mathfrak{u}$$ are gauge equivalent. Moreover the morphism depends only by $\mathcal{F}\left( f_{\bullet}\right) $ and $g_{\bullet}$. Assume that $M_{\bullet}G=(N-\mathcal{D})_{\bullet}G$, where $\left( N,\mathcal{D}\right) $ is a complex manifold with a normal crossing divisors and $G$ is a group acting holomorphically on $N$ and that preserves $\mathcal{D}$.If ${g''}_{\bullet}$ and $g_{\bullet}$ are holomorphic with logarithmic singularities, the gauge $e^{h''}$ is in $ \exp\left(A^{0}_{DR}(M)\widehat{\otimes }\mathfrak{u}\right)$.	
\end{enumerate}
\end{prop}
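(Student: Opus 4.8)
The plan is to compare the two degree zero geometric connections first as Maurer--Cartan elements of the degree zero reduced convolution $L_{\infty}$-algebra, then to transport the comparison down to $A_{DR}(M)$ along the strict map $r$ of Remark \ref{mapr}, and finally to translate the resulting homotopy of Maurer--Cartan elements into a gauge equivalence by means of Proposition \ref{equivMaurer}. Throughout, $\overline{C}\in \operatorname{Conv}_{1-C_{\infty}}\left( \left( W_{+},m_{\bullet}^{W}\right) ,\left( B,m_{\bullet}\right) \right)$ and $\overline{C}'\in \operatorname{Conv}_{1-C_{\infty}}\left( \left( W'_{+},m_{\bullet}^{W'}\right) ,\left( B,m_{\bullet}\right) \right)$ denote the Maurer--Cartan elements attached to $g_{\bullet}$ and ${g'}_{\bullet}$ through Proposition \ref{Cinftydictionary1}, with common coefficient algebra $B=\operatorname{Tot}_{N}\left(A_{DR}(M_{\bullet}G)\right)$.

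For the first assertion I would proceed as follows. Using Proposition \ref{existence of a homotopical inverse} I fix a homotopy inverse ${f'}_{\bullet}$ of ${g'}_{\bullet}$ and set $k_{\bullet}:=\mathcal{F}\left( {f'}_{\bullet}\right)g_{\bullet}\: : \: W\to W'$; by Proposition \ref{homotopyuniqueness} (2) this $k_{\bullet}$ is a $1$-isomorphism and $\overline{C}$ is homotopic to $k^{*}\overline{C}'$ inside $\operatorname{Conv}_{1-C_{\infty}}\left( \left( W_{+},m_{\bullet}^{W}\right) ,\left( B,m_{\bullet}\right) \right)$. Under the identification of Corollary \ref{maurercartandego0finitetype} (4) this algebra is $B\,\widehat{\otimes}\,\mathfrak{u}$, and by the discussion preceding Proposition \ref{beh} the pullback $k^{*}$ is exactly $\mathrm{Id}\,\widehat{\otimes}\,K^{*}$ for the Lie algebra morphism $K^{*}\: : \: \mathfrak{u}'\to \mathfrak{u}$ of \eqref{mapKdual}, which Proposition \ref{beh} shows to be an isomorphism. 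Next I push forward along $r_{*}=r\,\widehat{\otimes}\,\mathrm{Id}$, which by Proposition \ref{pres} is a morphism of $L_{\infty}$-algebras (its proof exhibits $r$ as strict) and therefore carries homotopic Maurer--Cartan elements to homotopic ones; this yields that $r_{*}\overline{C}$ and $r_{*}k^{*}\overline{C}'$ are homotopic in $A_{DR}(M)\,\widehat{\otimes}\,\mathfrak{u}$. Finally, since $\mathfrak{u}$ is pronilpotent and concentrated in degree zero by Corollary \ref{corpronilpot}, Proposition \ref{equivMaurer} converts this homotopy into a gauge equivalence, producing the desired $h'\in A^{0}_{DR}(M)\,\widehat{\otimes}\,\mathfrak{u}$ with $e^{h'}\left( r_{*}k^{*}\overline{C}'\right)=r_{*}\overline{C}$. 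The second assertion is obtained by running the identical chain with ${g''}_{\bullet}$ and its given quasi-inverse $f_{\bullet}$ (so that $k''_{\bullet}=\mathcal{F}\left( f_{\bullet}\right)g_{\bullet}$ and ${K''}^{*}$ depends only on $\mathcal{F}\left( f_{\bullet}\right)$ and $g_{\bullet}$, by Proposition \ref{homotopyuniqueness} (3) and Proposition \ref{beh}).

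The main obstacle will be the holomorphic statement with logarithmic singularities, since a gauge equivalence in the full de Rham complex need not refine to one in the logarithmic subcomplex. The first observation is that both $r_{*}\overline{C}$ and $r_{*}k^{*}\overline{C}'$ already lie in the complete differential graded Lie subalgebra $A^{\bullet}_{DR}(\log \mathcal{D})\,\widehat{\otimes}\,\mathfrak{u}$: for $r_{*}\overline{C}$ this is because $\overline{C}$ is holomorphic with logarithmic singularities and $r$ preserves holomorphy and logarithmic poles (Remark \ref{mapr}), and for $r_{*}k^{*}\overline{C}'$ it is because $k^{*}=\mathrm{Id}\,\widehat{\otimes}\,K^{*}$ merely recombines the $\mathfrak{u}'$-coefficients, leaving the holomorphic logarithmic form parts of $\overline{C}'$ intact. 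To conclude I would invoke Proposition \ref{delignholom}: the inclusion $A^{\bullet}_{DR}(\log \mathcal{D})\hookrightarrow A^{\bullet}_{DR}(M)$ is a quasi-isomorphism of commutative differential graded algebras, so after tensoring with $\mathfrak{u}$ and completing along the lower central series of $\mathfrak{u}$ it becomes a filtered quasi-isomorphism of complete differential graded Lie algebras; such a map induces a bijection on gauge-equivalence classes of Maurer--Cartan elements, whence a gauge relating $r_{*}\overline{C}$ and $r_{*}k^{*}\overline{C}'$ may be chosen inside $A^{\bullet}_{DR}(\log \mathcal{D})\,\widehat{\otimes}\,\mathfrak{u}$ and its generator $h'$ is holomorphic with logarithmic singularities. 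The delicate point is precisely this descent of gauge classes along the logarithmic inclusion, which I expect to require a careful filtration argument with the powers of the augmentation ideal of $\mathfrak{u}$, in the spirit of the comparison used for Proposition \ref{equivMaurer}; the purely holomorphic case (no divisor) then follows by the same argument, and the identical reasoning applies to $r_{*}{k''}^{*}\overline{C}''$.
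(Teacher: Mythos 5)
Your main chain for part 1 --- extract the homotopy of $1$-minimal models from Proposition \ref{homotopyuniqueness}, identify $k^{*}$ with $\mathrm{Id}\widehat{\otimes}K^{*}$ via Corollary \ref{maurercartandego0finitetype} and Proposition \ref{beh}, push forward along the strict map $r$ using Proposition \ref{pres}, and convert homotopy of Maurer--Cartan elements into gauge equivalence by Proposition \ref{equivMaurer} --- is exactly the paper's argument, so that part needs no further comment. Where you genuinely diverge is the logarithmic refinement. The paper disposes of it in one line (``by the definition of $A_{DR}^{0}(\log\mathcal{D})$''): since the whole comparison is carried out inside the logarithmic subcomplex, the gauge is a \emph{degree-zero} logarithmic form, and a degree-zero element of $A_{DR}^{\bullet}(\log\mathcal{D})$ can carry no $dz/z$ factor, hence is an honest smooth function on all of $N$ --- which is precisely what Theorem \ref{wehaveabundle} later uses. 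You instead build the gauge in the full de Rham complex and descend it along Deligne's quasi-isomorphism (Proposition \ref{delignholom}) by a Goldman--Millson-type invariance of gauge classes under filtered quasi-isomorphisms of complete dg Lie algebras. That route can be made to work (the filtration by powers of the augmentation ideal of $\mathfrak{u}$ has vector spaces as graded pieces, so the hypothesis is easy to check), and it has the virtue of not requiring the homotopy of Proposition \ref{existence of a homotopical inverse} to be constructed inside the logarithmic subcomplex; but it is substantially heavier machinery for the same conclusion, and note that what you actually get in degree zero is smoothness on $N$, not ``holomorphy with logarithmic singularities'' --- holomorphy of the gauge is established separately later (Proposition \ref{fiber}).

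The one genuine gap is in part 2. You say the second assertion follows by ``running the identical chain'' with the given $f_{\bullet}$, but $f_{\bullet}$ is only assumed to satisfy $f_{\bullet}\circ g''_{\bullet}=\mathrm{Id}_{W''}$, i.e.\ it is a one-sided inverse, whereas Proposition \ref{homotopyuniqueness} is stated for the two-sided homotopy inverse produced by Proposition \ref{existence of a homotopical inverse}. To use the \emph{specific} $f_{\bullet}$ (which is the whole point of the clause ``the morphism depends only on $\mathcal{F}(f_{\bullet})$ and $g_{\bullet}$'') one must first show that $g''_{\bullet}f_{\bullet}$ is homotopic to the identity; the paper does this by observing that $g''_{\bullet}f_{\bullet}$ induces the identity in cohomology, introducing an auxiliary homotopy inverse $t_{\bullet}$, and then composing the two resulting gauge equivalences via the Baker--Campbell--Hausdorff formula. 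Without this step your chain does not close for part 2.
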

\begin{proof}
The proof of 1. is an application of Proposition \ref{homotopyuniqueness}, Proposition \ref{pres} and Proposition \ref{equivMaurer}. The second part follows by the definition of $A_{DR}^{0}(\log D)$. We prove 2. We have a diagram of $1-C_{\infty}$-algebras
\[
\begin{tikzcd}
\mathcal{F}({W''}, m_{\bullet}^{W''})\arrow[rrr, shift right, "\mathcal{F}\left(  {g''}_{\bullet}\right) "']&&&\mathcal{F}\left( \operatorname{Tot}_{N}^{\bullet}\left(A_{DR}(M_{\bullet}G)\right) ,m_{\bullet} \right) 
\arrow[lll, shift right, " \mathcal{F}\left( f_{\bullet} \right) "'] \\ \\ \\
({W}, m_{\bullet}^{W})\arrow[uuu, shift right, "{k''}_{\bullet}"]\arrow[uuurrr, shift right, "g_{\bullet}"']
\end{tikzcd}
\]
where ${k''}_{\bullet}=\mathcal{F}(f_{\bullet})g_{\bullet}$. Notice that ${g''}_{\bullet}f_{\bullet}$ is the identity in cohomology since
\[
[{g''}_{\bullet}]^{-1}=[{f}_{\bullet}][{g''}_{\bullet}][{g''}_{\bullet}]^{-1}=[{f}_{\bullet}].
\]
Then there exist an inverse $t_{\bullet}\: : \: \left( \operatorname{Tot}_{N}^{\bullet}\left(A_{DR}(M_{\bullet}G)\right) ,m_{\bullet} \right)\to \left( \operatorname{Tot}_{N}^{\bullet}\left(A_{DR}(M_{\bullet}G)\right) ,m_{\bullet} \right)$ such that $t_{\bullet}{g''}_{\bullet}f_{\bullet}$ is homotopic to the identity via an homotopy $H_{\bullet}$. In particular 
${g''}_{\bullet}f_{\bullet}$ is homotopic to $t_{\bullet}{g''}_{\bullet}$ via $H_{\bullet}{g''}_{\bullet}f_{\bullet}$.
Let $\overline{C}_{1}$, $\overline{C}_{2}$ be the Maurer-Cartan elements corresponding to $\mathcal{F}({g''}_{\bullet}f_{\bullet})g_{\bullet}$ and $\mathcal{F}(t_{\bullet}{g''}_{\bullet}f_{\bullet})g_{\bullet}$. By Proposition \ref{homotopyuniqueness}, Proposition \ref{pres} and Proposition \ref{equivMaurer} we get that $r_{*}\overline{C}_{1}$ is gauge equivalent to $r_*\overline{C}_{2}$ which is gauge equivalent to $r_{*}\overline{C}$. By Baker-Campbell-Hausdorff formula $BCH(-,-)$ on $\mathfrak{u}$ we conclude that $r_{*}\overline{C}_{1}$ is gauge equivalent to $r_{*}\overline{C}$. We have $r_{*}\overline{C}_{1}=r_{*}\left(\mathrm{Id}\otimes {K''}^{*} \right) \overline{C}''=r_{*}{k''}^{*}\overline{C}''$ where ${K''}^{*}$ is the isomorphism of Lie algebras constructed in Proposition \ref{beh} induced by $k_{\bullet}=\mathcal{F}(f_{\bullet})g_{\bullet}$. This conclude the proof.
\end{proof}

\begin{rmk}\label{explicitcalculation} If ${g''}_{\bullet}$ is a minimal model constructed via the homotopy transfer theorem (for instance the $1$-minimal model constructed in Remark \ref{Chenhomologicalpair}). In \cite{Prelie}, Theorem 5 there is an explicit formula for $ f_{\bullet}$.  
\end{rmk}

\begin{prop}\label{fiber}
Let $M$, $G$ be as above. We fix a pronilpotent Lie algebra $\mathfrak{u}$ concentrated in degree zero. Let $\alpha'$ be a Maurer-Cartan elements in $A_{DR}(M)\widehat{\otimes} \mathfrak{u}$ such that it defines a well-defined flat connection on the bundle $E_{F'}$ with fiber $\mathfrak{u}'$ and factor of automorphy ${F'}_{g}(p)=Id$. Let $\alpha$ be a Maurer-Cartan element in $A_{DR}(M)\widehat{\otimes} \mathfrak{u}$. Assume that it is gauge equivalent to $\alpha'$ via some $h\in A^{0}_{DR}(M)\widehat{\otimes} \mathfrak{u}$. Then $\alpha$ is a well-defined connection form on the bundle $E_{F}$, where $F$ is given by ${F}_{g}(p):=e^{-h(gp)}e^{h(p)}$. Assume that there exists a finite dimensional positively graded vector space $V$ and a ideal $\mathcal{J}\subset \mathbb{L}\left(V \right)$ such that $\mathfrak{u}=\mathbb{L}\left(V \right)/\mathcal{J}$. Then if $\alpha$ and the group action are holomorphic, so is ${F}_{g}(p)$.
\end{prop}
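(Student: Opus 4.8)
The plan is to exhibit the factor of automorphy explicitly, check the compatibility condition of Proposition~\ref{welldefined}(1), and then extract holomorphicity of $F$ from a first order equation it satisfies. Write $u:=\exp(h)\in A^{0}_{DR}(M)\widehat{\otimes}U$ with $U=\exp(\mathfrak{u})$, and set $\nabla:=d-\alpha$, $\nabla':=d-\alpha'$. First I would record that the hypotheses force $\nabla'$ to be $G$-invariant: since $F'_{g}=\mathrm{Id}$, the well-definedness of $\alpha'$ on $E_{F'}$ read through Proposition~\ref{welldefined}(1) gives $d-g^{*}\alpha'=d-\alpha'$, i.e. $g^{*}\alpha'=\alpha'$. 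The gauge equivalence via $h$ means $e^{h}(\alpha)=\alpha'$, that is $\nabla'=u\nabla u^{-1}$, hence $\nabla=u^{-1}\nabla'u$.

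Next I would set $u_{g}:=g^{*}u$, so that $u_{g}(p)=\exp(h(gp))$ and $F_{g}:=u_{g}^{-1}u=e^{-h(gp)}e^{h(p)}$ (acting on the fibre through $\operatorname{Ad}$). A one-line computation gives $F_{g}(hx)F_{h}(x)=u(ghx)^{-1}u(hx)u(hx)^{-1}u(x)=u(ghx)^{-1}u(x)=F_{gh}(x)$, so $F$ is a factor of automorphy and $E_{F}$ is defined. Because $p\mapsto gp$ is a diffeomorphism, $g^{*}$ is an algebra map commuting with $d$ and with $\exp$, so conjugation is natural under it; combining this with the invariance $g^{*}\nabla'=\nabla'$ yields
\[
g^{*}\nabla=g^{*}\!\left(u^{-1}\nabla'u\right)=u_{g}^{-1}\nabla'u_{g}=u_{g}^{-1}\left(u\nabla u^{-1}\right)u_{g}=\left(u_{g}^{-1}u\right)\nabla\left(u_{g}^{-1}u\right)^{-1}=F_{g}\,\nabla\,F_{g}^{-1}.
\]
This is precisely the condition $d-g^{*}\alpha=F_{g}(d-\alpha)F_{g}^{-1}$, so Proposition~\ref{welldefined}(1) shows that $d-\alpha$ descends to a well-defined connection on $E_{F}$; flatness is inherited from the Maurer--Cartan property of $\alpha$.

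For the holomorphic statement I would expand the identity $g^{*}\nabla=F_{g}\nabla F_{g}^{-1}$ into $g^{*}\alpha=(dF_{g})F_{g}^{-1}+F_{g}\alpha F_{g}^{-1}$, and rearrange it to the first order equation $dF_{g}=(g^{*}\alpha)F_{g}-F_{g}\alpha$. If $\alpha$ is holomorphic it is of type $(1,0)$ with holomorphic coefficients, and since $g$ acts holomorphically $g^{*}\alpha$ is again of type $(1,0)$; as $F_{g}$ has degree zero, the right-hand side lies in $A^{1,0}_{DR}(M)\widehat{\otimes}\operatorname{End}(\mathfrak{u})$. Taking $(0,1)$-parts kills the right-hand side and leaves $\bar{\partial}F_{g}=0$. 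Here the hypothesis $\mathfrak{u}=\mathbb{L}(V)/\mathcal{J}$ with $V$ finite dimensional enters: each lower central series quotient $\mathfrak{u}/I^{k}$ is then finite dimensional, so $F_{g}$ is a projective limit of finite-dimensional smooth maps each annihilated by $\bar{\partial}$, hence genuinely holomorphic.

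I expect the main obstacle to be bookkeeping the conventions so that the factor of automorphy comes out as $e^{-h(gp)}e^{h(p)}$ and not its inverse: this pins down the direction of the gauge action ($e^{h}(\alpha)=\alpha'$ rather than $e^{h}(\alpha')=\alpha$), the fact that a left action of $G$ induces a right action on forms, and the exact placement of the inverses in $F_{g}=u_{g}^{-1}u$. The only genuinely analytic point is the passage from $\bar{\partial}F_{g}=0$ to honest holomorphicity, which is precisely what the finiteness assumption on $V$ is there to legitimise; the naturality-of-conjugation step also quietly uses that $g$ acts by biholomorphisms so that $g^{*}$ commutes with $d$ and $\exp$.
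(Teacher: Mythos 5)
Your proof is correct and follows essentially the same route as the paper: the same factor of automorphy $F_{g}(p)=e^{-h(gp)}e^{h(p)}$, the same conjugation computation resting on the $G$-invariance $g^{*}\alpha'=\alpha'$, and holomorphicity extracted from the $(0,1)$-part of the gauge-transformation identity. The only real divergence is in that last step: the paper works at the Lie-algebra level, setting $k=BCH(-g^{*}h,h)$ so that the gauge formula gives $\frac{1-e^{\operatorname{Ad}_{k}}}{\operatorname{Ad}_{k}}(\overline{\partial}k)=0$, then inverts this operator and lifts to $\widehat{\mathbb{L}}(V)$ to construct, degree by degree modulo $\mathcal{J}$, an honest holomorphic representative of $k$; your group-level equation $dF_{g}=(g^{*}\alpha)F_{g}-F_{g}\alpha$ yields $\overline{\partial}F_{g}=0$ directly, which together with the finite-dimensionality of the nilpotent quotients suffices for the statement as written, while the paper's extra lifting work is what it later exploits to extend $F$ holomorphically across the divisor.
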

\begin{proof}
${F}_{g}(p)$ is clearly a factor of automorphy. We have
	\begin{align*}
	e^{-g^{*}h}e^{h}\left( d-\alpha\right) e^{-h}e^{g^{*}h}& =	e^{-g^{*}h}\left( d-\alpha'\right) e^{g^{*}h} \\
	&=e^{-g^{*}h}\left( d-g^{*}\alpha'\right) e^{g^{*}h} \\
	&=d-g^{*}\alpha.
	\end{align*}
	We prove the second part. By Baker-Campbell-Hausdorff formula $BCH(-,-)$ on $\mathfrak{u}$ we can define $k(p):= BCH(-g^{*}h(p),h(p))$. In particular, $k\in A^{0}_{DR}(M)\widehat{\otimes} \mathfrak{u}$.
	We assume that $\alpha$ is holomorphic. Since 
		\begin{align*}
		d-g^{*}\alpha & =e^{-g^{*}h}e^{h}\left( d-\alpha\right) e^{-h}e^{g^{*}h}\\
		& =		e^{k}\left( d-\alpha\right) e^{k} \\
		&=e^{\operatorname{Ad}_{k}}(\alpha)+\frac{1-e^{\operatorname{Ad}_{k}}}{\operatorname{Ad}_{k}}(dk) \\
		\end{align*}
		and $dk=\sum_{i}\left( \frac{\partial}{\partial z_{i}}kd{z}_{i}+\frac{\partial}{\partial \overline{z}_{i}}kd\overline{z}_{i}\right)$, we set $\overline{\partial}k:=\sum_{i}\left( \frac{\partial}{\partial \overline{z}_{i}}kd\overline{z}_{i}\right)$. Let $\left\lbrace \mathcal{I}^{i}\right\rbrace $ be the filtration induced by the degree on $\widehat{\mathbb{L}}\left(V \right)$. Let $\underline{k}\in A^{0}_{DR}(M)\widehat{\otimes} \widehat{\mathbb{L}}\left(V \right)$ be a preimage of $k$ under the projection $A^{0}_{DR}(M)\widehat{\otimes}\widehat{\mathbb{L}}\left(V \right)\to  \mathfrak{u}A^{0}_{DR}(M)\widehat{\otimes} \mathfrak{u}$.  By above, we have $\frac{1-e^{\operatorname{Ad}_{\underline{k}}}}{\operatorname{Ad}_{\underline{k}}}\overline{\partial}\underline{k}\in \mathcal{J}$. The map $\frac{1-e^{\operatorname{Ad}_{\underline{k}}}}{\operatorname{Ad}_{\underline{k}}}\overline{\partial}$ defines an automorphisms on $A^{0}_{DR}(M)\widehat{\otimes}\widehat{\mathbb{L}}\left(V \right)$ and its inverses preserves $\mathcal{J}$. In particular, $\overline{\partial}\underline{k}\in \mathcal{J}$. We write $\underline{k}=\sum_{i=1}^{\infty}\underline{k}_{i}$ where each $\underline{k}_{i}$ is has homogeneous degree in $\widehat{\mathbb{L}}\left(V \right)$. Then $\overline{\partial}\underline{k}_{1}=\mathcal{J}\cap\mathcal{I}^{1}$ and
		 \[
		 \overline{\partial}\underline{k}_{1}=k_{1}^{\operatorname{hol}}+{k'}_{1}
		 \]
		where ${k'}_{1}\in   A^{0}_{DR}(M)\widehat{\otimes} \mathcal{J}$ and $k_{1}^{\operatorname{hol}}\in \widehat{\mathbb{L}}\left(V \right)$ is a primitive element whose coefficients are holomorphic functions. We set $\underline{k}^{\operatorname{hol},1}:=k_{1}^{\operatorname{hol}}+\underline{k}_{2}+\dots$. Continuing in this way we get a
		$\underline{k}^{\operatorname{hol}}:= \underline{k}^{\operatorname{hol},\infty}\in A^{0}_{DR}(M)\widehat{\otimes}\widehat{\mathbb{L}}\left(V \right)$ whose coefficients are holomorphic functions. Let $k^{\operatorname{hol}}$ be its projection on $A^{0}_{DR}(M)\widehat{\otimes}\mathfrak{u}$. The gauge action of $k^{\operatorname{hol}}$ corresponds to the gauge action of $k$ and hence
	\[
	{F}_{g}(p):=e^{-h(gp)}e^{h(p)}=e^{k(p)}=e^{k^{\operatorname{hol}}(p)}\in \operatorname{Aut}\left( \mathfrak{u}\right) 
	\]
	\end{proof}
Notice that there is canonical bundle isomorphism $E_{F'}\to E_{F}$ defined by 
\begin{equation}\label{bundlemap}
(p,v)\mapsto (p,e^{{-h(gp)}}e^{{h(p)}}v).
\end{equation}

\begin{defi}
Let $M$ be a complex manifold and let $E$, $E'$ be two smooth  vector bundles on $M$. Let $\left(d-\alpha,E \right) $, $\left(d-\alpha',E' \right) $ be two smooth connections. They are \emph{isomorphic} if there exists a (holomorphic) bundle isomorphism $T\: : \: E\to E'$ such that 
\[
T\left( d-\alpha\right)T^{-1}=d-\alpha'.
\]
Assume that $E$, $E'$ are bundles whose fiber is a pronilpotent Lie algebra $\mathfrak{u}$ and that $\alpha,\alpha'\in A^{1}_{DR}(M)\widehat{\otimes }\mathfrak{u}$ (we consider to be equipped with the adjoint action). An isomorphism $T\left(d-\alpha,E \right) \to\left(d-\alpha',E' \right) $ is said to be \emph{induced by a (holomorphic) gauge} if there exists a (holomorphic) $h\in A^{0}_{DR}(M)\widehat{\otimes }\mathfrak{u} $ such that $T=e^{h}$. 
\end{defi}
Let ${g'}_{\bullet}\: : \: \left(W', m_{\bullet}^{W'} \right) \to A_{DR}(M/G)$ be the minimal model (in the sense of $C_{\infty}$-algebras) obtained by the good homological pair $(C', { \delta'}^{*})$ and constructed by Chen as in Remark \ref{Chenhomologicalpair}. In particular it has a quasi-inverse $f_{\bullet}$ such that $f_{\bullet}{g'}_{\bullet}=\mathrm{Id}_{W}$. Let $\pi(C')$ be the good degree zero geometric connection associated to ${g'}_{\bullet}$ and let $\mathfrak{u}'$ be its fiber Lie algebra. Let $f_{\bullet}\: : \: \left(W, m_{\bullet}^{W} \right)\to \mathcal{F}\left( \operatorname{Tot}_{N}^{\bullet}\left(A_{DR}(M_{\bullet}G)\right) ,m_{\bullet} \right)$	
be a $1$-minimal model and let $\overline{C}$ be its associated  good degree zero geometric connection with fiber Lie algebra $\mathfrak{u}$. Consider the situation of Proposition \ref{gaugeuniqueness}, then there is an isomorphism of Lie algebra $K^{*}\: : \: \mathfrak{u}'\to \mathfrak{u}$, a Maurer-Cartan element $C\in A_{DR}(M)\widehat{ \otimes}\mathfrak{u}$ and a gauge $h\in A^{0}_{DR}(M)\widehat{\otimes }\mathfrak{u}$ such that $e^{h}\left(d- r_{*}\overline{C}\right)e^{-h}= k^{*}\pi{C'}$.
\begin{thm}\label{wehaveabundle}
 Let $\pi(C')$ be the good degree zero geometric connection associated to $g_{\bullet}$ and let $\mathfrak{u}'$ be its fiber Lie algebra. Assume that there is a triple $(\tilde{C},K*, h)$ consisting in an isomorphism of Lie algebra $K^{*}\: : \: \mathfrak{u}'\to \mathfrak{u}$, a Maurer-Cartan element $\tilde{C}\in A_{DR}(M)\widehat{ \otimes}\mathfrak{u}$ and a gauge $h\in A^{0}_{DR}(M)\widehat{\otimes }\mathfrak{u}$ such that $e^{h}\left(d- \tilde{C}\right)e^{-h}= k^{*}\pi{C'}$ as Maurer-Cartan elements in $A_{DR}(M)\widehat{ \otimes}\mathfrak{u}$.
\begin{enumerate}
\item $h$ induces a smooth factor of automorphy $F_{g}(p):=e^{-h(gp)}e^{h(p)}$ such that $d- \tilde{C}$ is a well-defined flat connection on $M/G$ on the bundle $E_{F}$ where the fiber corresponds to the Malcev completion of $\pi_{1}(M/G)$.
\item Assume that $\tilde{C}$ has holomorphic coefficients and the group action on $M$ is holomorphic. Then $h$ and $F$ are holomorphic and $d- \tilde{C}$ is a well-defined holomorphic flat connection on $M/G$ on the holomorphic bundle $E_{F}$.
\item Assume that $M_{\bullet}G=(N-\mathcal{D})_{\bullet}G$, where $\left( N,\mathcal{D}\right) $ is a complex manifold with a normal crossing divisors and $G$ be a group acting holomorphically on $N$ and that preserves $\mathcal{D}$. Assume that $\tilde{C}$ is holomorphic with logarithmic singularities. Then $F\: : \:N\times \mathfrak{u}\to N\times \mathfrak{u}$ is holomorphic and $d- \tilde{C}$ is a well-defined holomorphic flat connection with logarithmic singularities on $M/G$ on the holomorphic bundle $E_{F}$.
\end{enumerate}
In particular, any $1$-minimal model ${g}_{\bullet}$ for $\mathcal{F}\left( \operatorname{Tot}_{N}^{\bullet}\left(A_{DR}(M_{\bullet}G)\right) ,m_{\bullet}\right) $ produces a triple $(\tilde{C},K^*, h)$ where $\tilde{C}=r_{*}\overline{C}$, where $\overline{C}$ is the geometric degree zero connection associated to $g_{\bullet}$ and $K^{*}$ is completely determined by $\mathcal{F}(f_{\bullet})g_{\bullet}$.   
\end{thm}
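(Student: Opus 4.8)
The plan is to read off each claim from the factor-of-automorphy formalism of Subsection \ref{factorofautomorphy} together with the gauge calculus of Subsection \ref{gauge equiv}, treating $\tilde{C}$ throughout as a Maurer--Cartan element gauge equivalent to the $G$-invariant connection $k^{*}\pi C'$. Nothing new needs to be computed; the work is in invoking Propositions \ref{fiber} and \ref{gaugeuniqueness} in the right order.

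For (1), the first point is that $k^{*}\pi C'$ is $G$-invariant: Chen's geometric connection $\pi C'$ lies in $A_{DR}(M/G)\widehat{\otimes}\mathfrak{u}'$, and $k^{*}=\mathrm{Id}\widehat{\otimes}K^{*}$ acts only on the Lie algebra factor, so $g^{*}(k^{*}\pi C')=k^{*}\pi C'$ for all $g\in G$. Hence $d-k^{*}\pi C'$ is literally a flat connection on the quotient bundle $(M/G)\times\mathfrak{u}$, that is, it is well defined on a bundle $E_{F'}$ with trivial factor of automorphy $F'_{g}(p)=\mathrm{Id}$. Since by hypothesis $e^{h}(d-\tilde{C})e^{-h}=d-k^{*}\pi C'$, the form $\tilde{C}$ is gauge equivalent to $k^{*}\pi C'$ via $h$, so Proposition \ref{fiber} applies verbatim: it produces the factor of automorphy $F_{g}(p)=e^{-h(gp)}e^{h(p)}$ and shows, via Proposition \ref{welldefined}(1), that $d-\tilde{C}$ descends to a well defined connection on $E_{F}$. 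Flatness is inherited because $\tilde{C}$ is Maurer--Cartan (Theorem \ref{flatconnection on M0}). For the fibre, Proposition \ref{fibermalcev}(2) identifies $\mathfrak{u}'$ with the Malcev Lie algebra of $\pi_{1}(M/G)$, and $K^{*}$ carries this isomorphically to $\mathfrak{u}$; the holonomy homomorphism $\Theta_{0}\colon\pi_{1}(M/G,\overline{p})\to\exp(\mathfrak{u})$ of Proposition \ref{holonomyrep} then realizes the Malcev completion.

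For (2) and (3) the only extra ingredient is the holomorphic half of Proposition \ref{fiber}. Its hypothesis holds here because the fibre Lie algebra is $\mathfrak{u}=\widehat{\mathbb{L}}((W_{+}^{1}[1])^{*})/\mathcal{R}_{0}$ with $V:=(W_{+}^{1}[1])^{*}$ finite dimensional ($W$ being of finite type); taking $\mathcal{J}=\mathcal{R}_{0}$, Proposition \ref{fiber} yields a holomorphic representative $k^{\mathrm{hol}}$ of the degree-zero gauge and shows $F_{g}(p)=e^{k^{\mathrm{hol}}(p)}$ is holomorphic, whence $E_{F}$ is a holomorphic bundle and $d-\tilde{C}$ a holomorphic flat connection. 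For (3), with $M=N-\mathcal{D}$, Proposition \ref{gaugeuniqueness}(1) places the gauge in $A^{0}_{DR}(\log\mathcal{D})\widehat{\otimes}\mathfrak{u}$; since a logarithmic form of degree $0$ carries no $dz_{j}/z_{j}$ factors and is therefore an ordinary function holomorphic across $\mathcal{D}$, the map $F_{g}(p)=e^{-h(gp)}e^{h(p)}$ extends to a holomorphic $F\colon N\times\mathfrak{u}\to N\times\mathfrak{u}$. This defines a holomorphic bundle $E_{F}$ on $N/G$ on which the flat connection $d-\tilde{C}$ has logarithmic singularities along $\mathcal{D}/G$.

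Finally, the concluding assertion is exactly the output of Proposition \ref{gaugeuniqueness}: given any $1$-minimal model $g_{\bullet}$ with associated geometric connection $\overline{C}$, that proposition produces the Lie algebra isomorphism $K^{*}$ --- the restriction to primitives of the map \eqref{mapKdual}, determined by $k_{\bullet}=\mathcal{F}(f_{\bullet})g_{\bullet}$ through Proposition \ref{beh} --- and a gauge $h$ with $e^{h}(d-r_{*}\overline{C})e^{-h}=d-k^{*}\pi C'$, so setting $\tilde{C}=r_{*}\overline{C}$ gives the required triple. I expect the only genuinely delicate point to be the holomorphic extension across $\mathcal{D}$ in part (3): one must verify that the degree-zero gauge, a priori only defined on $M=N-\mathcal{D}$, really extends holomorphically, which rests on combining Proposition \ref{gaugeuniqueness}(1) with the elementary structure of $A^{0}_{DR}(\log\mathcal{D})$. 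The remaining analytic content is already packaged in Propositions \ref{fiber} and \ref{gaugeuniqueness}, so the proof is otherwise bookkeeping.
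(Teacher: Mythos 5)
Your overall route is the paper's route: part (1) via Propositions \ref{fiber}, \ref{fibermalcev} and \ref{welldefined}, part (2) via the holomorphic half of Proposition \ref{fiber}, part (3) via smoothness of the degree-zero logarithmic gauge plus removable singularities, and the final assertion via Propositions \ref{gaugeuniqueness} and \ref{beh}. Parts (2), (3) and the concluding claim are handled essentially as in the paper (the paper phrases the extension across $\mathcal{D}$ as: $h$ is holomorphic on $N-\mathcal{D}$ by Proposition \ref{fiber}, smooth on $N$ by Proposition \ref{gaugeuniqueness}, hence holomorphic on $N$ by Cauchy's formula --- which is what you sketch).

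The one substantive step you assert without proof is the very last claim of part (1): that the holonomy representation $\Theta_{0}$ of $d-\tilde{C}$ on $E_{F}$ \emph{is} the Malcev completion, not merely a representation into $\exp(\mathfrak{u})$ with $\mathfrak{u}$ abstractly isomorphic to the Malcev Lie algebra. This does not follow from Proposition \ref{fibermalcev} alone. The paper's proof spends most of its length on exactly this point: writing $T$ and $T^{K}$ for the parallel transports of $d-\tilde{C}$ and $d-k^{*}\pi C'$ on $M\times\mathfrak{u}$, the gauge relation gives $T(\gamma)=e^{-h(\gamma(0))}T^{K}(\gamma)e^{h(\gamma(1))}$, and then one must combine this with the factor-of-automorphy correction in Proposition \ref{holonomyrep}, using $c_{\gamma}(1)=\rho(\gamma)p$ so that the terms $e^{h(c_{\gamma}(1))}e^{-h(\rho(\gamma)p)}$ cancel, to obtain $\Theta_{0}(\gamma)=e^{h(p)}\,K^{*}\Theta'_{0}(\gamma)\,e^{-h(p)}$ where $\Theta'_{0}$ is Chen's Malcev completion. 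Since $K^{*}$ is a Hopf algebra isomorphism preserving group-like elements, conjugation by $e^{h(p)}$ then identifies $\Theta_{0}$ with the Malcev completion. Without this cancellation argument your claim that ``$\Theta_{0}$ realizes the Malcev completion'' is unsupported: the nontrivial factor of automorphy enters the holonomy formula, and it is not a priori clear that it interacts with the basepoint gauge only by an overall conjugation. Supplying this short computation closes the gap; everything else in your write-up is consistent with the paper.
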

\begin{proof}
 By Proposition \ref{fiber} and Proposition \ref{fibermalcev} and we get a flat connection $\left( d- r_{*}\tilde{C}, E_{F}\right) $ whose fiber $\mathfrak{u}$ is the Malcev Lie algebra of $\pi_{1}(M/G)$ and $\left( d-k^{*}\pi{C'}, M\times \mathfrak{u}\right)$ is isomorphic via a gauge to $\left( d- r_{*}\tilde{C}, E_{F}\right) $. We calculate its monodromy representation. We consider $d-k^{*}\pi{C'}$ as flat connection on $M\times \mathfrak{u}$. Its parallel transport $T^{K}$ is defined via iterated integrals (see \cite{IteratedChen}) and gives a map from the path space $PM$ of $M$ to $\widehat{T}\left( {W'}^{1}_{+} [1]\right) ^{*}/{\bar{\mathcal{R}}'}_{0}$, i.e the complete universal enveloping algebra of $\mathfrak{u}'$. In particular, we have $T^{K}(\gamma)=K^{*}T(\gamma)$ where $\gamma\in PM$ and $T'$ is the parallel transport of $d-\pi{C'}$ considered as a flat connection on $M\times \mathfrak{u}$. Let $T$ be the parallel transport of $d-r_{*}\tilde{C}$ considered as a flat connection on $M\times \mathfrak{u}$. Since they are gauge equivalent via $h$ we have
  \[
 T(\gamma)=e^{-h(\gamma(0))} T^{K}(\gamma)e^{h(\gamma(1))}\in \widehat{T}\left( {W}^{1}_{+}[1]\right) ^{*}/{\bar{\mathcal{R}}}_{0}
  \]
 for a path $\gamma\: : \: [0,1]\to M$. We fix a $p\in M$ and we choice a representative $\overline{p}$ for its class in $M/G$. We denote with $ \rho\: : \: \pi_{1}\left(M/G,\overline{p} \right) \to G$ its induce group homomorphism. Let $\gamma$ be a loop based at $\overline{p}$ and let $c_{\gamma}$ be its unique lift starting at $p$. We denote with ${\Theta}_{0}$ the holonomy representation of $d-r_{*}\tilde{C}$ and by ${\Theta'}_{0}$ the holonomy representation of $d-\pi{C'}$, then
\begin{align*}
 {\Theta'}_{0}(\gamma) & = T^{K}(c_{\gamma})e^{-h(\rho(\gamma)p)}e^{h(p)}\\
  & = e^{-h(c_{\gamma}(0))}T(c_{\gamma})e^{h(c_{\gamma}(1))}e^{-h(\rho(\gamma)p)}e^{h(p)}\\
  & = e^{-h(p)}T(c_{\gamma})e^{h(p)}\\
  & = e^{-h(p)}{\Theta}_{0}(\gamma)e^{h(p)}.
  \end{align*}
   Let $U=\exp\left(\mathfrak{u}\right) $. Since $K^{*}$ is an Hopf algebra isomorphism, it preserves group-like elements. The above calculation shows that the monodromy representation $\Theta_{0}\: : \:  \pi_{1}\left(M/G,\overline{p} \right) \to U$ is conjugate with $K^{*}{\Theta'}_{0}$, in particular it corresponds to the Malcev completion of the fundamental group. The second statement follows by Proposition \eqref{fiber}. We prove the last statement. By Proposition \eqref{fiber} we have a well-defined map $F_{g}(p)=e^{-h(gp)}e^{h(p)}$ where $h$ is holomorphic on $N-\mathcal{D}$. By Proposition \ref{gaugeuniqueness}, we have that $h$ is smooth on $N$. Hence by Cauchy's formula in several complex variables we conclude that $h$ is holomorphic on $N$ and then $F_{g}(p)=e^{-h(gp)}e^{h(p)}$ is the factor of automorphy of a holomorphic bundle on $N/G$. The final part follows directly by Proposition \ref{gaugeuniqueness}.
    \end{proof} 
    \begin{rmk}\label{FinalRMK0}
   Given a $1$-minimal model ${g}_{\bullet}$ for $\mathcal{F}\left( \operatorname{Tot}_{N}^{\bullet}\left(A_{DR}(M_{\bullet}G)\right) ,m_{\bullet}\right) $ and let $\tilde{C}$ be its degree zero geometric connection. Then in general there may be different choices for $K^{*}$ and $h$.
    \end{rmk}
Let $M$, $G$ be as above and let $\mathfrak{u}$ be a pronilpotent Lie algebra concentrated in degree zero. Let $\alpha\in A^{1}_{DR}(M)\widehat{\otimes} \mathfrak{u}$ such that $d-\alpha$ be a well-defined connection on the bundle $E_{F}$, with fiber $\mathfrak{u}$. Let $K^{*}\: : \: \mathfrak{u}\to \mathfrak{u}'$ be a Lie algebra isomorphism. Then $d-\left(\mathrm{Id}\widehat{\otimes }K\right)\alpha=d-k^{*}\alpha$ is a well-defined connection on the bundle $E_{K^{*}F}$ with fiber $\mathfrak{u}'$.
\begin{thm}\label{wehaveabundleuniq}
 Let $\tilde{C}_{1}$ and $\tilde{C}_{2}$ be two Maurer-Cartan elements satisfying the conditions of Theorem  \ref{wehaveabundle} with factor of automorphy $F_{1}$ and $F_{2}$. Let $\mathfrak{u}_{1}$ and $\mathfrak{u}_{2}$ be the fiber Lie algebra of $\tilde{C}_{1}$ and $\tilde{C}_{2}$ respectively. 
\begin{enumerate}
\item There exists a Lie algebra automorphism $K^{*}\: : \: \mathfrak{u}_{1}\to \mathfrak{u}_{2}$ such that $\left( d- k^{*} \tilde{C}_{2}, E_{K^{*}F_{2}}\right) $ is isomorphic via a gauge to $\left( d- \tilde{C}_{1}, E_{F_{1}}\right)$ as a flat connection on $M/G$.
\item Assume that $\tilde{C}_{1}$ and $\tilde{C}_{2}$ have holomorphic coefficients. The isomorphism is induced via a holomorphic gauge $e^{h}$, where $h\in A_{DR}(M)\widehat{ \otimes}\mathfrak{u}$ is a function with holomorphic coefficients.
\item Assume that $M_{\bullet}G=(N-\mathcal{D})_{\bullet}G$, where $\left( N,\mathcal{D}\right) $ is a complex manifold with a normal crossing divisors and $G$ is a group acting holomorphically on $N$ and that preserves $\mathcal{D}$. Assume that $\tilde{C}_{1}$ and $\tilde{C}_{2}$ have holomorphic coefficients with logarithmic singularities. The isomorphism is induced via a holomorphic gauge $e^{h}$, where $h\in A_{DR}(N)\widehat{ \otimes}\mathfrak{u}$ is a function with holomorphic coefficients. 
\end{enumerate}
\end{thm}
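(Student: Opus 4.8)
The plan is to exploit that both Maurer--Cartan elements are, by hypothesis, comparable to one and the same reference connection --- namely Chen's degree zero geometric connection $\pi C'$ on $M/G$ with fiber Lie algebra $\mathfrak{u}'$ --- and then to compose the two comparisons. Indeed, by the hypothesis of Theorem \ref{wehaveabundle}, for $i=1,2$ there is a triple $(\tilde{C}_i, K_i^{*}, h_i)$ with $K_i^{*}\: : \:\mathfrak{u}'\to\mathfrak{u}_i$ an isomorphism of Lie algebras and $h_i\in A^0_{DR}(M)\widehat{\otimes}\mathfrak{u}_i$ such that
\[
e^{h_i}\left(d-\tilde{C}_i\right)e^{-h_i}=k_i^{*}\,\pi C' \quad\text{in } A_{DR}(M)\widehat{\otimes}\mathfrak{u}_i,
\]
where $k_i^{*}=\mathrm{Id}\widehat{\otimes}K_i^{*}$. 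The whole statement is then an assembling of the machinery already in place in Proposition \ref{gaugeuniqueness} and Proposition \ref{fiber}; the fibers being the Malcev Lie algebra of $\pi_1(M/G)$ is Proposition \ref{fibermalcev}.

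First I would set $K^{*}:=K_1^{*}\circ\left(K_2^{*}\right)^{-1}$, an isomorphism identifying $\mathfrak{u}_2$ with $\mathfrak{u}_1$, and write $k^{*}=\mathrm{Id}\widehat{\otimes}K^{*}$ (the direction being immaterial up to inverse, since $K^{*}$ is an isomorphism). Because each $\mathrm{Id}\widehat{\otimes}K_i^{*}$ commutes with the de Rham differential and with the bracket, it intertwines the gauge actions on $A_{DR}(M)\widehat{\otimes}\mathfrak{u}'$ and $A_{DR}(M)\widehat{\otimes}\mathfrak{u}_i$ (the gauge group being the exponential of the degree-zero part). Applying $\left(k_i^{*}\right)^{-1}$ to the displayed identity thus shows that $\left(k_i^{*}\right)^{-1}\tilde{C}_i$ is gauge equivalent to $\pi C'$ in $A_{DR}(M)\widehat{\otimes}\mathfrak{u}'$ for $i=1,2$. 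Hence $\left(k_1^{*}\right)^{-1}\tilde{C}_1$ and $\left(k_2^{*}\right)^{-1}\tilde{C}_2$ are gauge equivalent to each other, and pushing this equivalence forward along $k_1^{*}$ produces a gauge $e^{h}$, with $h\in A^0_{DR}(M)\widehat{\otimes}\mathfrak{u}_1$, realizing $d-\tilde{C}_1=e^{h}\left(d-k^{*}\tilde{C}_2\right)e^{-h}$. Transitivity of gauge equivalence here is the same Baker--Campbell--Hausdorff computation used in the proof of Proposition \ref{gaugeuniqueness}, with $h$ assembled from $h_1$, $h_2$ and their $K_i^{*}$-transports; equivalently one passes through homotopy of Maurer--Cartan elements and invokes Proposition \ref{equivMaurer}.

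Next I would upgrade this gauge equivalence over $M$ to an isomorphism of bundles on $M/G$. This is exactly the content of Proposition \ref{fiber}: a gauge $h$ relating two well-defined Maurer--Cartan elements yields the factor of automorphy $F_g(p)=e^{-h(gp)}e^{h(p)}$ together with the canonical bundle isomorphism \eqref{bundlemap}, $(p,v)\mapsto(p,e^{-h(gp)}e^{h(p)}v)$, conjugating the two connections. Matching the resulting factors of automorphy with $F_1$ and $K^{*}F_2$ then gives the desired isomorphism $\left(d-k^{*}\tilde{C}_2, E_{K^{*}F_2}\right)\cong\left(d-\tilde{C}_1, E_{F_1}\right)$, proving part (1).

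For parts (2) and (3) I would simply track holomorphy through the same construction. The holomorphic (resp.\ logarithmic) refinements of Proposition \ref{gaugeuniqueness} together with the second half of Proposition \ref{fiber} show that each $h_i$, and hence the assembled $h$, may be chosen with holomorphic coefficients (resp.\ with holomorphic coefficients and logarithmic singularities along $\mathcal{D}$); the key point, already established in the proof of Theorem \ref{wehaveabundle}, is that a gauge which is smooth on $N$ and holomorphic on $N-\mathcal{D}$ is holomorphic on $N$ by Cauchy's formula in several complex variables, so the induced factor of automorphy extends holomorphically across $\mathcal{D}$ and the bundle isomorphism descends to $N/G$. I expect the main obstacle to be the bookkeeping in the second paragraph: ensuring that the two Lie algebra isomorphisms $K_1^{*}, K_2^{*}$ compose consistently with the directions of the pullbacks $k_i^{*}$ and the gauge conjugations, so that the composite is genuinely a gauge equivalence after the single identification $K^{*}$, and that the $BCH$ combination of the individual gauges lands in the correct completed Lie algebra; the holomorphic bookkeeping in (2)--(3) is then routine given the Cauchy-formula argument already available.
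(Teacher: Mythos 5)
Your proposal follows essentially the same route as the paper: both connections are compared to Chen's reference connection $\pi C'$ via the triples supplied by Theorem \ref{wehaveabundle}, the two Lie algebra isomorphisms are composed to produce $K^{*}$, transitivity of gauge equivalence (BCH) yields the single gauge $e^{h}$, and holomorphy in parts (2)--(3) is tracked by the argument of Proposition \ref{fiber} together with Cauchy's formula. The only cosmetic difference is the direction convention for $K^{*}$, which you correctly note is immaterial since it is an isomorphism.
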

\begin{proof}
Let $(C', { \delta'}^{*})$ be the good homological pair constructed by Chen as in Remark \ref{Chenhomologicalpair}. By Theorem \ref{wehaveabundle}, we have the following.
\begin{enumerate}
\item There is a Lie algebra isomorphism $\left( L'\right)^{*}\: : \: \mathfrak{u}'\to \mathfrak{u}_{1}$ such that $\left( d-(l')^{*}\pi{C'}, M\times \mathfrak{u}_{1}\right) $ and $\left( d- r_{*}\tilde{C}_{1}, E_{F_{1}}\right)$ are isomorphic via a gauge.
\item There is a Lie algebra isomorphism $\left( L\right)^{*}\: : \: \mathfrak{u}'\to \mathfrak{u}_{2}$ such that $\left( d-l^{*}\pi{C'}, M\times \mathfrak{u}_{2}\right) $ and $\left( d- r_{*}\tilde{C}_{2}, E_{F_{2}}\right)$ are isomoprhic via a gauge.
\end{enumerate}
 An explicit calculation shows that $$\left(d-\left({l'}^{-1}l \right)^{*} \tilde{C}_{2} , E_{\left({L'}^{-1}L \right)^{*}F_{2}} \right)\text{ and }\left(d- r_{*}\tilde{C}_{1} , E_{F_{1}} \right)$$ are isomorphic via a gauge to $\left(d- r_{*}l^{*} \tilde{C}' , M\times \mathfrak{u}' \right)$. This prove point 1. We prove point 2. Assume that $\tilde{C}_{1}$ and $\tilde{C}_{2}$ have holomorphic coefficients. Then so are $\tilde{C}_{1} $ and $\left({l'}^{-1}l \right)^{*} \tilde{C}_{2}$. These two connection forms are gauge equivalent as Maurer-Cartan elements in $A^{0}_{DR}(M)\widehat{\otimes }\mathfrak{u}_{1}$ for a certain $h\in A^{0}_{DR}(M)\widehat{\otimes }\mathfrak{u}_{1}$. The same argument used in Proposition \ref{fiber} shows that the gauge is holomorphic. Point 3 follows by point 2 and Cauchy's formulas for holomorphic functions in several variables.
\end{proof}

\subsection{Formality of the fundamental group and Hopf algebra isomorphisms}
In \cite{Sibilia1}, we show that the KZB connection presented in \cite{Damien} on the configuration space of points on the punctured torus can be constructed via Theorem \ref{wehaveabundle}. More precisely, we construct a
a $1$-minimal model via the homotopy transfer theorem which gives degree zero geometric connection $\overline{C}$ such that $d-r_{*}\overline{C}$ is the KZB connection on the configuration space of points on the punctured torus. The monodromy of this connection is used in \cite{Damien} to shows the formality of the pure braids group on the torus.
This fact can be proved via Chen's theory.  Let $\mathfrak{u}$ be a pronilpotent Lie algebra concentrated in degree zero. We denote its associated graded (with respect to the filtration $I^{\bullet}$) by $\operatorname{gr}\left( \mathfrak{u}\right)=\oplus_{i\geq 0}I^{i}/ I^{i+1}$ and the complete associated graded by $\widehat{\operatorname{gr}}\left( \mathfrak{u}\right)=\widehat{\oplus}_{i\geq 0}I^{i}/ I^{i+1}$. Notice that they are both filtered.
\begin{defi}[\cite{Damien}]
	The Lie algebra $\mathfrak{u}$ is said to be \emph{formal} if there is an isomorphism of filtered Lie algebras $\mathfrak{u}\to \widehat{\operatorname{gr}}\left( \mathfrak{u}\right)$ whose associated graded is the identity. The group $U=\exp(\mathfrak{u})$ is \emph{formal} if so is $\mathfrak{u}$. A group $G$ is \emph{formal} if so is its Malcev completion.
\end{defi}
In particular, if there exists a positively graded Lie algebra $\mathfrak{t}$ and an isomorphism of filtered Lie algebras  $\mathfrak{u}\to \widehat{ \mathfrak{t}}$ (called \emph{formality isomorphism}), then $\mathfrak{u}$ is formal and the map induced via its associated graded is an isomorhism of graded Lie algebra $\operatorname{gr}\left( \mathfrak{u}\right)\to \mathfrak{t}$.\\
Let $M$, $G$ be as in Theorem \ref{wehaveabundle}. We fix a $p\in M$ and we choice a representative $\overline{p}$ for its class in $M/G$. Let $\overline{C}$ be a good degree zero geometric connection.
The monodromy representation of $d-r_{*}\overline{C}$ gives a group homomorphism
 \[
\Theta_{0}\: : \:  \pi_{1}\left(M/G,\overline{p} \right)  \to U\subset \widehat{T}\left(  W_{+}^{1}[1]\right). ^{*}/\bar{\mathcal{R}}_{0}
 \]
We consider the Hopf algebra $\C\left[ \pi_{1}\left(M/G,\overline{p} \right) \right]$. The above map can be extended into an Hopf-algebra morphism
\begin{equation}\label{Malcevcompletionintro}
\Theta_{0}\: : \: \C\left[  \pi_{1}\left(M/G,\overline{p} \right) \right] \to H^{0}\left( {T}\left(W[1]^{*}\right), \delta^{*}\right) 
\end{equation} 
Let $J$ be the kernel of the augmentation map $\C\left[  \pi_{1}\left(M/G,\overline{p} \right) \right]\to \C$ that sends each element of $\pi_{1}(M,p)$ to $1$. The powers of $J$ (with respect to the multiplication) define a filtration $J^{i}$ and the completion $\C\left[ \pi_{1}\left(M/G,\overline{p} \right) \right]^{\wedge }$ is a complete Hopf algebra. Theorem 2.1.1 in \cite{extensionChen}  and Theorem \ref{wehaveabundle} implies the following\footnote{ Theorem 2.1.1 is not written in this way, but this fact is implies by the last isomorphism at page 209 in loc. cit.}: the map $\Theta_{0}$ preserves $J^{i}$ and it is the $J$-adic completion of $\C\left[\pi_{1}\left(M/G,\overline{p} \right) \right]$ as a Hopf algebra. Hence by looking at the group-like elements we conclude that $\Theta_{0}$ gives the Malcev completion of $\pi_{1}\left(M/G,\overline{p} \right) $. The following is immediate.
\begin{prop}\label{formality}
	 Assume that ${\mathcal{R}}_{0}$ is homogeneous, then $\pi\left(M/G, \overline{p} \right)$ is formal.
\end{prop}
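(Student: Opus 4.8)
The plan is to reduce formality of the group $\pi_1(M/G,\overline{p})$ to formality of its complete Malcev Lie algebra, and then to read off the latter directly from the homogeneity hypothesis. By Theorem \ref{wehaveabundle} together with Proposition \ref{fibermalcev}, the fiber Lie algebra $\mathfrak{u}=\widehat{\mathbb{L}}\left((W_+^1[1])^*\right)/\mathcal{R}_0$ is the complete Malcev Lie algebra of $\pi_1(M/G,\overline{p})$. By definition of formality of a group, it therefore suffices to produce a filtered isomorphism $\mathfrak{u}\to\widehat{\operatorname{gr}}(\mathfrak{u})$ whose associated graded is the identity.

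First I would record that the ambient free object carries two compatible structures. Writing $V:=(W_+^1[1])^*$, the complete free Lie algebra $\widehat{\mathbb{L}}(V)$ is the completion of the graded Lie algebra $\mathbb{L}(V)=\bigoplus_{k\geq1}\mathbb{L}^k(V)$ with respect to bracket length, and on a free Lie algebra this grading induces exactly the lower central series filtration $I^\bullet$ used to define completeness in Corollary \ref{corpronilpot} (indeed $I^{i}=\prod_{k\geq i+1}\mathbb{L}^k(V)$). Hence there is a canonical identification $\operatorname{gr}_{I}\left(\widehat{\mathbb{L}}(V)\right)\cong\mathbb{L}(V)$.

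Next, using that $\mathcal{R}_0$ is homogeneous, I would regard it as a closed graded ideal, so that it splits as the completed sum of its homogeneous components $\mathcal{R}_0=\prod_{k}\left(\mathcal{R}_0\cap\mathbb{L}^k(V)\right)$. Setting $\mathfrak{t}:=\mathbb{L}(V)/\mathcal{R}_0$, this is a positively graded Lie algebra, and dividing the completion by the closed graded ideal commutes with the bracket-length decomposition, yielding a filtered isomorphism $\mathfrak{u}=\widehat{\mathbb{L}}(V)/\mathcal{R}_0\cong\widehat{\mathfrak{t}}$. Under this identification the grading of $\mathfrak{t}$ furnishes a splitting of the filtration of $\mathfrak{u}$ that is compatible with the bracket, which is precisely a formality isomorphism $\mathfrak{u}\to\widehat{\operatorname{gr}}(\mathfrak{u})\cong\widehat{\mathfrak{t}}$ inducing the identity on associated graded. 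Thus $\mathfrak{u}$ is formal, and the proposition follows.

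The only delicate point, and the one I would check carefully, is the commutation of completion with passage to the quotient by $\mathcal{R}_0$: one must verify that $\operatorname{gr}_{I}(\mathcal{R}_0)$ coincides with $\mathcal{R}_0$ viewed as a graded ideal of $\mathbb{L}(V)$, so that no lower-order tail is lost when passing to the associated graded. This is exactly where homogeneity is essential, since for a general inhomogeneous image of ${\delta'}^{*}$ the graded ideal $\operatorname{gr}_{I}(\mathcal{R}_0)$ can be strictly larger and the formality isomorphism need not exist; when $\mathcal{R}_0$ is homogeneous the two agree termwise in each bracket length, which is a routine filtration argument.
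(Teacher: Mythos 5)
Your proposal is correct and follows essentially the same route as the paper, which dismisses the statement as ``immediate'' after establishing (via $\Theta_{0}$ and Theorem \ref{wehaveabundle}) that $\mathfrak{u}=\widehat{\mathbb{L}}\left((W_{+}^{1}[1])^{*}\right)/\mathcal{R}_{0}$ is the complete Malcev Lie algebra, and whose own definition of formality records exactly your mechanism: an isomorphism of filtered Lie algebras $\mathfrak{u}\to\widehat{\mathfrak{t}}$ with $\mathfrak{t}=\mathbb{L}(V)/\mathcal{R}_{0}$ positively graded (here by bracket length), which homogeneity of $\mathcal{R}_{0}$ supplies. Your explicit check that completion commutes with the quotient by the closed homogeneous ideal is the right detail to isolate and is consistent with the paper's remark that the formality isomorphism is realized by $\log\Theta_{0}$.
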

In particular Chen'theory gives an explicit formality isomorphism by taking the $\log$ of the monodromy representation $\Theta_{0}$. The formality of the fundamental group of $M=\left( N-\mathcal{D}\right)/G$ where $\mathcal{D}$ is a normal crossing divisor in a smooth complex algebraic variety $N$ and $G$ is a discrete group acting smoothly on $N$ and preserving $\mathcal{D}$ follows from Corollary \ref{Morgancor} and this shows the formality of the braid group on the torus, which corresponds to the fundamental group of the configuration space of points of the punctured torus. Let $M$ be a complex manifold equipped with a smooth properly discontinuous action of a group $G$, we assume that the quotient is connected an that the cohomology is of finite type. Let $f_{\bullet}\: : \:\left( W, m_{\bullet}^{W}\right)\to   \mathcal{F}\left( \operatorname{Tot}_{N}\left(A_{DR}(M_{\bullet}G)\right), m_{\bullet}\right)$ be a $1$-minimal model. The homogeneity of ${\mathcal{R}}_{0}$ can translated in terms of $m^{W}_{\bullet}$. Let $V_{n}\subset W^{2}$ be the subspace generated by $m_{n}^{W}(w_{1}, \dots,w_{n} )$ where $w_{i}\in W$ for $i=1, \dots, n$, then ${\mathcal{R}}_{0}$ has homogeneous generators if $\bigcap_{i=1}^{n}(V_{i})=0$.
\begin{rmk}
There is a topological interpretation of the $m^{W}_{n}$. Consider the singular cohomology $C^{\bullet}\left( M/G\right)$, in particular the cup products gives a differential graded algebra structure on $C^{\bullet}\left( M/G\right)$. In \cite{Camillo}, it is constructed an $A_{\infty}$-quasi-isomorphism between $C^{\bullet}\left( M/G\right)$ and $A_{DR}(M/G)$. Since $\left( A_{DR}(M/G),d,\wedge\right) \subset \left( \operatorname{Tot}_{N}\left(A_{DR}(M_{\bullet}G)\right), m_{\bullet}\right)$ is a strict $C_{\infty}$-quasi-isomorphism, it follows that $m^{W}_{n}$ corresponds to the higher Massey products in singular cohomology of $M/G$.
\end{rmk} 
 In a forthcoming paper, we analyze the case where the action of $G$ is not properly discontinuous and we give sufficient condition to have a rational Malcev completion.

\section{Appendix}
\subsection{Proof of Lemma \ref{Algebrainfinitystruct} and corollary \eqref{Linftystructnotquot}}\label{proof}
 We start with the proof of Lemma \ref{Algebrainfinitystruct}.
 \begin{proof}
 Clearly $\partial^{2}=0$ and $M_{1}^{2}=0$. The relations \eqref{rel} for $n>1$ are equivalent to
 \begin{equation}\label{rel2'}
 \sum_{\substack{p+q+r=n\\
 p+1+r>1, q>1}}(-1)^{p+qr}m_{p+1+r}\circ \left(Id^{\otimes p}\otimes m_{q}\otimes Id^{\otimes r} \right)= D\circ m_{n}, 
 \end{equation}
 where $D\circ m_{n}:=m_{1}\circ m_{n}+(-1)^{n+1}m_{n}\circ \left(\sum_{i=1}^{n-1} Id^{\otimes i}\otimes m_{1}\otimes Id^{\otimes n-1-i} \right)$. Chose homegeneous elements $f_{1}, \dots, f_{n}$. The expression \eqref{rel2'} in our case is
 \begin{align*}
 &\sum_{\substack{p+q+r=n\\
 p+1+r>1, q>1}}(-1)^{p+qr}M_{p+1+r}\circ \left(Id^{\otimes p}\otimes M_{q}\otimes Id^{\otimes r} \right)(f_{1}\otimes  \cdots\otimes  f_{n})  =\\
 & \sum_{\substack{p+q+r=n\\
 p+1+r>1, q>1}}(-1)^{p+qr}\tilde{m}^{A}_{p+1+r} \left( \left(Id^{\otimes p}\otimes M_{q}\otimes Id^{\otimes r} \right)\circ (f_{1}\otimes  \cdots\otimes  f_{n})\right)\circ \Delta^{p+r} =\\
  &\pm \sum_{\substack{p+q+r=n\\
  p+1+r>1, q>1}}(-1)^{p+qr}\tilde{m}^{A}_{p+1+r} \left(f_{1}\otimes  \cdots \otimes  f_{p}\otimes \left( \tilde{m}^{A}_{q}\left(f_{p+1},\dots ,f_{p+q} \right)\circ \Delta^{q-1} \right) \otimes f_{p+q+1}\otimes \cdots \otimes f_{n} \right)\circ \Delta^{p+r} =\\
 &\pm \sum_{\substack{p+q+r=n\\
 p+1+r>1, q>1}}(-1)^{p+qr}\tilde{m}^{A}_{p+1+r} \left(f_{1}\otimes  \cdots \otimes  f_{p}\otimes  \tilde{m}^{A}_{q}\left(f_{p+1},\dots ,f_{p+q} \right)\otimes f_{p+q+1}\otimes \cdots \otimes f_{n} \right)\circ \Delta^{n-1} =\\
 & \sum_{\substack{p+q+r=n\\
 p+1+r>1, q>1}}(-1)^{p+qr}\tilde{m}^{A}_{p+1+r} \left(Id^{\otimes p}\otimes  \tilde{m}^{A}_{q}\otimes Id^{\otimes r} \right)\circ (f_{1}\otimes  \cdots\otimes  f_{n}) \circ \Delta^{n-1}.\\
 \end{align*}
 Set $m_{1}=\partial$, we have
 \begin{align*}
 \left( \partial\circ M_{n}+(-1)^{n+1}M_{n}\circ \left(\sum_{i=1}^{n-1} Id^{\otimes i}\otimes \partial\otimes Id^{\otimes n-1-i} \right)\right)(f_{1}\otimes  \cdots\otimes  f_{n}). \\
 \end{align*}
 The first summand is
 \begin{align*}
 & \partial\circ M_{n}(f_{1}\otimes  \cdots\otimes  f_{n})\\
 &= \tilde{m}^{A}_{1}\tilde{m}^{A}_{n}(f_{1}\otimes  \cdots\otimes  f_{n})\circ\Delta^{n-1}+  (-1)^{|M_{n}(f_{1}\otimes  \cdots\otimes  f_{n})|+1}\tilde{m}^{A}_{n}(f_{1}\otimes  \cdots\otimes  f_{n})\circ\Delta^{n-1}\circ\delta,
 \end{align*}
 the second is 
 \begin{align*}
 & (-1)^{n+1}M_{n}\circ \left(\sum_{i=1}^{n-1} Id^{\otimes i}\otimes \partial\otimes Id^{\otimes n-1-i} \right)(f_{1}\otimes  \cdots\otimes  f_{n})=\\
 & (-1)^{n+1+|f_{1}|+\dots +|f_{p}|}\sum_{i=1}^{n-1}M_{n}\left( f_{1}\otimes  \cdots \otimes  f_{p}\otimes\left(  \tilde{m}^{A}_{1} f_{p+1}+(-1)^{|f_{p+1}|+1}f_{p+1}\circ \delta \right) \otimes f_{p+2}\otimes  \cdots \otimes  f_{n} \right)=\\
 & (-1)^{n+1}\sum_{i=1}^{n-1}\tilde{m}^{A}_{n}\left( Id^{\otimes p}\otimes \tilde{m}^{A}_{1} \otimes Id^{\otimes n-p-1}\right)(f_{1}\otimes  \cdots\otimes  f_{n})\circ \Delta^{n-1}+\\
 &(-1)^{n+|f_{1}|+\dots +|f_{n}|} \tilde{m}^{A}_{n}(f_{1}\otimes  \cdots\otimes  f_{n})\circ \left(\sum_{i=1}^{n-1}Id^{\otimes p}\otimes \delta \otimes Id^{\otimes r} \right)\circ \Delta^{n-1} =\\
 & (-1)^{n+1}\sum_{i=1}^{n-1}\tilde{m}^{A}_{n}\left( Id^{\otimes p}\otimes \tilde{m}^{A}_{1} \otimes Id^{\otimes n-p-1}\right)(f_{1}\otimes  \cdots\otimes  f_{n})\circ \Delta^{n-1}+\\
 &(-1)^{n+|f_{1}|+\dots +|f_{n}|} \tilde{m}^{A}_{n}(f_{1}\otimes  \cdots\otimes  f_{n})\circ \Delta^{n-1}\circ \delta.
 \end{align*}
 Since $\tilde{m}_{\bullet}^{A}$ is an $A_{\infty}$-structure
 \begin{align*}
 & \sum_{\substack{p+q+r=n\\
 p+1+r>1, q>1}}(-1)^{p+qr}\tilde{m}^{A}_{p+1+r} \left(Id^{\otimes p}\otimes  \tilde{m}^{A}_{q}\otimes Id^{\otimes r} \right)\circ (f_{1}\otimes  \cdots\otimes  f_{n}) \circ \Delta^{n-1}=\\
 &   \tilde{m}^{A}_{1}\tilde{m}^{A}_{n}(f_{1}\otimes  \cdots\otimes  f_{n})\circ\Delta^{n-1}+(-1)^{n+1}\sum_{i=1}^{n-1}\tilde{m}^{A}_{n}\left( Id^{\otimes p}\otimes \tilde{m}^{A}_{1} \otimes Id^{\otimes n-p-1}\right)(f_{1}\otimes  \cdots\otimes  f_{n})\circ \Delta^{n-1}.
 \end{align*}
 This shows that $\left( \partial, M_{2}, M_{3},\dots\right) $ is an $A_{\infty}$-structure. The second summand is canceled by the second summand of the first summand and this shows that $\left( M_{\bullet}\right) $ is an $A_{\infty}$-structure. We prove the third statement. By direct calculation $n=1,2$, we have that $l_{n}$ and $l_{n}'$ are well-defined on  
  $L_{V[1]^{*}}\left(A \right)$. Let $n>2$, and
  $f_{1}, \dots, f_{n}\in   L_{V[1]^{*}}(A)$ be homogeneous elements. Let $\mu'(a,b)$ be a non trivial shuffle in $T^{c}(V[1])\otimes T^{c}(V[1])$. For any $n$ we have
 \begin{align*}
 &\tilde{m}_{n}^{A}\left(f_{1}\otimes \cdots\otimes f_{n} \right)\circ \Delta^{n-1}\circ\mu'(a,b) =\\
 & = \tilde{m}_{n}^{A}\left(f_{1}\otimes \cdots\otimes f_{n} \right)\big(\sum_{i\neq j}^{n} 1\otimes \cdots 1\otimes  a\otimes 1\cdots 1\otimes  b\otimes 1 \cdots \otimes 1\\
 &+ (-1)^{|a||b|}\sum_{i\neq j}^{n} 1\otimes \cdots 1\otimes  b\otimes 1\cdots 1\otimes a \otimes 1 \cdots \otimes 1\big)\\
 & = 0
 \end{align*}
 since $f_{i}(1)=0$ for each $i$.
 \end{proof}
 Proof of corollary \eqref{Linftystructnotquot}
 \begin{proof}
The first claim is immediate. We prove the second statement. Let $f_{1},\dots,f_{n}\in  \Hom^{\bullet}\left(T^{c}(V[1]^{0}), A \right)$ and assume that there is a $g$ with $\delta^{*}g=f_{i}$ for some $i$. Then
 \begin{align*}
 &\tilde{m}_{n}^{A}\left(f_{1}\cdots \otimes\delta^{*}g\otimes \cdots\otimes f_{n} \right)\circ \Delta^{n-1} =\\
 & =\tilde{m}_{n}^{A}\left(f_{1}\cdots \otimes g\otimes \cdots\otimes f_{n}\right)\circ (Id\otimes\cdots \otimes \delta\otimes\cdots\otimes  Id)\circ \Delta^{n-1} \\
 &= \tilde{m}_{n}^{A}\left(f_{1}\cdots \otimes g\otimes \cdots\otimes f_{n}\right)\circ \left( \sum_{i=1}^{n}Id\otimes\cdots \otimes \delta\otimes\cdots\otimes  Id\right) \circ \Delta^{n-1}\\
 \end{align*}
 since $\delta^{*}f_{i}=0$ for any $i$. Then 
 \begin{align*}
 & \tilde{m}_{n}^{A}\left(f_{1}\cdots \otimes g\otimes \cdots\otimes f_{n}\right)\circ \left( \sum_{i=1}^{n}Id\otimes\cdots \otimes \delta\otimes\cdots\otimes  Id\right) \circ \Delta^{n-1}=\\
 &=\tilde{m}_{n}^{A}\left(f_{1}\cdots \otimes g\otimes \cdots\otimes f_{n}\right)\circ\Delta^{n-1}\circ \delta\\
 &=\delta^{*}\circ\tilde{m}_{n}^{A}\left(f_{1}\cdots \otimes g\otimes \cdots\otimes f_{n}\right)\circ\Delta^{n-1}.\\
 \end{align*}
 The third assertion is straightforward.
 \end{proof}
 \subsection{Conormalized graded module}\label{cosimplicial modules}
 The goal of this subsection is to construct an isomorphism $\psi\: : \: \operatorname{Tot}_{N}\left(A\right)\to  \int^{[n]\in\boldsymbol{\Delta}}NC_{n}\otimes A^{n,\bullet }$ between differential modules. Fix a field $\Bbbk$ of charactersitic zero. Let $B^{\bullet}$ be a cosimplicial module. The \emph{conormalized graded module} $N(B)^{\bullet}$ is a (cochain) graded modules defined as follows,
 \[
 N(B)^{p}:=\begin{cases}
 B^{0}, & \text{ if }p=0,\\
 \bigcap_{i=0}^{p}\operatorname{Ker}\left(s^{i}_{p-1}\: : \: B^{p}\to B^{p-1} \right) , & \text{ otherwise}.
 \end{cases}
 \]
 where $s^{p}_{i}$ for $i=0,\dots p-1$ are the codegenerancy maps.
 The differential $\partial\: : \: N(B)^{p}\to N(B)^{p+1}$ is given by the alternating sum of the coface maps
 \[
 \partial=\sum_{i=0}^{p}(-1)^{i}d^{i}.
 \]
 In particular $N^{\bullet}$ is a functor from the category of cosimplicial modules $cMod$ toward the category of (cochain) differential graded modules $dgMod$.\\
 A cosimplicial differential graded module is a cosimplicial objects in $A^{\bullet, \bullet}\in dgMod$ where the first slot denotes the cosimplicial degree and the second slot denotes the differential degree. It can be visualized as a sequence of cosimplicial modules
 \[
 \begin{tikzcd}
 A^{\bullet,0 }\arrow{r}{d}& A^{ \bullet,1}\arrow{r}{d}& A^{ \bullet,2}\arrow{r}{d}&\dots
 \end{tikzcd}
 \]
 If we apply the functor $N$ we turn the cosimplicial structure of each terms into a differential graded structures
 \[\begin{tikzcd}
 N\left( A^{\bullet,0 }\right)^{\bullet}\arrow{r}{d}& N\left( A^{ \bullet,1}\right)^{\bullet}\arrow{r}{d}& N\left( A^{ \bullet,2}\right)^{\bullet}\arrow{r}{d}&\dots
 \end{tikzcd}\]
 moreover since each cosimplicial maps commutes with the differentials, we get a bicomplex $\left(N(A), d, \partial \right) $, where $N(A)^{p,q}:=N(A^{p,\bullet})^{q}$. We define $\operatorname{Tot}_{N}\left(A\right)\in dgMod$ as the total complex associated to the bicomplex above. Explicitly an element $a\in \operatorname{Tot}\left(N(A) \right)^{k} $ is a collection
 \[
 (a_{0}, \dots , a_{k})\in A^{k,0}\oplus A^{1,k}\oplus
 \dots A^{0,k}
 \]
 such that each $a_{i}$ is contained in the kernel of some codegenrancy map. The following is well-known.
  
  \begin{lem}\label{isompsi}
  Let $A^{\bullet,\bullet}$ be a cosimplicial differential graded module.
  \begin{enumerate}
  \item Let $v$ be an elements of bidegree $(p,q)$. Then each $v_{n}\in NC_{n}^{p}\otimes A^{n,q}$ is equal to zero for $p>n$.
  \item $NC_{p}^{p}$ is a one dimensional module.
  
  \item An element $v$ with bidegree $(p,q)$ is completely determined by 
  \[
  v_{p}\in NC_{p}^{p}\otimes A^{p,q}.
  \]
  \item There is an isomorphism between differential modules $\psi\: : \: \operatorname{Tot}_{N}\left(A\right)\to  \int^{[n]\in\boldsymbol{\Delta}}NC_{n}\otimes A^{n,\bullet }$ such that for $v$ with bidegree $(p,q)$  we have
  \[
  \psi(b)_{p}=b\in NC_{p}^{p}\otimes A^{p,q}
  \]
  \end{enumerate}
  \end{lem}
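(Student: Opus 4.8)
The plan is to treat the four assertions in order, with the substance in (3) and (4). I identify a basis vector $\lambda_I\in NC_n^p$ with the injective map $\sigma_I\colon[p]\hookrightarrow[n]$ it indicates, and I write $(\sigma_I)_*:=A^{\bullet,\bullet}(\sigma_I)$ for the iterated coface operator of $A$ attached to $\sigma_I$, exactly as in the derivation of \eqref{formulafinale}. Assertions (1) and (2) are then immediate from the definition of $NC_\bullet$: one has $NC_n^p=0$ for $p>n$, which is (1); and $NC_p^p$ is the space of functions on the set of injective maps $[p]\hookrightarrow[p]$, which contains only the identity, so $NC_p^p=\Bbbk\,\lambda_{0\cdots p}$ is one-dimensional, which is (2).

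For (3), let $v$ have bidegree $(p,q)$ and write $v_p=\lambda_{0\cdots p}\otimes a$ with $a\in A^{p,q}$, which is legitimate by (2). I would test the defining coend relation, namely $(1\otimes A^{\bullet,\bullet}(\theta))v_p=(NC_\bullet(\theta)\otimes 1)v_n$ for $\theta\colon[p]\to[n]$, against the injections $\theta=\sigma_I$. Since the identity is the only injective endomorphism of $[p]$, the simplicial operator $NC_\bullet(\sigma_I)$ sends $\lambda_J$ to $\lambda_{0\cdots p}$ when $J=I$ and to $0$ otherwise; comparing the coefficients of $\lambda_{0\cdots p}$ on the two sides forces the coefficient of $\lambda_I$ in $v_n$ to equal $(\sigma_I)_*a$. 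Hence $v_n=\sum_{\lvert I\rvert=p}\lambda_I\otimes(\sigma_I)_*a$ for every $n$, the very expansion that already appears in the computation leading to \eqref{formulafinale}, so $v$ is determined by $v_p$.

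For (4) I define, for $b\in\operatorname{Tot}_{N}(A)$ of bidegree $(p,q)$ (so that $b$ is annihilated by every codegeneracy $s^i$), the element $\psi(b)$ by $\psi(b)_n:=\sum_{\lvert I\rvert=p}\lambda_I\otimes(\sigma_I)_*b$, extended to all of $\operatorname{Tot}_{N}(A)$ through the bidegree decomposition. Bijectivity is then formal: $\psi(b)_p=\lambda_{0\cdots p}\otimes b$ recovers $b$, giving injectivity, while for a coend element $v$ of bidegree $(p,q)$ the relation tested against a surjection $[p]\twoheadrightarrow[p-1]$ has vanishing right-hand side (as $NC_{p-1}^p=0$) and so forces the $A$-factor $a$ of $v_p$ to be killed by the codegeneracies, after which (3) gives $v=\psi(a)$, giving surjectivity. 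The real point, and what I expect to be the main obstacle, is that $\psi(b)$ actually satisfies \emph{all} coend relations, not merely those indexed by injective maps. For general $\theta\colon[n]\to[m]$ the two sides of the relation differ by $\sum_I\lambda_I\otimes(\theta\circ\sigma_I)_*b$, summed over those injective $I$ for which $\theta\circ\sigma_I$ fails to be injective. By the epi--mono factorisation in $\boldsymbol{\Delta}$ each such $\theta\circ\sigma_I$ factors through a nontrivial surjection, so $(\theta\circ\sigma_I)_*$ is a coface operator precomposed with a genuine codegeneracy; since $b$ is conormalised, every one of these terms vanishes and the two sides coincide. Pinning down this factorisation and the resulting cancellation precisely is the delicate part, and it is exactly here that the conormalisation hypothesis is used.

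It remains to check that $\psi$ is a morphism of differential modules. Here I would invoke (3) to reduce $\psi D=D\psi$ to equality of leading components, treating the internal differential $d$ and the cosimplicial differential $\tilde{\partial}$ separately. The $d$-contribution is carried inside the factor $A$ and already matches at the $p$-component. For the $\tilde{\partial}$-contribution a short computation with the conormalised coface differential gives $d_{NC_{p+1}}(\lambda_{0\cdots\widehat{\imath}\cdots p+1})=(-1)^i\lambda_{0\cdots p+1}$, so that the $(p+1)$-component of $(d_{NC}\otimes 1)\psi(b)$ reassembles into $\lambda_{0\cdots p+1}\otimes\tilde{\partial}b=\psi(\tilde{\partial}b)_{p+1}$; matching $(p+1)$-components via (3) closes the argument, the only residual task being the Koszul sign bookkeeping.
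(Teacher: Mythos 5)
Your proof is correct and follows the same route as the paper: points (1)--(2) are read off from the definition of $NC_{\bullet}$, and point (3) is obtained exactly as in the paper's proof by testing the coend relation against the injections $\sigma_{i_{0},\dots,i_{p}}$, which forces $v_{n}=\sum_{|I|=p}\lambda_{I}\otimes(\sigma_{I})_{*}a$. For point (4) the paper merely asserts the isomorphism in one closing sentence, whereas you supply the actual verification (well-definedness of $\psi(b)$ via the epi--mono factorisation in $\boldsymbol{\Delta}$ together with conormalisation of $b$, surjectivity via testing against the codegeneracies, and compatibility with the differentials); this fills in details the paper leaves implicit but is not a different method.
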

 \begin{proof}
 The first two points are immediate.
 Fix a $n$ and a $p\leq n$. Notice that each inclusion $[p]\hookrightarrow [n]$ is equivalent to an ordered string $0\leq i_{0}<i_{i}<\dots <i_{p}\leq n$ contained in $\left\lbrace 0,1,\dots, n\right\rbrace $. For each string $0\leq i_{0}<i_{i}<\dots <i_{p}\leq n$ we denote the associated inclusion by $\sigma_{i_{0}, \dots i_{p}}\: : \:[p]\hookrightarrow [n]$, and we define the maps $\lambda_{i_{0}, \dots, i_{p}}\: : \: \Delta[n]_{p}^{+}\to \Bbbk$, via
 \[
 \lambda_{i_{0}, \dots, i_{p}}(\phi):=
 \begin{cases}
 1\text{ if }\sigma_{i_{0}, \dots i_{p}}=\phi,\\
 0,\text{ otherwhise}
 \end{cases}
 \]
 Clearly $\left\lbrace \lambda_{i_{0}, \dots i_{p}}\right\rbrace _{0\leq i_{0}<i_{i}<\dots <i_{p}\leq n}$ is a basis of  $NC_{n}^{p}$. It turns out that $v_{n}$ can be written as 
 \[
 v_{n}=\sum_{\theta \in \Delta[n]_{p}^{+}}\lambda_{i_{0}, \dots, i_{p}}\otimes b^{i_{0}, \dots, i_{p}}
 \]
 for some $b^{i_{0}, \dots i_{p}}\in B^{n,q}$. Let $v_{p}\in NC_{p}^{p}\otimes A^{p,q}$. Since $NC_{p}^{p}$ is one dimensional we write $v_{p}=\lambda_{0, \dots, p}\otimes b$, for some $b\in A^{p,q}$. We shows that $b^{i_{0}, \dots i_{p}}$ is completely determined by $b$. Fix a $0\leq i_{0}<i_{i}<\dots <i_{p}\leq n$. Then the above relations read as follows
 \[
 \left( 1\otimes\sigma_{i_{0}, \dots i_{p}}^{*}\right)v_{p}=\left({\sigma_{i_{0}, \dots i_{p}}}_{*}\otimes 1 \right)v_{n}.
 \]
 In particular the map ${\sigma_{i_{0}, \dots i_{p}}}_{*}\: : \: NC_{n}^{p}\to  NC_{p}^{p}$ is the linear map that sends $\lambda_{i_{0}, \dots i_{p}}$ to $\lambda_{{0}, \dots, {p}}$ and $\lambda_{j_{0}, \dots j_{p}}$ to $0$ for $\left( j_{0}, \dots j_{p}\right) \neq \left( i_{0}, \dots i_{p}\right) $.
 Then
 \begin{align*}
 \lambda_{{0}, \dots ,{p}}\otimes \sigma_{i_{0}, \dots i_{p}}^{*}(b)=\left( 1\otimes\sigma_{i_{0}, \dots i_{p}}^{*}\right)v_{p}=\left( {\sigma_{i_{0}, \dots i_{p}}}_{*}\otimes 1 \right)v_{n}=\lambda_{{0}, \dots ,{p}}\otimes b^{i_{0}, \dots i_{p}}.
 \end{align*}
 Since each degree $k$ elements can be written uniquely as a sum of elements of bidegree $(p,q)$ with $p+q=k$ we get a natural isomorphism
 $\psi\: : \: \operatorname{Tot}_{N}\left(A\right)\to  \int^{[n]\in\boldsymbol{\Delta}}NC_{n}\otimes A^{n,\bullet }$ of differential graded modules.
 \end{proof}
 \subsection{The $C_{\infty}$-structure on the 2 dimensional simplex}\label{sec2dimsimplex}
This diagram originally defined in \cite{Dupont2} is intensively studied in \cite{Getz2}. We define two maps between simplicial differential graded module (see \cite{Dupont2})
 \[
 \begin{tikzcd}
 E_{\bullet}\: : \: NC_{\bullet}^{\bullet}\arrow[r, shift right, ""]&\Omega^{\bullet}(\bullet)\: : \: \int_{\bullet}.\arrow[l, shift right, ""] 
 \end{tikzcd}
 \] 
 \begin{enumerate}
 \item Fix a $[n]\in \boldsymbol{\Delta}$. For each $p\leq n$ we define  $\int_{n} \: : \: \Omega^{p}(n)\to NC_{n}^{p}$ via
 \[
 \int_{n}(w)(\sigma_{i_{0}, \dots, i_{p}}):= \int_{\Delta_{geo}[p]}\sigma_{i_{0}, \dots, i_{p}}^{*}(w)
 \]
 i.e, we pulled back $w$ along the smooth inclusion $\sigma_{i_{0}, \dots, i_{p}}\: : \: \Delta[p]_{geo}\to \Delta[n]_{geo}$ and we integrate along the geometric standard $p$-simplex. $\int \: : \: \Omega^{\bullet}(n)\to NC_{n}$  is indeed a map between graded modules. The Stokes'theorem implies 
 \[
 \int_{n}(dw)(\sigma_{i_{0}, \dots, i_{p}})=\sum_{j=0}^{p}(-1)^{j}\int_{n}(w)(\sigma_{i_{0}, \dots, \hat{i_{j}}, \dots i_{p}}),
 \]
 i.e $\int_{n} \: : \: \Omega^{\bullet}(n)\to NC_{n}$ is a map between differential graded modules. Moreover, the above construction is compatibAle with the simplicial structure, i.e $\int_{\bullet} \: : \: \Omega^{\bullet}(\bullet)\to NC_{\bullet}^{*}$ is a map between simplicial differential graded modules.
 \item We define the quasi inverse of $\int_{\bullet}$. We fix a $[n]\in \boldsymbol{\Delta}$. For each string $0\leq<i_{0}<i_{1}<\dots <i_{p}\leq n$ we define the \emph{Whitney elementary form} $\omega_{i_{0}, \dots, i_{p}}\in \Omega^{p}(n)$ via
 \[
 \omega_{i_{0}, \dots , i_{p}}:=k!\sum_{j=0}^{p}(-1)^{j}t_{i_{j}}dt_{i_{0}}\wedge \dots \wedge \hat{dt_{i_{j}}}\wedge \dots \wedge dt_{i_{p}}
 \]
 We define $E_{n}\: : \: NC_{n}^{p}\hookrightarrow \Omega^{p}(n)$ via
 \[
 E_{n}(\lambda):=\sum_{0\leq i_{0}< \dots < i_{p}\leq n} \lambda(\sigma_{i_{0}, \dots , i_{p}})\omega_{i_{0}, \dots , i_{p}}
 \]
 The above map defines a map between differential graded modules $E_{n}\: : \: NC_{n}\hookrightarrow \Omega^{\bullet}(n)$. Since the construction is compatible with the simplicial maps we get a map between simplicial differential graded modules $E_{\bullet}\: : \: NC_{\bullet}^{\bullet}\hookrightarrow \Omega^{\bullet}(\bullet)$. Moreover since
 \[
 \int_{\Delta[n]_{geo}}t_{1}^{a_{1}}\cdots t_{n}^{a_{n}}dt_{1}\wedge dt_{n}:=\frac{a_{1}!\cdots a_{n}!}{\left( a_{1}+\dots a_{n}+n\right)!}
 \]
 an easy computation demonstrates that
 \[
 \left( \int_{\bullet}\right) \circ E_{\bullet}=Id_{ NC_{\bullet}^{\bullet}}.
 \]
 \end{enumerate}
 It remains to construct an explicit simplicial homotopy between $ E_{\bullet}\circ\left( \int_{\bullet}\right)$ and $\mathrm{Id}_{ \Omega^{\bullet}(\bullet)}$. We recall the construction of \cite{Dupont2} (see also \cite{Getz2}). Fix a $n$, for $0\leq i\leq n$ we define the map $\phi_{i}\: : \: [0,1]\times \Delta_{geo}[n]\to \Delta_{geo}[n]$ via
 \[
 \phi_{i}\left(u, t_{0}, \dots, t_{n}\right):=\left(t_{0}+(1-u)\delta_{i0}, \dots,t_{0}+(1-u)\delta_{in} \right)  
 \]
 Let $\pi\: : \: [0,1]\times \Delta_{geo}[n]\to \Delta_{geo}[n]$ be the projection at the second coordinate and let  $\pi_{*}\: : \: \Omega^{\bullet}(n)\to \Omega^{\bullet-1}(n)$ be the integration along the fiber. We define $h^{i}_{n}\: : \: \Omega^{\bullet}(n)\to \Omega^{\bullet-1}(n)$ via
 \[
 h_{n}^{i}(w):=\pi_{*}\circ\phi^{*}_{i}(w)
 \]
  We define $s_{n}\: : \: \Omega^{p}(n)\to \Omega^{p-1}(n)$ as follows: 
 \[
 s_{n}w:=\sum_{j=0}^{p-1} \sum_{0\leq i_{0}<\dots<i_{j}\leq n}\omega_{i_{0}\dots i_{j}}\wedge h^{i_{j}}_{n}\dots  h^{i_{0}}_{n}(w)
 \]
 We start with the proof of Proposition \ref{speriam}. 
 \begin{proof}
 Recall that $\Omega^{\bullet}(2)$ is the free differential graded commutative algebra generated by the degree zero variables $t_{0}$, $t_{1}$ and $t_{2}$ modulo the relations
 \[
 t_{0}+t_{1}+t_{2}=1,\quad dt_{0}+dt_{1}+dt_{2}=0.
 \]
 On the other hand $NC_{2}^{0}$ is the vector space generated by $\lambda_{0}$, $\lambda_{1}$ and $\lambda_{2}$, $NC_{2}^{1}$ is generated by $\lambda_{01},\lambda_{02} $ and $\lambda_{12}$, and $NC_{2}^{2}$ is the one dimensional vector space generated by $\lambda_{0012}$. We have
\begin{enumerate}
\item $E_{2}\left( \lambda_{0}\right)=t_{0}$, $E_{2}\left( \lambda_{1}\right)=t_{1}$, and $E_{2}\left( \lambda_{2}\right)=t_{2}$.
\item $E_{2}\left( \lambda_{01}\right)=t_{0}dt_{1}-t_{1}dt_{0}$, $E_{2}\left( \lambda_{02}\right)=t_{0}dt_{2}-t_{2}dt_{0}$, and $E_{2}\left( \lambda_{12}\right)=t_{1}dt_{2}-t_{2}dt_{1}$.
\item $E_{2}\left( \lambda_{012}\right)=2t_{0}dt_{1}dt_{2}-2t_{1}dt_{0}dt_{2}+2t_{2}dt_{0}dt_{1}$.
\end{enumerate}
We prove a). For degree reason we have $m_{2}(\lambda_{01},\lambda_{02})=\mu_{01/02}\lambda_{012}$. By the homotopy transfer theorem (see \cite{kontsoibel}) we have
\begin{align*}
\mu_{01/02} &=\left( \int_{\Delta[2]}E_{2}\left( \lambda_{01}\right)E_{2}\left( \lambda_{02}\right)\right) \\
& = \int_{\Delta[2]} t_0 (t_0dt_1dt_2  -t_1 dt_0dt_2  +t_2 dt_0dt1 )\\
& =\frac{1}{6}
\end{align*}
and analogously the other cases.
\end{proof}


\begin{thebibliography}{}
\bibitem{Camillo} C. Arias Abad, F. Sch\"atz, \emph{The $A_{\infty}$ de Rham Theorem and Integration up to Homotopy}, Int. Math. Research Notices, vol 2013, no. 16, (2013), pp. 3790-3855. 
\bibitem{Arnold} V. I. Arnold, \emph{The cohomology ring of the colored braid group}, Translated from Matematicheskie Zametki, Vol. 5, No. 2 (1969), pp. 227–231.
\bibitem{Damien} D. Calaque, B. Enriquez, P. Etingof\emph{ Universal KZB equations: the elliptic case}, in Algebra, arithmetic, and geometry: in honor of Yu. I. Manin. Vol. I, Progr. Math., 269, pp. 165–266.
\bibitem{extensionChen} K.-T. Chen, \emph{Extension of }$C^{\infty}$\emph{ function algebra by Iterated Integrals and Malcev completion of }${\pi}_{1}$, Advances in Mathematics, Volume 23, Issue 2, 1977, pp. 181-210.
\bibitem{IteratedChen} K.-T. Chen, \emph{Iterated path integrals}, Bull. Amer. Math. Soc. Volume 83, Number 5 (1977),pp. 831-879.
\bibitem{Deligne2} P. Deligne, \emph{ Théorie de Hodge : II}, Publications mathématiques de l'I.H.É.S., tome 40 (1971), pp. 5-57.
\bibitem{Prelie} V. Dotsenko, S. Shadrin, B. Vallette \emph{Pre-Lie deformation theory}, Moscow Mathematical Journal, Volume 16, Issue 3 (2016), pp. 505-543.
\bibitem{Dupont} J. L. Dupont, \emph{ Curvature and Characteristic Classes}, Lecture Notes in Mathematics, (1978).
\bibitem{Dupont2} J. L. Dupont,\emph{ Simplicial De Rham cohomology and characteristic classes of Flat Bundles},  Topology 15 (1976), pp. 233-245.
\bibitem{Enriquezconnection} B. Enriquez. \emph{Flat connections on configuration spaces and formality of braid groups of surfaces} Arxiv: arXiv:1112.0864 [math.GT]
\bibitem{benoitfresse} B. Fresse, 
\emph{Homotopy of Operads and Grothendieck–Teichmüller Groups: Parts 1 and 2}, to appear in Mathematical Surveys and Monographs
Volume: 217; (2017).
\bibitem{Getz2} E. Getzler, \emph{Lie theory for nilpotent $L_{\infty}$-algebras}, Annals of Mathematics, Volume 170, Issue 1 (2009), pp. 271-301.
\bibitem{Getz} E. Getzler, X. Z. Cheng, \emph{Transferring homotopy commutative algebraic structures}, Journal of Pure and Applied Algebra, Volume 212, Issue 11 (2008), pp.
\bibitem{Algchentwist} V. K. A. M. Gugenheim, L. A. Lambe, and J. D. Stasheff, \emph{Algebraic aspects of Chen's twisting cochain}, Illinois J. Math. Volume 34, Issue 2 (1990), pp. 485-502.
\bibitem{Hain} R. Hain, \emph{Notes on the Universal Elliptic KZB Equation}, (2013), arXiv:1309.0580
\bibitem{KadeshHuebsch}J. Huebschmann, T.V. Kadeishvili, \emph{Small models for chain algebras}, Mathematische Zeitschrift, Volume 207, Issue 1(1991), pp. 245-280.
\bibitem{Huebsch}J. Huebschmann, \emph{On the construction of $A_{\infty}$-structures}, Festschrift in honor of T. Kadeishvili's 60-th birthday, Georgian J. Math. 17 (2010), pp. 161-202.
\bibitem{Kadesh}T.V. Kadeishvili, \emph{On the homology theory of fi-
	bre spaces}, Russian Math. Surv., 35:3:231–238, 1980. math.AT/0504437
\bibitem{kontsoibel} M. Kontsevich, Y. Soibelman, \emph{Homological mirror symmetry and torus fibrations}, Symplectic geometry and mirror symmetry. Proceedings, 4th KIAS Annual International Conference, Seoul, South Korea, August 14-18, 2000.
\bibitem{lambestaheff} L. Lambe, J. Stasheff, \emph{Application to perturbation theory to iterated fibrations}, manuscripta mathematica , Volume 58, Issue 3 (1987), pp. 363-376 
\bibitem{lodayVallette} J. Loday, B. Vallette, \emph{Algebraic Operads}, Grundlehren der mathematischen Wissenschaften, Volume 346 Springer-Verlag (2012).
\bibitem{Markl} M. Markl, \emph{Transferring $A_{\infty}$ (strongly homotopy associative) structures}, Proceedings of the 25th Winter School "Geometry and Physics", Publisher: Circolo Matematico di Palermo(Palermo), pp. 139-151.
\bibitem{Morgan}  Morgan, John W., \emph{The algebraic topology of smooth algebraic varieties}, Publications Mathématiques del'IHÉS, Volume 48 (1978) , pp. 137-204.
\bibitem{Segal} G. B. Segal, \emph{Classifying spaces and spectral sequences}, Publ. Math. IHES 34 (1968), pp. 105–112.
\bibitem{Sibilia1} C. Sibilia, \emph{Homotopy transfer theorem and KZB connection}, in preparation.
\bibitem{Siedel} P. Siedel, \emph{Fukaya Categories and Picard-Lefschetz Theory} , European Mathematical Society, 2008.
\bibitem{BrunoHomotopy} B. Vallette, \emph{Homotopy Theory of Homotopy algebras}, 2016 , arXiv:1411.5533 
\bibitem{Yalin} S. Yalin, \emph{Maurer-Cartan spaces of filtered L-infinity algebras}, 2015, arXiv:1409.4741. 

\end{thebibliography}
\end{document}